\documentclass[11pt, reqno]{amsart}
\usepackage{amssymb}
\usepackage{times}
\usepackage{amsmath,amsthm}
\usepackage{amsfonts}
\usepackage{leftidx}
\usepackage{etoolbox}
\usepackage{mathrsfs}
\usepackage{enumerate}
\usepackage{abstract}
\usepackage{stmaryrd}
\usepackage{ulem}
\usepackage{graphicx}	
\usepackage{float}
\usepackage{hyperref}

\synctex=-1

\def\R{\mathbb{R}}

\def\d{|\nabla|}
\def\n{\nabla}

\def\p{\partial}

\def\be{\begin{equation}}
\def\ee{\end{equation}}
\def\vo{\vspace{1\baselineskip}}

\def\h{\frac{1}{2}}

\newtheorem{theorem}{Theorem} 

\newtheorem{lemma}{Lemma}[section]
\newtheorem{proposition}{Proposition}[section]
\theoremstyle{definition}
\newtheorem{definition}{Definition}[section]

\theoremstyle{remark}
\newtheorem{remark}{Remark}[section]
 
\setlength{\textwidth}{16.5cm} 
\setlength{\oddsidemargin}{0cm}
\setlength{\evensidemargin}{0cm}
\setcounter{tocdepth}{2}
\numberwithin{equation}{section}
\begin{document}
 \title[3D GWW Above a Flat Bottom]{Global solution for the 3D gravity water waves system above a flat bottom}
\author{Xuecheng Wang}
\address{Mathematics Department, Princeton University, Princeton, NJ, USA 08544}

\address{Yau Mathematical Sciences Center, Tsinghua University, Beijing, China 100084}

\email{xuecheng@tsinghua.edu.cn,\, xuecheng.wang.work@gmail.com}
\thanks{}
\maketitle
\begin{abstract}
Given any suitably small, localized, and smooth initial data, in this paper, we prove global regularity for the $3D$ finite depth gravity water wave system. As a byproduct, we rule out the  small, localized traveling waves in $3D$, which do exist for the same system in $2D$. 
\end{abstract}
\setcounter{tocdepth}{1}
\tableofcontents

\section{Introduction}\label{introduction}
In this paper, we study the long time behavior of the motion of an inviscid incompressible fluid, e.g., water, inside a time dependent region $\Omega(t)$. A fascinating feature of this problem, which  is also known as the free boundary problem, is that the boundary of ``$\Omega(t)$'' will affect the motion of the  fluid and will also be affected by the motion of  the fluid. In other words, to study the motion of the fluid, we need to study the motion of the fluid and the motion of the boundary at the same time.

To be more precise of the problem setting, we assume that  there is a vacuum above the water region $\Omega(t)$ and there is no vorticity inside $\Omega(t)$. Moreover, we consider the gravity effect and neglect the surface tension effect.  The system under consideration is also known as the gravity water waves system.

Despite  recent gratifying progress devoted to improving the understanding of the long time behavior of the water waves system, which will be discussed later,  there are still many open questions. One of them is how the fixed bottom of the water region ``$\Omega(t)$'' changes the behavior of the solution  in the long run. Although  we do have evidence  that shows that the structure of bottom indeed plays an important role in the long run, the mechanism is not mathematically clear  even in the small data  regime. Here comes   evidence. For the $2D$ gravity water system,  small traveling waves don't exist in the infinite depth setting (without a bottom) but do exist in the flat bottom setting, see \cite{deng}. Here comes an open question, does the presence of the flat bottom affect the stability of zero solution? Please note that the zero solution is indeed stable under small perturbation for the infinite depth setting. See the work of Germain-Masmoudi-Shatah \cite{germain2} and Wu \cite{wu2} in $3D$; see the work of Ionescu-Pusateri\cite{IP1}, Alazard-Delort \cite{alazard}, Ifrim-Tataru \cite{tataru3} in $2D$. 

In this paper, we will answer this question definitely in $3D$ for   small initial data.  In conclusion, the global stability of the zero solution also holds for the $3D$ gravity water  waves system in the flat bottom setting. For any suitably small  initial data, the solution globally exists and scatters to a linear solution. Moreover, the nonlinear solution decays sharply  over time in a weak $L^\infty$-type space. 

We arrive in this conclusion by carefully analyzing the low-frequency part of the nonlinear solution, which is  the main difference between the infinite depth setting and the flat bottom setting. We remark that the high-frequency part of the nonlinear solution in two settings  are essentially same, see \cite{alazard1,alazard2}.

\subsection{Gravity water waves system above a flat bottom}

In this subsection, we give a more precise mathematical description of the $3D$ gravity water waves system in the flat bottom setting. 

Assume that the water region $\Omega(t)$ has a free interface $\Gamma(t)$  and a fixed flat bottom $\Sigma$.     We normalize both  \textit{ the depth} and \textit{the gravity constant} ``$g$'' to be ``$1$''. As a result,  we can describe the domain, the interface and the bottom in the Eulerian coordinates as follows, 
\[
\Omega(t):= \{(x,y): x\in \R^2, -1\leq y\leq \,h(t,x)\},\]
\[ 
\Gamma(t):= \{(x,y): x\in \R^2, y= \,h(t,x)\}, \quad \Sigma :=\{(x,y): x\in \R^2, y=-1\},
\]
	where $h(t,x)$ denotes the height of the interface at point $x$ and at time $t$. Since we will be in the small data regime, readers can imagine that  $h(t,x)$ is a small perturbation of ``$0$''.

 The evolution of the fluid is described by the Euler equation with boundary conditions as follows, 
\begin{equation}
\left\{\begin{array}{ll}
\p_t u + u\cdot \nabla u =-\nabla p -g(0,0,1) & \\
\nabla\cdot u =0,\,\, \nabla \times u =0, \,\,u(0)=u_0&\\
u\cdot \vec{\mathbf{n}}=0 & \textup{on\,\,$\Sigma$} \\
p=0 & \textup{on\,\, $\Gamma(t)$}\\
\p_t + u\cdot\nabla \textup{tangents to $\cup_{t} \Gamma(t)$} & \textup{on\, $\Gamma(t)$,}\\
\end{array}\right.
\end{equation}
As the  velocity field is irrotational, we can represent it in terms of velocity potential $\phi$. Let $\psi$ be the restriction of velocity potential on the boundary $\Gamma(t)$, i.e., $\psi(t,x):=\phi(t,x,\,h(t,x))$. From the divergence free condition and the boundary conditions, we can derive the following harmonic equation with two boundary conditions: a Neumann type condition on the bottom and a Dirichlet type condition on the interface,
\begin{equation}\label{harmoniceqn}
(\Delta_x + \p_y^2)\phi=0, \quad \frac{\p \phi}{\p \vec{\mathbf{n}}}\big|_{\Sigma}=0, \quad \phi\big|_{\Gamma(t)} = \psi.
\end{equation}

Following the work of Zakharov \cite{zakharov},   we can reduce  the motion of fluid to the  evolution of the height ``$h$" and the  velocity potential on the interface ``$\psi$" as follows,
\begin{equation}\label{waterwave}
\left\{\begin{array}{l}
\p_t h= G(h)\psi,\\
\p_t \psi = -h - \frac{1}{2} |\nabla \psi|^2 + \displaystyle{\frac{(G(h)\psi + \n h\cdot\n \psi)^2}{2(1+ |\n h|^2)}},
\end{array}\right.
\end{equation}
where $G( h)\psi= \sqrt{1+|\n h|^2}\mathcal{N}(h)\psi$  and $\mathcal{N}(h)\psi$ is the Dirichlet-Neumann operator at the interface $\Gamma(t)$.  For the gravity water waves system (\ref{waterwave}), the following conservation law holds as long as the solution exists over time, 
\begin{equation}\label{conservation}
\mathcal{H}( h(t), \psi(t)) = \int_{\R^2} \h | h(t) |^2 + \h \psi G(h(t))\psi(t) d x= \mathcal{H}( h(0), \psi(0)).
\end{equation}
 
\subsection{Previous results}
There is   extensive literature on the study of the water waves system. 
 Without being exhaustive on the progress  made so far, we only mention several results on the initial value problem here. For the results on the blow-up behavior and the ``splash singularity"  of solutions, interested readers please refer to \cite{fefferman, fefferman2, coutand2} and references therein.

On the local theory side, Nalimov \cite{nalimov} and Yosihara \cite{yosihara} considered the small initial data case,  Wu \cite{wu1, wu2} considered general initial data in Sobolev spaces, see also the subsequent works by Christodoulou-Lindblad \cite{christodoulou}, Lannes\cite{lannes}, Lindblad \cite{lindblad}, Coutand-Shkoller \cite{coutand1}, Shatah-Zeng\cite{shatah1} and Alazard-Burq-Zuily \cite{alazard1,alazard2}. If the effect of surface tension is also considered,  local existence also holds, see Beyer-Gunther \cite{beyer}, Ambrose-Masmoudi \cite{ambrose}, Coutand-Shkoller \cite{coutand1}, Shatah-Zeng \cite{shatah1} and Alazard-Burq-Zuily \cite{alazard1,alazard2}.

On the long time behavior side, we   have several results. For the gravity water waves system in the infinite depth setting. In the $3D$ case, Wu \cite{wu1} and Germain-Masmoudi-Shatah \cite{germain2} proved  global existence for small initial data. In the $2D$ case,  see the work of Wu\cite{wu2} and the work of Hunter-Ifrim-Tataru \cite{tataru3} for the almost global existence, see the work of Ionescu-Pusateri\cite{IP1},  Alazard-Delort \cite{alazard}, Ifrim-Tataru\cite{tataru3}, Wang \cite{wang1} for the global existence results. For the capillary water waves system in the infinite depth setting.  See the work of Germain-Masmoudi-Shatah \cite{germain3} for the $3D$ case. See the work of Ionescu-Pusateri \cite{IP4}  and Ifrim-Tataru \cite{tataru4} for the $2D$ case.

For the water waves system in the flat bottom setting. What we know so far about the flat bottom case can be summarized as follows: (i) on the one hand, the local existence holds (with bottom not necessarily flat) by the work of Lannes \cite{lannes} and the works of Alazard-Burq-Zuily \cite{alazard1,alazard2} and the large time existence holds  by the work of Alvarez-Samaniego and Lannes in \cite{Alvarez}; (ii) on the other hand,  there exist   traveling waves, which are   arbitrary small in $L^2$. Note that the existence of traveling waves    depends on the dimension and the ratio of the surface tension coefficient and the gravity constant. 

The existence of traveling waves makes the  global regularity problem more delicate and more complicated. Traveling waves are more likely to exist in $2D$. More precisely,  
in the $2D$ case, the traveling waves exist as long as $g\neq 0$ regardless the presence of surface tension effect , see  \cite{deng} and references therein. In the $3D$ case, the existence of the traveling waves are only known in  the strong surface tension case so far, more precisely the case when $\sigma/g > 1/3$, see \cite{deng}.

\subsection{Main result}
Before stating our main theorem, we first define the main function spaces.  Define a $L^\infty$-type space as follows, 
\[
\| f\|_{W^{\gamma, b}}:=\sum_{k\in\mathbb{Z}} (2^{\gamma k } + 2^{b k}) \| P_{k } f\|_{L^\infty},\quad \| f\|_{W^{\gamma}}:= \| f\|_{W^{\gamma,0}}, \quad  0 \leq b\leq \gamma,
\]
where `` $P_{k}$'' denotes the standard Littlewood-Paley projection operator, which will be defined precisely in the subsection \ref{notation}. 

We define the $Z$-normed space and the auxiliary space ``$B_{k,j}$ '' as follows,
\begin{equation}\label{definitionofZnorm}
\| f\|_{Z}:=  \sup_{k\in \mathbb{Z}}\sum_{j\geq \max\{-k, 0\}} \|  f\|_{B_{k,j}}, \quad \| f \|_{B_{k,j}} := 2^{\alpha k} (1+ 2^{6 k}) 2^{ j}\| \varphi_j^k(x)\cdot P_k f \|_{L^2}, \quad \alpha=1/10,
\end{equation}
where the cutoff function $\varphi_{j}^{k}(x)$ localizes the physical position with a threshold determined  by the localized frequency. The detailed formula of $\varphi_j^k(x)$ is postponed to the subsection \ref{notation}.

 The  $Z$-normed space of this type was first  introduced  by Ionescu-Pausader  in \cite{benoit} for the   Euler-Poisson system. A basic idea of using this $Z$-normed space is that   not only this atomic space has the  localized $L^2$-type structure, which is very convenient,  but also it  is stronger than the corresponding $L^1$-type space. Note that  the  $L^1$-norm of  the profile of the nonlinear solution, which is the pullback of the nonlinear solution along the linear flow,  suggests the decay rate over time for the nonlinear solution. Hence, we will control the $Z$-norm of the profile instead of the $L^1$-norm.

 Our main result is stated as follows, 
\begin{theorem}\label{maintheorem}
Let $N_0=1000$ and $\delta \in(0,10^{-9}]$ be fixed and sufficiently small. If the initial data  $(h _0, \psi_0)$ satisfies the following estimate, 
\begin{equation}\label{initialcondition}
\|h_0\|_{H^{N_0+1/2}} + \|\Lambda\psi\|_{H^{N_0}} + \| (h_0, \Lambda \psi_0)\|_{Z  } + \|\mathcal{F}[(   {h_0},  {\Lambda \psi_0})](\xi)\|_{L^\infty_\xi} \leq \epsilon_0\leq \bar{\epsilon},\quad \Lambda:=\sqrt{\d\tanh\d},
\end{equation}
for some sufficiently small constant $\bar{\epsilon}$, then there is a unique global solution for the system \textup{(\ref{waterwave})} with initial data $(h_0, \psi_0)$. Moreover, the following estimate holds,
\begin{equation}\label{boundovertime}
\sup_{t\in[0,\infty)} (1+t)^{-\delta} \| (h, \Lambda\psi)(t)\|_{H^{N_0}} + (1+t)\|(h, \Lambda\psi)(t)\|_{W^{4,2\alpha}} + \| e^{it \Lambda}(h+i\Lambda\psi)(t)\|_{Z}\lesssim \epsilon_0.
\end{equation}

\end{theorem}

\begin{remark}
From (\ref{boundovertime}), we know that there is no traveling wave below a certain smallness level determined by`` $\bar{\epsilon}$'' in the above theorem.
\end{remark}

\begin{remark}
As a byproduct of deriving the improved $Z$-norm estimate for the profile of the nonlinear solution,  we   know that the solution is scattering to a linear solution in a lower regularity Sobolev space, e.g.,  $H^5(\R^2)$. 
\end{remark}

\subsection{Summary of the local results for the gravity waves system}\label{summary}

In this subsection, we will discuss the local behavior of  the gravity waves system (\ref{waterwave}) studied in  \cite{wang2}, which is the starting point of this paper. Note that the local existence of the system (\ref{waterwave}) is already known, e.g., see \cite{alazard2}. Our goal is to  extend the lifespan of the nonlinear solution.   Hence, it is very natural to use the bootstrap argument to iterate the local result. To close the bootstrap argument, it is very essential to  have  a good understanding of the dispersion of the nonlinear solution. 

Because the gravity waves system (\ref{waterwave}) is   quasilinear    and  moreover the system (\ref{waterwave}) behaves badly at the  low-frequency part in the flat bottom setting, it looks unlikely that the decay overtime rate of the nonlinear solution will be same as the decay rate of the corresponding linear solution. Note that even the decay rate of the linear solution, which is $1/(1+t)$, is  barely integrable to close the bootstrap argument.  As a result, a rough energy estimate is not sufficient to control the growth of energy in the long run. 

To get around this issue, we introduced a new energy estimate in \cite{wang2}, in which we paid special attention to the low-frequency part of the nonlinear solution. The reason why we did so is due to the  expectation  that derivatives can compensate for the  decay rate over time  for the nonlinear solution of (\ref{waterwave}). The intuition of having this expectation is simple. If the main issue lies in the low-frequency part, then the derivatives at the  low-frequency part, which are small, will provide extra smallness.

 We state the new energy estimate obtained in \cite{wang2} as follows, 
\begin{theorem}\label{newenergyestimatetheorem}
 If the initial data $(\,h_0, \Lambda \psi_0)\in H^{N_0+1/2}(\R^2)\times H^{N_0}(\R^2)$  satisfies the  smallness condition \textup{(\ref{initialcondition})},
then there exists some $T>0$ and a unique solution $(\,h, \Lambda\psi)\in C^{0}\big([0,T]; H^{N_0}(\R^2)\times H^{N_0}(\R^2)\big)$. Moreover, the following energy estimate holds for any  $t\in[0,T]$,
\begin{equation}\label{energyestimatetheorem}
 \|(\,h, \Lambda\psi)(t)\|_{H^{N_0}}^2\lesssim_{} \epsilon_0^2+ \int_0^t   \big[ \|(\,h, \Lambda\psi)(s)\|_{{W^{4,1}}} + \| (\,h, \Lambda\psi)(s)\|_{{W^4}}^2\big] \|(\,h, \Lambda\psi)(s)\|_{H^{N_0}}^2 ds.
\end{equation}
\end{theorem}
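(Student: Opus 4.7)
The plan is to establish the energy estimate by paralinearizing the Dirichlet--Neumann operator and carefully tracking the quadratic structure of the reformulated system. First I would invoke the standard local existence theory (e.g.\ Alazard--Burq--Zuily \cite{alazard2}) to get a unique solution $(h,\Lambda\psi)\in C^0([0,T];H^{N_0+1/2}\times H^{N_0})$ for some $T>0$ depending on the initial data. The real content is then the estimate \eqref{energyestimatetheorem}, and the guiding principle (highlighted in the introduction to this section) is that the linear-in-energy factor must involve $\|(h,\Lambda\psi)\|_{W^{4,1}}$ rather than $\|(h,\Lambda\psi)\|_{W^4}$; equivalently, one derivative must be extracted from any low-frequency factor multiplying the energy. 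This is what forces us to expose the quadratic structure explicitly.

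The second step is to paralinearize $G(h)\psi$ and the remaining nonlinearity in \eqref{waterwave} in the Bony sense, then perform the Alinhac good-unknown change of variables $\omega=\psi-T_B h$ (with $B$ the vertical trace of the velocity and $V$ the horizontal trace) and symmetrize. This reduces \eqref{waterwave} to a single complex equation $(\partial_t+i\Lambda+T_V\cdot\nabla)U=\mathcal N$, where $U\approx h+i\Lambda\omega$ and $\mathcal N$ is a sum of paradifferential remainders plus an explicitly computable quadratic form. The cubic and higher order terms from the expansion of $G(h)\psi$ will be controlled in $L^2$ by bilinear estimates and the $W^4$ norm squared, producing the $\|(h,\Lambda\psi)\|_{W^4}^2 \|(h,\Lambda\psi)\|_{H^{N_0}}^2$ contribution after Gronwall.

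The third step is the energy identity: apply $\langle\nabla\rangle^{N_0}$ to the symmetrized equation, pair with $\langle\nabla\rangle^{N_0}U$ in $L^2$, and use the skew-adjointness of $i\Lambda+T_V\cdot\nabla$ (modulo lower order pieces absorbable into the $W^4$ factor) to kill the principal contribution. What remains is a commutator and the explicit quadratic terms. For each quadratic form $Q(f,g)$ that arises, I would split by Littlewood--Paley into high-high$\to$low, high-low, and low-high interactions, and verify that in every configuration where a factor is at frequency $2^k$ with $k<0$, the symbol carries at least one power of the low frequency. This derivative gain is what converts the $L^\infty$ norm of that factor into a $W^{4,1}$ norm and yields the first term on the right-hand side of \eqref{energyestimatetheorem}.

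The main obstacle is exactly this last point: showing that the paradifferential remainders and the quadratic commutators produced by the symmetrization do carry a low-frequency derivative gain. This is not automatic — the raw Zakharov system does not exhibit it — and the gain has to be read off from the Taylor expansion of the Dirichlet--Neumann operator together with the cancellations produced by the good-unknown substitution. In practice this amounts to verifying, symbol by symbol, that the quadratic contributions can be written in the schematic form $\nabla\cdot(\text{bilinear})$ or have an explicit $|\nabla|$ factor on the low-frequency input; once that is in hand, H\"older and Bernstein give the $W^{4,1}\cdot H^{N_0}\cdot H^{N_0}$ trilinear bound, and Gronwall closes \eqref{energyestimatetheorem}. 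Since all of these computations were carried out in detail in \cite{wang2}, the present theorem is really a citation, and the argument above is what one would reproduce if unfolding that reference.
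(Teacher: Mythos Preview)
Your proposal is correct and matches the paper's approach: the theorem is a citation to \cite{wang2}, and your outline---local existence via Alazard--Burq--Zuily, paralinearization and symmetrization with the good-unknown substitution, and the crucial low-frequency derivative gain on the quadratic symbols (cf.\ Lemma~\ref{symbol}) that converts the $L^\infty$ factor into $W^{4,1}$---is exactly the two-ingredient strategy the paper summarizes immediately after stating the theorem.
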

\begin{remark}
The smallness assumption assumed in \cite{wang2} is weaker than the smallness assumption (\ref{initialcondition}). Only $W^4$-norm of initial data is required to be small in \cite{wang2}.  Hence, we can use the new energy estimate derived  there directly in this paper.
\end{remark}

There are two main ingredients to derive the energy estimate (\ref{energyestimatetheorem}): (i) Thanks to the works of Alazard-Burq-Zuily \cite{alazard1, alazard2}, we can use their paralinerarization and symmetrization procedures to avoid losing derivatives at the  high-frequency part; (ii) The careful study of the Dirichlet--Neumann operator at the low-frequency part.

\subsection{Some properties of the Dirichlet--Neumann operator}
Note that the gravity waves system (\ref{waterwave}) is fully nonlinear, which is very inconvenient to analyze. Thanks to  a fixed point type structure lies in the Dirichlet--Neumann operator, due to the small date regime, it enables us  to control the $Z$-norm  of  the remainder terms  (the cubic and higher order terms). We discuss it with details in this subsection as follows.

To identify the fixed point type structure inside the Dirichlet--Neumann operator, we need to reformulate the velocity potential inside the water region $\Omega(t)$.  More precisely, for any fixed  time ``$t$'',   we map the water region $\Omega(t)$ to the strip $\mathcal{S}:= \mathbb{R}^2\times [-1,0]$ via change of coordinates  as follows,
\[
(x,y)\rightarrow (x,z), \quad z: = \displaystyle{\frac{y-h(t,x)}{h(t,x)+1}}, \quad z\in [-1,0].
 \]

We define the velocity potential in $(x,z)$-coordinate system as follows,
 \be\label{potentialinxzcoordinate}
 \varphi(x,z):= \phi(x, h(t,x)+ (h(t,x)+1)z).
 \ee
 From (\ref{harmoniceqn}),  the following identity holds, 
\begin{equation}\label{changeco1}
(\Delta_{x}+\p_y^2)\phi = 0 \Longrightarrow P\varphi:=[ \Delta_{x}+ \tilde{a}\p_z^2 +  \tilde{b}\cdot \nabla \p_z + \tilde{c}\p_z ] \varphi=0, \varphi\big|_{z=0}=\psi, \p_z\varphi\big|_{z=-1}=0,
\end{equation}
where
\begin{equation}\label{coeff}
\tilde{a}= \frac{1+(z+1)^2|\nabla h|^2}{(1+h)^2},\quad 
\tilde{b} = \frac{-2(z+1)\nabla h}{1+h},\quad \tilde{c}= \frac{-(z+1)\Delta_{x} h}{(1+h)} + 2\frac{(z+1)|\nabla h|^2}{(1+h)^2}. 
\end{equation}

As a result of direct computations,  we can formulate the Dirichlet-Neumann operator in terms of $\varphi$  as follows,
\begin{equation}\label{DN1}
G(h)\psi = [-\nabla h\cdot\nabla \phi + \p_y\phi]\big|_{y=h(t,x)} = \frac{1+|\nabla  h|^2}{1+h} \p_z \varphi \big|_{z=0} -\nabla\psi \cdot \nabla h. 
\end{equation}

From (\ref{DN1}), it is easy to see that the only nontrivial term inside the Dirichlet--Neumann operator is $\p_z\varphi\big|_{z=0}$. Therefore, to estimate $G(h)\psi$ in a normed space, e.g., a $X$-normed space, it is sufficient to estimate $\p_z \varphi$ in the $L^\infty_z X$-normed space.

Now, we will show that a fixed point type structure for ``$\nabla_{x,z}\varphi$'' is hidden inside the elliptic equation (\ref{changeco1}). To see so, we reformulate the equation (\ref{changeco1}) as follows, 
\be\label{noveqn20}
(\partial_z +|\nabla|) (\partial_z -|\nabla|)\varphi = (1-\tilde{a})\p_z^2 \varphi -\tilde{b}\cdot \nabla \p_z \varphi -\tilde{c}\p_z \varphi=g(z)=\p_z g_1(z) + g_2(z)+\nabla \cdot g_3(z),
\ee
where
\begin{equation}\label{eqn12}
g_1(z) =  \frac{2\,h+\,h^2 - (z+1)^2 |\nabla\,h|^2}{(1+\,h)^2} \p_z \varphi +\frac{(z+1)\nabla\,h\cdot \nabla\varphi}{1+\,h},\quad g_1(-1)=0,
\end{equation}
\begin{equation}\label{eqn14}
g_2(z) =\frac{(z+1)|\nabla \,h|^2 \p_z\varphi}{(1+\,h)^2}  - \frac{\nabla \,h \cdot \nabla\varphi}{1+\,h} ,\quad g_3(z)= \frac{(z+1)\nabla \,h \p_z\varphi}{1+\,h}, z\in [-1,0].
\end{equation}

By treating `` $g(z)$'' in (\ref{noveqn20}) as some given nonlinearity, we can solve $\varphi(x,z)$ explicitly from the equation (\ref{noveqn20}) and the  boundary conditions in (\ref{changeco1}). As a result,  we can solve $\nabla_{x,z}\varphi$ ``explicitly'' as follows,
\[\nabla_{x,z}\varphi = \Bigg[ \Big[ \frac{e^{-(z+1)\d}+ e^{(z+1)\d}}{e^{-\d} + e^{\d}}\Big]\nabla\psi ,  \frac{e^{(z+1)\d}- e^{-(z+1)\d}}{e^{-\d}+e^{\d}} \d \psi\Bigg] + [\mathbf{0}, g_1(z)]+ \]
\[
+\int_{-1}^{0} [K_1(z,s)-K_2(z,s)-K_3(z,s)](g_2(s)+\nabla \cdot g_3(s))  ds \]
\begin{equation}\label{fixedpoint}
+\int_{-1}^{0} K_3(z,s)\d\textup{sign($z-s$)}g_1(s)  -\d [K_1(z,s) +K_2(z,s)]g_1(s)\, d  s,
\end{equation}
where $K_i(z,s)$, $i\in\{1,2,3\}$, are some linear operators that only depend  on $z$ and $s$,  see \cite{wang2} for their detailed formulas.

From (\ref{eqn12}) and (\ref{eqn14}), it is easy to see that  $g_i(z)$, $i\in\{1,2,3\}$,  are all linearly depending  on $\nabla_{x,z}\varphi$. Moreover, $g_i(z)$, $i\in\{1,2,3\}$ are at the higher order than $\nabla_{x,z}\varphi$ because of the smallness of the height of interface ``$h(t,x)$''. Because of this observation, now it is easy to see that there exists a fixed point type structure inside (\ref{fixedpoint}).

From the fixed point type formulation (\ref{fixedpoint}), we can derive the Taylor expansion for the Dirichlet-Neumann operator, which is crucial to the study of the long time behavior of the water waves system. 

Because the decay rate of the nonlinear solution  is critical in the $3D$ case  ($2D$ interface), it is    crucial to know precisely what the linear term and the  quadratic terms of the Dirichlet-Neumann operator are.

 From (\ref{fixedpoint}) and (\ref{DN1}), it's easy to see that the linear terms of $\nabla_{x,z}\varphi$ and $G(h)\psi$ are given as follows, 
 \begin{equation}\label{eqn773}
 \Lambda_{1}[\nabla_{x,z}\varphi]= \Bigg[ \Big[ \frac{e^{-(z+1)\d}+ e^{(z+1)\d}}{e^{-\d} + e^{\d}}\Big]\nabla\psi ,  \frac{e^{(z+1)\d}- e^{-(z+1)\d}}{e^{-\d}+e^{\d}} \d \psi\Bigg],  \end{equation}
 \be\label{noveqn99}
 \Lambda_{1}[G(h)\psi]= \Lambda_{1}[\p_z\varphi\big|_{z=0}]= |\nabla|\tanh(|\nabla|)\psi.
\ee

Now, we  identify the quadratic terms of the Dirichlet-Neumann operator. Because of the hierarchy of the smallness,  we can  plug-in the linear terms of $
 \nabla_{x,z}\varphi$ in (\ref{eqn773}) to (\ref{fixedpoint}) to calculate explicitly the quadratic terms of $\p_z \varphi$,  which further give us the quadratic terms of $G(h)\psi$ from (\ref{DN1}). As a result (see \cite{wang2}[Lemma 3.4]), we have

\be\label{noveqn100}
\Lambda_{2}[G(h)\psi]= -\nabla_x\cdot(h\nabla_x\psi) - \d\tanh\d(h \d\tanh\d\psi).
\ee

For the cubic and higher order terms, although it is not necessary to figure out explicitly what they are, we still need to estimate them over time to show that they  do not have much accumulated effect  in the long run. From (\ref{fixedpoint}), we can  derive a   fixed point type formulation for  $\Lambda_{\geq 3}[\nabla_{x,z}\varphi]$ as follows,

\[
\Lambda_{\geq 3}[\nabla_{x,z}\varphi] =\sum_{i=1,2} C_z^i(h, \psi, \tilde{h}_i) + h\tilde{C}_z^i(h, \psi, \tilde{h}_i) + (1+h)^2\tilde{C}_z(\tilde{h}_2, \tilde{h}_2,\Lambda_{\leq 2}[\nabla_{x,z}\varphi])+ h^2 \hat{C}_z(h, \tilde{h}_2, \psi)
\]
\begin{equation}\label{fixed1}
 + T_{z}^i(\tilde{h}_i, \Lambda_{\geq 3}[\nabla_{x,z}\varphi]) + \widetilde{C}_z^1(h, \tilde{h}_2, \Lambda_{\geq 3}[\nabla_{x,z}\varphi]) + (1+h)^2 \widetilde{C}_z^2(\tilde{h}_2, \tilde{h}_2, \Lambda_{\geq 3}[\nabla_{x,z}\varphi]),
\end{equation}
where $ C_z^i$, $\tilde{C}_z^i$, $\tilde{C}_z$, $\hat{C}_z$, $\widetilde{C}_z^i$, $i\in\{1,2\}$ are some trilinear operators with symbols that satisfy the rough estimate (\ref{symbolestimatecubic}), $T_{z}^i $  are some bilinear operators with symbols that satisfy the rough estimate (\ref{symbolestimatequadratic}),  and  $\tilde{h}_1$ and $\tilde{h}_2$ are defined as follows, 
\[
\tilde{h}_1 = \frac{2h+h^2}{(1+h)^2}, \quad \tilde{h}_2 = \frac{h}{1+h},
\]
see \cite{wang2}[Lemma 3.7]. Due to the small data regime, it is easy to see that  the formulation (\ref{fixed1}) provides a mechanism to estimate $\Lambda_{\geq 3}[\nabla_{x,z}\varphi]$. 
 
\subsection{Main ideas of the proof of Theorem \ref{maintheorem}} 
 From  the new  energy estimate (\ref{energyestimatetheorem}) in Theorem \ref{newenergyestimatetheorem}, we know that it would be sufficient to close the argument if we can prove that the decay rate of the nonlinear solution in $W^{4, 2\alpha}$ space is sharp, which is $1/(1+t)$ over time.  From the linear decay estimates in Lemma \ref{lineardecay} and the fact that  the $Z$-norm constructed in (\ref{definitionofZnorm}) is stronger than the corresponding $L^1$ type norm, we can reduce our goal to    prove that the $Z$--norm of the profile doesn't grow over time.

Although the proof presented in this paper 	is very complicated at the technical level, we mention three key observations that make it possible to close the argument.

The first key observation is  that  we can decompose the phases of quadratic terms into two parts which have the same sign.  For example,   
\[
\underbrace{\Phi^{+,-}(\xi, \eta)}_{\textup{Phase}}:=\Lambda(|\xi|) - \Lambda(|\xi-\eta|)+\Lambda(|\eta|)= \underbrace{\Lambda(|\xi|) - \Lambda(|\xi|+|\eta|)  +\Lambda(|\eta|)}_{\text{Positive}}
\]
\be\label{noveqn31}
 + \underbrace{\Lambda(|\xi|+|\eta|)   - \Lambda(|\xi-\eta|)}_{\text{Positive}},\quad \Lambda(|\xi|):=\sqrt{|\xi|\tanh|\xi|},\quad  \xi, \eta\in \mathbb{R}^2.
\ee
Because of this observation, we know that the phases always have    a lower bound despite it is of cubic level smallness. See also the Lemma \ref{roughestimatephase2}.

The second key observation is that, we can gain one degree of the smallness of the output frequency in the $1\times 1\rightarrow 0$ type interaction, which means that the frequencies of two inputs are of size ``$1$'' and the frequency of the output is of size ``$0$'', see the estimate (\ref{sizeofsymboluniform}) in Lemma \ref{sizeofsymbol}. Although this smallness is not sufficiently strong  to control  completely  the accumulated $1\times 1\rightarrow 0$ type interaction effect over time, it makes the choice of small ``$\alpha$" in the definition of $Z$-norm in (\ref{definitionofZnorm}) possible. The availability of a small ``$\alpha$" is important, because of  the following two facts: (i) the gain from the choice of ``$\alpha$'' in the $1\times 1\rightarrow 0$ type interaction  becomes the corresponding loss   in the $0\times 1 \rightarrow 1$ type interaction, which means that the frequencies of the output and one input are of size ``$1$'' and the frequency of the other input is of size ``$0$''; (ii) The  null structure is not available for the gravity waves system (\ref{waterwave}) in the Low $\times$ High type interaction, because the size of the symbol is   ``$1$'' instead of ``$0$'' in the $0\times 1\rightarrow 1$ type interaction.

The third key observation is that the angle between the output frequency and the relatively smaller input frequency plays an important role when the phases associated with quadratic terms degenerate. To illustrate this observation,  we use the phase $\Phi^{+,-}(\xi, \eta)$ and the case   $|\eta|\ll |\xi|$ as an example. From (\ref{noveqn31}), it is easy to see that 
\[
\Phi^{+,-}(\xi, \eta)\geq \Lambda(|\xi|+|\eta|) - \Lambda(|\xi-\eta|) \approx \Lambda'(|\xi|)\big(|\xi|+|\eta|-|\xi-\eta| \big)= \Lambda'(|\xi|)\frac{2|\xi||\eta|(1+\cos(\angle(\xi, \eta)))}{|\xi|+|\eta|+|\xi-\eta|}.
\]

From the above approximation, we can see that  the size of phase is of linear level smallness, which is not so small, if the angle between $\xi$ and $-\eta$ is not small and	the size of phase is of cubic level  smallness, which is the worst scenario, if $\xi$ and $-\eta$ is almost in the same direction. That is to say,   the size of the $\Phi^{+,-}(\xi, \eta)$ highly depends on the angle between ``$\xi$" and ``$-\eta$".

 Because of this observation, we will first localize the angle between the output frequency and the relatively smaller input frequency  if the associated phase is highly degenerated and then carefully analyze the role of this angle in the $Z$-norm estimate.

\subsection{The outline of this paper}

This paper is organized as follows.
\begin{enumerate}

	\item[$\bullet$]   In section \ref{somelemmas}, we  introduce the notation used in this paper, reduce the system (\ref{waterwave}) into a quasilinear dispersive equation, and then   prove some  bilinear estimates with the angle localized, which are very important in the $Z$-norm estimate.   

	\item[$\bullet$] In section \ref{improvedZnorm}, based on the behavior of the associated phases,  we first introduce the set-up of the  $Z$-norm estimate for the profile and  then decompose the quadratic terms into good type terms and bad type terms.   
	\item[$\bullet$]  In section \ref{goodimprovedZnorm} and section \ref{badimprovedZnorm}, we  derive the improved $Z$-norm estimate for the good type  terms and the bad type terms respectively by assuming that the improved  $Z$-norm estimate for remainder terms (cubic and higher order terms) holds. 
	\item[$\bullet$]
In section \ref{remainderZnorm}, we derive  the improved $Z$-norm estimate for the remainder terms. Hence finishing the bootstrap argument. 

	\item[$\bullet$]   In the Appendix \ref{analysisphase}, we analyze  properties of phases associated with system (\ref{waterwave}). 
\end{enumerate}

\vo
\noindent \textbf{Acknowledgment\quad } The author thanks his Ph.D. advisor Alexandru Ionescu for many helpful discussions and suggestions. The first version of the manuscript was completed when the author  visited Fudan University and BICMR, Peking University. The author thanks their warm hospitalities during the visits.

\section{Notation and Some Lemmas}\label{somelemmas}
\subsection{Notations }\label{notation} 
 For any two numbers $A$ and $B$, we use  $A\lesssim B$ and $B\gtrsim A$ to denote  $A\leq C B$, where $C$ is an absolute constant. We use $A\sim B$ to denote the case  $A\lesssim B$ and $B\lesssim A$. We use $A\approx B$ to denote the case  $|A-B|\leq c|A|$, where $c$ is some  small absolute constant. For any two vectors $\xi, \eta\in \R^2$, we use $\angle(\xi, \eta)$ to denote  the angle between $\xi$ and $\eta$. Moreover, we use the convention that $\angle(\xi, \eta)\in [0, \pi]$. 
 
Throughout this paper, we will slightly abuse the notation of $``\Lambda"$.  When  there is no lower script under $\Lambda$, then `` $\Lambda$'' denotes ``$	\sqrt{\tanh(|\nabla|)|\nabla|}$'', which is the linear operator associated for the system  (\ref{waterwave}). When there is a lower script ``$p$'' under $\Lambda$ where $p\in \mathbb{N}_{+}$, then   $\Lambda_{p}(\mathcal{N})$ denotes the $p$-th order terms of the nonlinearity $\mathcal{N}$  if a Taylor expansion of $\mathcal{N}$ is available. Also, we use  $\Lambda_{\geq p}[\mathcal{N}]$ to denote the $p$-th  and higher orders terms. More precisely, $\Lambda_{\geq p}[\mathcal{N}]:=\sum_{q\geq p}\Lambda_{q}[\mathcal{N}]$.  

We provide an example here to better illustrate the notation. For example, 
 $\Lambda_{2}[\mathcal{N}]$ denotes the quadratic term of $\mathcal{N}$ and $\Lambda_{\geq 2}[\mathcal{N}]$  denotes the quadratic and higher order terms of $\mathcal{N}$.

For an integrable function $f(x)$, the Fourier transform of $f$ is defined as follows, 
\[
\mathcal{F}(f)(\xi)= \int e^{-ix \cdot \xi} f(x) d x.
\]

We  will also use   $\widehat{f}(\xi)$  to denote the Fourier transform of $f$. We use $\mathcal{F}^{-1}(g)$ to denote the inverse Fourier transform of $g(\xi)$.

We  fix an even smooth function $\tilde{\psi}:\R \rightarrow [0,1]$ supported in $[-3/2,3/2]$ and equals to $1$ in $[-5/4, 5/4]$. For any $k\in \mathbb{Z}$, we define
\[
\psi_{k}(x) := \tilde{\psi}(x/2^k) -\tilde{\psi}(x/2^{k-1}), \quad \psi_{\leq k}(x):= \tilde{\psi}(x/2^k)=\sum_{l\leq k}\psi_{l}(x), \quad \psi_{\geq k}(x):= 1-\psi_{\leq k-1}(x).
\]

We use  $P_{k}$, $P_{\leq k}$ and $P_{\geq k}$ to denote the  Fourier multiplier operators with symbols $\psi_{k}(\xi),$ $\psi_{\leq k}(\xi)$ and $\psi_{\geq k }(\xi)$ respectively. We   use  $f_{k}(x)$ to abbreviate $P_{k} f(x)$.

 For an integer $k\in\mathbb{Z}$, we use $k_{+}$ to denote $\max\{k,0\}$ and  use $k_{-}$ to denote $\min\{k,0\}$.  The cutoff function ``$\varphi_j^k(x)$'' used in (\ref{definitionofZnorm}) is defined as follows, 
 \begin{equation}
 \varphi_j^{k}(x):=\left\{\begin{array}{cc}
 {\psi}_{\leq -k_{-} }(x) & \textup{if}\,  j =-k_{-}\\
 	{\psi}_j(x) & \textup{if} j> k_{-}. \\
\end{array}\right.
 \end{equation}

We define a linear operator ``$Q_{k,j}$'' as follows, 
\begin{equation}\label{spatialfrequencylocal}
Q_{k, j} f := P_{[k-2,k+2]}[\varphi_j^k(x)\cdot P_k f].
\end{equation}
We use  $f_{k,j}(x)$ to abbreviate $Q_{k,j}f(x)$.
From the above definition, it's easy to see  that the following decomposition holds
\be\label{dyadicdecompositionfs}
P_k f = \sum_{j\geq -k_{-}} Q_{k,j} f, \quad f = \sum_{k\in \mathbb{Z}} P_k f =  \sum_{k\in \mathbb{Z}}\sum_{j\geq -k_{-}} Q_{k,j} f.
\ee

For any integrable function $f$, we define
\be\label{signnotation}
f^{+}:=f,\quad P_{+}[f]:=f , \quad f^{-}:= \bar{f}, \quad P_{-}[f]:=\bar{f}.
\ee

For two localized functions $f(x)$ and $g(x)$ and  a bilinear form  $Q(f,g)$, we  use the convention that the symbol $q(\cdot, \cdot)$ of $Q(\cdot, \cdot)$  is defined in the following sense throughout this paper,
\begin{equation}
\mathcal{F}[Q(f,g)](\xi)=\frac{1}{4\pi^2} \int_{\R^2} \widehat{f}(\xi-\eta)\widehat{g}(\eta)q(\xi-\eta, \eta) d \eta. 
\end{equation}
Very similarly, for a trilinear form $C(f, g, h)$, its symbol $c(\cdot, \cdot, \cdot)$ is defined in the following sense throughout this paper, 
\[
\mathcal{F}[C(f,g,h)](\xi) = \frac{1}{16\pi^4} \int_{\R^2}\int_{\R^2} \widehat{f}(\xi-\eta)\widehat{g}(\eta-\sigma) \widehat{h}(\sigma) c(\xi-\eta, \eta-\sigma, \sigma) d \eta d \sigma.
\]

We define a class of symbol and its associated norms as follows,
\[
\mathcal{S}^\infty:=\{ m: \mathbb{R}^4\,\textup{or}\, \mathbb{R}^6 \rightarrow \mathbb{C}, m\,\textup{is continuous and }  \quad \| \mathcal{F}^{-1}(m)\|_{L^1} < \infty\},
\]
\[
\| m\|_{\mathcal{S}^\infty}:=\|\mathcal{F}^{-1}(m)\|_{L^1}, \quad \|m(\xi,\eta)\|_{\mathcal{S}^\infty_{k,k_1,k_2}}:=\|m(\xi, \eta)\psi_k(\xi)\psi_{k_1}(\xi-\eta)\psi_{k_2}(\eta)\|_{\mathcal{S}^\infty},
\]
\[
 \|m(\xi,\eta,\sigma)\|_{\mathcal{S}^\infty_{k,k_1,k_2,k_3}}:=\|m(\xi, \eta,\sigma)\psi_k(\xi)\psi_{k_1}(\xi-\eta)\psi_{k_2}(\eta-\sigma)\psi_{k_3}(\sigma)\|_{\mathcal{S}^\infty}.
\]

We have the following lemma on the multilinear estimates,
\begin{lemma}\label{multilinearestimate}
Assume that $m$, $m'\in S^\infty$, $p, q, r, s \in[1, \infty]$ , then the following multilinear estimates hold,    
\begin{equation}\label{productofsymbol}
\| m\cdot m'\|_{S^\infty} \lesssim \| m \|_{S^\infty}\| m'\|_{S^\infty},
\end{equation}
\begin{equation}\label{bilinearesetimate}
\Big\| \mathcal{F}^{-1}\big[\int_{\R^2} m(\xi, \eta) \widehat{f}(\xi-\eta) \widehat{g}(\eta) d \eta\big]\Big\|_{L^p} \lesssim \| m\|_{\mathcal{S}^\infty}\| f \|_{L^q}\| g \|_{L^r}  \quad \textup{if}\,\,\, \frac{1}{p} = \frac{1}{q} + \frac{1}{r},
\end{equation}
\begin{equation}\label{trilinearesetimate}
\Big\| \mathcal{F}^{-1}\big[\int_{\R^2}\int_{\R^2} m'(\xi, \eta,\sigma) \widehat{f}(\xi-\eta) \widehat{h}(\sigma) \widehat{g}(\eta-\sigma)  d \eta d\sigma\big] \Big\|_{L^{p}} \lesssim \|m'\|_{\mathcal{S}^\infty} \| f \|_{L^q}\| g \|_{L^r} \| h\|_{L^s},\,\, \end{equation}
where $ { {1}/{p} =  {1}/{q}+  {1}/{r} +   {1}/{s}}.$

\end{lemma}
\begin{proof}
The proof is standard. See \cite{ IP1} for details.
\end{proof}

To estimate the $\mathcal{S}^{\infty}_{k,k_1,k_2}$  or the $\mathcal{S}^{\infty}_{k,k_1,k_2,k_3}$ norms of symbols, we   use the following Lemma. 
\begin{lemma}\label{Snorm}
If $f:\mathbb{R}^{2i}\rightarrow \mathbb{C}$ is a smooth function, $i\in\{2,3\}$, then the following estimate holds for any  $k_1,\cdots, k_i\in\mathbb{Z}$,
\begin{equation}\label{eqn61001}
\| \int_{\mathbb{R}^{2i}} f(\xi_1,\cdots, \xi_i) \prod_{j=1}^{i} e^{i x_j\cdot \xi_j} \psi_{k_j}(\xi_j) d \xi_1\cdots  d\xi_i \|_{L^1_{x_1, \cdots, x_i}} \lesssim \sum_{m=0}^{i+1}\sum_{j=1}^i 2^{m k_j}\|\p_{\xi_j}^m f\|_{L^\infty} .
 \end{equation}
\end{lemma}
\begin{proof}
Let's first consider the case when $i=2$. Through scaling, it is  sufficient to prove the above estimate for the case when $k_1=k_2=0$. From Plancherel theorem, we have the following two estimates, 
\[
 \| \int_{\mathbb{R}^{2i}} f(\xi_1,\xi_2) e^{i (x_1\cdot \xi_1+ x_2\cdot \xi_2)} \psi_{0}(\xi_1) \psi_{0}(\xi_2) d \xi_1 d\xi_2 \|_{L^2_{x_1, x_2}}\lesssim \| f(\xi_1, \xi_2)\|_{L^\infty_{\xi_1, \xi_2}},
\]
\[
 \| (|x_1|+|x_2|)^3 \int_{\mathbb{R}^{2i}} f(\xi_1,\xi_2) e^{i (x_1\cdot \xi_1+ x_2\cdot \xi_2)} \psi_{0}(\xi_1) \psi_{0}(\xi_2) d \xi_1 d\xi_2 \|_{L^2_{x_1, x_2}}\lesssim  \sum_{m=0}^3\big[\|\p_{\xi_1}^m f\|_{L^\infty} + \| \p_{\xi_2}^m f \|_{L^\infty}\big],
\]
which are sufficient to finish the proof of (\ref{eqn61001}). The proof of the case  $i=3$  is very similar. Hence we omit the details here.
\end{proof}
 
We will use  the following lemma to derive the  $L^\infty$-decay estimate for the corresponding  linear solution of the gravity waves system (\ref{waterwave}).
\begin{lemma}\label{lineardecay}
For $f\in L^1(\R^2)$, the following $L^\infty$ type estimates hold,
\begin{equation}\label{highdecay}
\| e^{it \sqrt{\d\tanh\d}} P_{k} f\|_{L^\infty} \lesssim (1+|t|)^{-1} 2^{3k/2} \| f\|_{L^1},  \quad \textup{if $k\geq 0$}.
\end{equation}
\begin{equation}\label{lowdecay}
\| e^{it \sqrt{\d\tanh\d}} P_{k} f\|_{L^\infty} \lesssim (1+|t|)^{-\frac{1+\theta}{2}}2^{\frac{(3-3\theta)k}{2} } \| f\|_{L^1},\quad 0\leq \theta \leq 1, \quad \textup{if $k\leq 0$}.
\end{equation}

\end{lemma}
\begin{proof}
After  checking the expansion of the phase,    we can apply the main result in \cite{Guoz}[Theorem 1:(a)\&(b)] directly to derive the  above estimates.
\end{proof}
\subsection{Reduction of the gravity waves system (\ref{waterwave})}

In this subsection, we reformulate the gravity waves system (\ref{waterwave}), which is a coupled system, into a quasilinear dispersive equation, which is diagonalized and has explicit quadratic terms. 

Recall (\ref{noveqn99}) and (\ref{noveqn100}). Based on the order of nonlinearities, we can rewrite the gravity waves system (\ref{waterwave}) as follows, 
\begin{equation}\label{wwreformulate}
\left\{\begin{array}{l}
\p_t h= \Lambda^2 \psi  -\nabla_x\cdot(h\nabla_x\psi) - \d\tanh\d(h \d\tanh\d\psi)+ \Lambda_{\geq 3}[G(h)\psi],\\
\\
\p_t \psi =\displaystyle{ -h - \frac{1}{2} |\nabla \psi|^2 + \h |\Lambda^2 \psi|^2   + \h \Lambda_{\geq 3}\Big[(1+|\nabla h|^2)(B(h)\psi)^2\Big]},
\end{array}\right.
\end{equation}
where
\be\label{noveqn102}
B(h)\psi= \frac{G(h)\psi + \nabla_x h \cdot \nabla_x \psi}{{1+|\nabla_x h|^2}}.
\ee
Define $u= h + i \Lambda \psi$, where $\Lambda:=\sqrt{\d\tanh\d}$. Very naturally, we have
\begin{equation}\label{eqn1200}
h = \frac{u+\bar{u}}{2}, \quad \psi =\Lambda^{-1} \big({c_{+}u + c_{-}\bar{u}} \big), \quad \textup{where\,\,}c_{+}:= -i/2, c_{-}:= i/2.
\end{equation}
From (\ref{wwreformulate}), we can  derive the equation satisfied by ``$u$''  from (\ref{wwreformulate}) as follows, 
\begin{equation}\label{complexversion}
(\p_t + i\Lambda)u = \sum_{\mu, \nu\in\{+,-\}} Q_{\mu, \nu}(u^{\mu}, u^{\nu}) + \mathcal{R}, 
\end{equation}
where
\begin{equation}\label{quadraticterms}
Q_{\mu, \nu}(u^{\mu}, u^{\nu})= -\frac{c_{\nu}}{2}\nabla_x\cdot(u^{\mu}\nabla \Lambda^{-1} u^{\nu})- \frac{c_{\nu}}{2}\d\tanh\d( u^{\mu} \Lambda u^{\nu})+ \frac{i c_{\mu}c_{\nu}}{2}\Lambda \big[-\frac{\nabla}{\Lambda} u^{\mu}\cdot \frac{\nabla}{\Lambda} u^{\nu} + \Lambda u^{\mu} \Lambda u^{\nu}\big],
\ee
\begin{equation}\label{remainderterms}
\mathcal{R}= \Lambda_{\geq 3}[\p_t h] + i \Lambda \Lambda_{\geq 3}[\p_t \psi]= \Lambda_{\geq 3}[G(h)\psi] + i \Lambda\big( \Lambda_{\geq 3}[(1+|\nabla h|^2 ) (B(h)\psi)^2]\big).
\end{equation}
Note that, in (\ref{quadraticterms}), we used the  notation defined in (\ref{signnotation}).

For any fixed $k\in \mathbb{Z}$, we define   
\[
\chi_k^1:=\{(k_1,k_2): k_1, k_2\in \mathbb{Z}, k\leq \max\{k_1,k_2\}-5\leq \min\{k_1,k_2\}\},
\]  
\[
\chi_k^2:=\{(k_1,k_2): k_1, k_2\in \mathbb{Z}, k_2\leq k-5, |k_1-k|\leq 4\},
\]
\[
\chi_k^3:=\{(k_1,k_2): k_1, k_2\in \mathbb{Z}, k-5< \min\{k_1,k_2\}\leq \max\{k_1,k_2\}<k+5\},
\]
\[
\chi_k^4:=\{(k_1,k_2): k_1, k_2\in \mathbb{Z}, k_1\leq k-5, |k_2-k|\leq 4\},
\]
where $\chi_k^1$ corresponds to the High $\times$ High type interaction with the  output frequency relatively small, $\chi_k^2$ and $\chi_k^4$ corresponds to the High $\times$ Low type  and the Low $\times$ High type interactions with a  relatively small input frequency, and $\chi_k^3$ corresponds to  the case when  two inputs frequencies  and the output frequency are all comparable. 

 When $(k_1,k_2)\in \chi_k^4$, we can do change of coordinates to switch the roles of  $k_1$ and $k_2$. As a result, we have
\be\label{noveqn611}
\sum_{\mu, \nu\in\{+,-\}} Q_{\mu, \nu}(u^{\mu}, u^{\nu})= \sum_{\mu, \nu\in\{+,-\}}\sum_{k\in\mathbb{Z}} \sum_{(k_1,k_2)\in \chi_k^1\cup \chi_k^2\cup \chi_k^3} \tilde{Q}_{\mu, \nu}( u^{\mu}_{k_1} ,  u^{\nu}_{k_2}),
\ee
for some bilinear operator $\tilde{Q}_{\mu, \nu}(u^{\mu}, u^{\nu})$. Let   $q_{\mu, \nu}(\xi-\eta, \eta)$  denotes the symbol of quadratic term  $\tilde{Q}_{\mu, \nu}(u_{
\mu}, u_{\nu})$.
 
\begin{lemma}\label{sizeofsymbol}
For $k,k_1, k_2\in \mathbb{Z}$ and any $\mu, \nu\in\{+,-\}$, the following estimates holds for any $k\in \mathbb{Z}, (k_1,k_2)\in \chi_k^1\cup \chi_k^2\cup \chi_k^3$,
\begin{equation}\label{sizeofsymboluniform}
\|q_{\mu, \nu}(\xi-\eta, \eta)\|_{\mathcal{S}_{k,k_1,k_2}^\infty}\lesssim 2^{k/2+k_{-}/2+k_{1,+}}.
\end{equation}
\end{lemma}
\begin{proof}
From (\ref{quadraticterms}) and (\ref{noveqn611}), it is easy to compute the symbol $q_{\mu, \nu}(\xi-\eta, \eta)$ explicitly.  Therefore  the desired estimate  (\ref{sizeofsymboluniform}) holds directly  from the estimate (\ref{eqn61001}) in Lemma \ref{Snorm}.
\end{proof}

\subsection{Bootstrap assumption and proof of the main theorem} We   prove   Theorem \ref{maintheorem} via the standard bootstrap argument. The  bootstrap assumption is stated as follows,
\begin{equation}\label{smallness}
\sup_{t\in[0,T]} (1+t)^{-\delta}\| (h, \Lambda\psi)(t)\|_{H^{N_0}}   + \| e^{it\Lambda}( h + i \Lambda \psi)\|_Z \lesssim \epsilon_1:=\epsilon_0^{5/6}.
\end{equation}
As a result of the new energy estimate  (\ref{energyestimatetheorem}) in Theorem \ref{newenergyestimatetheorem}, the following Proposition holds.  
\begin{proposition}\label{propositon1}
Under the bootstrap assumption \textup{(\ref{smallness})}, we have the following estimate,
\begin{equation}\label{energyestimate}
\sup_{t\in[0,T]} (1+t)^{-\delta}\|(h, \Lambda\psi)(t)\|_{H^{N_0}} \lesssim \epsilon_0.
\end{equation}
\end{proposition}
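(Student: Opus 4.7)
The plan is to insert the bootstrap assumption (\ref{smallness}) directly into the new energy estimate (\ref{energyestimatetheorem}) from Theorem \ref{newenergyestimatetheorem}. Writing $f := (h, \Lambda \psi)$, that inequality reads
\[
\|f(t)\|_{H^{N_0}}^2 \lesssim \epsilon_0^2 + \int_0^t \bigl[\|f(s)\|_{W^{4,1}} + \|f(s)\|_{W^4}^2\bigr] \|f(s)\|_{H^{N_0}}^2 \, ds,
\]
and from (\ref{smallness}) we have $\|f(s)\|_{H^{N_0}}^2 \lesssim \epsilon_1^2 (1+s)^{2\delta}$ together with $\|f(s)\|_{W^{4,2\alpha}} \lesssim \epsilon_1 (1+s)^{-1}$. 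The task reduces to dominating the two dispersive factors by quantities integrable in time up to at most an $(1+t)^{2\delta}$ loss.

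For $\|f\|_{W^{4,1}}$, I compare the Littlewood--Paley weights of $W^{4,1}$ and $W^{4,2\alpha}$. At $k\ge 0$ both weights are comparable to $2^{4k}$; at $k\le 0$ the respective weights $\approx 2^k$ and $\approx 2^{2\alpha k}$ satisfy $2^k \le 2^{2\alpha k}$ since $2\alpha = 1/5 < 1$. Hence $\|f\|_{W^{4,1}} \lesssim \|f\|_{W^{4,2\alpha}} \lesssim \epsilon_1(1+s)^{-1}$, contributing
\[
\int_0^t \|f(s)\|_{W^{4,1}} \|f(s)\|_{H^{N_0}}^2 \, ds \lesssim \epsilon_1^3 \int_0^t (1+s)^{-1 + 2\delta} \, ds \lesssim \delta^{-1} \epsilon_1^3 (1+t)^{2\delta}.
\]

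The $\|f\|_{W^4}^2$ term is the one genuinely delicate point, and is the main obstacle: at low frequency $k\le 0$ the $W^{4,0}$ weight is $\approx 1$ while the $W^{4,2\alpha}$ weight is only $2^{2\alpha k}$, so the bootstrap dispersive bound alone does not give a summable low-frequency estimate. I would interpolate by combining the dispersive and Sobolev controls: for each $k \le 0$,
\[
\|P_k f\|_{L^\infty} \le \min\bigl(2^{-2\alpha k}\|f\|_{W^{4,2\alpha}},\ 2^k \|f\|_{H^{N_0}}\bigr),
\]
where the second inequality is Bernstein's inequality in two dimensions. Selecting the crossover frequency and summing yields $\sum_{k\le 0}\|P_k f\|_{L^\infty} \lesssim \|f\|_{W^{4,2\alpha}}^{5/6}\|f\|_{H^{N_0}}^{1/6}$; the $k \ge 0$ part of $\|f\|_{W^{4,0}}$ is already controlled by $\|f\|_{W^{4,2\alpha}}$. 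Consequently
\[
\|f\|_{W^{4,0}} \lesssim \|f\|_{W^{4,2\alpha}}^{5/6}\|f\|_{H^{N_0}}^{1/6} \lesssim \epsilon_1 (1+s)^{-5/6 + \delta/6},
\]
so $\|f\|_{W^{4,0}}^2\|f\|_{H^{N_0}}^2 \lesssim \epsilon_1^4 (1+s)^{-5/3 + O(\delta)}$, which is time-integrable for $\delta$ small and contributes an $O(\epsilon_1^4)$ term to the energy.

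Combining the three contributions gives $\|f(t)\|_{H^{N_0}}^2 \lesssim \epsilon_0^2 + \delta^{-1}\epsilon_1^3 (1+t)^{2\delta} + \epsilon_1^4$. Since $\epsilon_1 = \epsilon_0^{5/6}$, we have $\delta^{-1}\epsilon_1^3 = \delta^{-1}\epsilon_0^{5/2} \ll \epsilon_0^2$ provided $\bar\epsilon$ (hence $\epsilon_0$) is small compared to $\delta^2$, which closes the bootstrap to the improved constant $\epsilon_0$ and yields (\ref{energyestimate}). The essential idea is the low-frequency interpolation for $\|f\|_{W^{4,0}}$: the pure dispersive bound has no useful decay as $k \to -\infty$, but trading a negligible portion of the high Sobolev regularity through the Bernstein factor $2^k$ produces the slightly sub-$(1+s)^{-1}$ decay whose square is still integrable in time, which is exactly what the energy estimate requires.
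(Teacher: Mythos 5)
Your proposal is correct and follows essentially the same route as the paper: insert the bootstrap bounds into the energy inequality of Theorem \ref{newenergyestimatetheorem}, bound $\|\cdot\|_{W^{4,1}}$ by $\|\cdot\|_{W^{4,2\alpha}}$, and interpolate $\|\cdot\|_{W^4}$ between $\|\cdot\|_{W^{4,2\alpha}}$ and $\|\cdot\|_{H^{N_0}}$ so that its square is time-integrable (the paper uses exponents $3/4$ and $1/4$ rather than your optimal $5/6$ and $1/6$, but either choice works). Your explicit tracking of the $\delta^{-1}$ factor and the requirement $\epsilon_0\lesssim \delta^2$, which the paper leaves implicit in choosing $\bar\epsilon$ sufficiently small, is a welcome clarification but not a different argument.
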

\begin{proof}
Note that the following estimate holds  under the bootstrap assumption (\ref{smallness}),\[
\|(h, \Lambda \psi)(t)\|_{W^{4}}\lesssim \|(h, \Lambda \psi)(t)\|_{H^{N_0}}^{1/4}\| (h, \Lambda\psi)(t)\|_{W^{4, 2\alpha}}^{3/4}\lesssim (1+t)^{-3/4+2 \delta} \epsilon_0.
\]
 From the estimate (\ref{energyestimatetheorem}) in Theorem \ref{newenergyestimatetheorem}, the following estimate holds for any $t\in[0,T]$, 
\[
\|(h,\Lambda \psi)(t)\|_{H^{N_0}}^2 \lesssim \epsilon_0^2 +\int_{0}^{t} \frac{\epsilon_1^3}{(1+s)^{1-2\delta}}  d s \lesssim (1+t)^{2\delta}\epsilon_0^2.
\]
Hence the desired estimate (\ref{energyestimate}) holds. 
\end{proof}
 The rest of this paper is devoted to proving the following Proposition, which is sufficient to close the bootstrap argument. 
\begin{proposition}\label{proposition2}
Under the bootstrap assumption \textup{(\ref{smallness})} and the energy estimate \textup{(\ref{energyestimate})}, we have
\begin{equation}\label{improvedZnormestimate}
\sup_{t\in[0,T]}  \| e^{it\Lambda}( h + i \Lambda \psi)\|_Z\lesssim \epsilon_0.
\end{equation}
Therefore, from the linear decay estimates \textup{(\ref{highdecay}) and (\ref{lowdecay})} in Lemma \textup{\ref{lineardecay}}, the following decay estimate holds, 
\be
\sup_{t\in[0,T]} (1+t)\|(h, \Lambda\psi)(t)\|_{W^{4,2\alpha}} \lesssim \epsilon_0.
\ee
\end{proposition}
\subsection{Bilinear estimates with the  angle localized}

In the later $Z$-norm estimate, we need to localize the angle between the output frequency and  the smaller input frequency to exploit the observation that the size of the degenerated phase depends on this angle. Unavoidably, we need to estimate some bilinear operators with the angle localized. More precisely, we have the following two Lemmas.

\begin{lemma}\label{bilinearest}
For $l,k, k_1, k_2\in \mathbb{Z}$, $l\leq 2$, $k_2\leq k_1$, and   $f,g \in L^2\cap L^1$, we define a bilinear form  as follows, \[
T(f, g) =   \int_{\R^2}  e^{it \Phi^{\mu, \nu}(\xi, \eta)} \psi_{k}(\xi) \psi_{k_1}(\xi-\eta)\psi_{k_2}(\eta) \psi_{l}(\angle(\xi, \nu \eta)) m(\xi, \eta)\widehat{f^{\mu}}(\xi-\eta) \widehat{g^{\nu}}(\eta) d \eta,
\]
where  $\mu, \nu\in \{+,-\}$, $m(\xi, \eta) \in  S^{\infty}$, and the phase ``$\Phi^{\mu, \nu}(\xi,\eta)$'' is defined as follows, 
\be\label{definitionofphase}
\Phi^{\mu, \nu}(\xi, \eta):=\Lambda(|\xi|)-\mu\Lambda(|\xi-\eta|)- \nu\Lambda(|\eta|), \quad \Lambda(|\xi|):=\sqrt{|\xi|\tanh|\xi|}.
\ee
Then  the  following estimates hold,  
\begin{equation}\label{eqn293}
\| T(f,g)\|_{L^2} \lesssim \| m\|_{L^\infty_{\xi, \eta}} \min\big\{ 2^{2k_2 +l} \| f\|_{L^2} \|\widehat{g}\|_{L^\infty_\xi}, 2^{k_2+l/2}\| f\|_{L^2} \| g\|_{L^2}, 2^{k_2+k_1+l}\| \widehat{f}\|_{L^\infty_{\xi}} \| g\|_{L^2}\big\},\end{equation}
\begin{equation}\label{eqn298}
\| T(f,g)\|_{L^2} \lesssim \| m\|_{L^\infty_{\xi, \eta}}  \min\{2^{k+k_1+l}\| \widehat{f}\|_{L^\infty_{\xi}}\|g\|_{L^2},2^{k+l/2}\| f\|_{L^2} \|g\|_{L^2}, 2^{k_2+k+l} \| \widehat{g}\|_{L^\infty_\xi}\| f\|_{L^2}  \},\end{equation}
\[
\| T(f,g)\|_{L^2} \lesssim  \| m\|_{\mathcal{S}^\infty_{k,k_1,k_2}} \min\{ \| f_{k_1}\|_{L^2} \| \mathcal{F}^{-1}[e^{-it \Lambda(\xi)} \widehat{g}(\xi)\psi_{k_2}(\xi)]\|_{L^\infty_x},  
\]
\begin{equation}\label{equation6884}
2^{(k_1-k_2)/2} \| g(\eta)\psi_{k_2}(\eta)\|_{L^2} \| \mathcal{F}^{-1}[e^{-it \Lambda(\xi)} \widehat{f}(\xi)\psi_{k_1}(\xi)]\|_{L^\infty_x} \}. 
\end{equation}
\end{lemma}
\begin{proof}
$\bullet$\quad We first prove the desired estimates (\ref{eqn293}) and (\ref{eqn298}).

Note that  for any given small number $0< 2^{n}\lesssim 1$, we can decompose the unit  circle $\mathbb{S}^1$ into the union of angular sections with bounded (with upper bound given by an absolute constant) overlaps, where each sector has angular size $2^{n}$.  These cutoff functions  form a partition of unity.  We label those sectors by their angles $\omega = \xi/|\xi|$, use $|\omega|$ to denotes the size of angle and use $b^{\omega}_{n}(\xi)$ to denote a fixed standard bump function that supported in this sector and form a partition of unity.

With  the above defined notation, we  use the angular partition of unity  for  ``$\xi$'',  ``$\xi-\eta$'' and ``$\eta$''. Because of the localized angle between $\xi$ and $\nu \eta$,  the following decomposition holds, 
\[
T(f, g) =  \sum_{\begin{subarray}{c}
|\omega_1|, |\omega_2|\sim 2^{l+k_2-k_1}\\
|\omega_1\pm\omega_2|\sim 2^{l+k_2-k_1}\\
|\omega_3|\sim 2^{l},|\omega_1-\nu \omega_3|\sim 2^{l}\\
\end{subarray}} \int_{\R^2}  e^{it \Phi^{\mu, \nu}(\xi, \eta)} \psi_{k}(\xi) \psi_{k_1}(\xi-\eta)\psi_{k_2}(\eta) \psi_{l}(\angle(\xi, \nu \eta)) m(\xi, \eta)
\]
\be\label{noveeqn141}
\times b^{\omega_1}_{l+k_2-k_1}(\xi)b^{\omega_2}_{l+k_2-k_1}(\xi-\eta)   b^{\omega_3}_{l}(\eta) \widehat{f^{\mu}}(\xi-\eta) \widehat{g^{\nu}}(\eta) d \eta.
\ee
From the $L^2$-orthogonality of the localized angle in ``$\xi$'', the following estimate holds, 
\[
\| T(f,g)\|_{L^2}^2 \lesssim  \sum_{\begin{subarray}{c}
\omega_1,\omega_2,\omega_3\\
\textup{same as in (\ref{noveeqn141})}\\
\end{subarray}}  \big\| \int_{\R^2}  e^{it \Phi^{\mu, \nu}(\xi, \eta)} \psi_{k}(\xi) \psi_{k_1}(\xi-\eta)\psi_{k_2}(\eta)m(\xi, \eta) 
\]
\begin{equation}\label{equation6850}
\times b^{\omega_1}_{l+k_2-k_1}(\xi)b^{\omega_2}_{l+k_2-k_1}(\xi-\eta)  \widehat{f^{\mu}}(\xi-\eta) \widehat{g^{\nu}}(\eta) b^{\omega_3}_{l}(\eta) d \eta  \big\|_{L^2_{\xi}}^2
\end{equation}
\[
\lesssim \| m \|_{L^\infty_{\xi, \eta}}^2 \min \{ \sum_{\begin{subarray}{c}
\omega_1,\omega_2,\omega_3\\
\textup{same as in (\ref{noveeqn141})}\\
\end{subarray}}  2^{4k_2+2l} \| \widehat{f}(\xi- \eta) b^{\omega_2}_{l+k_2-k_1}(\xi-\eta) \psi_{k_1}(\xi-\eta) \|_{L^2}^2 \|\widehat{g}(\eta)\|_{L^\infty_{\eta}}^2,
\]
\begin{equation}\label{eqn280}
\sum_{\begin{subarray}{c}
\omega_1,\omega_2,\omega_3\\
\textup{same as in (\ref{noveeqn141})}\\
\end{subarray}}   2^{2k_2+l}  \| \widehat{f}(\xi- \eta) b^{\omega_2}_{l+k_2-k_1}(\xi-\eta) \psi_{k_1}(\xi-\eta) \|_{L^2}^2 \|b^{\omega_3}_{l}(\eta)\psi_{k_2}(\eta)\widehat{g}(\eta)\|_{L^2_{\eta}}^2\}.\end{equation}
For the simplicity of notation, in (\ref{equation6850}) and (\ref{eqn280}), $\omega_1$, $\omega_2$, and $\omega_3$ belong to the same set listed under the summation of (\ref{noveeqn141}). 

In  the first estimate of (\ref{eqn280}), we used the volume of support of $\eta$; in the second estimate of (\ref{eqn280}), we used the Cauchy-Schwartz inequality for the integration with respect to ``$\eta$''.

Recall the set of $\omega_1, \omega_2, \omega_3$ under the summation in (\ref{noveeqn141}). Because of the partition of unity, it is easy to check  the following two facts: (i)  the multiplicity (i.e., how many times it has been counted) of the summation with respect to $\omega_2$ is a finite number; (ii) the multiplicity of the summation with respect to $\omega_3$ is $2^{k_1-k_2}$ because there are $2^{k_1-k_2}$  sectors $\omega_2$ satisfy $|\omega_1- \nu \omega_3|\sim 2^l$ and $|\omega_1\pm \omega_2|\sim 2^{l+k_2-k_1}$ for a fixed sector $\omega_3$.

 Therefore,  the following estimate holds after summarizing with respect to $\omega_2$, 
\[
\sum_{\begin{subarray}{c}
\omega_1,\omega_2,\omega_3\\
\textup{same as in (\ref{noveeqn141})}\\
\end{subarray}}  2^{4k_2+2l} \| \widehat{f}(\xi- \eta) b^{\omega_2}_{l+k_2-k_1}(\xi-\eta) \psi_{k_1}(\xi-\eta) \|_{L^2}^2 \|\widehat{g}(\eta)\|_{L^\infty_{\eta}}^2
\lesssim 2^{4k+2l} \| f\|_{L^2} \| \widehat{g}\|_{L^\infty_\xi}
\]
\[
\sum_{\begin{subarray}{c}
\omega_1,\omega_2,\omega_3\\
\textup{same as in (\ref{noveeqn141})}\\
\end{subarray}}   2^{2k_2+l}  \| \widehat{f}(\xi- \eta) b^{\omega_2}_{l+k_2-k_1}(\xi-\eta) \psi_{k_1}(\xi-\eta) \|_{L^2}^2 \|b^{\omega_3}_{l}(\eta)\psi_{k_2}(\eta)\widehat{g}(\eta)\|_{L^2_{\eta}}^2 \lesssim 2^{2k_2+l}\| f\|_{L^2}\|g\|_{L^2}.
\]

 Alternatively,  the following estimate holds after summarizing with respect to $\omega_3$ and using the volume of support of $\widehat{f}(\cdot)$,
\[
  \sum_{\begin{subarray}{c}
\omega_1,\omega_2,\omega_3\\
\textup{same as in (\ref{noveeqn141})}\\
\end{subarray}}  2^{2k_2+l}  \| \widehat{f}(\xi-\eta) b^{\omega_2}_{l+k_2-k_1}(\xi-\eta) \psi_{k_1}(\xi-\eta) \|_{L^2}^2 \|b^{\omega_3}_{l}(\eta)\psi_{k_2}(\eta)\widehat{g}(\eta)\|_{L^2_{\eta}}^2\]
\[
 \lesssim 2^{2k_2+l} 2^{k_1-k_2} 2^{2k_1+k_2-k_1+l}\| \widehat{f}\|_{L^\infty_{\xi}}^2 \sum_{\omega_3} \|b^{\omega_3}_{l}(\eta)\psi_{k_2}(\eta)\widehat{g}(\eta)\|_{L^2_{\eta}}^2 
 \lesssim 2^{2k_1+2k_2+2l}\|\widehat{f}\|_{L^\infty_{\xi}}^2\| g\|_{L^2}^2.
\]
Hence finishing the  proof of the desired estimate (\ref{eqn293}).

On the other hand, if we   use the  size of support of $\xi$ first, then the following estimate holds in the same spirit as the proof of (\ref{eqn293}), 
\[
\textup{L.H.S. of (\ref{eqn280}) }\lesssim \| m\|_{L^\infty_{\xi, \eta}}^2 \sum_{\begin{subarray}{c}
\omega_1,\omega_2,\omega_3\\
\textup{same as in (\ref{noveeqn141})}\\
\end{subarray}}  2^{2k + l} \| \widehat{f}(\xi-\eta) b^{\omega_2}_{l+k_2-k_1}(\xi-\eta) \psi_{k_1}(\xi-\eta) \|_{L^2}^2  \]
\[
\times \|b^{\omega_3}_{l}(\eta)\psi_{k_2}(\eta)\widehat{g}(\eta)\|_{L^2_{\eta}}^2\lesssim \| m\|_{L^\infty_{\xi, \eta}}^2 \min\big\{  \sum_{\begin{subarray}{c}
\omega_1,\omega_2,\omega_3\\
\textup{same as in (\ref{noveeqn141})}\\
\end{subarray}} 2^{2k+2l+ k_1+k_2} \| \widehat{f}\|_{L^\infty_{\xi}}^2 \|b^{\omega_3}_{l}(\eta)\psi_{k_2}(\eta)\widehat{g}(\eta)\|_{L^2_{\eta}}^2 , \]
\[
 2^{2k+l}\| f\|_{L^2}^2
\|g\|_{L^2}^2, \sum_{\begin{subarray}{c}
\omega_1,\omega_2,\omega_3\\
\textup{same as in (\ref{noveeqn141})}\\
\end{subarray}} 2^{2k+2k_2+2l}\| \widehat{f}(\xi-\eta) b^{\omega_2}_{l+k_2-k_1}(\xi-\eta) \psi_{k_1}(\xi-\eta) \|_{L^2}^2\|\widehat{g}\|_{L^\infty_\xi}^2\big\}\]
\[
\lesssim \| m\|_{L^\infty_{\xi, \eta}}^2  \min\{2^{2k+2k_1+2l}\| \widehat{f}\|_{L^\infty_{\xi}}^2\|g\|_{L^2}^2, 2^{2k+l}\|f\|_{L^2}^2\|g\|_{L^2}^2, 2^{2k_2+2k+2l} \| \widehat{g}\|_{L^\infty_\xi}^2\|f\|_{L^2}^2 \}.
\]
Hence finishing  the proof of  (\ref{eqn298}).

$\bullet$\quad Now we proceed to prove the desired estimate (\ref{equation6884}). From the $L^2-L^\infty$ type bilinear estimate (\ref{bilinearesetimate}) in Lemma \ref{multilinearestimate}, the following estimate holds,
\[
\textup{(\ref{equation6850})} \lesssim \| m\|_{\mathcal{S}^\infty_{k,k_1,k_2}}^2 \min\big\{  \sum_{\begin{subarray}{c}
\omega_1,\omega_2,\omega_3\\
\textup{same as in (\ref{noveeqn141})}\\
\end{subarray}}\| \widehat{f}(\xi- \eta) b^{\omega_2}_{l+k_2-k_1}(\xi-\eta) \psi_{k_1}(\xi-\eta) \|_{L^2}^2
\]
\[
\times  \|\mathcal{F}^{-1}[e^{-it \Lambda(\eta)} \widehat{g}(\eta) b^{\omega_3}_{l}(\eta)\psi_{k_2}(\eta)]\|_{L^\infty_{x}}^2,  \sum_{\begin{subarray}{c}
\omega_1,\omega_2,\omega_3\\
\textup{same as in (\ref{noveeqn141})}\\
\end{subarray}} \| \widehat{g}(\eta) b^{\omega_3}_{l}(\eta)\psi_{k_2}(\eta)\|_{L^2}^2
\]
\[
 \times \| \mathcal{F}^{-1}[ e^{-it \Lambda(\xi-\eta)} \widehat{f}(\xi- \eta) b^{\omega_2}_{l+k_2-k_1}(\xi-\eta) \psi_{k_1}(\xi-\eta) \|_{L^\infty_{x}}^2\big\}
\]
\[
\lesssim \| m\|_{\mathcal{S}^\infty_{k,k_1,k_2}}^2\min\big\{ \|f_{k_1}\|_{L^2}^2 \|\mathcal{F}^{-1}[e^{-it \Lambda(\eta)} \widehat{g_{k_2}}(\eta)]\|_{L^\infty_{x}}^2, 2^{(k_1-k_2)} \| {g_{k_2}}\|_{L^2}^2 \|\mathcal{F}^{-1}[e^{-it \Lambda(\eta)} \widehat{f_{k_1}}(\eta)]\|_{L^\infty_{x}}^2\big\}.
\]
Hence finishing the proof of (\ref{equation6884}). In the  above estimate, we used the facts that the multiplicity of summation with respect to $\omega_3$ is $2^{k_1-k_2}$ and the kernel of symbol $b^\omega_{l}(\xi)\psi_k(\xi)$ belongs $L^1$, where $l, k\in \mathbb{Z}.$
\end{proof}

To take   advantage of the oscillation in time when the frequencies are localized away from the time  resonance set, we do integration by parts in time. Unavoidably, we will confront a bilinear operator with the small divisor issue because of the presence of the degenerated phase in the denominator. Hence, we provide the $L^2$-estimates of  bilinear operators of this type  in the following Lemma. 

\begin{lemma}\label{angularbilinear}
For $m,k,k_1,k_2,\kappa,l \in \mathbb{Z}$,  $k_2\leq k_1$,  $t\in[2^{m-1}, 2^{m} ]$, $\kappa \geq -m +\delta m $, and $f,g\in L^2\cap L^1$, we define two bilinear operators as follow,  
\[
T_1(f,g) =  \int_{\R^2}  e^{it \Phi^{\mu, \nu}(\xi, \eta)} \frac{\psi_{[-10, 10]}(2^{-\kappa}\Phi^{\mu, \nu}(\xi, \eta))}{\Phi^{\mu, \nu}(\xi, \eta)} \psi_{k}(\xi)  m(\xi, \eta) \widehat{f^{\mu}_{k_1}}(\xi-\eta) \widehat{g^{\nu}_{k_2}}(\eta) d \eta,
\]
\[
T_2(f,g) =  \int_{\R^2}  e^{it \Phi^{\mu, \nu}(\xi, \eta)} \frac{\psi_{[-10, 10]}(2^{-\kappa}\Phi^{\mu, \nu}(\xi, \eta))}{\Phi^{\mu, \nu}(\xi, \eta)} \psi_{k}(\xi)  m(\xi, \eta) \widehat{f^{\mu}_{k_1}}(\xi-\eta) \widehat{g^{\nu}_{k_2}}(\eta) \psi_l(\angle(\xi, \nu\eta)) d \eta,
\]
where the phase $\Phi^{\mu, \nu}(\xi,\eta)$  is defined in \textup{(\ref{definitionofphase})}. Then the following estimates hold,
\[
\| T_1(f,g)\|_{L^2}\lesssim_{} 2^{-\kappa} \| m\|_{\mathcal{S}^\infty_{k,k_1,k_2}} \sup_{|\lambda| \leq 2^{\delta m/2}} \min\{\| e^{-i (t+\lambda2^{-\kappa})\Lambda} f_{k_1}\|_{L^\infty} \| g_{k_2}\|_{L^2} , 
\]
\begin{equation}\label{equation6910}
\| e^{-i (t+\lambda2^{-\kappa})\Lambda} g_{k_2}\|_{L^\infty} \| f_{k_1}\|_{L^2}\} + 2^{-10m-\kappa+k} \| m\|_{L^\infty_{\xi, \eta}} \| f_{k_1}\|_{L^2} \| g_{k_2}\|_{L^2},
\end{equation}
\[
\| T_2(f,g)\|_{L^2}\lesssim 2^{-\kappa} \| m\|_{\mathcal{S}^\infty_{k,k_1,k_2}}\sup_{|\lambda| \leq 2^{\delta m/2}} \min\{2^{(k_1-k_2)/2}\| e^{-i (t+\lambda2^{-\kappa})\Lambda} f_{k_1}\|_{L^\infty} \| g_{k_2}\|_{L^2} ,\]
\begin{equation}\label{equation6911}
\| e^{-i (t+\lambda2^{-\kappa})\Lambda} g_{k_2}\|_{L^\infty} \| f_{k_1}\|_{L^2}\} + 2^{-10m-\kappa+k} \| m\|_{L^\infty_{\xi, \eta}} \| f_{k_1}\|_{L^2} \| g_{k_2}\|_{L^2}.
\end{equation}
\end{lemma}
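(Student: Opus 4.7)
\textbf{Proof outline for Lemma \ref{angularbilinear}.}

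The strategy is to decouple the rational multiplier $\psi_0(2^{-\kappa}\Phi^{\mu,\nu})/\Phi^{\mu,\nu}$ from the oscillatory factor $e^{it\Phi^{\mu,\nu}}$ by exploiting its $1$D Fourier representation, thereby converting the presence of the phase denominator into an integral of bilinear operators with time parameter shifted by $\lambda 2^{-\kappa}$. Set $h(x) := \psi_0(x)/x$; since $\psi_0$ is a smooth bump supported in an annulus away from the origin, $h$ is Schwartz, and we may write
\begin{equation*}
\frac{\psi_0(2^{-\kappa}\Phi^{\mu,\nu}(\xi,\eta))}{\Phi^{\mu,\nu}(\xi,\eta)} = 2^{-\kappa}\int_{\mathbb{R}} \widehat{h}(\lambda)\, e^{i\lambda\, 2^{-\kappa}\Phi^{\mu,\nu}(\xi,\eta)}\, d\lambda,
\end{equation*}
with $\widehat{h}$ Schwartz.

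Substituting into $T_1$ and interchanging the order of integration, the phase in the inner $\eta$-integral becomes $e^{i\tau_\lambda\Phi^{\mu,\nu}(\xi,\eta)}$ with $\tau_\lambda := t+\lambda 2^{-\kappa}$. Factoring as $e^{i\tau_\lambda\Lambda(\xi)}\cdot e^{-i\tau_\lambda\mu\Lambda(\xi-\eta)}\cdot e^{-i\tau_\lambda\nu\Lambda(\eta)}$, the latter two exponentials are absorbed into the inputs, producing the shifted linear profiles $F_\lambda := e^{-i\tau_\lambda\Lambda}f$, $G_\lambda := e^{-i\tau_\lambda\Lambda}g$, and giving
\begin{equation*}
T_1(f,g) = 2^{-\kappa}\int_{\mathbb{R}}\widehat{h}(\lambda)\, e^{i\tau_\lambda\Lambda}\mathcal{F}^{-1}\!\!\left[\psi_k(\xi)\!\int\! m(\xi,\eta)\,\widehat{F^\mu_{\lambda,k_1}}(\xi-\eta)\widehat{G^\nu_{\lambda,k_2}}(\eta)\,d\eta\right]d\lambda.
\end{equation*}
Unitarity of $e^{i\tau_\lambda\Lambda}$ and the $L^2$--$L^\infty$ bilinear estimate \eqref{bilinearesetimate} then produce, for each $\lambda$, the bound
\begin{equation*}
\|m\|_{\mathcal{S}^\infty_{k,k_1,k_2}}\min\{\|F^\mu_{\lambda,k_1}\|_{L^\infty}\|g_{k_2}\|_{L^2},\ \|G^\nu_{\lambda,k_2}\|_{L^\infty}\|f_{k_1}\|_{L^2}\}.
\end{equation*}

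To finish, split the $\lambda$-integral at the threshold $|\lambda|=2^{\delta m}$. For $|\lambda|\leq 2^{\delta m}$, $\widehat{h}$ is integrable and we pull the minimum outside as a supremum over $|\lambda|\leq 2^{\delta m}$, producing the main term in \eqref{equation6910}. For $|\lambda|>2^{\delta m}$, the rapid decay $|\widehat{h}(\lambda)|\lesssim_N(1+|\lambda|)^{-N}$ combined with the crude Cauchy--Schwarz bound $\|\cdots\|_{L^2_\xi}\lesssim 2^k\|m\|_{L^\infty_{\xi,\eta}}\|f_{k_1}\|_{L^2}\|g_{k_2}\|_{L^2}$ (using the support $|\xi|\sim 2^k$) yields the error term $2^{-10m-\kappa+k}\|m\|_{L^\infty}\|f_{k_1}\|_{L^2}\|g_{k_2}\|_{L^2}$ on choosing $N$ sufficiently large relative to $1/\delta$.

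The proof of \eqref{equation6911} for $T_2$ proceeds identically, except that at the step where the bilinear estimate is applied, the angular cutoff $\varphi_l(\angle(\xi,\nu\eta))$ now sits inside the symbol, and we invoke the sharper orthogonality-based estimate \eqref{equation6884} from Lemma \ref{bilinearest} instead of the plain bilinear estimate. The asymmetric factor $2^{(k_1-k_2)/2}$ appearing only in the first option of the minimum in \eqref{equation6911} is inherited directly from \eqref{equation6884}, and reflects the cost of trading the high-frequency input $f_{k_1}$ for an $L^\infty$ bound while the low-frequency input $g_{k_2}$ is placed in $L^2$ after angular decomposition. The main technical point is to verify that the Schwartz decay of $\widehat{h}$ is strong enough to accommodate the splitting scale $2^{\delta m}$, and that the $\mathcal{S}^\infty_{k,k_1,k_2}$ norm of $m$ governs all the bilinear estimates uniformly in the shift parameter $\lambda$; both are routine given the identities above.
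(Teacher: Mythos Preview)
Your proof is correct and follows essentially the same approach as the paper: write $\psi_0(2^{-\kappa}\Phi)/\Phi$ via the one-dimensional Fourier transform of $h(x)=\psi_0(x)/x$, absorb the resulting phase shift into the time variable $t\mapsto t+\lambda 2^{-\kappa}$, split the $\lambda$-integral at $|\lambda|=2^{\delta m}$, apply the $L^2$--$L^\infty$ bilinear estimate \eqref{bilinearesetimate} on the main region and a crude $L^2$--$L^2$ bound exploiting the support $|\xi|\sim 2^k$ on the tail, and for $T_2$ replace \eqref{bilinearesetimate} by the angular estimate \eqref{equation6884}. Your explanation of the asymmetric factor $2^{(k_1-k_2)/2}$ is also accurate.
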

\begin{proof}
To prove (\ref{equation6910}), we use the inverse    Fourier transform to reformulate $T_1(f,g)$ as follows, 
\[
T_1(f,g) = \frac{1}{4\pi^2} \int_{\R}  \int_{\R^2} 2^{-\kappa} e^{i (t+ \lambda 2^{-\kappa}) \Phi^{\mu, \nu}(\xi, \eta)} \widehat{\chi}(\lambda)\psi_{k}(\xi)  m(\xi, \eta) \widehat{f}^{\mu}_{k_1}(\xi-\eta) \widehat{g}^{\nu}_{k_2}(\eta) d \eta d \lambda,
\]
where
\begin{equation}\label{equation6912}
\widehat{\chi}(\lambda)= \int_{\R} e^{-i \lambda x} \frac{\psi_{[-10, 10]}(x)}{x} d x, \Longrightarrow |\widehat{\chi}(\lambda)| \lesssim (1+|\lambda|)^{-N_0/\delta}.
\end{equation}
Hence, when $|\lambda|\leq 2^{\delta m/2}$, we use the $L^2-L^\infty$ type bilinear estimate (\ref{bilinearesetimate}) in Lemma \ref{multilinearestimate}, which gives the first part of estimate (\ref{equation6910}). When $|\lambda|\geq 2^{\delta m/2} $, from (\ref{equation6912}), $\widehat{\chi}(\lambda)$ provides fast decay. After using the size of support of $\xi$ first and then use the $L^2-L^2$ type estimate, we derive the second part of estimate (\ref{equation6910}).

With minor modifications in the above estimates and the proof of (\ref{equation6884}), we can prove the desired estimate (\ref{equation6911}) very similarly. Hence, we omit details here. 
\end{proof}

\section{The Set-up of  the $Z$-norm Estimate}\label{improvedZnorm}

Note that our goal is reduced to prove the Proposition \ref{proposition2}. In other words, we will prove that the $Z$-norm of the profile ``$e^{it\Lambda}(h+ i \Lambda\psi)=e^{it\Lambda} u $ '' doesn't grow over time.  In this section, we first introduce the set-up of the $Z$-norm estimate and  then reduce the proof of Proposition \ref{proposition2} into the proof of three Propositions.

\subsection{The first reduction}

Recall the equation satisfied by $u$ in (\ref{complexversion}) and (\ref{noveqn611}), we define the   profile of $u$ as $f(t):= e^{i t \Lambda} u(t)$ and then rewrite the equation (\ref{complexversion}) in terms of profile $f(t)$ as follows,
\begin{equation}\label{eqn1400}
\p_t f = \sum_{\mu, \nu\in
\{+,-\}}\sum_{k\in \mathbb{Z}} \sum_{(k_1,k_2)\in \chi_k^1\cup \chi_k^2\cup\chi_k^3} T^{\mu, \nu}(f^{\mu}_{k_1}, f^{\nu}_{k_2}) + \mathcal{R}',
\end{equation}
where $\mathcal{R}'= e^{it \Lambda} \mathcal{R}$ and the bilinear operator $T^{\mu, \nu}(\cdot, \cdot)$  is defined as follows,
\[
T^{\mu, \nu}(g, h):=\mathcal{F}^{-1}\Big[ \int_{\R^2}  e^{it \Phi^{\mu, \nu}(\xi, \eta)}\widehat{g}(\xi-\eta)\widehat{h}(\eta) q_{\mu, \nu}(\xi-\eta, \eta) d \eta \Big], 
\]
where ``$g$'' and ``$h$'' are two localized $L^2$ functions.

Because of the presence of the space resonance but not time resonance set, which is a small neighborhood of $(\underbrace{\xi}_{\text{output frequency}}, \underbrace{\xi/2}_{\text{input frequency}},\underbrace{\xi/2}_{\text{input frequency}})$,  instead of estimating the $Z$-norm of the profile $f(t)$ directly, we will estimate a good substitution variable instead. 

We first identify this good substitution variable by utilizing the normal form transformation. More precisely, we define 
\begin{equation}\label{equation000010}
v(t):= u(t) + \sum_{\mu, \nu\in\{+,-\}}\sum_{k \in \mathbb{Z}, (k_1,k_2)\in \chi_k^3 } A_{\mu,\nu} (u^{\mu}_{k_1}(t),u^{\nu}_{k_2}(t)),
\ee
where the symbol $m_{\mu,\nu}(  \cdot, \cdot)$ of $A_{\mu,\nu} (\cdot, \cdot)$  is defined as follows, 
\be\label{neweqn90}
m_{\mu,\nu}( \xi-\eta, \eta) := - \frac{q_{\mu, \nu}(\xi-\eta, \eta)}{i \Phi^{\mu, \nu}(\xi, \eta)}.
\ee
From the estimate (\ref{sizeofsymboluniform}) in Lemma \ref{sizeofsymbol}  and the estimate (\ref{noveqn519}) in Lemma \ref{roughestimatephase2}, the following estimate holds, 
\be\label{sizeofnormalform}
\|m_{\mu,\nu}( \xi-\eta, \eta)\psi_k(\xi) \psi_{k_1}(\xi-\eta)\psi_{k_2}(\eta)\|_{L^\infty_{\xi, \eta}}\lesssim 2^{k-\min\{k,k_2\}-2k_{1,-}+3k_{1,+}/2}.
\ee
Since the phases are alway bounded from below (see the estimate (\ref{noveqn519}) in Lemma \ref{roughestimatephase2}), for simplicity, instead of remove a very small neighborhood of $(\xi, \xi/2, \xi/2)$, we removed the case when the output frequnecy and the two inputs frequencies are all comparable, i.e., the case when $(k_1,k_2)\in\chi_k^3$, in (\ref{equation000010}).

For the $Z$-norm estimate of the normal form transformation, we have the following Lemma. 
 \begin{lemma}\label{L2Znormestimate}
Under the bootstrap assumption  \textup{(\ref{smallness})}, the following estimate holds,
\begin{equation}\label{equation24334}
\sup_{t\in[0,T ]} \big\|  \sum_{\mu, \nu\in\{+,-\}}\sum_{k\in \mathbb{Z}, (k_1,k_2)\in \chi_k^3 }  e^{i t\Lambda} A_{\mu,\nu} (u^{\mu}_{k_1}(t),u^{\nu}_{k_2}(t))\big\|_{Z} \lesssim \epsilon_0.
\end{equation}
\end{lemma}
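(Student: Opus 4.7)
Fix $k \in \mathbb{Z}$. Because $(k_1,k_2)\in \chi_k^3$ constrains $k_1,k_2\in[k-4,k+4]$, the inner sum has $O(1)$ terms and it suffices to control a single $e^{it\Lambda} A_{\mu,\nu}(u^\mu_{k_1},u^\nu_{k_2})$ with $k\sim k_1 \sim k_2$. On the Fourier side this equals $\int e^{it\Phi^{\mu,\nu}(\xi,\eta)} m_{\mu,\nu}(\xi-\eta,\eta)\,\widehat{f^\mu_{k_1}}(\xi-\eta)\widehat{f^\nu_{k_2}}(\eta)\,d\eta$. The key quantity is the $\mathcal{S}^\infty_{k,k_1,k_2}$-norm of $m_{\mu,\nu}=-q_{\mu,\nu}/(i\Phi^{\mu,\nu})$. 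For the numerator, Lemma \ref{sizeofsymbol} gives $\|q_{\mu,\nu}\|_{\mathcal{S}^\infty_{k,k_1,k_2}}\lesssim 2^{k/2+k_-/2+k_+}$. For the denominator, when $(\mu,\nu)\in\{(-,-),(-,+)\}$, the phase is non-degenerate: $|\Phi^{\mu,\nu}|\sim 2^{(k+k_-)/2}$ by (\ref{equation6934}) and by inspection, so one gets $\|m_{\mu,\nu}\|_{\mathcal{S}^\infty}\lesssim 2^{k_+}$ via Lemma \ref{Snorm}. For the degenerate cases $(\mu,\nu)\in\{(+,-),(+,+)\}$, I would angular-localize at scale $\angle(\xi,\nu\eta)\sim 2^l$, $l\leq 2$; on each slice, (\ref{equation6930}) and (\ref{eqn137}) give $|\Phi^{\mu,\nu}|\gtrsim 2^{k}\max\{2^{-k_+/2+2k_{1,-}}, 2^{-k_{1,+}/2+2l}\}$, and summing in $l$ costs at most a logarithmic factor in $|k_-|$ which is absorbed by the extra smallness below.

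\textbf{$L^2$-layer.} Applying the $L^2\times L^\infty$ bilinear bound (\ref{bilinearesetimate}) with the symbol bound above yields
\[
\|P_k e^{it\Lambda} A_{\mu,\nu}(u^\mu_{k_1},u^\nu_{k_2})\|_{L^2}\lesssim \|m_{\mu,\nu}\|_{\mathcal{S}^\infty_{k,k_1,k_2}}\,\|u_{k_1}\|_{L^2}\|u_{k_2}\|_{L^\infty}.
\]
Using the bootstrap (\ref{smallness}) and (\ref{energyestimate}), $\|u_{k}\|_{L^2}\lesssim 2^{-N_0 k_+}(1+t)^\delta\epsilon_0$ and $\|u_{k}\|_{L^\infty}\lesssim (1+t)^{-1}(2^{-4k_+}+2^{-2\alpha k_-})\epsilon_1$, so the product is $\lesssim (1+t)^{-1+\delta}\epsilon_1 \epsilon_0 \cdot (\text{frequency weight})$, which combined with the prefactor $2^{\alpha k}(1+2^{6k})$ in the $Z$-norm gives a summable-in-$k$ bound, uniformly in $t$.

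\textbf{Weighted layer.} To handle the position cutoff $\varphi_j^k(x)$, I would decompose $f^\mu_{k_1}=\sum_{j_1}Q_{k_1,j_1}f^\mu$, $f^\nu_{k_2}=\sum_{j_2}Q_{k_2,j_2}f^\nu$, and apply $\nabla_\xi$ to the Fourier representation. The $\xi$-derivative lands in three places: (i) on $e^{it\Phi^{\mu,\nu}}$ producing $it\nabla_\xi \Phi^{\mu,\nu}$, where $|\nabla_\xi \Phi^{\mu,\nu}|\lesssim 2^{-k_+/2}$, and the ensuing $t$ growth is absorbed by the $(1+t)^{-1}$ decay of $\|u_{k_i}\|_{L^\infty}$; (ii) on the symbol $m_{\mu,\nu}$, producing either $\nabla_\xi q_{\mu,\nu}/(i\Phi^{\mu,\nu})$ or the dangerous $q_{\mu,\nu}\nabla_\xi\Phi^{\mu,\nu}/(i\Phi^{\mu,\nu})^2$ — the latter is the delicate point; (iii) on $\widehat{f^\mu_{k_1}}(\xi-\eta)$, producing an $(xf^\mu_{k_1})$ factor which is controlled directly via the $Z$-norm bootstrap $\|f\|_Z\lesssim \epsilon_1$. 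Combining these with the $B_{k,j}$ normalization and summing in $j_1,j_2,j$ gives a total bound $\lesssim \epsilon_1^2 = \epsilon_0^{5/3}\ll \epsilon_0$, finishing (\ref{equation24334}).

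\textbf{Main obstacle.} The hardest piece is term (ii) above in the degenerate phase regime $(+,-)$ at low frequency, where $(\Phi^{+,-})^{-2}$ carries a factor as large as $2^{-6k}$ for $k\to -\infty$. To control this I would use the key structural fact from Lemma \ref{phasesize} (eq. (\ref{eqn140})--(\ref{eqn134})) that the size of $\nabla_\xi \Phi^{+,-}$ is comparable to the angular separation that also controls $\Phi^{+,-}$ itself; combined with the angular slicing and the degeneracy of the numerator $q_{\mu,\nu}$ (which also shrinks like $2^k$ at low frequency by Lemma \ref{sizeofsymbol}), the apparent singularity cancels modulo a mild $|k_-|$-logarithmic loss, which is absorbed by $\epsilon_1^2 \ll \epsilon_0$.
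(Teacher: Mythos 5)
Your treatment of the non-degenerate phases $(\mu,\nu)\in\{(-,-),(-,+)\}$ (and, implicitly, the part of $(+,-),(+,+)$ where the angle is bounded below) matches the paper's first step: there $|\Phi^{\mu,\nu}|\sim 2^{k-k_+/2}$, $\|m_{\mu,\nu}\|_{\mathcal{S}^\infty_{k,k_1,k_2}}\lesssim 2^{2k_+}$, and an $L^2\times L^\infty$ estimate closes after ruling out $j\geq m+5$ by repeated integration by parts in $\xi$. You also correctly isolate the real difficulty (degenerate phases at low frequency, and the $q_{\mu,\nu}\nabla_\xi\Phi/(\Phi)^2$ term) and the relevant structural fact that $|\Phi^{+,\nu}|\gtrsim 2^{k+2\max\{k_-,l\}}$ on an angular slice of size $2^l$.

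However, the degenerate part of your argument has a genuine gap: it is asserted rather than carried out, and the one quantitative claim you make about it is wrong. You propose to apply the $\mathcal{S}^\infty$-based bilinear estimate (\ref{bilinearesetimate}) slice by slice and to "sum in $l$ at a logarithmic cost". But the $\mathcal{S}^\infty_{k,k_1,k_2}$-norm of $m_{\mu,\nu}\varphi_l(\angle(\xi,\nu\eta))$ degrades like an extra factor $2^{-3l}$ (three derivatives of the angular cutoff, cf.\ the computation leading to (\ref{equation6861})), on top of the $2^{-2\max\{k_-,l\}}$ from the phase; with that input the sum over $l\geq 2k_-$ diverges badly rather than logarithmically. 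This is precisely why the paper proves the angular bilinear estimates of Lemma \ref{bilinearest} and Lemma \ref{angularbilinear}, which require only the $L^\infty_{\xi,\eta}$-norm of the symbol and gain the measure $2^{2k+l}$ (or $2^{k+l/2}$) of the angular sector by orthogonality; with those, each slice contributes $2^{-2\max\{k_-,l\}}\cdot 2^{2k+l}$ and the $l$-sum is geometric. Moreover, in the regime $k+2l\leq -m$ (Cases 2 and 3 of the paper's proof in subsubsection \ref{thesizeofnormalform}) neither time oscillation nor $\eta$-oscillation is available, and the $(1+t)^{-1}$ dispersive decay you invoke in your ``$L^2$-layer'' does not beat the weight $2^j$ with $j$ as large as $-k-l$; the paper closes this case only by combining (\ref{eqn293}) with the improved bounds $\|f_{k}\|_{L^2}\lesssim 2^{k+\delta m}\epsilon_0$ and $\|\widehat{f_k}\|_{L^\infty_\xi}\lesssim 2^{3\delta m}\epsilon_0$ from Lemma \ref{L2estimatelemma}, which your proposal never uses. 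Finally, for the weighted layer a single application of $\nabla_\xi$ does not produce the $2^{-j}$ per-step gain needed to sum $2^j\|\varphi_j^k P_k(\cdot)\|_{L^2}$ over $j$; one must iterate the integration by parts in $\xi$ (using the improved bound $|\nabla_\xi\Phi^{+,\nu}|\lesssim 2^l$ on the angular slice) to rule out $j\geq\max\{m+l,-k-l\}+O(1)$, and then treat the remaining bounded range of $j$ by the case analysis above.
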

\begin{proof}
Postponed to  subsection \ref{znormnormalform}.
\end{proof}

Define the profile of the good substitution variable ``$v(t)$'' as  $g(t):= e^{ it \Lambda} v(t)$. Recall (\ref{equation000010}). From the estimate (\ref{equation24334}) in    Lemma \ref{L2Znormestimate}, it is easy to see that the $Z$-norm of $f(t)$ and $g(t)$ are comparable, Hence,  it would be sufficient to prove  the  following estimate to close the argument,
\begin{equation}\label{desiredZnorm}
\sup_{t_1, t_2\in[2^{m-1}, 2^{m+1}]} \| g(t_2)  - g(t_1)\|_{Z} \lesssim  2^{- \delta m} \epsilon_0,\quad [2^{m-1}, 2^{m+1}]\subset[0,T], \quad m \gg 1. 
\end{equation}
In the rest of this paper,  time ``$t$" will be naturally restricted inside the time interval $[2^{m-1}, 2^{m+1}]$, where ``$m$" is a fixed and sufficiently large number.

From (\ref{eqn1400}) and (\ref{equation000010}), we can derive the equation satisfied by the profile $g(t)$ as follows, 
\[
\p_t g_k(t)= \sum_{\mu, \nu \in\{+,-\} }  \sum_{(k_1,k_2)\in \chi_k^1\cup \chi_k^2} T^{\mu, \nu} (f_{k_1}, f_{k_2}) + P_{k}[\mathcal{R}'] + \sum_{\mu, \nu\in\{+,-\}}\sum_{(k_1, k_2)\in \chi_k^3} \mathcal{F}^{-1}\big[\int_{\R^2} e^{  i t \Phi^{\mu,\nu}(\xi,\eta)} 
\]
\be\label{noveqn251}
 \times  
   {m_{\mu, \nu}(\xi-\eta, \eta)}   \p_t \big(\widehat{f^{\mu}_{k_1}}(t,\xi-\eta) \widehat{f^{\nu}_{k_2}}(t,\eta) \big)     d \eta\big].
 \ee

\subsection{The second reduction} In this subsection, based on the properties of  the associated phases,  we classify the quadratic terms in (\ref{noveqn251}) into two types: good type and bad type. Moreover, we reduce the proof of the desired estimate (\ref{desiredZnorm}) into the proof of three propositions.

 \begin{definition}\label{goodphase}
 We call the  phase $\Phi^{\mu, \nu}(\xi, \eta)$  a \textit{good phase} if and only if  
 \be\label{goodtypephase}
(k_1,k_2, \mu, \nu)\in \mathcal{P}_{\textup{good}}^k = \chi_k^1 \times \{(-,-),(+,+)\} \cup \chi_k^2\times \{(-,-),(-,+)\}.
 \ee
\end{definition}
 Recall that the phase $\Phi^{\mu,\nu}(\xi, \eta)$ is defined as follows,
\[
\Phi^{\mu, \nu}(\xi, \eta)=\Lambda(|\xi|)-\mu \Lambda(|\xi-\eta|) -\nu \Lambda(|\eta|),\, \quad \mu, \nu\in\{+,-\}.
\]
It is easy to verify that the following estimate holds,
\begin{equation}\label{hhchi1}
|\Phi^{\mu,\nu}(\xi, \eta)| \sim 2^{k_1/2+k_{1,-}/2}, \quad \textup{if\,\,} |\xi|\sim 2^k,\quad |\xi-\eta|\sim 2^{k_1}, \quad |\eta|\sim 2^{k_2},\,\, (k_1, k_2, \mu, \nu)\in \mathcal{P}_{\textup{good}}^k.
\end{equation}
 
From (\ref{hhchi1}), it is easy to see that the sizes of phases  are not highly degenerated, which is why we refer the phases in the scenarios mentioned above  as good type phases.
 \begin{definition}\label{badphase}
 We call the  phase $\Phi^{\mu, \nu}(\xi, \eta)$  a \textit{bad phase} if and only if  
 \be\label{badtypephase}
(k_1,k_2, \mu, \nu)\in \mathcal{P}_{\textup{bad}}^k= \chi_k^1 \times \{(+,-),(-,+)\} \cup \chi_k^2\times \{(+,-),(+,+)\}.
 \ee
\end{definition}
We refer the phases in the  scenarios   mentioned above as bad type phases because the associated phases are of cubic level smallness  in the worst scenario,   see  the estimate (\ref{dece31}) in Lemma \ref{roughestimatephase2}.

Recall (\ref{noveqn251}). We can rewrite the equation satisfied by the frequency localized profile $g(t) $ as follows, 
\begin{equation}\label{equation6410}
\p_t g_k(t)= \textup{good}_k(t) +\textup{bad}_k(t)+ P_{k}[\mathcal{R}'],\end{equation}
where
\begin{equation}\label{goodtypeterms}
\textup{good}_{k}(t) = \sum_{(k_1,k_2,\mu, \nu)\in \mathcal{P}_{\textup{good}}^k} T^{\mu, \nu}(f^{\mu}_{k_1},f^{\nu}_{k_2}),   
\end{equation}
\begin{equation}\label{badtypeterms}
\textup{bad}_{k}(t) = \sum_{(k_1,k_2,\mu, \nu)\in \mathcal{P}_{\textup{bad}}^k} T^{\mu, \nu}(f^{\mu}_{k_1},f^{\nu}_{k_2}) +  \sum_{(k_1,k_2)\in \chi_{k}^3}\sum_{\mu, \nu \in\{+,-\}} \mathcal{F}^{-1}\big[ K^{\mu, \nu}
(f^{\mu}_{k_1}, {f}^{\nu}_{k_2})\big], 
\end{equation}
where the bilinear operator   $K^{\mu,\nu}(\cdot, \cdot)$ is defined as follows, 
 \be\label{neweqn95}
K^{\mu,\nu}(f_{k_1}^{\mu},f_{k_2}^{\nu})=    \int e^{i t\Phi^{\mu,\nu}(\xi, \eta)}  m_{\mu,\nu}(\xi-\eta, \eta) \p_t\big(\widehat{f_{k_1 }^{\mu}}(t,\xi-\eta) \widehat{f_{k_2}^{\nu}}(t,\eta)\big)    d \eta.
\ee
Hence, from (\ref{equation6410}),  the following identity holds,
\[
P_{k}g (t_2)- P_{k}g (t_1) =\int_{t_1}^{t_2} \textup{good}_k(t) +\textup{bad}_k(t)+ P_k[\mathcal{R}'(t)] d t. \]
Hence to prove the desired estimate (\ref{desiredZnorm}), recall the definition of $Z$-norm in (\ref{definitionofZnorm}),  it would  be sufficient if we can prove the following three Propositions,
\begin{proposition}\label{propZnorm2}
Under the bootstrap assumption \textup{(\ref{smallness})}, for any $\theta\in[0,1]$,  the following estimates hold for the remainder term $\mathcal{R}'$:
\begin{equation}
\sup_{t_1, t_2\in[2^{m-1, m+1}]}\sup_{k\in \mathbb{Z}, j\geq -k_{-} } 2^{\delta  j} \| \int_{t_1}^{t_2} P_k[\mathcal{R}'(t)] d t\|_{B_{k,j}}   \lesssim 2^{- \delta m}\epsilon_0,
\end{equation}
\begin{equation}\label{equation7760}
\sup_{t\in[2^{m-1, m+1}]}  \| \mathcal{R}'(t)\|_{Z}  +2^{-(1-\theta) k + \theta m}\| P_k\big(\mathcal{R}'(t)\big)\|_{L^2} + \| \widehat{\mathcal{R}'}(t, \xi) \|_{L^\infty_\xi}   \lesssim 2^{-m} \epsilon_0.
\end{equation}
\end{proposition}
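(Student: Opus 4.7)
The starting point is the fixed-point formula \eqref{fixed1} for $\Lambda_{\geq 3}[\nabla_{x,z}\varphi]$. Iterating this formula and exploiting that the operators containing $\tilde h_1$ or $\tilde h_2$ each carry a factor of size $\epsilon_1$ coming from $h$, the resulting Neumann-type series converges absolutely under \eqref{smallness}. Combined with \eqref{remainderterms} and \eqref{DN1}, this yields an expansion of $\mathcal{R}$ as a norm-convergent multilinear series in $u^\pm = h \pm i\Lambda\psi$ whose terms are at least trilinear, and each $n$-linear symbol is under control via Lemma \ref{Snorm} together with the referenced symbol bounds \eqref{symbolestimatecubic}. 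The whole proof then reduces to estimating trilinear (and higher) operators acting on the profile $f = e^{it\Lambda}u$, whose Fourier and physical side bounds are provided by the bootstrap \eqref{smallness} through \eqref{L2}--\eqref{Linfinity3}.

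For the pointwise-in-time estimates \eqref{equation7760}, I would treat each multilinear piece dyadically in all frequency variables. The Fourier $L^\infty_\xi$ bound on $\mathcal{R}'$ is obtained from the trilinear bound \eqref{trilinearesetimate} with two factors placed in $\|e^{-it\Lambda} P_{k_i} f\|_{L^\infty}$, which is of size $2^{-m}$ up to frequency weights by \eqref{Linfty}--\eqref{Linfinity3}, and the remaining factor in $\|\widehat{f_{k_i}}\|_{L^\infty_\xi}$; the combined size is $O(2^{-2m}\epsilon_0^3)$ and summation over frequencies via the symbol bounds yields the desired $2^{-m}\epsilon_0$ with substantial margin. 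The $\theta=1$ endpoint of the weighted $L^2$ estimate follows similarly by $L^2\cdot L^\infty\cdot L^\infty$ using \eqref{L2}, while the $\theta=0$ endpoint trades one $L^\infty$ for a Sobolev $L^2$ via the energy \eqref{energyestimate} (paying a factor $2^{\delta m}$), with interpolation covering intermediate $\theta$. The $Z$-norm bound on $\mathcal{R}'$ decomposes into $B_{k,j}$ pieces; for each $(k,j)$, the spatial localization by $\varphi_j^k$ is transferred onto the inputs using that the multilinear kernels have $L^1$ control (Lemma \ref{Snorm}), and at least one input is bounded via its own $Z$-norm decomposition, yielding the gain $2^{-m}\epsilon_0$.

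For the time-integral bound, I would integrate a fixed-time $B_{k,j}$ estimate of $\mathcal{R}'(t)$ over $[t_1,t_2]$. Since every term in the expansion is at least trilinear and $\|(h,\Lambda\psi)(t)\|_{W^{4,2\alpha}}\lesssim (1+t)^{-1}$, two of the factors can always be placed in $L^\infty$, giving a pointwise-in-$t$ decay of $(1+t)^{-2}$; integrating on an interval of length $\sim 2^m$ produces a $2^{-m}$ gain that must then be distributed among the weights in $B_{k,j}$. For $(k,j)$ with $j\lesssim (1-2\delta)m$, this gain easily dominates $2^{\delta j}\cdot 2^j$, using either the energy bound on one factor or a straightforward multilinear H\"older argument. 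The main obstacle is the far-spatial regime where $j$ is comparable to or much larger than $m$: here the time decay alone cannot pay for the $2^j$ weight, and one must carefully propagate the spatial localization of the input profiles (encoded in their $Z$-norms through $\varphi_{j'}^{k'}$) through the multilinear kernel to generate matching localization in $\mathcal{R}'$. Handling this step cleanly---splitting the bump $\varphi_j^k$ among the three inputs and verifying that at least one input is forced into a region whose $L^2$ mass absorbs the $2^j$ weight---is the most delicate bookkeeping in the argument, but once it is in place the geometric time decay combines with the spatial decay to deliver the required $2^{-\delta m}$ margin.
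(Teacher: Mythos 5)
Your overall architecture matches the paper's: split $\mathcal{R}$ into explicit cubic terms and terms containing $\Lambda_{\geq 3}[B(h)\psi]$, control the latter through the fixed-point formulation (\ref{fixed1}) (your Neumann series is the same as the paper's absorption of the $\epsilon_1$ prefactor in Lemma \ref{higherorderZ}), and reduce everything to multilinear estimates of the type collected in Lemma \ref{bilinearZnorm}. The fixed-time bounds in (\ref{equation7760}) are indeed obtained by $L^2\times L^\infty\times L^\infty$ H\"older essentially as you describe; one small slip is that your $L^\infty_\xi$ bound pairs two physical-side $L^\infty_x$ factors with one Fourier-side $\|\widehat{f}\|_{L^\infty_\xi}$, which is not a legitimate instance of (\ref{trilinearesetimate}) --- the paper instead puts one factor in $L^\infty_x$ and two in $L^2_x$ to land in $L^1_x$, which already gives the required $2^{-m}$.

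The genuine gap is in the time-integrated $B_{k,j}$ estimate, specifically in which regime of $j$ is hard and how it is closed. You are right that the crude bound $2^{(1+\delta)j}\cdot 2^{m}\cdot \sup_t\|P_k\mathcal{R}'(t)\|_{L^2}$ closes for $j\leq (1-2\delta)m$, and for $j\geq m+C\delta m$ the spatial mechanism you describe works (there $|x+t\nabla_\xi\Phi|\sim 2^j\gg t$, so integration by parts in $\xi$ forces an input to have comparable spatial concentration). But in the window $(1-2\delta)m\lesssim j\lesssim m+C\delta m$ neither mechanism is available: three unit-frequency inputs all concentrated near the origin produce, after time $t\sim 2^m$, output mass precisely at $|x|\sim t\sim 2^j$, so no input is ``forced'' into a far region, while the crude time integration is short by a factor $2^{2\delta m}$. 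This is exactly the case the paper isolates in the proof of (\ref{newequation1001}) (``all frequencies like $1$ and all inputs spatially concentrated around $1$''), and it is closed only by exploiting the oscillations of the cubic phases as in subsubsection \ref{allcomparable}: away from the space-resonance sets in $\eta$ or $\sigma$ one integrates by parts in those variables; near both space-resonance sets one uses the lower bound (\ref{neweqn240}) on $\Phi_1^{\mu,\tau,\tau}$ to perform a second integration by parts in time; and for the fully degenerate phase $\Phi_1^{-,+,+}$, whose value and whose $\nabla_\xi$ both vanish at the joint resonance, one instead improves the output localization to $j\leq m/2+O(\beta m)$ and uses the small measure of the resonant region. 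None of this appears in your proposal, and without it the bound $2^{\delta j}\|\int_{t_1}^{t_2}P_k[\mathcal{R}'(t)]\,dt\|_{B_{k,j}}\lesssim 2^{-\delta m}\epsilon_0$ does not close.
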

\begin{proof}
Postponed to section \ref{remainderZnorm}.
\end{proof}
\begin{proposition}\label{propZnorm1} 
Under the bootstrap assumption \textup{(\ref{smallness})}  and the assumption that Proposition \textup{\ref{propZnorm2}} holds, 
the following $Z$-norm estimate holds for any $t_1, t_2\in[2^{m-1}, 2^{m+1}]$, $m\in\mathbb{Z}_{+}$,
 \begin{equation}\label{goodtypeZ}
\sup_{k\in \mathbb{Z}, j\geq -k_{-}} 2^{\delta  j} \| \int_{t_1}^{t_2}  \textup{good}_k(t)  d  t\|_{B_{k,j}} \lesssim 2^{-\delta m}\epsilon_0.
 \end{equation}
\end{proposition}
\begin{proof}
Postponed to section \ref{goodimprovedZnorm}. 
\end{proof}
\begin{proposition}\label{propZnorm4bad}
Under the bootstrap assumption \textup{(\ref{smallness})}  and the assumption that Proposition \textup{\ref{propZnorm2}} holds, 
the following $Z$-norm estimate holds for any $t_1, t_2\in[2^{m-1}, 2^{m+1}]$, $m\in\mathbb{Z}_{+}$,
\begin{equation}\label{badtypeZ}
\sup_{k\in \mathbb{Z}, j\geq -k_{-}} 2^{\delta  j} \| \int_{t_1}^{t_2}  \textup{bad}_k(t)  d  t\|_{B_{k,j}} \lesssim 2^{-\delta m}\epsilon_0.
\end{equation}
\end{proposition}
\begin{proof}
Postponed to section \ref{badimprovedZnorm}. 
\end{proof}

\subsection{The size of profile under the bootstrap assumption}
In this subsection, we estimate the size of the profile in different function spaces under the bootstrap assumption.  These estimates will give us a good sense of what the profile $f(t)$  looks like with respect to the localized frequencies  over time.

From the definition of $Z$-norm,  the bootstrap assumption  (\ref{smallness}), the improved energy estimate (\ref{energyestimate}),  and the linear decay estimates  in Lemma \ref{lineardecay}, we have the following estimates,  
\be\label{L2}
\| P_{k} f(t)\|_{L^2}\lesssim 2^{-N_0 k_{+} +\delta m }\epsilon_0,  
\ee
\be\label{lowfrequencyestimateZ}
\| P_{k} f(t)\|_{L^2}\lesssim 2^{(1-\alpha)k-6k_{+}}\epsilon_1, \quad \| \widehat{f_{k}}(\xi)\|_{L^\infty_\xi} \lesssim 2^{-\alpha k - 6 k_+}   \epsilon_1, 
\ee
\begin{equation}\label{Linfty}
  \| e^{ -i t \Lambda} P_{k} f \|_{L^\infty} \lesssim 2^{-\alpha k -m-4.5k_{+}} \epsilon_1,
\end{equation}
where  estimate (\ref{L2}) is derived from the energy estimate,   estimate (\ref{lowfrequencyestimateZ}) is derived from the $Z$-norm estimate, estimate (\ref{Linfty}) is derived from the linear decay estimates (\ref{highdecay}) and (\ref{lowdecay}) in Lemma  \ref{lineardecay}.

Note that the $L^\infty$- estimate in (\ref{Linfty}) is not sharp when $k$ is sufficiently small. Alternatively, after choosing  $\theta = 1- \alpha $ in the estimate  (\ref{lowdecay}) in Lemma \ref{lineardecay}, the following estimate holds, 
\[
\| e^{i t \Lambda }P_{k} f \|_{L^\infty} \lesssim 2^{-m+\alpha m/2} 2^{3\alpha k/2} \| f_k\|_{L^1}\lesssim 2^{-m+\alpha m/2+\alpha k/2  }\epsilon_1,\quad \textup{if $k\leq 0$}. \]
To sum up, we have the following linear decay estimate at the low-frequency part,
\begin{equation}\label{Linfinity3}
 \| e^{ -i t \Lambda} P_{k} f \|_{L^\infty}\lesssim  \min\{2^{-m-\alpha k }, 2^{- m+\alpha m/2+\alpha k/2}\}\epsilon_1,\quad \textup{if $k\leq 0$}. 
\end{equation}

Note that the $L^\infty_\xi$-estimate and the $L^2$-estimate of the profile in (\ref{lowfrequencyestimateZ}) is derived directly from the size of $Z$-norm. When ``$k$'' is extremely small, the upper bound provided by the $Z$-norm is not sharp. It turns out that, under the bootstrap assumption (\ref{smallness}),  the $L^\infty_\xi$ estimate of the profile grows at most at rate $(1+t)^{3\delta}$. More precisely, we summarize those improved estimates in the following Lemma. 
\begin{lemma}\label{L2estimatelemma}
Under the bootstrap assumption \textup{(\ref{smallness})}  and the assumption that Proposition \textup{\ref{propZnorm2}} holds, the following estimates hold, 
\begin{equation}\label{L2estimate}
\sup_{t\in[0,T]} \sup_{k\in \mathbb{Z}} 2^{-k }  (1+t)^{ -\delta} \|f_k(t,x)\|_{L^2}    \lesssim   \epsilon_0 ,\quad 
\end{equation}
\begin{equation}\label{L2derivativeestimate}
\sup_{t\in[2^{m-1},2^{m+1}]} \sup_{k\in \mathbb{Z}}2^{-k} \| \p_t f_k(t,x)\|_{L^2}\lesssim 2^{-m}\epsilon_0,
\end{equation}
\begin{equation}\label{equation6726}
\sup_{t\in[2^{m-1}, 2^{m+1}]} \sup_{k\leq 0}\| \widehat{f_k}(t,\xi)\|_{L^\infty_\xi}\lesssim 2^{3\delta m }\epsilon_0.
\end{equation}
\end{lemma}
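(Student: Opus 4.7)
My plan is to prove the three estimates in the order (\ref{L2derivativeestimate}), (\ref{L2estimate}), (\ref{equation6726}). Estimate (\ref{L2derivativeestimate}) is the cornerstone; (\ref{L2estimate}) follows from it by time integration against the initial-data hypothesis, and (\ref{equation6726}) is obtained by a Fourier-side analysis that again relies on (\ref{L2derivativeestimate}). For (\ref{L2derivativeestimate}), I use the profile equation $\partial_t f = \sum_{\mu,\nu}T^{\mu,\nu}(f^{\mu},f^{\nu}) + \mathcal{R}'$. The remainder contribution is handled directly by Proposition \ref{propZnorm2} with $\theta = 0$, producing $\|P_k\mathcal{R}'\|_{L^2} \lesssim 2^{k-m}\epsilon_0$. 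For each quadratic piece I split the input-frequency pair into the regimes $\chi_k^1$, $\chi_k^2$, $\chi_k^3$ and apply (\ref{bilinearesetimate}). The symbol bound of Lemma \ref{sizeofsymbol} supplies an output factor $2^{k/2+k_-/2}$ in the $\chi_k^1$ regime and $2^{\max\{k_1,k_2\}+\max\{k_1,k_2\}_+/2}$ in the $\chi_k^2$ regime; placing the larger-frequency factor in $L^2$ through (\ref{L2}) and the other in the linear-decay $L^\infty$ norm through (\ref{Linfty})--(\ref{Linfinity3}) provides the $2^{-m}$ dispersion gain. Dyadic summation in $k_1, k_2$ converges by the $N_0$-regularity at high frequencies and the $\alpha$-smallness at low frequencies built into the $Z$-norm.

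For (\ref{L2estimate}), I write $f_k(t) = f_k(0) + \int_0^t \partial_s f_k(s)\,ds$. When $k \geq 0$, Proposition \ref{propositon1} gives $2^{-k}\|f_k(t)\|_{L^2}\leq 2^{-(N_0+1)k}\|(h,\Lambda\psi)\|_{H^{N_0}} \lesssim t^\delta\epsilon_0$. When $k \leq 0$, the initial contribution is bounded by Bernstein and the $L^1$ part of (\ref{initialcondition}): $\|f_k(0)\|_{L^2}\lesssim 2^k\|\widehat{f}(0)\|_{L^\infty_\xi}\lesssim 2^k\epsilon_0$. The time integral is split into dyadic blocks $I_{m'} = [2^{m'-1},2^{m'+1}]$ with $m'\leq \log_2 t$; on each $I_{m'}$ the bound (\ref{L2derivativeestimate}) yields increment $\lesssim 2^{m'}\cdot 2^{k-m'}\epsilon_0 = 2^k\epsilon_0$, and summing over the $O(\log t)$ blocks produces $\|f_k(t)\|_{L^2}\lesssim 2^k(1+\log t)\epsilon_0\lesssim 2^k t^\delta\epsilon_0$, as required.

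For (\ref{equation6726}), I write $\widehat{f_k}(t,\xi) = \widehat{f_k}(0,\xi) + \int_0^t \widehat{\partial_s f_k}(s,\xi)\,ds$; the initial term is bounded by $\|(h_0,\Lambda\psi_0)\|_{L^1}\lesssim\epsilon_0$. I dyadically decompose the time integral and, on each block $I_{m'}$, integrate by parts in time against the phase $\Phi^{\mu,\nu}$ whenever Lemma \ref{phasesize} supplies a usable lower bound. The boundary terms from this integration by parts are bounded in $L^\infty_\xi$ by combining the bootstrap bound (\ref{Linfty}) on $\|\widehat{f_k}\|_{L^\infty_\xi}$ with the $\eta$-volume estimate $\lesssim 2^{2\min\{k_1,k_2\}}$; the interior $\partial_s(\widehat{f}\widehat{f})$ term is estimated through (\ref{L2derivativeestimate}) combined with Cauchy--Schwarz in $\eta$. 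In the bad-phase subcases where time-integration by parts is illegal, I use instead the angular bilinear estimates of Lemmas \ref{bilinearest} and \ref{angularbilinear} together with the proportionality of angles identified in the discussion preceding (\ref{goodtypeterms}). A careful accounting gives dyadic increment $\lesssim 2^{2\delta m'}\epsilon_0$, and summing over $m'\leq m$ produces the claimed $2^{3\delta m}\epsilon_0$ bound.

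The principal obstacle is the comparable-frequency regime $\chi_k^3$ in (\ref{L2derivativeestimate}) and the tight $L^\infty_\xi$ accumulation in (\ref{equation6726}). In both places the symbol does not supply a free $2^k$ gain, so Bernstein combined with the smallness of the $\eta$-support must be invoked, and in (\ref{equation6726}) the accumulated loss must fit inside the narrow $2^{3\delta m}$ budget. The integration-by-parts step is only viable outside the highly-degenerate bad-phase region where Lemma \ref{phasesize} guarantees a workable lower bound on $|\Phi^{\mu,\nu}|$; in the bad-phase region the phase degeneracy has to be compensated through angular localization and the bilinear estimates of Lemmas \ref{bilinearest} and \ref{angularbilinear}.
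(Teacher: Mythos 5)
Your overall architecture matches the paper's: prove (\ref{L2derivativeestimate}) first by direct bilinear estimates on the profile equation, deduce (\ref{L2estimate}) by time integration against the $L^1$ initial data, and prove (\ref{equation6726}) by Fourier-side integration by parts in time. However, two of your quantitative steps do not close as stated.

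First, in (\ref{L2derivativeestimate}) you place the larger-frequency input in $L^2$ and the smaller one in the decay norm. In the Low$\times$High regime ($k_2\ll k_1\sim k$) this fails: by (\ref{Linfinity3}) the low-frequency piece decays only like $\min\{2^{-m-\alpha k_2},\,2^{-(1-\alpha/3)m}\}\epsilon_1$, so after summing over $k_2\leq 0$ you retain a loss $2^{\alpha m/3}=2^{m/30}$, and $2^{-m+\alpha m/3}\epsilon_1^2$ is not $\lesssim 2^{-m}\epsilon_0$ for large $m$. This loss is fatal downstream, since (\ref{L2estimate}) tolerates only $t^{\delta}$ with $\delta\leq 10^{-9}$. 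The placement must be reversed, as in (\ref{equation7090}): put $f_{k_2}$ in $L^2$, where (\ref{L2}) supplies the summable gain $2^{(1-\alpha)k_2}$, and put $f_{k_1}$ in $L^\infty$, where the full $2^{-m}$ decay is available because $k_1\sim k$ is the large frequency. (Incidentally, the regime $\chi_k^3$ you single out as the principal obstacle is harmless here: all frequencies are comparable and the same $L^2\times L^\infty$ estimate applies directly.)

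Second, in (\ref{equation6726}) you bound the boundary terms of the time integration by parts using the bootstrap bound (\ref{Linfty}) together with the $\eta$-volume $2^{2\min\{k_1,k_2\}}$. The bound (\ref{Linfty}) carries the loss $2^{-\alpha k_1-\alpha k_2}$, and dividing by the phase costs roughly $2^{-k_2-2k_{1,-}}$ up to $k_{+}$ factors; over the range $-2m\leq k_2\leq k_1$ this accumulates to a loss as large as $2^{c\alpha m}$ with $c$ of order one, which cannot fit inside the $2^{3\delta m}$ budget. The correct tool for the boundary terms is precisely the improved bound (\ref{L2estimate}) you have just proved, namely $\|f_{k_j}\|_{L^2}\lesssim 2^{k_j}t^{\delta}\epsilon_0$, applied via $L^2\times L^2$ Cauchy--Schwarz in $\eta$: the full extra power $2^{k_1+k_2}$ is what absorbs the phase denominator, and this is the reason (\ref{L2estimate}) must precede (\ref{equation6726}). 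Relatedly, your detour through ``bad-phase subcases where time integration by parts is illegal'' and Lemmas \ref{bilinearest}, \ref{angularbilinear} is unnecessary for this estimate: by (\ref{eqn135}) and (\ref{eqn137}) every phase $\Phi^{\mu,\nu}$ has the angle-independent lower bound $\gtrsim 2^{k_2-k_{2,+}/2+2k_{1,-}}$ (resp.\ $2^{k-k_{+}/2+2k_{1,-}}$), so integration by parts in time is always admissible in the stated frequency range and no angular localization is needed for this crude $L^\infty_\xi$ bound.
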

\begin{proof}
Recall (\ref{eqn1400}).  From the $L^2-L^\infty$ type bilinear estimate (\ref{bilinearesetimate}) in Lemma \ref{multilinearestimate},   (\ref{sizeofsymboluniform}) in Lemma \ref{sizeofsymbol}, and  (\ref{equation7760}) in Proposition \ref{propZnorm2}, the following estimate holds, 
\[
\sup_{t\in[2^{m-1},2^{m+1}]} \sup_{k\in \mathbb{Z}}2^{-k} \| \p_t f_k(t,x)\|_{L^2} \lesssim \sup_{t\in[2^{m-1},2^{m+1}]} 2^{-k} \| P_k[\mathcal{R}'](t)\|_{L^2} + \sum_{k_2\leq k_1} 2^{-k+k+k_{1,+}} \| f_{k_2}(t) \|_{L^2}
\]
\begin{equation}\label{equation7090}
 \times \| e^{-it \Lambda} f_{k_1}(t)\|_{L^\infty} \lesssim 2^{-m}\epsilon_0 + \sum_{k_2\leq k_1} 2^{-m+(1-2\alpha)k_2-3k_{1,+}}\epsilon_1^2 \lesssim 2^{-m}\epsilon_0 .
 \end{equation}
 Recall that the  $L^\infty_\xi$-norm of the initial data  is of size $\epsilon_0$, see (\ref{initialcondition}). Therefore, the following estimate holds from the volume of support of the frequency variable ``$\xi$'', 
\be\label{newequation310}
\sup_{k\in \mathbb{Z}} 2^{-k}\| f_k(0,x)\|_{L^2} \lesssim \|\widehat{f}(t,\xi)\psi_k(\xi)\|_{L^\infty_\xi}\lesssim \epsilon_0.
\ee
Combining (\ref{newequation310}) and (\ref{equation7090}), it is easy to see that our desired estimate (\ref{L2estimate}) holds. 

Recall again  (\ref{eqn1400}). From  the   estimate   (\ref{equation7760}) in Proposition \ref{propZnorm2}, the following estimate holds straightforwardly for any $k\in \mathbb{Z}, k\leq 0$, and $t\in[2^{m-1}, 2^m]$,
\be\label{noveqn401}
\| \widehat{f}_k(t, \xi)\|_{L^\infty_\xi} \lesssim 2^{\delta m }\epsilon_0 + \sum_{
\begin{subarray}{c}
k_1,k_2\in \mathbb{Z},
\mu, \nu \in\{+,-\}\\
k_2\leq k_1\\
\end{subarray}
}   \| \int_0^t e^{i s \Phi^{\mu, \nu}(\xi, \eta)} q_{\mu, \nu}(\xi-\eta, \eta)\widehat{f_{k_1}^\mu}(s,\xi-\eta) \widehat{f_{k_2}^{\nu}}(s,\eta)\psi_k(\xi) d \eta d s \|_{L^\infty_\xi}.
\ee
From the estimate  (\ref{sizeofsymboluniform})  in Lemma \ref{sizeofsymbol} and the estimate (\ref{L2estimate}), the following estimate holds when $k_1, k_2\notin [-2m, 2\beta m ]$, 
\[
\sum_{
\begin{subarray}{c}
 k_1, k_2\notin [-2m, 2\beta m ]\\
\mu, \nu \in\{+,-\}, k_1,k_2\in \mathbb{Z},k_2\leq k_1\\
\end{subarray}
}   \| \int_0^t e^{i s \Phi^{\mu, \nu}(\xi, \eta)} q_{\mu, \nu}(\xi-\eta, \eta)\widehat{f_{k_1}^\mu}(s,\xi-\eta) \widehat{f_{k_2}^{\nu}}(s,\eta)\psi_k(\xi) d \eta d s \|_{L^\infty_\xi}
\]
\be\label{noveqn310}
\lesssim \sum_{k_2\leq -2m, \textup{\, or\,} k_1\geq 2\beta m } 2^{m +k+k_{1,+}} \sup_{0\leq t\leq 2^m} \| f_{k_1}(t)\|_{L^2}\| f_{k_2}(t)\|_{L^2} \lesssim 2^{-m+2\delta m}\epsilon_1^2.
\ee

When $k_1, k_2\in[-2m, 2\beta m ]$, we do integration by parts in time once. As a result, the following estimate holds from the estimate (\ref{sizeofnormalform}),
\[
\sum_{k_1, k_2\in[-2m, 2\beta m ], k_2\leq k_1}\| \int_0^t e^{i s \Phi^{\mu, \nu}(\xi, \eta)} q_{\mu, \nu}(\xi-\eta, \eta)\widehat{f_{k_1}^\mu}(s,\xi-\eta) \widehat{f_{k_2}^{\nu}}(s,\eta)\psi_k(\xi) d \eta d s \|_{L^\infty_\xi}
\]
\[
\lesssim \sup_{0\leq t\leq 2^m}\sum_{k_1, k_2\in[-2m, 2\beta m ], k_1, k_2\in \mathbb{Z}, k_2\leq k_1} 2^{k-\min\{k,k_2\}-2k_{1,-}+3k_{1,+}/2}\big[  \| f_{k_1}(t)\|_{L^2}  \| f_{k_2}(t)\|_{L^2} 
\]
\be\label{noveqn412}
+\int_0^t \big( \| \p_t f_{k_1}(s)\|_{L^2} \| f_{k_2}(s)\|_{L^2} 
+  \| f_{k_1}( s)\|_{L^2} \| \p_t f_{k_2}(s)\|_{L^2}\big) d s\big]\lesssim m^2 2^{2\delta m}\epsilon_1^2 \lesssim 2^{3\delta m}\epsilon_0.
\ee
From the estimates (\ref{noveqn401}), (\ref{noveqn310}), and (\ref{noveqn412}), it is easy to see that our desired estimate (\ref{equation6726}) holds. 
\end{proof}

Therefore, \textit{under the bootstrap assumption} \textup{(\ref{smallness})} \textit{and the   assumption that Proposition} \ref{propZnorm2} \textit{holds}, the following estimates hold for the profile ``$f(t)$'' at the low-frequency part,
\be\label{sizeofprofilelowfrequency}
\| P_{k} f(t)\|_{L^2}\lesssim \min\{2^{(1-\alpha)k },2^{k+3\delta m }\}\epsilon_1,  \quad \| \widehat{f_{k}}(\xi)\|_{L^\infty_\xi} \lesssim \min\{2^{-\alpha k  }  ,2^{3\delta m }\} \epsilon_1, \quad \textup{when\,\,} k\leq 0.
\ee
\begin{remark}
Please note that the results in Lemma \ref{L2estimatelemma} cannot and will not be used in the proof of Proposition \ref{propZnorm2}, i.e., the estimate of the cubic and higher order terms, because the validity of Proposition \ref{propZnorm2} is part of the assumptions in Lemma \ref{L2estimatelemma}.
\end{remark}
\section{The Improved $Z$-norm Estimate: Good  Type Phases}\label{goodimprovedZnorm}

The main goal of this section is to prove the desired Proposition \ref{propZnorm1}. In other words, we will prove the desired estimate (\ref{goodtypeZ}) under the bootstrap assumption (\ref{smallness}) and the assumption that Proposition \ref{propZnorm2} holds. Note that the estimate (\ref{sizeofprofilelowfrequency}) is valid in this section. 

 Recall (\ref{goodtypeterms}) and (\ref{goodtypephase}). In   subsection \ref{highhighgood}, we consider the case $(k_1,k_2, \mu, \nu)\in \chi_k^1\times\{(+,+), (-,-)\}$.  In   subsection \ref{lowhighgood}, we consider the case  $(k_1,k_2, \mu, \nu)\in \chi_k^2\times$ $\{(-,+), (-,-)\}$. Hence finishing the proof.

\subsection{ When $(k_1, k_2, \mu, \nu)\in \chi_k^1\times \{(-,-), (+,+)\} $}\label{highhighgood}

For simplicity, we first rule out the relatively high-frequency case and the very-low-frequency case. More precisely, the following Lemma holds.

\begin{lemma}\label{ruleoutlowhigh}
For any fixed  $j,m\in \mathbb{Z}_{+}$, 
under the bootstrap assumption \textup{(\ref{smallness})}  and the assumption that Proposition \textup{\ref{propZnorm2}} holds, the following estimate  holds for any $k\in \mathbb{Z}$, $t_1, t_2\in [2^{m-1}, 2^m]$, 
\be\label{nove2}
\sum_{\begin{subarray}{c}
|k_1-k_2|\leq 10\\
k_1 \geq  (1+\delta)(j+m)/(N_0-8)\\
\end{subarray}
}\sum_{ \mu, \nu\in\{+,-\}} 2^{\delta j}  \|
\int_{t_1}^{t_2}T^{\mu, \nu} (f_{k_1}, f_{k_2})  d t \|_{B_{k,j}}\lesssim 2^{-\delta m }\epsilon_0.
\ee
Moreover, for any $k\in \mathbb{Z}$ s.t.,  $k\leq  - (1+10\delta)(m+j)/(2+\alpha) $,   the following estimate holds, 
\be\label{nove3}
\sum_{\begin{subarray}{c}
|k_1-k_2|\leq 10\\
\end{subarray}
}\sum_{ \mu, \nu\in\{+,-\}} 2^{\delta j}  \|
\int_{t_1}^{t_2}T^{\mu, \nu} (f_{k_1}, f_{k_2})  d t \|_{B_{k,j}}\lesssim 2^{-\delta m }\epsilon_0.
\ee
\end{lemma}
\begin{proof} 
For any $\mu, \nu\in\{+,-\}$, from the $L^2_x\rightarrow L^1_x$ type Sobolev embedding,  the  $L^2_x-L^2_x$-type bilinear estimate and the estimate (\ref{sizeofsymboluniform}) in Lemma \ref{sizeofsymbol}, the following estimate holds for any $(k_1,k_2)\in \chi_k^1$,
\[
 \|
\int_{t_1}^{t_2}T^{\mu, \nu} (f_{k_1}, f_{k_2})  d t \|_{B_{k,j}}  \lesssim   2^{\alpha k + 6k_+ + m+ j +k } \| q_{\mu, \nu}(\xi-\eta, \eta)\|_{\mathcal{S}^{\infty}_{k,k_1,k_2}}  \|  P_{k_1}f\|_{L^2}   \| P_{k_2} f\|_{L^2} \]
\be\label{nove1}
\lesssim  2^{(2+\alpha) k + 6 k_{+}+ m+  j +k_{1,+}  -2N_0 k_{1,+}+2\delta m } \epsilon_1^2. 
 \ee
 From the estimate (\ref{nove1}), it is easy to see that the desired estimates (\ref{nove2}) and (\ref{nove3}) hold. 
\end{proof}

To prove the desired estimate (\ref{goodtypeZ}),  from the estimates (\ref{nove2}) and (\ref{nove3}) in Lemma \ref{ruleoutlowhigh}, it is easy to see that  it would be  sufficient to prove the following estimate,
\be\label{reduceddesired1}
   2^{\delta j}  \|
\int_{t_1}^{t_2}T^{\mu, \nu} (f_{k_1}, f_{k_2})  d t \|_{B_{k,j}}\lesssim 2^{-2\delta m -2\delta j}\epsilon_0,\quad   (\mu, \nu)\in\{(-,-), (+,+)\},
\ee
where fixed $k$,  $k_1$, and $k_2$ satisfy the following estimate 
\begin{equation}\label{eqn40000}
 - (1+10\delta)(m+j)/(2+\alpha)  \leq k  \leq k_1-5\leq k_2\leq k_1 +5 \leq (1+\delta)(j+m)/(N_0-8)+10.
\end{equation}
Note that we used the fact that there are  at most  $(m+j)^2$ cases in total in (\ref{eqn40000}), which is only a logarithmic loss. 

 Based on the possible size of the fixed $j$, we separate   into two cases, see Lemma \ref{hhgoodj1} and Lemma \ref{hhgoodj2}.

\begin{lemma}\label{hhgoodj1}
Under the bootstrap assumption \textup{(\ref{smallness})}  and the assumption that Proposition \textup{\ref{propZnorm2}} holds, if $j\geq (1+20\delta)m$, then the desired estimate (\textup{\ref{reduceddesired1}}) holds  for any fixed $k$,  $k_1$, and $k_2$ that satisfy  \textup{(\ref{eqn40000})}.
\end{lemma}
\begin{proof}
Firstly, we do spatial localizations for two inputs. As a result,   the following decomposition holds, 
\be\label{noveq9}
\int_{t_1}^{t_2}T^{\mu, \nu} (f_{k_1}(t), f_{k_2}(t))  d t = \sum_{j_1\geq -k_{1,-}, j_2\geq -k_{2,-}} \int_{t_1}^{t_2}T^{\mu, \nu} (f_{k_1,j_1}(t), f_{k_2,j_2}(t))  d t.
\ee
  If  $\min\{j_1,j_2\}\geq j-\delta j-\delta m$, then the following estimate holds after using the $L^2_x-L^\infty_x$ type bilinear estimate, the estimate (\ref{sizeofsymboluniform}) in Lemma \ref{sizeofsymbol}, and the $L^\infty_x\rightarrow L^2_x$ type Sobolev embedding, 
\[
\sum_{\min\{j_1,j_2\}\geq j-\delta j-\delta m} 2^{\delta j}\| \int_{t_1}^{t_2}T^{\mu, \nu} (f_{k_1,j_1}(t), f_{k_2,j_2}(t))  d t\|_{B_{k,j}} \lesssim \sum_{\min\{j_1,j_2\}\geq j-\delta j-\delta m}2^{\alpha k + m + (1+\delta)j}
\]
\be\label{noveq7}
 \times 2^{k+(1-2\alpha)k_2 -j_1-j_2-3k_{1,+}}\| f_{k_1,j_1}\|_Z \| f_{k_2,j_2}\|_Z \lesssim 2^{m +2\delta m-(1-3\delta)j}\epsilon_0\lesssim 2^{-2\delta j -2\delta m}\epsilon_0,
\ee
where we used the assumption that $j\geq (1+20\delta)m.$

Now we consider the case 
$\min\{j_1, j_2\}\leq j-\delta j-\delta m$. Since $|k_1-k_2|\leq 5$, from the symmetry between inputs, we assume that $j_1=\min\{j_1,j_2\}$. Otherwise, we can simply do change of variables to switch the roles of $\xi-\eta$ and $\eta$.

  For this case,  we can do integration by parts in ``$\xi$''  to see rapidly decay. More precisely, after integration by parts in ``$\xi$'' once, we have
\[
\int_{t_1}^{t_2}T^{\mu, \nu} (f_{k_1,j_1}(t), f_{k_2,j_2}(t))  d t \]
\[=\int_{t_1}^{t_2} \int_{\R^2}\int_{\R^2} e^{i x\cdot \xi + i t \Phi^{\mu, \nu}(\xi, \eta)}  \nabla_{\xi}\cdot \big(\widehat{f^{\mu}_{k_1,j_1}}(t,\xi-\eta)\widehat{f^{\nu}_{ k_2,j_2}}(t,\eta)a_{\mu, \nu}(t,x,\xi, \eta)\big) d \eta  d \xi dt, 
\] 
where
\begin{equation}\label{equation6840}
a_{\mu, \nu}(t,x,\xi, \eta) = i q_{\mu, \nu}(\xi-\eta, \eta) \frac{x+ t \nabla_{\xi}\Phi^{\mu, \nu}(\xi, \eta)}{|x+ t \nabla_{\xi}\Phi^{\mu, \nu}(\xi, \eta)|^2}.
\end{equation}
Note that 
\begin{equation}\label{eqn3320}
|\nabla_{\xi}\Phi^{\mu, \nu}(\xi, \eta)|\lesssim 1\Longrightarrow \frac{x+ t \nabla_{\xi}\Phi^{\mu, \nu}(\xi, \eta)}{|x+ t \nabla_{\xi}\Phi^{\mu, \nu}(\xi, \eta)|^2}\varphi_{j}^{k}(x)\sim 2^{-j}.
\end{equation}
Hence, we can gain $2^{-j}$ if  doing integration by parts in ``$\xi$'' once. 

In the meantime, we need to find out what the maximal loss is. If $\nabla_{\xi}$ hits input $\widehat{f^{\mu}_{k_1,j_1}}(\cdot)$,  then we at most lose $2^{j_1}$. If $\nabla_{\xi}$ hits the  cutoff functions or the symbol $a_{\mu, \nu}(t,x,\xi, \eta)$,    it is easy to see that we at most  lose $\max\{2^{-k_1},2^{-k}, 1\}$.   Note that $j_1\geq -k_{1,-}$ and $k_1\geq k-10$.  Therefore,  the net gain of  doing integration by parts in ``$\xi$'' once is at least $\max\{2^{-j+j_1}, 2^{-j-k}\}\lesssim 2^{-\delta j-\delta m}$. Note that we used the fact that $j+k\geq \delta j+\delta m$, which can be derived from the estimate (\ref{eqn40000}) and the assumption that $j\geq (1+20\delta)m$.

We can do this process as many times as we want to see rapidly decay. As a result,  the following point-wise estimate holds after using  the estimate (\ref{sizeofsymboluniform}) in Lemma \ref{sizeofsymbol} and the $L^2-L^2$ type bilinear estimate, 
\begin{equation}\label{eqn3321}
\Big|\int_{t_1}^{t_2}T^{\mu, \nu} (f_{k_1,j_1}(t), f_{k_2,j_2}(t))  d t\Big| \varphi_{j}^{k}(x)\lesssim 2^{-10j}\| f_{k_1,j_1}\|_{L^2} \| f_{k_2,j_2}\|_{L^2},
\end{equation}
which further implies the following estimate, 
\be\label{noveq6}
\sum_{\min\{j_1,j_2\}\leq j-\delta j-\delta m}2^{\delta m}\| \int_{t_1}^{t_2}T^{\mu, \nu} (f_{k_1,j_1}(t), f_{k_2,j_2}(t))  d t\|_{B_{k,j}} \lesssim 2^{m+2j-10j}\epsilon_1^2\lesssim 2^{-2\delta m -2\delta j}\epsilon_0.
\ee
From (\ref{noveq9}), (\ref{noveq7}) and (\ref{noveq6}), it is easy to see that our desired estimate (\ref{reduceddesired1}) holds if $j\geq (1+20\delta)m$.
\end{proof}

\begin{lemma}\label{hhgoodj2}
Under the bootstrap assumption \textup{(\ref{smallness})}  and the assumption that Proposition \textup{\ref{propZnorm2}} holds, if $j\leq (1+20\delta)m$, then the desired estimate (\textup{\ref{reduceddesired1}}) holds  for any fixed $k$,  $k_1$, and $k_2$ that satisfy \textup{(\ref{eqn40000})}.
\end{lemma}
\begin{proof}
If $j\leq (1+20\delta )m$, from the estimate (\ref{eqn40000}), we have
\[
-(2+100\delta)m/(2+\alpha)\leq k   \leq k_1-10\leq k_1\leq   \beta m, \quad \beta:=1/980.
\]
Note that the following equality holds, 
\[
\mathcal{F}[\int_{t_1}^{t_2}T^{\mu, \nu} (f_{k_1}(t), f_{k_2}(t))  d t](\xi) = \int_{t_1}^{t_2} \int_{\R^2} e^{it \Phi^{\mu, \nu}(\xi, \eta)}  \widehat{ f^{\mu}_{k_1}}(t,\xi-\eta)\widehat{f^{\nu}_{k_2}}(t,\eta) q_{\mu, \nu}(\xi-\eta, \eta) d \eta d t.
\]
Recall (\ref{hhchi1}). To take   advantage of the high oscillation in time for the good type phases,  we  do integration by parts in time once for the above integral. As a result, we  have, 
\begin{equation}\label{eqn450}
\mathcal{F}[\int_{t_1}^{t_2}T^{\mu, \nu} (f_{k_1}(t), f_{k_2}(t))  d t](\xi)= \sum_{i=1,2} \textup{End}_{k_1, k_2}^{\mu, \nu, i}+  J^{\mu, \nu,i}_{k_1, k_2} ,
\end{equation}
where
\begin{equation}\label{eqn453}
\textup{End}_{k_1, k_2}^{\mu, \nu, i } = (-1)^{i-1}  \int_{\R^2} e^{i t_i \Phi^{\mu, \nu}(\xi, \eta)}\widehat{ f^{\mu}}(t_i,\xi-\eta)\widehat{f^{\nu}}(t_i,\eta) m_{\mu, \nu}(\xi-\eta, \eta) \psi_{k_1}(\xi-\eta)\psi_{k_2}(\eta ) d \eta,
\end{equation}
\begin{equation}\label{eqn451}
J^{\mu, \nu,1}_{k_1, k_2}= \int_{t_1}^{t_2} \int_{\R^2} e^{i t \Phi^{\mu, \nu}(\xi, \eta)}\widehat{f^{\mu}}(t,\xi-\eta)\widehat{\p_t f^{\nu}}(t,\eta) m_{\mu, \nu}(\xi-\eta, \eta) \psi_{k_1}(\xi-\eta)\psi_{k_2}(\eta ) d \eta d t, 
\end{equation}
\begin{equation}\label{eqn452}
J^{\mu, \nu,2}_{k_1, k_2}=  \int_{t_1}^{t_2} \int_{\R^2} e^{i t \Phi^{\mu, \nu}(\xi, \eta)}\widehat{\p_t f^{\mu}}(t,\xi-\eta)\widehat{f^{\nu}}(t,\eta) m_{\mu, \nu}(\xi-\eta, \eta) \psi_{k_1}(\xi-\eta)\psi_{k_2}(\eta ) d \eta d t,
\end{equation}
where the symbol $m_{\mu, \nu}(\cdot, \cdot)$ is defined in (\ref{neweqn90}).
 
From Lemma \ref{Snorm},  (\ref{sizeofsymboluniform})  in Lemma \ref{sizeofsymbol}, and (\ref{hhchi1}),  it is easy to check that the following estimate holds for any $(k_1,k_2)\in \chi_k^1\cup \chi_k^2$,
\begin{equation}\label{equation5667}
 \|m_{\mu, \nu}(\xi-\eta, \eta) \|_{\mathcal{S}^\infty_{k,k_1,k_2}}\lesssim 2^{k/2+k_{-}/2+3k_{1,+}-k_1}
\lesssim 2^{3\beta m}.
\end{equation}
 From the $L^2-L^\infty$ type bilinear estimate (\ref{bilinearesetimate}) in Lemma \ref{multilinearestimate}, (\ref{L2derivativeestimate}) in Lemma \ref{L2estimatelemma} and  (\ref{equation5667}),  the following estimate holds if $k_1\leq -4\beta m$  or $k\leq - 40\beta m$,
\[
\sum_{i=1,2}2^{\delta j}\| \mathcal{F}^{-1}\big[\textup{End}_{k_1, k_2}^{\mu, \nu, i}\big]\|_{B_{k,j}} + 2^{\delta j} \| \mathcal{F}^{-1}\big[J^{\mu, \nu,i}_{k_1, k_2}\big]\|_{B_{k,j}} \lesssim 2^{\alpha k +6k_{+} +j +3\beta m+2\delta m} \| f_{k_1} \|_{L^2} \| e^{-i t\Lambda} f_{k_2}\|_{L^\infty}\]
\[
+ 2^{\alpha k +6k_+ + j +m +3\beta m +2\delta m}\big( \|  \p_t f_{k_1} \|_{L^2} \| e^{-i t\Lambda} f_{k_2}\|_{L^\infty}  + \| e^{-it \Lambda } f_{k_1}\|_{L^\infty} \|\p_t  f_{k_2}\|_{L^2}\big) \]
\be\label{noveqn74}
\lesssim 2^{\alpha k +(1-2\alpha)k_1 + 3\beta m +2\delta m}\epsilon_1^2 \lesssim 2^{-2\delta m -2\delta j}\epsilon_0.
\ee
 From the estimate (\ref{noveq41}) in Lemma \ref{hhgoodj3} and the estimate (\ref{noveqn65}) in Lemma \ref{hhgoodj7}, we know that the desired estimate (\ref{noveqn74}) also holds when $k_1 \geq - 4\beta m$ and $k\geq -40\beta m $. Hence finishing the proof. 
\end{proof}
\begin{lemma}\label{hhgoodj3}
Under the bootstrap assumption \textup{(\ref{smallness})}  and the assumption that Proposition \textup{\ref{propZnorm2}} holds, if $j\leq (1+20\delta)m$, then the following estimate holds for fixed $k, k_1\in \mathbb{Z}, k\in[-40\beta m , \beta m], k_1\in [-4\beta m, \beta m]$, 
\be\label{noveq41}
\sum_{i=1,2}2^{\delta j}\| \mathcal{F}^{-1}\big[\textup{End}_{k_1, k_2}^{\mu, \nu, i }\big]\|_{B_{k,j}}\lesssim 2^{-2\delta m-2\delta j}\epsilon_0.
\ee  
\end{lemma}
\begin{proof}
Recall (\ref{eqn453}). After doing  spatial localizations for two inputs, we have 
\begin{equation}
\textup{End}_{k_1, k_2}^{\mu, \nu, i }= \sum_{j_1\geq -k_{1,-},j_2\geq -k_{2,-}}\textup{End}_{k_1,j_1, k_2,j_2}^{\mu, \nu, i },
\end{equation}
\[
\textup{End}_{k_1,j_1, k_2,j_2}^{\mu, \nu, i } = (-1)^{i-1} \int_{\R^2} e^{i t_i \Phi^{\mu, \nu}(\xi, \eta)}\widehat{ f^{\mu}_{k_1,j_1}}(t_i,\xi-\eta)\widehat{f^{\nu}_{k_2,j_2}}(t_i,\eta) m_{\mu, \nu}(\xi-\eta, \eta)  d \eta.
\]

Firstly, let's consider the case ``  $\max\{j_1,j_2\}\geq m + k +k_1-4\beta m $''. From the $L^2-L^\infty$ type bilinear estimate (\ref{bilinearesetimate}) in Lemma \ref{multilinearestimate} and the estimate  (\ref{equation5667}), we can put the input with larger spatial concentration in $L^2$ and the other input in $L^\infty$. As a result, the following estimate holds,
\[
\sum_{\max\{j_1,j_2\}\geq m + k +k_1-4 \beta m}  2^{\delta j}\| \mathcal{F}^{-1}\big[\textup{End}_{k_1,j_1, k_2,j_2}^{\mu, \nu, i }\big]\|_{B_{k,j}}\lesssim \sum_{\max\{j_1,j_2\}\geq m + k +k_1-4\beta m}  2^{\alpha k+ 6k_{+} +(1+\delta)j+4\beta m} 
\]
\be\label{noveq81}
 \times 2^{  -\max\{j_1,j_2\} -m-\alpha (k_1+k_2)}\|f_{k_1,j_1}\|_{Z} \|f_{k_2,j_2}\|_{Z} \lesssim  2^{-m-(1+\alpha)(k+k_1)+9\beta m}\epsilon_1^2 \lesssim 2^{-2\delta m -2\delta j}\epsilon_0.
\ee

Lastly, let's consider the case when $\max\{j_1,j_2\}\leq m+k+k_1-4\beta m$. For this case, we can do integration by parts in ``$\eta$'' many times  to see rapidly decay. More precisely, after integration by parts in $\eta$ once, we have the following identity,
\begin{equation}\label{equation6510}
\textup{End}_{k_1,j_1, k_2,j_2}^{\mu, \nu, i} = \frac{(-1)^{i-1}}{t_i}\int_{\R^2} e^{i t_i \Phi^{\mu, \nu}(\xi, \eta)} \nabla_{\eta}\cdot\big( \widehat{ f^{\mu}_{k_1,j_1}}(t_i,\xi-\eta)\widehat{f^{\nu}_{k_2,j_2}}(t_i,\eta)  \tilde{m}_{\mu, \nu}(\xi-\eta, \eta)\big) d \eta,
\end{equation}
\begin{equation}\label{integrationeta}
 \tilde{m}_{\mu, \nu}(\xi-\eta, \eta)  =- \frac{ {m}_{\mu, \nu}(\xi-\eta, \eta) \nabla_{\eta} \Phi^{\mu, \nu}(\xi, \eta)  }{i|\nabla_{\eta}\Phi^{\mu, \nu}(\xi, \eta)|^2}.
\end{equation}
If $\nabla_{\eta}$ hits  $\widehat{f}_{k_1,j_1}$ and $\widehat{f}_{k_2,j_2}$, we at most lose $2^{\max\{j_1,j_2\} }$. If $\nabla_{\eta}$ hits the symbol $\tilde{m}_{\mu, \nu}(\cdot, \cdot)$, then from the estimates (\ref{dece1}) and (\ref{dece2}) in Lemma \ref{phasesize}, it is easy to see that the maximal loss is  $2^{-3k_{ -}+k_{1,+}}$. Therefore,  the net gain of doing integration by parts in ``$\eta$'' once is at least $2^{-m}\max\{ 2^{\max\{j_1,j_2\}-k-k_1+5k_{1,+}/2},2^{-5k_{ -}+k_{1,+}} \}$, which is less than $2^{-\beta m}$. Therefore, after repeating   this process many times, it is easy to see that the following estimate holds,
 \[
\sum_{\max\{j_1,j_2\}\leq  m + k +k_1-4 \beta m}  2^{\delta j}\| \mathcal{F}^{-1}\big[\textup{End}_{k_1,j_1, k_2,j_2}^{\mu, \nu, i }\big]\|_{B_{k,j}} \lesssim \sum_{\max\{j_1,j_2\}\leq  m + k +k_1-4 \beta m} 2^{-10 m} \| f_{k_1,j_1}\|_{L^2}  \]
\be\label{noveq82}
\times \| f_{k_2,j_2}\|_{L^2}\lesssim 2^{-2\delta m -2\delta j}\epsilon_0.
 \ee
From the estimates (\ref{noveq81}) and (\ref{noveq82}), it is easy to see that our desired estimate (\ref{noveq41}) holds.  
 \end{proof}
 \begin{lemma}\label{hhgoodj7}
 Under the bootstrap assumption \textup{(\ref{smallness})}  and the assumption that Proposition \textup{\ref{propZnorm2}} holds, if $j\leq (1+20\delta)m$, then the following estimate holds for fixed $k, k_1\in \mathbb{Z}, k\in[-40\beta m , \beta m], k_1\in [-4\beta m, \beta m]$, 
\be\label{noveqn65}
\sum_{i=1,2}2^{\delta j}\| \mathcal{F}^{-1}\big[J^{\mu, \nu,i}_{k_1, k_2}\big]\|_{B_{k,j}}\lesssim 2^{-2\delta m-2\delta j}\epsilon_0.
\ee  
 \end{lemma}
 \begin{proof}
 Recall (\ref{eqn451}) and (\ref{eqn452}). 
   After plugging in the equation satisfied by $\p_t f$ in (\ref{eqn1400}) and doing dyadic decompositions for the quadratic terms of $\p_t f$, we have
\[
 J^{\mu, \nu, i}_{k_1,k_2}= \sum_{(k_1',k_2')\in \chi_{k_{3-i}}^1\cup \chi_{k_{3-i}}^2\cup \chi_{k_{3-i}}^3}  \sum_{\tau, \kappa\in \{+, -\}} J^{\mu, \nu,\tau, \kappa, i}_{k_1',k_2'} + \mbox{JR}^{i}_{k_1,k_2},\]
 \[
 J^{\mu, \nu,\tau, \kappa, i}_{k_1',k_2' }:= \sum_{j_1'\geq -k_{1,-}', j_2'\geq -k_{2,-}', j_i \geq -k_{i,-}} H_{j_i;j_1',j_2'}^{\mu,\nu,\tau, \kappa,i},
\]
\begin{equation}\label{tridecom}
H_{j_1',j_2'}^{\mu,\nu,\tau, \kappa,i}=\sum_{ j_i \geq -k_{i,-}} H_{j_i;j_1',j_2'}^{\mu,\nu,\tau, \kappa,i}, \quad H_{j_i;j_1'} ^{\mu,\nu,\tau, \kappa,i}=\sum_{ j_2' \geq -k_{2,-}'} H_{j_i;j_1',j_2'}^{\mu,\nu,\tau, \kappa,i},\quad i \in \{1,2\},
\end{equation}
where
\[
H_{j_i;j_1',j_2'}^{\mu,\nu,\tau, \kappa,1} =   \int_{t_1}^{t_2}\int_{\R^2} \int_{\R^2} e^{i t \Phi_1^{\mu, \tau, \kappa}(\xi, \eta, \sigma)} m^{ \tau, \kappa}_{\mu, \nu,1}(\xi, \eta, \sigma)\widehat{
f_{k_1,j_1}^{\mu}
}(t,\xi-\eta)\widehat{f_{k_1', j_1'}^{\tau}}(t,\eta-\sigma) \widehat{f_{k_2',j_2'}^{\kappa}}(t,\sigma)  d \eta d\sigma   d t, 
\]
\[
H_{j_i;j_1',j_2'}^{\mu,\nu,\tau, \kappa,2} =   \int_{t_1}^{t_2} \int_{\R^2}\int_{\R^2} e^{i t \Phi_2^{\tau, \kappa, \nu}(\xi, \eta, \sigma)} m^{ \tau, \kappa}_{\mu, \nu,2}(\xi, \eta, \sigma)\widehat{
f_{k_1',j_1'}^{\tau}
}(t,\xi-\sigma)\widehat{f_{k_2',j_2'}^{\kappa}}(t,\sigma-\eta) \widehat{f_{k_2,j_2}^{\nu}}(t,\eta)  d \eta d\sigma   d t, 
\]
\[
\mbox{JR}^{1}_{k_1,k_2}= \int_{t_1}^{t_2} \int_{\R^2} e^{i t \Phi^{\mu, \nu}(\xi, \eta)}\widehat{f^{\mu}}(t,\xi-\eta)\widehat{\mathcal{R'}^{\nu}}(t,\eta) m_{\mu, \nu}(\xi-\eta, \eta) \psi_{k_1}(\xi-\eta)\psi_{k_2}(\eta ) d \eta d t, \]
\[
\mbox{JR}^{2}_{k_1,k_2}= \int_{t_1}^{t_2} \int_{\R^2} e^{i t \Phi^{\mu, \nu}(\xi, \eta)}\widehat{\mathcal{R'}^{\mu}}(t,\xi-\eta)\widehat{f^{\nu}}(t,\eta) m_{\mu, \nu}(\xi-\eta, \eta) \psi_{k_1}(\xi-\eta)\psi_{k_2}(\eta ) d \eta d t, \]
\be\label{neweqn1}
\Phi_1^{\mu, \tau,\kappa}(\xi, \eta, \sigma)= \Lambda(\xi)- \mu \Lambda(\xi-\eta)-\tau\Lambda(\eta-\sigma)-\kappa\Lambda(\sigma),
\ee
\be\label{neweqn2}
\Phi_2^{\tau,  \kappa,\nu}(\xi, \eta, \sigma)= \Lambda(\xi)- \tau\Lambda(\xi-\sigma)-\kappa\Lambda(\sigma-\eta)-\nu\Lambda(\eta),
\ee
\begin{equation}\label{eqn4400}
m^{ \tau, \kappa}_{\mu, \nu,1}(\xi, \eta, \sigma)= m_{\mu, \nu}(\xi-\eta, \eta) \big(q_{\tau\nu, \kappa\nu}(\eta-\sigma, \sigma)\big)^{\nu}\psi_{k}(\xi)\psi_{k_1}(\xi-\eta)\psi_{k_2}(\eta),
\end{equation}
\begin{equation}\label{eqn4401}
m^{ \tau, \kappa}_{\mu, \nu,2}(\xi, \eta, \sigma)= m_{\mu, \nu}(\xi-\eta, \eta) \big(q_{\tau\mu, \kappa\mu}(\xi-\sigma, \sigma-\eta)\big)^{\mu}\psi_{k}(\xi)\psi_{k_1}(\xi-\eta)\psi_{k_2}(\eta).
\end{equation}
Here, we remind readers that ``$\tau \nu$ '' is understood as the product of signs, e.g., $+-=-$. 

From the estimate (\ref{productofsymbol}) in Lemma \ref{multilinearestimate}, the estimate (\ref{equation5667}),  and the estimate  (\ref{sizeofsymboluniform})  in Lemma \ref{sizeofsymbol}, the following estimate holds, 
\begin{equation}\label{equation7860}
\| m^{\tau, \kappa}_{\mu, \nu,1}(\xi, \eta, \sigma)\psi_{k_1'}(\eta-\sigma)\psi_{k_2'}(\sigma)\|_{\mathcal{S}^\infty} + \| m^{\tau, \kappa}_{\mu, \nu,2}(\xi, \eta, \sigma)\psi_{k_1'}(\xi-\sigma)\psi_{k_2'}(\sigma-\eta)\|_{\mathcal{S}^\infty} \lesssim 2^{k_1+k_{1,+}'+4\beta m}. 
\end{equation}

From the estimate (\ref{equation7760}) in Proposition \ref{propZnorm2}, we know that the $Z$-norm of $\mathcal{R}'(t)$ decays at rate $2^{-m  }$, which compensates the loss from  the integration with respect to time. With minor modifications,  the method used in the estimate of $\textup{End}_{k_1, k_2}^{\mu, \nu, i }$ can be applied directly to the estimate of $\mbox{JR}^{1}_{k_1,k_2}$ and $\mbox{JR}^{2}_{k_1,k_2}$. We omit details here.

Now let's proceed to estimate $J^{\mu, \nu,\tau, \kappa, 1}_{k_1',k_2'}$ and $J^{\mu, \nu,\tau, \kappa,2}_{k_1',k_2'}$.  From the $L^\infty-L^\infty-L^2$ type trilinear estimate (\ref{trilinearesetimate}) in Lemma \ref{multilinearestimate} and the estimate (\ref{equation7860}), the following estimate holds for fixed $k_1'$ and $k_2'$, 
\[
\sum_{i=1,2}2^{\delta j}\| \mathcal{F}^{-1}\big[J^{\mu, \nu,\tau, \kappa, i}_{k_1', k_2'}\big]\|_{B_{k,j}}
  \lesssim \sum_{i=1,2}2^{\alpha k +6k_+ +m +j+k_1+k_{1,+}'+4\beta m }   \| e^{-i t\Lambda} f_{k_i}\|_{L^\infty} \| e^{-it \Lambda} f_{k_1'}\|_{L^\infty}\|f_{k_2'}\|_{L^2}
\] 
\begin{equation}\label{equation6820}
\lesssim 2^{\alpha k +(1-\alpha)k_1+(1-2\alpha)k_2'} \min\{2^{ -2 k_{1,+}'+4\beta m +20\delta m },  2^{m  -(N_0-6) k_{1,+}'+4\beta m+30\delta m }\} \epsilon_0.
\end{equation}
In the above estimate, we used the fact that $\| e^{-it \Lambda} f_{k_1'}\|_{L^\infty}\lesssim 2^{k_1'} \| f_{k_1'}\|_{L^2}\lesssim 2^{k_1'-N_0 k_{1,+}' +\delta m}\epsilon_1$. From (\ref{equation6820}), we can rule out the case when $k_1'\geq 2\beta m$ or $k_2'\leq -6\beta m$. 

It remains to consider the case when  $k_1'$  and $k_2'$ are fixed and  $-6\beta m \leq k_2'\leq k_1'\leq 2\beta m.$ Recall that $|k_1-k_2|\leq 10$. With minor modifications,  we can  estimate $J^{\mu, \nu,\tau, \kappa, 2}_{k_1',k_2' }$ and  $J^{\mu, \nu,\tau, \kappa, 1}_{k_1',k_2' }$ in the same way. Hence,  we only show the estimate of $J^{\mu, \nu,\tau, \kappa, 1}_{k_1',k_2' }$ in details here.

 Firstly, we consider the case when $\kappa=-\tau$. For $J^{\mu, \nu,\tau, -\tau, 1}_{k_1' ,k_2' }$, we can first rule out the case when $\max\{j_1',j_2'\}\leq m  -20\beta m $ by doing integration by parts in $\sigma$ many times to see rapidly decay. More precisely, after doing integration by parts in ``$\sigma$'', we have
 \[
H_{ j_1',j_2'}^{\mu,\nu,\tau, -\tau,1}  = \int_{t_1}^{t_2}\int_{\R^2} \int_{\R^2} \frac{1}{t} e^{i t \Phi_1^{\mu, \tau,-\tau}(\xi, \eta, \sigma)} \nabla_{\sigma}\cdot \big( \tilde{m}^{ \tau, -\tau}_{\mu, \nu,1}(\xi, \eta, \sigma) \]
\be\label{neweqn45}
\times \widehat{
f_{k_1}^{\mu}
}(t,\xi-\eta)\widehat{f_{k_1',j_1'}^{\tau}}(t,\eta-\sigma) \widehat{f_{k_2',j_2'}^{-\tau}}(t,\sigma) \big) d\eta d \sigma d t, 
\ee
where
\be\label{neweqn79}
\tilde{m}^{ \tau, -\tau}_{\mu, \nu,1}(\xi, \eta, \sigma):= - \frac{{m}^{ \tau, -\tau}_{\mu, \nu,1}(\xi, \eta, \sigma)\nabla_\sigma \Phi_1^{\mu, \tau,-\tau}(\xi, \eta, \sigma)   }{i|\nabla_\sigma \Phi_1^{\mu, \tau,-\tau}(\xi, \eta, \sigma)|^2}.
\ee
From the estimate  (\ref{dece1})  in Lemma \ref{phasesize} and (\ref{equation7860}), we have
\be\label{newequation09903}
|\tilde{m}^{ \tau, -\tau}_{\mu, \nu,1}(\xi, \eta, \sigma)|\lesssim 2^{15\beta m},  \quad |\nabla_\sigma \tilde{m}^{ \tau, -\tau}_{\mu, \nu,1}(\xi, \eta, \sigma)| \lesssim 2^{-2(k_{2,-}+k_{1,-}') -k_{2}' + 9\beta m } \lesssim 2^{m/2}.
\ee
From the estimate (\ref{newequation09903}), it is easy to see  that  the net gain of doing integration by parts in ``$\sigma$'' once  is at least $ \max\{ 2^{-m/2}, 2^{-m+15\beta m-\max\{j_1',j_2'\}}\}$,  which is less than $2^{-\beta m}$. Therefore, we can repeat this process many times to see rapidly decay. 

From (\ref{equation7860}) and the $L^2-L^\infty-L^\infty$ type trilinear estimate (\ref{trilinearesetimate}) in Lemma \ref{multilinearestimate}, the following estimate holds after putting the input with the higher spatial concentration in $L^2$ and other inputs in $L^\infty$,
\[
\sum_{\max\{j_1',j_2'\}\geq m -20\beta m} 2^{\delta j} \| H_{ j_1',j_2'}^{\mu,\nu,\tau, -\tau,1} \|_{B_{k,j}} \lesssim \sum_{\max\{j_1',j_2'\}\geq m -20\beta m} 2^{\alpha k + 6k_+ +m +(1+\delta)j }2^{k_1+4\beta m +k_{1,+}' }\]
\be\label{neweqn141}
\times  2^{-\alpha k_1-\alpha k_1'-\alpha k_2'} 2^{-2m-\max\{j_1', j_2'\} } \| f_{k_1',j_1'}\|_{Z}\| f_{k_2',j_2'}\|_{Z}\| f_{k_1}\|_Z \lesssim 2^{-m/2}\epsilon_0.
\ee

Lastly,  we consider the case $\kappa=\tau$. Note that  $\nabla_\sigma\Phi_1^{\mu, \tau, \tau}(\xi, \eta, \sigma)=0$ if $\sigma=\eta/2$. Hence,  we localize around a small neighborhood of $(\xi, \eta, \eta/2)$ and split $H_{j_1;j_1',j_2'}^{\mu,\nu,\tau, \tau,1}$ into two parts as follows,
\[
H_{j_1;j_1',j_2'}^{\mu,\nu,\tau, \tau,1}=   \int_{t_1}^{t_2}\int_{\R^2} \int_{\R^2} e^{i t \Phi_1^{\mu, \tau, \tau}(\xi, \eta, \sigma)} m^{ \tau, \tau}_{\mu, \nu,1}(\xi, \eta, \sigma)\widehat{
f_{k_1,j_1}^{\mu}
}(t,\xi-\eta)\widehat{f_{k_1',j_1' }^{\tau}}(t,\eta-\sigma) 
\]
\[
 \times  \widehat{f_{k_2', j_2'}^{\tau}}(t,\sigma) \psi_{\leq k_2-10}(\eta-2\sigma) d \eta d\sigma   d t +\int_{t_1}^{t_2}\int_{\R^2} \int_{\R^2} e^{i t \Phi_1^{\mu, \tau,\tau}(\xi, \eta, \sigma)} m^{ \tau, \tau}_{\mu, \nu,1}(\xi, \eta, \sigma)\]
 \be\label{neweqn27}
 \times \widehat{
f_{k_1}^{\mu}
}(t,\xi-\eta)\widehat{f_{k_1' }^{\tau}}(t,\eta-\sigma) \widehat{f_{k_2' }^{\tau}}(t,\sigma )\psi_{> k_2-10}(\eta-2\sigma) \big)  d \eta d\sigma   d t.
\ee

Note that $\big||\eta|-|\xi-\eta|\big|\leq 2^{-4}|\eta|$ because $(k_1,k_2)\in \chi_k^1$. It is easy to see that $|\xi-\eta|-|\eta-\sigma|\sim |\eta|$ when $\sigma$ locates inside a small neighborhood of $\eta/2$.  Hence, we can take the advantage of the high oscillation in ``$\eta$''
by doing integration by parts in ``$\eta$'' when $\sigma$ is close to $\eta/2$.

Therefore, we do integration by parts in ``$\eta$'' for the first integral in (\ref{neweqn27}) and   do integration by parts in ``$\sigma$'' for the second integral in (\ref{neweqn27}).
 As a result, we have
\[
H_{j_1;j_1',j_2'}^{\mu,\nu,\tau, \tau,1}= \int_{t_1}^{t_2}\int_{\R^2} \int_{\R^2} \frac{1}{t} e^{i t \Phi_1^{\mu, \tau, \tau}(\xi, \eta, \sigma)} \nabla_{\sigma}\cdot \big( \tilde{m}^{ \tau, \tau}_{\mu, \nu,1}(\xi, \eta, \sigma)\widehat{
f_{k_1,j_1}^{\mu}
}(t,\xi-\eta)\widehat{f_{k_1',j_1' }^{\tau}}(t,\eta-\sigma) \widehat{f_{k_2',j_2'}^{\tau}}(t,\sigma) \big) \]
\be\label{neweqn47}
+\frac{1}{t} e^{i t \Phi_1^{\mu, \tau, \tau}(\xi, \eta, \sigma)} \nabla_{\eta}\cdot \big( \tilde{m}^{ \tau, \tau}_{\mu, \nu,2}(\xi, \eta, \sigma)\widehat{
f_{k_1,j_1}^{\mu}
}(t,\xi-\eta)\widehat{f_{k_1',j_1'}^{\tau}}(t,\eta-\sigma) \widehat{f_{k_2',j_2' }^{\tau}}(t,\sigma) \big) d \eta d\sigma   d t, 
\ee
where
\begin{equation}\label{equation6920}
\tilde{m}^{ \tau, \tau}_{\mu, \nu,1}(\xi, \eta, \sigma):=- \frac{{m}^{ \tau, \tau}_{\mu, \nu,1}(\xi, \eta, \sigma)\nabla_\sigma \Phi_1^{\mu, \tau,\tau}(\xi, \eta, \sigma) \psi_{> k_2-10}(\eta-2\sigma)  }{i|\nabla_\sigma \Phi_1^{\mu, \tau,\tau}(\xi, \eta, \sigma)|^2},
\end{equation}
\be\label{newequation240543}
\tilde{m}^{ \tau, \tau}_{\mu, \nu,2}(\xi, \eta,\sigma):= -\frac{{m}^{\mu, \tau, \tau}_1(\xi, \eta, \sigma)\nabla_\eta \Phi_1^{\mu, \tau,\tau}(\xi, \eta, \sigma) \psi_{\leq k_2-10}(\eta-2\sigma)  }{i|\nabla_\eta \Phi_1^{\mu, \tau,\tau}(\xi, \eta, \sigma)|^2}.
\end{equation}
From the estimates (\ref{dece1}) and (\ref{dece2}) in Lemma \ref{phasesize} and (\ref{equation7860}), we have
\be\label{noveqn101}
|  \tilde{m}^{ \tau, \tau}_{\mu, \nu,1}(\xi, \eta, \sigma)| + |  \tilde{m}^{ \tau, \tau}_{\mu, \nu,2}(\xi, \eta, \sigma)|  \lesssim 2^{-k_2-k_{2}'+10\beta m }.
\ee
\be\label{neweqn130}
|\nabla_\sigma \tilde{m}^{ \tau, \tau}_{\mu, \nu,1}(\xi, \eta, \sigma)| + |\nabla_\eta \tilde{m}^{ \tau, \tau}_{\mu, \nu,2}(\xi, \eta, \sigma)|\lesssim 2^{-2(k_{2,-}+k_{1,-}') -k_{2}' + 9\beta m } \lesssim 2^{m/2}.
\ee
Hence,     from estimates (\ref{noveqn101}) and (\ref{neweqn130}), we can see that  the net gain of doing integration by parts in ``$\sigma$''(when $\sigma$ is away from $\eta/2$) and ``$\eta$'' (when $\sigma$ is close to $\eta/2$) once  is at least $ \max\{2^{-m/2},$  $ 2^{-m} $  $\times 2^{ \max\{j_1,j_1',j_2'\}-k_2-k_{2}' +10\beta m }\}$, which is less than $2^{-\beta m}$ when $\max\{j_1, j_1', j_2'\}\leq m +k_2+k_2'-16\beta m$. Therefore, we can repeat this process many times to see rapidly decay, hence ruling out  the case when $\max\{j_1, j_1', j_2'\}\leq m +k_2+k_2'-16\beta m$.
 
It remains to consider the case when $\max\{j_1, j_1', j_2'\}\geq   m +k_2+k_2'-16\beta m $. From the estimate (\ref{equation7860}) and the $L^2-L^\infty-L^\infty$ type trilinear estimate (\ref{trilinearesetimate}) in Lemma \ref{multilinearestimate}, the following estimate holds after putting the input with the largest spatial concentration in $L^2$ and other two inputs in $L^\infty$, 
\[
\sum_{\max\{j_1, j_1', j_2'\}\geq m  +k_2+k_2'-16\beta m} 2^{\delta j}\| H_{j_i;j_1',j_2'}^{\mu,\nu,\tau, \tau,1}\|_{B_{k,j}} \lesssim \sum_{\max\{j_1, j_1', j_2'\}\geq m +k_2+k_2'-16\beta m}  2^{\alpha k +6k_+ + m +(1+\delta) j} 
\]
\be\label{neweqn150}
\times  2^{k_1+4\beta m+k_{1,+}'}  2^{-\alpha k_1'-\alpha k_2'-\alpha k_2} 2^{-2m -\max\{j_1,j_1',j_2'\}} \| f_{k_1,j_1}\|_Z \| f_{k_1',j_1'}\|_Z \| f_{k_2',j_2'}\|_Z \lesssim 2^{-m/2}\epsilon_0.
\ee
Hence finishing the proof.
\end{proof}
\subsection{When $(k_1, k_2, \mu, \nu)\in \chi_k^2\times   \{(-,-), (-,+)\}$}\label{lowhighgood}   As  we did before, we   first rule out the very-low-frequency case and the relatively-high-frequency case. More precisely, the following Lemma holds. 

\begin{lemma}\label{highlowhl}
For any fixed $j,m\in \mathbb{Z}_{+}$, 
under the bootstrap assumption \textup{(\ref{smallness})}  and the assumption that Proposition \textup{\ref{propZnorm2}} holds, then the following estimate  holds for any $k\in \mathbb{Z}$, $t_1, t_2\in [2^{m-1}, 2^m]$, 
\be\label{noveqn110} 
 \sum_{\begin{subarray}{c}
(k_1,k_2)\in \chi_k^2\\
 k_2\leq -(1+10\delta)(m+j )/(2-\alpha)\\
\end{subarray}
} \sum_{
 \mu, \nu  \in \{+,-\}}2^{\delta j}  \|
\int_{t_1}^{t_2}T^{\mu, \nu} (f_{k_1}, f_{k_2})  d t \|_{B_{k,j}}\lesssim 2^{-\delta m }\epsilon_0.
\ee
Moreover, for any $k\in \mathbb{Z}$ s.t., $k\notin[  -(1+10\delta)(m+j )/(4-\alpha), (1+10\delta)(m+j )/(N_0-10)]   $,   the following estimate holds,
\be\label{noveqn111} 
\sum_{(k_1,k_2)\in \chi_k^2}\sum_{ \mu, \nu  \in \{+,-\}} 2^{\delta j}  \|
\int_{t_1}^{t_2}T^{\mu, \nu} (f_{k_1}, f_{k_2})  d t \|_{B_{k,j}}\lesssim 2^{-\delta m }\epsilon_0.
\ee
\end{lemma}
\begin{proof}
From the $L^2-L^\infty$ type bilinear estimate (\ref{bilinearesetimate}) in Lemma \ref{multilinearestimate} and the estimate (\ref{sizeofsymboluniform}) in Lemma \ref{sizeofsymbol}, the following estimate holds, 
\begin{equation}\label{equation10020}
\|
\int_{t_1}^{t_2}T^{\mu, \nu} (f_{k_1}, f_{k_2})  d t \|_{B_{k,j}}\lesssim 2^{\alpha k+6k_{+} + m + j} \| q_{\mu, \nu}(\xi-\eta, \eta)\|_{\mathcal{S}^\infty_{k,k_1,k_2}}
\| P_{k_1} f \|_{L^2} \| e^{-it \Lambda} P_{k_2} f\|_{L^\infty}
\end{equation}
\begin{equation}\label{eqn420}
\lesssim 2^{   m+ j+k_1 
+  k_1+(2-\alpha)k_2-(N_0-7)k_{1,+}-4 k_{2,+} +\delta m}\epsilon_1^2.
\end{equation}
From the above estimate (\ref{eqn420}), it is easy to see that our desired estimates (\ref{noveqn110}) and (\ref{noveqn111}) hold. 
\end{proof}
Hence, from the estimates (\ref{noveqn110}) and (\ref{noveqn111}) in Lemma \ref{highlowhl}, we know that it would be sufficient  to  prove the following estimate, 
\be\label{noveqn116}
2^{\delta j}  \|
\int_{t_1}^{t_2}T^{\mu, \nu} (f_{k_1}, f_{k_2})  d t \|_{B_{k,j}}\lesssim 2^{-\delta m -\delta j }\epsilon_0,\quad  (\mu, \nu)\in\{(-,+), (-,-)\},
\ee
where fixed $k, k_1$ and $k_2$ satisfies the following estimates, 
\begin{equation}\label{range1}
-(1+10\delta)(m+j )/(2-\alpha) \leq k_2\leq k-10, |k_1-k|\leq 10,
\end{equation}
\begin{equation}\label{range2}
\, -(1+10\delta)(m+j )/(4-\alpha)\leq k \leq (1+10\delta)(m+j  )/(N_0-10). 
\end{equation}
Same as we did in the previous subsection, we separate into two cases based on the possible size of $j$.
\begin{lemma}\label{hlgoodj10}
Under the bootstrap assumption \textup{(\ref{smallness})}  and the assumption that Proposition \textup{\ref{propZnorm2}} holds, if $j\geq (1+20\delta)m$, then the desired estimate (\textup{\ref{noveqn116}}) holds  for any fixed $k $, $k_1,$ and $k_2$ that satisfy the estimates \textup{(\ref{range1})} and \textup{(\ref{range2})}.
\end{lemma}
 \begin{proof} Note that the rough estimate (\ref{eqn3320}) 
still holds for the case $(k_1,k_2)\in \chi_k^2$. The   sizes of frequencies of inputs do not play a role there. With minor modifications in the proof of Lemma \ref{hhgoodj1},  we can prove the desired estimate very similar if $j\geq (1+20\delta)m$. We omit details for this case here.
\end{proof}

\begin{lemma}\label{hlgoodj2}
Under the bootstrap assumption \textup{(\ref{smallness})}  and the assumption that Proposition \textup{\ref{propZnorm2}} holds, if $j\leq (1+20\delta)m$, then the desired estimate (\textup{\ref{noveqn116}}) holds  for any fixed $k $, $k_1,$ and $k_2$ that satisfy the estimates \textup{(\ref{range1})} and \textup{(\ref{range2})}.
\end{lemma}
\begin{proof}
 Since $j\leq (1+20\delta)m$, from estimates (\ref{range1}) and (\ref{range2}), we know that fixed $k, k_1,$ and $k_2$ satisfy the following estimates, 
 \begin{equation}\label{noveqn130}
-2(1+100\delta)m/(2-\alpha) \leq k_2\leq k-10, |k_1-k|\leq 10,
\end{equation}
\begin{equation}\label{noveqn131}
\, -2(1+100\delta)m/(4-\alpha)\leq k \leq 2\beta m, \quad \beta=1/980. 
\end{equation}

 Same as we did  in the proof of Lemma \ref{hhgoodj2}, we do integration by parts in time to take advantage of the high oscillation in time. As a result, we have the same identity as in (\ref{eqn450}). For simplicity, we use the same notations used there. Note that    the only difference is that now $(k_1, k_2)\in\chi_k^2$ instead of belongs to $\chi_k^1$.

 From the $L^2-L^\infty$ type bilinear estimate (\ref{bilinearesetimate}) in Lemma \ref{multilinearestimate} and the estimate (\ref{equation5667}), 
the following estimate holds when  $k_2\leq -\alpha m$,
\[
\sum_{i=1,2}2^{\delta j}\| \mathcal{F}^{-1}\big[\textup{End}_{k_1, k_2}^{\mu, \nu, i}\big]\|_{B_{k,j}} + 2^{\delta j} \| \mathcal{F}^{-1}\big[J^{\mu, \nu,i}_{k_1, k_2}\big]\|_{B_{k,j}} \lesssim 2^{\alpha k+ 6k_{+} + (1+\delta)j +3\beta m} \| e^{-it \Lambda} f_{k_1} \|_{L^\infty} \|  f_{k_2}\|_{L^2} \]
\[ 
+ 2^{\alpha k +6k_{+}+ (1+\delta)j +m +3\beta m } \big( \| e^{-i t\Lambda} \p_t f_{k_1} \|_{L^\infty} \|  f_{k_2}\|_{L^2} + \| e^{-it \Lambda } f_{k_1}\|_{L^\infty} \|\p_t  f_{k_2}\|_{L^2}\big)
\]
\be\label{noveqn145}
\lesssim 2^{(1-\alpha)k_2 + \alpha m/2+ 10\beta m }\epsilon_1^2 \lesssim 2^{-2\delta m-\delta j} \epsilon_0.
\ee
Note that in the above estimate, we used the following estimate 
\[
 \| e^{-i t\Lambda} \p_t f_{k_1} \|_{L^\infty} \lesssim \sum_{\mu, \nu\in\{+,-\}, k_2'\leq k_1'} \| e^{-it \Lambda} T^{\mu, \nu }(f_{k_1'}^{\mu}, f_{k_2'}^{\nu})\|_{L^\infty} + 2^{-m} \| P_{k_1}(\mathcal{R}')\|_{Z}\]
 \[\lesssim   \sum_{  k_2'\leq k_1'}  2^{k_1'+k_{1,+}'} \| e^{-it\Lambda} f_{k_1'}\|_{L^\infty} \| e^{-it \Lambda} f_{k_2'}\|_{L^\infty}+ 2^{-2m}\epsilon_0\lesssim  2^{-2m+\alpha m /2}\epsilon_0,
\]
 which can be derived from $L^\infty-L^\infty$ type bilinear estimate (\ref{bilinearesetimate}) in Lemma \ref{multilinearestimate}, the estimate (\ref{Linfinity3}), and the estimate (\ref{equation7760}) in Proposition \ref{propZnorm2}. From the estimate (\ref{noveq331}) in Lemma \ref{hlgoodj3} and the estimate (\ref{noveq332}) in Lemma \ref{hlgoodj7}, it is easy to see that the desired estimate (\ref{noveqn116}) also holds for the case when $k_2\geq -\alpha m$. Hence finishing the proof. 
\end{proof}
\begin{lemma}\label{hlgoodj3}
Under the bootstrap assumption \textup{(\ref{smallness})}  and the assumption that Proposition \textup{\ref{propZnorm2}} holds, if $j\leq (1+20\delta)m$, then the following estimate holds for fixed $k, k_1, k_2\in \mathbb{Z}, $ s.t., $(k_1, k_2, \mu, \nu)\in \chi_k^2\times   \{(-,-), (-,+)\}$, and $k_1, k_2\in[-\alpha m , 2\beta m ],$
\be\label{noveq331}
\sum_{i=1,2}2^{\delta j}\| \mathcal{F}^{-1}\big[\textup{End}_{k_1, k_2}^{\mu, \nu, i }\big]\|_{B_{k,j}}\lesssim 2^{-2\delta m-2\delta j}\epsilon_0.
\ee  
\end{lemma}
\begin{proof}
 Firstly, we consider the case when $\max\{j_1,j_2\}\geq m  +2k_2-4\beta m$. From  the $L^2-L^\infty$ type bilinear estimate (\ref{bilinearesetimate}) in Lemma \ref{multilinearestimate} and the estimate (\ref{equation5667}), the following estimate holds, 
\[
\sum_{\max\{j_1,j_2\}\geq m + 2k_2-4 \beta m} 2^{\delta j} \|  \mathcal{F}^{-1}\big[\textup{End}_{k_1,j_1, k_2,j_2}^{\mu, \nu, i }\big]\|_{B_{k,j}} \lesssim \sum_{\max\{j_1,j_2\}\geq m +2k_2-4\beta m}  2^{\alpha k +6k_+ + (1+\delta)j+4\beta m }\]
\[\times 2^{ -\max\{j_1,j_2\}-m-\alpha(k_1+k_2)} \|f_{k_1,j_1}\|_{Z} \|f_{k_2,j_2}\|_{Z} \lesssim
2^{-m-(2+2\alpha)k_2+15\beta m } \epsilon_1^2\lesssim 2^{-2\delta m -2\delta j}\epsilon_0.
\]
For the case when $\max\{j_1, j_2\}\leq m +2k_2-4\beta m$, we can do integration by parts in`` $\eta$'' to see rapidly decay. If $\nabla_{\eta}$ hits  $\widehat{f}_{k_1,j_1}$ and $\widehat{f}_{k_2,j_2}$, we at most lose $2^{\max\{j_1,j_2\}-2k_2+5k_{1,+}/2}$, which is less than $2^{m-\beta m}$. If $\nabla_{\eta}$ hits the symbol $\tilde{m}_{\mu, \nu}(\cdot, \cdot)$, then from the  estimates  (\ref{dece1}) and (\ref{dece2})  in Lemma \ref{phasesize}, it is easy to see that the maximal loss  is   $2^{-k_2-4k_{1,-}+k_{1,+}}$, which is less than $2^{m-\beta m }$. Hence the net gain is at least $2^{-\beta m }$ from integration by parts in ``$\eta$'' once. We can repeat  this process many times  to see rapidly decay.  Hence finishing the proof. 
\end{proof}
 \begin{lemma}\label{hlgoodj7}
Under the bootstrap assumption \textup{(\ref{smallness})}  and the assumption that Proposition \textup{\ref{propZnorm2}} holds, if $j\leq (1+20\delta)m$, then the following estimate holds for fixed $k, k_1, k_2\in \mathbb{Z}$,  s.t., $(k_1, k_2, \mu, \nu)\in \chi_k^2\times   \{(-,-), (-,+)\}$, and $k_1, k_2\in[-\alpha m , 2\beta m ],$
\be\label{noveq332}
 \sum_{i=1,2}2^{\delta j}\| \mathcal{F}^{-1}\big[J^{\mu, \nu,i}_{k_1, k_2}\big]\|_{B_{k,j}} \lesssim 2^{-2\delta m-2\delta j}\epsilon_0.
\ee  
\end{lemma}
\begin{proof}

 Same as before, the method used in the estimate of $\textup{End}_{k_1, k_2}^{\mu, \nu, i }$ can be applied directly to the estimate of $\mbox{JR}^{1}_{k_1,k_2}$ and $\mbox{JR}^{2}_{k_1,k_2}$. Now we  proceed to estimate $J^{\mu, \nu,\tau, \kappa, 1}_{k_1' ,k_2' }$ and $J^{\mu, \nu,\tau, \kappa, 2}_{k_1', k_2' }$. 

Recall (\ref{eqn4400}) and (\ref{eqn4401}). From the estimate  (\ref{productofsymbol}) in Lemma \ref{multilinearestimate},  the estimate (\ref{sizeofsymboluniform}) in Lemma \ref{sizeofsymbol}, and   the estimate (\ref{equation5667}), the following estimate holds,  
\begin{equation}\label{equation8860}
\| m^{ \tau, \kappa}_{\mu, \nu,1}(\xi, \eta, \sigma)\psi_{k_1'}(\eta-\sigma)\psi_{k_2'}(\sigma)\|_{\mathcal{S}^\infty} + \| m^{ \tau, \kappa}_{\mu, \nu,2}(\xi, \eta, \sigma)\psi_{k_1'}(\xi-\sigma)\psi_{k_2'}(\sigma-\eta)\|_{\mathcal{S}^\infty} \lesssim 2^{k_1'+4\beta m}. 
\end{equation}
From the estimate (\ref{equation8860}) and the   $L^2-L^\infty-L^\infty$ type trilinear estimate (\ref{trilinearesetimate}) in Lemma \ref{multilinearestimate}, the following estimate holds for fixed $k_1'$ and $k_2'$, 
\[
\sum_{i=1,2}2^{\delta j}\| \mathcal{F}^{-1}\big[J^{\mu, \nu,\tau, \kappa,i}_{k_1', k_2' }\big]\|_{B_{k,j}} \lesssim \sum_{i=1,2} 2^{2\delta m + \alpha k +6k_+ +m +j+k_1'+4\beta m }   \| e^{-i t\Lambda} f_{k_i}\|_{L^\infty}  \| e^{-it \Lambda} f_{k_1'}\|_{L^\infty}\|f_{k_2'}\|_{L^2}
\] 
\[
\lesssim \min\{2^{(1-\alpha)(k_1'+k_2')+11\beta m},  2^{(1-\alpha)k_2'+m+4\beta m-(N_0-8)k_{1,+}'}\}\epsilon_0.
\]
From the above estimate, we can rule out the case when  $k_1', k_2'\notin[-14\beta m, \beta m ]$. Now, it is sufficient to consider the case when $k_1'$ and $k_2'$ are  fixed and  $-14\beta m \leq k_2'\leq k_1'\leq \beta m$.

Recall that $|\eta|\leq 2^{-4}|\xi-\eta|$ for  the case we are considering. It is easy to see that  ``$\xi-\eta$'' and ``$\eta/2$'' are still not close. Hence, the methods used in the proof of Lemma \ref{hhgoodj7} for the $\chi_k^1$ case can be applied directly here. Therefore we can do integration by parts in ``$\sigma$'' when ``$\sigma$'' is far away from $\eta/2$ and do integration by parts in $\eta$ when $\sigma$ is close to $\eta/2$ to take the advantage of the high oscillation in $\sigma$ or $\eta$. As a result, we can rule out the case when $\max\{j_i, j_1', j_2'\}\leq m+k_2+k_2'-16\beta m $. 

For the case when $\max\{j_i, j_1', j_2'\}\geq m+k_2+k_2'-16\beta m $, it is easy to verify that a similar estimate as in the estimate  (\ref{neweqn150}) is still valid for the case when $k_1, k_2\in [-\alpha m , 2\beta m]$ and $k_1', k_2'\in[-14\beta m , \beta m]$.  Hence finishing the proof.

 \end{proof}

\section{The Improved $Z$-Norm Estimate: Bad Type Phases }\label{badimprovedZnorm}

The main goal of this section is to prove the desired Proposition \ref{propZnorm4bad}. In other words, we will prove the desired estimate (\ref{badtypeZ}) under the bootstrap assumption (\ref{smallness}) and the assumption that Proposition \ref{propZnorm2} holds. Note that the estimate (\ref{sizeofprofilelowfrequency}) is valid in this section. 

 Recall (\ref{badtypeterms}) and (\ref{badtypephase}). In   subsection \ref{rangehighhigh}, we consider the case $(k_1,k_2, \mu, \nu)\in \chi_k^1\times\{(+,-), (-,+)\}$.  In   subsection  \ref{hlbad}, we consider the case  $(k_1,k_2, \mu, \nu)\in \chi_k^2\times$ $\{(+,+), (+,-)\}$. In subsection \ref{comparablebad}, we estimate $K^{\mu, \nu}(f_{k_1}^{\mu}, f_{k_2}^{\nu})$, where $\mu, \nu\in\{+, -\}$ and $(k_1,k_2)\in \chi_k^3$.   Hence finishing the proof.

\subsection{When $(k_1, k_2)\in \chi_k^1, (\mu, \nu)\in \{(-,+), (+,-)\} $}\label{hhbad}  

Note that the estimates (\ref{nove2}) and (\ref{nove3}) in Lemma \ref{ruleoutlowhigh} holds regardless the sign of $\mu$ and $\nu$.  Moreover, the proof of the Lemma \ref{hhgoodj1}  is also valid regardless the sign of $\mu$ and $\nu$. Hence,  we can rule out the very-low-frequency case,  the relatively-high-frequency case, and the case $j\geq (1+20\delta)m$  as in subsection \ref{highhighgood}.  Moreover, from the estimate (\ref{noveqn519}) in Lemma \ref{roughestimatephase2}, it is easy to see that the phase is not degenerated if $k_1\geq 0$. Therefore, the case when $k_1\geq 0$ can be handled in the same way as in the subsection \ref{highhighgood}.
To sum up, in this subsection,  it would be sufficient to consider 
 fixed  $k$,  $k_1$, $k_2,$ and $j$ that satisfy the following estimate, 
\begin{equation}\label{rangehighhigh}
 - (1+\delta)(m+j)/(2+\alpha)  \leq k\leq k_1- 10 \leq k_2 \leq k_1+10\leq 10,\quad  j\leq m +20\delta m.  
\end{equation}

From the estimate of bad type phases in (\ref{noveqn519}), we know that the size of phases highly depends on the angle between $\xi$ and $\nu \eta$. This fact motivates us to do dyadic decomposition for the angle between $\xi$ and $\nu\eta$   with  a threshold $\bar{l}$ chosen to be $ 2k_{1,-}$  as follows, 
\begin{equation}\label{equation6601}
\mathcal{F}[\int_{t_1}^{t_2}T^{\mu, \nu} (f_{k_1}(t), f_{k_2}(t))  d t](\xi)=\sum_{ \bar{l}\leq l \leq 2} I_{l}^{\mu, \nu},\quad I_{l}^{\mu,\nu} = \sum_{j_1\geq -k_{1,-}, j_2\geq -k_{2,-}} I_{l;j_1, j_2}^{\mu, \nu}, \end{equation}
where
\begin{equation}\label{equation7002}
I_{l;j_1, j_2}^{\mu, \nu}= \int_{t_1}^{t_2} \int e^{i t\Phi^{\mu, \nu}(\xi, \eta)} \varphi_{\bar{l};l}(\angle(\xi, \nu\eta)) {q}_{\mu, \nu}(\xi-\eta, \eta)\widehat{f^{\mu}_{k_1,j_1}}(t,\xi-\eta)\widehat{f^{\nu}_{k_2,j_2}}(t,\eta)  d \eta  dt, 
\end{equation}
and $\varphi_{\bar{l};l}(\cdot)$ is defined as follows, 
\begin{equation}\label{angularcutoff}
\varphi_{\bar{l};l}(x) =\left\{\begin{array}{ll}
 \psi_{l}(x) & \textup{if $ \bar{l}< l \leq 2$}\\
 \psi_{\leq \bar{l}}(x) & \textup{if\, } l= \bar{l}:=2k_{1,-}.
\end{array}\right.
\end{equation}
Note that there are at most  $m^2$ cases in total for $l$, which is only a logarithmic loss. Hence we will also let $l$ to be fixed in the rest of this subsection.

 To sum up, 
it would be sufficient to prove the following estimate in this subsection, 
\be\label{baddesired1}
  2^{\delta j}  \| \mathcal{F}^{-1}\big[
I_{l}^{\mu, \nu}\big] \|_{B_{k,j}}\lesssim 2^{-2\delta m -2\delta j}\epsilon_0,\quad   (\mu, \nu)\in\{(-,+), (+,-)\},
\ee
where  fixed  $k$,  $k_1$, and $j$ satisfy the estimate (\ref{rangehighhigh}) and fixed $l\in [-2k_{1,-}, 2]$.  Based on the possible sizes of $j$, $k+2k_1$, and $k+2l$, we separate into five cases. As a result, the desired estimate (\ref{baddesired1}) follows from Lemma \ref{badhhlemma1}, Lemma \ref{badhhlemma2}, Lemma \ref{badhhlemma3}, Lemma \ref{badhhlemma4}, Lemma \ref{badhhlemma5}.

\begin{lemma}\label{badhhlemma1}
Under the bootstrap assumption \textup{(\ref{smallness})}  and the assumption that Proposition \textup{\ref{propZnorm2}} holds, if $  \max\{m+  l , -k-l\}+ 100 \delta m \leq j \leq m+20\delta m $, then the desired estimate  \textup{(\ref{baddesired1})}  holds  for any fixed $k$,  $k_1$, and $k_2$ that satisfy \textup{(\ref{rangehighhigh})}.
\end{lemma}
\begin{proof}
Note that the assumption on the size of $j$ implies that \emph{$k\leq -30\delta m$} because otherwise $\max\{m+  l , -k-l\}+ 100 \delta m >  m+20\delta m $.    

We first consider the case when 
  $\min\{j_1, j_2\}\geq j-\delta m$.  From  (\ref{eqn298}) in Lemma \ref{bilinearest} and (\ref{sizeofsymboluniform}) in Lemma \ref{sizeofsymbol}, the following estimate holds, 
\[
\sum_{ \min\{j_1, j_2\}\geq j-\delta m}2^{\delta j}\|\mathcal{F}^{-1}[ I_{l;j_1, j_2}^{\mu, \nu}]\|_{B_{k,j}} \lesssim \sum_{ \min\{j_1, j_2\}\geq j-\delta m}  2^{(2+\alpha) k  +m+(1+\delta)j+k +k+l/2}   \|   f_{k_1,j_1}\|_{L^2}  \]
\begin{equation}\label{equation6603}
\times  \|  f_{k_2,j_2}\|_{L^2} \lesssim  2^{(2+\alpha) k +m+2\delta m -(1-\delta)j-2\alpha k_1+l/2} \epsilon_1^2 \lesssim 2^{(1-\alpha)k +2\delta m }\epsilon_0\lesssim 2^{-2\delta m -2\delta j}\epsilon_0.
\end{equation}

Now we consider the case when $\min\{j_1, j_2\}\leq j -\delta m$. For this case, we will   do integration by parts in ``$\xi$" repeatedly to see rapidly decay. Recall that $\mu \nu=-$. From the estimate (\ref{angularrelation}) in Lemma \ref{angularrelationlemma} and the estimate (\ref{eqn140}) in Lemma \ref{phasesize}, it is easy to see that the following estimate holds,
 \[
 |\nabla_{\xi} \Phi^{\mu, \nu}(\xi, \eta)|\varphi_{\bar{l};l}(\angle(\xi, \nu \eta))= \Big|\Lambda'(|\xi|)\frac{\xi}{|\xi|}-\mu \Lambda'(|\xi-\eta|) \frac{\xi-\eta}{|\xi-\eta|} \Big|\varphi_{\bar{l};l}(\angle(\xi, \nu \eta))\lesssim 2^{l}.
  \]
From the above estimate and the assumption that $j\geq m+l +100\delta m$,   we have  the following estimate, 
\begin{equation}\label{eqn12200}
|\nabla_{\xi}[x\cdot \xi + t \Phi^{\mu, \nu}(\xi,\eta)]|\varphi_{\bar{l};l}(\angle(\xi, \nu\eta))\varphi_{j}^k(x)  \sim 2^{j}.
\end{equation}

Hence, after doing integration by part in $\xi$ once, we can gain $2^{-j}$ by paying the price of at most  $\max\{2^{\min\{j_1, j_2\}},$ $  2^{-k-l}\}$, where $2^{-k-l}$ comes from the fact that $\nabla_{\xi}$ might hit the angular cutoff function $\varphi_{\bar{l};l}(\angle(\xi, \nu \eta))$ or the symbol $a_{\mu, \nu}(t,x,\xi, \eta)$ (see (\ref{equation6840})).  As $j\geq -k-l+ 100\delta m$ and $\min\{j_1,j_2\}\leq j-\delta m$, we can see that the net gain of  doing  integration by parts in ``$\xi$'' once is at least $2^{-\delta m}$. Hence, we can keep doing this process to see rapidly decay. More precisely, the following estimate holds.
\[
\big|\mathcal{F}^{-1}[ I_{l;j_1, j_2}^{\mu, \nu}](x)\big|\varphi_{j}^k(x)\lesssim 2^{-10m} \| f_{k_1,j_1}\|_{L^2}  \| f_{k_2,j_2}\|_{L^2},\]
Hence, it's easy to see that the following estimate holds
\begin{equation}\label{dece4}
 \sum_{\min\{j_1, j_2\}\leq j-\delta m}2^{\delta j}\|\mathcal{F}^{-1}[ I_{l;j_1, j_2}^{\mu, \nu}]\|_{B_{k,j}} \lesssim 2^{-2\delta m-2\delta j}\epsilon_0.
\end{equation}
Hence  finishing the proof  from the estimates (\ref{equation6603}) and (\ref{dece4}). 
\end{proof}

\begin{lemma}\label{badhhlemma2}
Under the bootstrap assumption \textup{(\ref{smallness})}  and the assumption that Proposition \textup{\ref{propZnorm2}} holds, if $  j\leq \max\{m+l, -k-l\}+100 \delta m $, then the desired estimate  \textup{(\ref{baddesired1})} holds  for any fixed $k$,  $k_1$, and $k_2$ that satisfy \textup{(\ref{rangehighhigh})} under the assumption that  $k+ 2k_1\leq-m+\beta m$, and $k +2l\leq -m +2\beta m $.
\end{lemma}
\begin{proof}
From the assumptions that $j\leq \max\{m+l, -k-l\}+100 \delta m$, $k+2k_1\leq -m+\beta m $ and $k+2l\leq -m +2\beta m $, it is easy to see that  $j\leq -k-l+2\beta m +100\delta m $ and  $k\leq -m/3+\beta m $.  

From the estimate (\ref{eqn298}) in Lemma \ref{bilinearest}, the estimate (\ref{sizeofsymboluniform}) in Lemma \ref{sizeofsymbol}, and the estimates (\ref{L2estimate}) and (\ref{equation6726}) in Lemma \ref{L2estimatelemma}, the following estimate holds, 
\[
  2^{\delta j}\|\mathcal{F}^{-1}[ I_{l}^{\mu, \nu}]\|_{B_{k,j}}\lesssim  2^{2\delta m+ \alpha k   + m + (1+\delta) j +k + k+k_1+l}  \| f_{k_1}\|_{L^2}\|  \widehat{f_{k_2}}(t, \xi)\|_{L^\infty_\xi} 
\]
\[ 
\lesssim 2^{(1+\alpha) k+m + 2 k_1+3\beta m} \epsilon_0 \lesssim 2^{ \alpha k +4\beta m }\epsilon_0 \lesssim 2^{-2\delta m -2\delta j}\epsilon_0.
\]
Hence finishing the proof.
\end{proof}
 \begin{lemma}\label{badhhlemma3}
Under the bootstrap assumption \textup{(\ref{smallness})}  and the assumption that Proposition \textup{\ref{propZnorm2}} holds, if $  j\leq \max\{m+l, -k-l\}+100 \delta m $, then the desired estimate  \textup{(\ref{baddesired1})} holds  for any fixed $k$,  $k_1$, and $k_2$ that satisfy \textup{(\ref{rangehighhigh})} under the assumption that  $k+ 2k_1\leq-m+\beta m$, and $k+2l\geq -m+2\beta m$.
\end{lemma}
 \begin{proof}
From the assumptions on $j$, $k+2k_1$ and $k+2l$, it is easy to see that $j\leq m + l+100\delta m$ and $k\leq -m/3+\beta m $.

\noindent $\bullet$\quad We first consider the case when $\max\{j_1,j_2\}\geq m +k - k_1+l -4\beta m$.   From the estimate (\ref{equation6884}) in Lemma \ref{bilinearest} and the estimate (\ref{sizeofsymboluniform}) in Lemma \ref{sizeofsymbol}, the following estimate holds, 
\[
\sum_{\max\{j_1, j_2\}\geq m+k-k_1+l-4\beta m }2^{\delta j} \| \mathcal{F}^{-1}[I_{l;j_1, j_2}^{\mu, \nu}]\|_{B_{k,j}}\lesssim \sum_{\max\{j_1, j_2\}\geq m+k-k_1+l-4\beta m } 2^{\alpha k  + m +(1+\delta)j}
\]
\[
\times 2^{k +\beta m -m-2\alpha k_1 -\max\{j_1,j_2\} } \| f_{k_1,j_1}\|_Z \| f_{k_2,j_2}\|_Z \lesssim 2^{\alpha k +(1-2\alpha)k_1   +6\beta m }\epsilon_1^2  \lesssim 2^{-2\delta m -2\delta j}\epsilon_0.
\]
 
\noindent $\bullet$\quad Now we consider the case when $\max\{j_1,j_2\}\leq m +k-k_1 +l -4 \beta m $.  For this case, we can keep doing  integration by parts in `` $\eta$'' to see rapidly decay.   Because $k+2l \geq -m +2\beta m$, which means that we are away from the space resonance set,  there is no problem   when $\nabla_{\eta}$ hits the symbol $\tilde{m}_{\mu, \nu}(\xi-\eta, \eta)$ when doing integration by parts in $\eta$, see (\ref{integrationeta}) and (\ref{eqn134}) in Lemma \ref{phasesize}. From   (\ref{eqn134}) in Lemma \ref{phasesize}, we can see that the net gain of doing integration by parts in ``$\eta$'' once is at least $2^{-m}\max\{2^{-k-2l+\beta m},$ $ 2^{\max\{j_1, j_2\}-k+k_1-l+3\beta m}\} $, which is less than $2^{-\beta m}$. Therefore, we can keep doing this process to see rapidly decay.   
\end{proof}
 \begin{lemma}\label{badhhlemma4}
Under the bootstrap assumption \textup{(\ref{smallness})}  and the assumption that Proposition \textup{\ref{propZnorm2}} holds, if $  j\leq \max\{m+l, -k-l\}+100 \delta m $, then the desired estimate  \textup{(\ref{baddesired1})} holds  for any fixed $k$,  $k_1$, and $k_2$ that satisfy \textup{(\ref{rangehighhigh})} under the assumption that  $k+ 2k_1\geq-m+\beta m$, and $k+2l\leq -m+2\beta m$.
\end{lemma}
\begin{proof}
From the assumptions on $j$, $k+2k_1$ and $k+2l$, it is easy to see that  $j\leq -k-l+\beta m +100\delta m$, $k\leq -m/5+\beta m$ and $l\leq k_1 + \beta m/2$.

Because $k+2k_1\geq -m +\beta m$, which means that we are away from the time resonance set,  we  do integration by parts in time once to take   advantage of the high oscillation in time. The formulas are very similar to (\ref{eqn450}) and (\ref{tridecom}).   For the sake of readers, we still state them in details as follows, 
\begin{equation}\label{equation7000}
I_{l}^{\mu, \nu}= \sum_{i=1,2} \textup{End}_{l;k_1,k_2}^{\mu, \nu, i} +  J^{\mu, \nu,i}_{l;k_1,k_2}, \quad  J^{\mu, \nu,i}_{l;k_1,k_2} =   \sum_{k_1', k_2'\in \mathbb{Z}} \sum_{\mu', \kappa'\in \{+, -\}} J^{\mu, \nu,\tau, \kappa, i}_{l;k_1',k_2'} + \mbox{JR}^{i}_{l;k_1,k_2},\quad i\in\{1,2\},
\end{equation}
\begin{equation}\label{equation7001}
 J^{\mu, \nu,\tau, \kappa, i}_{l;k_1',k_2' }:= \sum_{j_1'\geq -k_{1,-}', j_2'\geq -k_{2,-}', j_i \geq -k_{i,-}} H_{l,j_i;j_1',j_2'}^{\mu,\nu,\tau, \kappa,i},, \quad \textup{End}_{l;k_1,k_2}^{\mu, \nu, i}= \sum_{j_1\geq -k_{1,-}, j_2\geq -k_{2,-}}\textup{End}_{l;k_1,j_1,k_2,j_2}^{\mu, \nu, i},
\end{equation}
\begin{equation}\label{tridecomwithangle}
H_{l;j_1',j_2'}^{\mu,\nu,\tau, \kappa,i}=\sum_{ j_i \geq -k_{i,-}} H_{l,j_i;j_1',j_2'}^{\mu,\nu,\tau, \kappa,i}, \quad H_{l,j_i;j_1'} ^{\mu,\nu,\tau, \kappa,i}=\sum_{ j_2' \geq -k_{2,-}'} H_{l,j_i;j_1',j_2'}^{\mu,\nu,\tau, \kappa,i},\quad i \in \{1,2\},
\end{equation}
where 
\be\label{noveqn810}
\textup{End}_{l;k_1,j_1,k_2,j_2}^{\mu, \nu, i} =  (-1)^{i-1} \int_{\R^2} e^{i t_i \Phi^{\mu, \nu}(\xi, \eta)}\widehat{ f^{\mu}_{k_1,j_1}}(t_i,\xi-\eta)\widehat{f^{\nu}_{k_2,j_2}}(t_i,\eta) m_{\mu, \nu}(\xi-\eta, \eta) \varphi_{\bar{l};l}(\angle(\xi, \nu\eta)) d \eta,
\ee
\[
H_{l,j_1;j_1',j_2'}^{\mu,\nu,\tau, \kappa,1}=   \int_{t_1}^{t_2}\int_{\R^2} \int_{\R^2} e^{i t \Phi_1^{\mu, \tau, \kappa}(\xi, \eta, \sigma)} m^{ \tau, \kappa}_{\mu, \nu,1}(\xi, \eta, \sigma) \varphi_{\bar{l};l}(\angle(\xi, \nu\eta))\]
\be\label{noveqn811}
\times \widehat{
f_{k_1,j_1}^{\mu}
}(t,\xi-\eta)\widehat{f_{k_1',j_1'}^{\tau}}(t,\eta-\sigma) \widehat{f_{k_2',j_2'}^{\kappa}}(t,\sigma)  d \eta d\sigma   d t, 
\ee
\[
H_{l,j_2;j_1',j_2'}^{\mu,\nu,\tau, \kappa,2}= \int_{t_1}^{t_2} \int_{\R^2}\int_{\R^2} e^{i t_i \Phi_2^{\tau, \kappa, \nu}(\xi, \eta, \sigma)} m^{ \tau, \kappa}_{\mu, \nu,2}(\xi, \eta, \sigma)\varphi_{\bar{l};l}(\angle(\xi, \nu\eta)) \]
\be\label{noveqn812}
\times \widehat{
f_{k_1',j_1'}^{\tau}
}(t,\xi-\sigma)\widehat{f_{k_2',j_2'}^{\kappa}}(t,\sigma-\eta) \widehat{f_{k_2,j_2}^{\nu}}(t,\eta)  d \eta d\sigma   d t, 
\ee
\be\label{noveqn813}
\mbox{JR}^{1}_{l;k_1,k_2}= \int_{t_1}^{t_2} \int_{\R^2} e^{i t \Phi^{\mu, \nu}(\xi, \eta)}\widehat{f^{\mu}}(t,\xi-\eta)\widehat{\mathcal{R'}^{\nu}}(t,\eta) m_{\mu, \nu}(\xi-\eta, \eta) \psi_{k_1}(\xi-\eta)\psi_{k_2}(\eta )\varphi_{\bar{l};l}(\angle(\xi, \nu\eta)) d \eta d t, \ee
\be\label{noveqn814}
\mbox{JR}^{2}_{l;k_1,k_2}=\int_{t_1}^{t_2} \int_{\R^2} e^{i t \Phi^{\mu, \nu}(\xi, \eta)}\widehat{\mathcal{R'}^{\mu}}(t,\xi-\eta)\widehat{f^{\nu}}(t,\eta) m_{\mu, \nu}(\xi-\eta, \eta) \psi_{k_1}(\xi-\eta)\psi_{k_2}(\eta )\varphi_{\bar{l};l}(\angle(\xi, \nu\eta)) d \eta d t,\ee
where $m_{\mu, \nu}(\xi-\eta, \eta)$, $ m^{ \tau, \kappa}_{\mu, \nu,1}(\xi, \eta, \sigma)$, and $ m^{ \tau, \kappa}_{\mu, \nu,2}(\xi, \eta, \sigma)$ are defined  in (\ref{neweqn90}), (\ref{eqn4400}), and (\ref{eqn4401}). From the estimate (\ref{eqn61001}) in Lemma \ref{Snorm}, the estimate (\ref{sizeofsymboluniform}) in Lemma \ref{sizeofsymbol}, the estimate (\ref{noveqn519}) in Lemma \ref{roughestimatephase2}, and the estimate (\ref{eqn134}) in Lemma \ref{phasesize}, the following estimates hold, 
\begin{equation}\label{equation6860}
\| m_{\mu, \nu}(\xi-\eta, \eta) \varphi_{\bar{l};l}(\angle(\xi, \nu \eta))\|_{L^{\infty}_{\xi, \eta}} \lesssim 2^{k-k-2\max\{k_{1,-}, l\} }  ,
\end{equation}
\[
  \| m_{\mu, \nu}(\xi-\eta, \eta) \varphi_{\bar{l};l}(\angle(\xi, \nu \eta))\|_{\mathcal{S}^{\infty}_{k,k_1,k_2}}
\]
\begin{equation}\label{equation6861}
 \lesssim \max\{2^{k-k-2\max\{k_{1,-}, l\}  - 3k_{1,-}},2^{k-k-2\max\{k_{1,-}, l\} - 3l} \}  \lesssim 2^{-2\max\{k_{1,-}, l\}-6k_{1,-}  }.
\end{equation}
   From the estimate (\ref{eqn298}) in Lemma \ref{bilinearest}, estimates (\ref{L2estimate}), (\ref{L2derivativeestimate}) and  (\ref{equation6726}) in Lemma \ref{L2estimatelemma}, and the estimate (\ref{equation6860}), the following estimate holds, 
\[
\sum_{i=1,2}2^{\delta j}\| \mathcal{F}^{-1}[ \textup{End}_{l;k_1, k_2}^{\mu, \nu, i } ]\|_{B_{k,j}} + \sum_{i=1,2}2^{\delta j}\| \mathcal{F}^{-1}[ J^{\mu, \nu,i}_{l;k_1, k_2}]\|_{B_{k,j}}  \lesssim \sum_{i=1,2} 2^{\alpha k + (1+\delta)j-2\max\{k_{1,-},l\}  +2 \beta m + k+k_1+l}\]
\[
  \times \big(\| \widehat{f_{k_i}}(\xi)\|_{L^\infty_\xi} \|f_{k_{3-i}}\|_{L^2}  + 2^m \| \widehat{f_{k_i}}(t,\xi)\|_{L^\infty_\xi} \| \p_t f_{k_{3-i}}\|_{L^2}\big)
\lesssim 2^{\alpha k  +3\beta m }\epsilon_1^2 \lesssim 2^{-2\delta m -2\delta j}\epsilon_0.
\]
Hence finishing the proof.
\end{proof}
 
 \begin{lemma}\label{badhhlemma5}
Under the bootstrap assumption \textup{(\ref{smallness})}  and the assumption that Proposition \textup{\ref{propZnorm2}} holds, if $  j\leq \max\{m+l, -k-l\}+100 \delta m $, then the desired estimate  \textup{(\ref{baddesired1})} holds  for any fixed $k$,  $k_1$, and $k_2$ that satisfy \textup{(\ref{rangehighhigh})} under the assumption that  $k+ 2k_1\geq-m+\beta m$, and $k+2l\geq -m+2\beta m$.
\end{lemma}
\begin{proof}
From the assumptions on $j$, $k+2k_1$ and $k+2l$, it is easy to see that  $j\leq m+l +100\delta m$ and $k_1\geq -m/3$.  Same as in Lemma \ref{badhhlemma4}, we do integration by parts in time first to take   advantage of high oscillation in time. 

$\bullet$\quad \textit{Estimate of the endpoint case.}\quad Because $k+2l \geq -m +2\beta m $, which means that we are away from the space resonance set,  we can first rule out  the case when $\max\{j_1,j_2\}\leq m +k-k_1 +l -4 \beta m $ by keep doing integration  by parts in `` $\eta$'' many times.

 Now it would be sufficient to consider the case when $\max\{j_1, j_2\}\geq m+k-k_1+l -4\beta m$. From the estimate (\ref{eqn298}) in Lemma \ref{bilinearest} and the estimate (\ref{equation6860}), the following estimate holds after putting the input with higher spatial concentration in $L^2$ and the other input in $L^\infty_\xi$.
\[
\sum_{\max\{j_1, j_2\}\geq m+k-k_1+l-4\beta m }  \sum_{i=1,2} 2^{\delta j}\| \mathcal{F}^{-1}[ \textup{End}_{l;k_1,j_1,  k_2, j_2}^{\mu, \nu, i } ]\|_{B_{k,j}}
\]
\[
 \lesssim \sum_{\max\{j_1, j_2\}\geq m+k-k_1+l-4\beta m } 2^{\alpha k  + (1+\delta)j -2\max\{k_{1,-},l\}    + k +k_1+l -\max\{j_1, j_2\} -2\alpha k_1}  \]
 \[ \times \| f_{k_1, j_1}\|_{Z} \| f_{k_2, j_2}\|_{Z} \lesssim  2^{\alpha k +2k_1 +l -2\max\{k_{1,-},l\} -2\alpha k_1 + 6\beta m } \epsilon_1^2 \lesssim 2^{\alpha k + (1-2\alpha )k_1+6\beta m }\epsilon_0.
\]
From the above estimate, we can rule out the case when $k_1\leq -7\beta m$ or $k\leq-70\beta m$. It remains to consider the case when $k_1\geq -7 \beta m $ and $k\geq -70\beta m.$ From the estimate (\ref{equation6861}), we have
\begin{equation}\label{equation6874}
\|  {m}_{\mu, \nu}(\xi-\eta, \eta)\varphi_{\bar{l};l}(\angle(\xi, \nu \eta))\|_{\mathcal{S}^\infty_{k,k_1,k_2}} \lesssim 2^{-8k_{1,-}+2\beta m } \lesssim 2^{58\beta m}.
\end{equation}
From the $L^2-L^\infty$ type bilinear estimate (\ref{bilinearesetimate}) in Lemma \ref{multilinearestimate} and the estimate (\ref{equation6874}), the following estimate holds after putting the input with higher spatial localization in $L^2$ and the other input in $L^\infty$.
\[
\sum_{\max\{j_1, j_2\}\geq m+k-k_1+l-4\beta m }  \sum_{i=1,2}2^{\delta j}\| \mathcal{F}^{-1}[ \textup{End}_{l;k_1,j_1,  k_2, j_2}^{\mu, \nu, i } ]\|_{B_{k,j}}
\]
\[
\lesssim \sum_{\max\{j_1, j_2\}\geq m+k-k_1+l-4\beta m }  2^{\alpha k  + (1+\delta)j +58 \beta m-\max\{j_1,j_2\}-m-2\alpha k_1}  \| f_{k_1, j_1}\|_{Z} \| f_{k_2, j_2}\|_{Z}
\]
\[
\lesssim 2^{-m-(1-\alpha)k+(1-2\alpha)k_1 + 70 \beta m}\epsilon_1^2\lesssim 2^{-m+140\beta m }\epsilon_0\lesssim 2^{-2\delta m -2\delta j}\epsilon_0.
\]

$\bullet$\quad \textit{Estimate of $J^{\mu, \nu,i}_{l;k_1, k_2}$, $i\in\{1,2\}$.\quad} Recall (\ref{equation7000}), (\ref{equation7001}), and (\ref{tridecomwithangle}). 

Since the decay rate of $Z$-norm of $\mathcal{R}'$ is   $2^{-m}$, with minor modifications, we can  estimate of $\mbox{JR}^{1}_{l;k_1,k_2}$ and $\mbox{JR}^{2}_{l;k_1,k_2}$ in the same way as we did  above for $ \textup{End}_{l;k_1,  k_2}^{\mu, \nu, i }$. We omit details here. 

From the estimate (\ref{noveqn519})  in Lemma \ref{roughestimatephase2}, it is easy to see  that the size of $\Phi^{\mu, \nu}(\xi-\eta, \eta)$ is greater than  $2^{k+2\max\{k_1,l\}} $, which is greater than $2^{-m+\beta m }$. From the estimate (\ref{equation6911}) in Lemma \ref{angularbilinear}, after putting $f_{k_i}(t)$  in $L^\infty$ and $T^{\tau, \kappa}(f_{k_1'}, f_{k_2'})$ in $L^2$, the following estimate holds 
\[
\sum_{i=1,2}2^{\delta j}\| \mathcal{F}^{-1}[J^{\mu, \nu, \tau, \kappa, i}_{l;k_1', k_2'}]\|_{B_{k,j}}\lesssim  \sup_{|\lambda|\leq 2^{\beta m/2} } 2^{\alpha k   + m + (1+\delta) j-2\max\{l,k_{1,-} \}+2\beta m} \| e^{-i (t+2^{-k-2l}\lambda)} f_{k_1}(t)\|_{L^\infty}\]
\[
\times  2^{k_1} \| e^{-it\Lambda} f_{k_1'}\|_{L^\infty} \| f_{k_2'}\|_{L^2}
+2^{-10 m +k+\alpha k+m+j-2\max\{l,k_{1,-}\} }\| f_{k_1}\|_{L^2}\| e^{-it\Lambda} f_{k_1'}\|_{L^\infty} \| f_{k_2'}\|_{L^2}
\]
\begin{equation}\label{equation6880}
\lesssim \min\{ 2^{(1-2\alpha)k_2'+3\beta m}, 2^{(1-2\alpha)k_2'+m+3\beta m -(N_0-8)k_{1,+}'}\}\epsilon_0.
\end{equation}
From the above estimate (\ref{equation6880}), we can rule out the case when $k_1', k_2'\notin [-4\beta m , \beta m ] $. 
Now, it remains  to consider fixed $k_1', k_2'\in [-4\beta m , \beta m ]$.

Firstly, we consider the case when $\max\{j_i, j_1',j_2'\}\geq m + k_1+k_1'-10\beta m $, $i\in\{1,2\}$. From the estimate (\ref{equation6911}) in Lemma \ref{angularbilinear}, the following estimate holds after putting the input with the maximal spatial concentration in $L^2$ and the other two inputs in $L^\infty$, the following estimate holds, 
  \[
\sum_{i=1,2}\sum_{\max\{j_i, j_1', j_2'\}\geq m+k_1+k_1'-10\beta m } 2^{\delta j}\| \mathcal{F}^{-1}[H_{l,j_i;j_1',j_2'}^{\mu,\nu,\tau, \kappa,i}]\|_{B_{k,j}}\]
\[\lesssim  \sum_{\max\{j_i, j_1', j_2'\}\geq m+k_1+k_1'-10\beta m  }
  2^{2\beta m} \big( 2^{\alpha k   + m +j-2\max\{l,k_{1,-} \}  }  2^{-m-\alpha k_1}+ 2^{-10 m +k+\alpha k+m+j-2\max\{l,k_{1,-}\}  } \big)\]
   \[ 
 \times  2^{k_1} 2^{-\max\{j_i,j_1', j_2'\}-m-\alpha(k_1'+k_2')}\| f_{k_1', j_1'}\|_{Z} \| f_{k_i, j_i}\|_Z \| f_{k_2',j_2'}\|_Z
\]
\begin{equation}\label{equation9880}
\lesssim 2^{-m-k_1-(1+\alpha)(k_1'+k_2') +24\beta m } \epsilon_0\lesssim 2^{-2\delta m -2\delta j}\epsilon_0.
\end{equation}
In the above estimate, we used the fact that $k_1\geq -m/3$.

Now, we consider the case when $\max\{j_i, j_1',j_2'\}\leq m + k_1+k_1'-10\beta m $, $i\in\{1,2\}$. We separate further into two cases based on the possible size of $k_1$. If $k_1\geq -10\beta m$, then the cubic degeneracy of the bad type phases doesn't cause much difference, with minor modifications, the argument used in the proof of Lemma \ref{hhgoodj7} for the good type phases also works out for this case.

Lastly, we consider the case when $k_1\leq -10\beta m $. Recall that $-4\beta m\leq k_2'\leq k_1'\leq \beta m$. In other words, we have $|\eta|\ll |\eta-\sigma|\sim |\sigma|$  or $|\xi-\eta|\ll |\xi-\sigma|\sim |\sigma-\eta|$. Hence, we can do integration by parts in $\sigma$ to take the advantage of high oscillation in $\sigma$. More precisely, the net gain of doing integration by parts in ``$\sigma$'' is  at least $\max\{2^{-m+\max\{j_1',j_2'\} -k_1-k_1'+3\beta m},$ $  2^{-m-\min\{k_1,k_2'\}-4k_{1}'+5\beta m} \}$, which is less than $2^{-\beta m}$, see estimates (\ref{dece1}) and (\ref{dece2}) in Lemma \ref{phasesize}. Therefore, we can do integration by parts in ``$\sigma$'' many times to see rapidly decay to rule out the case when $\max\{j_1', j_2'\}\leq m +k_1+k_1'-4\beta m $.  Hence finishing the proof.

\end{proof}

\subsection{When $(k_1, k_2)\in \chi_k^2, (\mu, \nu)\in \{(+,+), (+,-)\} $}\label{hlbad}

Note that the estimates (\ref{noveqn110}) and (\ref{noveqn111}) in Lemma \ref{highlowhl} and Lemma \ref{hlgoodj10} hold regardless the sign of $\mu$ and $\nu$.   Hence,  we can rule out the very-low-frequency case,  the relatively-high-frequency case and the case $j\geq (1+20\delta)m$  as in subsection \ref{lowhighgood}. From the estimate (\ref{noveqn519}) in Lemma \ref{roughestimatephase2}, it is easy to see that the phase is not degenerated when $k_{}\geq 0$. Therefore, there is little difference between the bad type phase and the good type phase and the method used in   subsection \ref{lowhighgood} also works for this case. To sum up,   it would be sufficient to consider 
 fixed  $k$,  $k_1, k_2$, and $j$ that satisfy the following estimate, 
\begin{equation}\label{badrange1}
-2(1+100\delta)m/(2-\alpha) \leq k_2\leq k-10, |k_1-k|\leq 10,
\end{equation}
\begin{equation}\label{badrange2}
\, -2(1+100\delta)m/(4-\alpha)\leq k \leq 0, \quad j\leq (1+20\delta) m.
\end{equation}

For fixed $k_1$ and $k_2$ in the above range, we do dyadic decomposition for   the angle between $\xi$ and $\nu \eta$ with the threshold $\bar{l}$ chosen to be $2k_{1,-}$ and then spatially localize two inputs as in (\ref{equation7002}).  For simplicity, we use the same notations listed in (\ref{equation6601}) and  (\ref{equation7002}) but readers should keep in mind  that now $(k_1,k_2)\in \chi_k^2$ instead of $\chi_k^1$. 

 To sum up, 
it would be sufficient to prove the following estimate in this subsection, 
\be\label{baddesired2}
  2^{\delta j}  \| \mathcal{F}^{-1}\big[
I_{l}^{\mu, \nu}\big] \|_{B_{k,j}}\lesssim 2^{-2\delta m -2\delta j}\epsilon_0,\quad   (\mu, \nu)\in\{(+,-), (+,+)\},
\ee
where  fixed  $k$,  $k_1$, and $j$ satisfy the estimates (\ref{badrange1}) and (\ref{badrange2}) and fixed $l\in [-2k_{1,-}, 2]$. Based on the possible size of $j$, $k_2$ and $l$, we separate the proof of the desired estimate (\ref{baddesired2}) into five cases, see Lemma \ref{highlowbadlemma1}, Lemma \ref{highlowbadlemma2}, Lemma \ref{highlowbadlemma3}, Lemma \ref{highlowbadlemma5}, and Lemma \ref{highlowbadlemma9}.

\begin{lemma}\label{highlowbadlemma1}
Under the bootstrap assumption \textup{(\ref{smallness})}  and the assumption that Proposition \textup{\ref{propZnorm2}} holds, if   $ \max\{m+l,\min\{-k_2-l,m\}\} + 100\delta m \leq j \leq m +20\delta m$, then the desired estimate (\textup{\ref{baddesired2}}) holds  for any fixed $k$,  $k_1$, and $k_2$ that satisfy \textup{(\ref{badrange1})} and \textup{(\ref{badrange2})}.
\end{lemma}
\begin{proof}
Recall that $j\leq m+20\delta m $. From the assumption on $j$, it is easy to see that   we only need to consider the case when $k_2+l\geq -m$, $j\geq \max\{m+l, -k_2-l\}+100\delta m$ and $k_1\leq -20\delta m$.

Recall (\ref{equation7002}). Although $k_1$ and $k_2$ are not comparable in the case we are considering, the following rough estimate always holds,
\[
 |\nabla_{\xi} \Phi^{\mu, \nu}(\xi, \eta)|\varphi_{\bar{l};l}(\angle(\xi, \nu \eta))\psi_{k_1}(\xi-\eta) \psi_{k_2}(\eta) + |\nabla_{\xi} \Phi^{\mu, \nu}(\xi, \xi-\eta)|\varphi_{\bar{l};l}(\angle(\xi, \nu (\xi-\eta)))\psi_{k_1}( \eta) \psi_{k_2}(\xi-\eta) \]
 \[
 \lesssim 2^{l}.
\]

Recall that $j\geq \max\{ m+l, -k_2-l\}+100\delta m$.  Hence,   by doing integration by parts in $\xi$ once, we gain $2^{-j}$ and pay the price of $\max\{2^{\min\{j_1,j_2\}}, 2^{-k_2-l}\}$, where $2^{-k_2-l}$ comes from the fact that $\nabla_{\xi}$ might hit the angular cutoff function or $a_{\mu, \nu}(t,x, \xi-\eta)$ (see (\ref{equation6840})). Hence,  we can rule out the case when $\min\{j_1,j_2\}\leq j-\delta m$ by doing integration by parts in $\xi$ many times. 

It remains to consider the case when $\min\{j_1,j_2\}\geq j-\delta m$, from the estimate (\ref{eqn293}) in Lemma \ref{bilinearest}, the following estimate holds, 
\[
\sum_{ \min\{j_1, j_2\}\geq j-\delta m}2^{\delta j}\|\mathcal{F}^{-1}[ I_{l;j_1, j_2}^{\mu, \nu}]\|_{B_{k,j}} \lesssim \sum_{ \min\{j_1, j_2\}\geq j-\delta m}  2^{ \alpha k  +m+(1+\delta)j+k+k_2+l/2} \|  f_{k_1,j_1}\|_{L^2}    \]
\begin{equation}\label{equation10010}
\times  \|   f_{k_2,j_2}\|_{L^2} \lesssim  2^{k+(1-\alpha)k_2+m+2\delta m -j+l/2} \epsilon_1^2   \lesssim 2^{ (1-\alpha)k_2 +2\delta m}\epsilon_0 \lesssim 2^{-2\delta m -2\delta j}\epsilon_0.
\end{equation}
Hence finishing the proof. 

\end{proof}

\begin{lemma}\label{highlowbadlemma2}
Under the bootstrap assumption \textup{(\ref{smallness})}  and the assumption that Proposition \textup{\ref{propZnorm2}} holds, if $  j\leq \max\{m+l,\min\{-k_2-l,m\}\} + 100\delta m  $, then the desired estimate (\textup{\ref{baddesired2}}) holds  for any fixed $k$,  $k_1$, and $k_2$ that satisfy \textup{(\ref{badrange1})} and \textup{(\ref{badrange2})} under the assumption that  $(2-2\alpha)k_2\leq -m-20\beta m $, and  $k_2+2l\leq -m +4\beta m $.
\end{lemma}
\begin{proof}
Note that the assumptions on $j$, $k_2$, and $l$ implies that  $j\leq \min\{ -k_2-l + 4\beta m, m\} +100\delta m$.   Since $k_1$ and $k_2$
are not comparable for the case we are considering,  whether $j_2$ is the smaller than $j_1$   makes a difference.

$\bullet$\quad  If $j_2\leq j_1$,  then   from the estimate (\ref{eqn293}) in Lemma \ref{bilinearest}, the following estimate holds, 
\[
 \sum_{ j_2\leq j_1} 2^{\delta j}\| \mathcal{F}^{-1}[I_{l;j_1,j_2}^{\mu, \nu}]\|_{B_{k,j}} \lesssim   \sum_{-k_2\leq j_2\leq j_1}  2^{2\delta m + \alpha k+6k_{+} + m +j +k_1  +k_2+l/2} \| { f_{k_1,j_1}}\|_{L^2} \| f_{k_2,j_2}\|_{L^2}
\]
\[
\lesssim 2^{12\beta m +m +k_1-l/2+(2-\alpha)k_2}
\epsilon_0  \lesssim 2^{-2\delta m -\delta j}\epsilon_0.
\]
$\bullet$\quad If $j_1\leq j_2$, then we can improve the upper bound of $j$. More precisely, as $j_1\leq j_2$, there is no need to   switch the role of $\xi-\eta$ and $\eta$. As a result, the following improved estimate holds from the estimate (\ref{eqn134}) in Lemma \ref{phasesize},
\[
 |\nabla_{\xi} \Phi^{+, \nu}(\xi, \eta)|\varphi_{\bar{l};l}(\angle(\xi, \nu \eta)) = |\Lambda'(|\xi|)\frac{\xi}{|\xi|} - \Lambda'(|\xi-\eta|)\frac{\xi-\eta}{|\xi-\eta|}|\varphi_{\bar{l};l}(\angle(\xi, \nu \eta)) \lesssim  2^{k_2-k_1+l}.
\]

With the above observation, we can  redo the argument used in  the proof of Lemma \ref{highlowbadlemma1} to further rule out the case when $\max\{m+(1-\alpha)(k_2-k_1)+l, -k_1-l\}+3\beta m \leq j \leq \max\{m+l,-k_2-l\} +100\delta m$. More precisely, 
a similar estimate as in the estimate (\ref{equation10010}) holds for  the case when $j_1\geq j-\delta m $. Recall (\ref{equation6840}). Note that the price of doing integration by parts in $\xi$ once is  $2^{-k_1-l}$ when $\nabla_{\xi}$ hits $a_{\mu, \nu}(t,x,\xi, \eta)$. Hence, by doing the integration by parts in ``$\xi$'' many times,  we can rule out the case when $j_1 \leq j-\delta m$.

Lastly, it remains to consider the case when $j \leq \max\{m+(1-\alpha)(k_2-k_1)+l, -k_1-l\}+3\beta m$. From the estimate (\ref{eqn293}) in Lemma \ref{bilinearest} and the estimates (\ref{L2estimate})  and (\ref{equation6726}) in Lemma \ref{L2estimatelemma},  the following estimate holds,
\[
\sum_{j_1\leq j_2} 2^{\delta j}\| \mathcal{F}^{-1}[I_{l;j_1,j_2}^{\mu, \nu}]\|_{B_{k,j}}\lesssim \sum_{j_1\leq j_2}   2^{2\delta m + \alpha k+ m +j +k_1   +k_1+k_2+l } \| { f_{k_1,j_1}}\|_{L^1} \| f_{k_2,j_2}\|_{L^2}
\]
\[
\lesssim \max\{2^{7\beta m +2m +(3-2\alpha)k_2+k_1 +2l}\epsilon_0, 2^{7\beta m + m+(2-\alpha)k_2+k_1}\epsilon_0 \} \lesssim 2^{-2\delta m -2\delta j}\epsilon_0.
\]
\end{proof}
\begin{lemma}\label{highlowbadlemma3}
Under the bootstrap assumption \textup{(\ref{smallness})}  and the assumption that Proposition \textup{\ref{propZnorm2}} holds, if $  j\leq \max\{m+l,\min\{-k_2-l,m\}\} + 100\delta m  $, then the desired estimate (\textup{\ref{baddesired2}}) holds  for any fixed $k$,  $k_1$, and $k_2$ that satisfy \textup{(\ref{badrange1})} and \textup{(\ref{badrange2})} under the assumption that  $(2-2\alpha)k_2\leq -m-20\beta m $, and  $k_2+2l\geq -m +4\beta m $.
\end{lemma}
 \begin{proof}

Note that the assumptions on $j$, $k_2$, and $l$ implies that $j\leq m +l +100\delta m$ and $l\geq -m/4$. Moreover, note that    we are away from the space-resonance in ``$\eta$'' set since $k_2+2l\geq -m +4\beta m $.

 We separate into two cases based on whether $j_1$ is   smaller than $j_2$ as follows. 

$\bullet$\quad  We first consider the case when $j_2\leq j_1$.  Note that the net gain of doing integration by parts in $\eta$ once is at least $2^{-m}\max\{2^{\max\{j_1,j_2\}-l+3\beta m}, 2^{-k_2-2l+\beta m}\}$, which is less than $2^{-\beta m}$ if  $j_1\leq m + l-4\beta m$. Hence, we can first rule out the case $j_1\leq m + l-4\beta m$  by doing integration by parts in ``$\eta$'' many times. From (\ref{eqn293}) in Lemma \ref{bilinearest}, the following estimate holds for the case   $j_1\geq  m + l-4\beta m $
\[
\sum_{j_2\leq j_1, m+l-4\beta m \leq j_1} 2^{\delta j}\| \mathcal{F}^{-1}[I_{l;j_1,j_2}^{\mu, \nu}]\|_{B_{k,j}} \lesssim  \sum_{j_2\leq j_1, m+l-4\beta m \leq j_1}  2^{2\delta m + \alpha k   + m + j +k_1  +2k_2+l} 
\]
\[
\times \| f_{k_1,j_1}\|_{L^2} \| f_{k_2, j_2}\|_{L^1} \lesssim 2^{m+(2-\alpha)k_2+k_1+l+6\beta m}\epsilon_0 \lesssim 2^{-2\delta m -2\delta j}\epsilon_0. 
\]

$\bullet$\quad Lastly, we consider the case when $j_1\leq j_2$. For this case, we can  improve the upper bound for $j$. More precisely, following the same argument used in the proof of Lemma \ref{highlowbadlemma2}, we can rule out the case when  $\max\{m+(1-\alpha)(k_2-k_1)+l, -k_1-l\}+3\beta m \leq j \leq \max\{m+l,-k_2-l\} +100\delta m$. 

It remains  to consider the case  when $j \leq \max\{m+(1-\alpha)(k_2-k_1)+l, -k_1-l\}+3\beta m$. Moreover, same as in the case $j_2\leq j_1$ considered previously, we can further rule out the case when $j_2\leq m+l -4\beta m$ by  doing integration by parts in ``$\eta$'' many times to see rapidly decay. 

To sum up, it would be sufficient to consider the case when $j \leq \max\{m+(1-\alpha)(k_2-k_1)+l, -k_1-l\}+3\beta m$ and $j_2\geq m+l -4\beta m$. From the estimate   (\ref{eqn293}) in Lemma \ref{bilinearest} and the estimate  (\ref{sizeofsymboluniform}) in Lemma \ref{sizeofsymbol}, we derive the following estimate,
 \[
 \sum_{j_1\leq j_2, m+l-4\beta m \leq j_2}  2^{\delta j}\| \mathcal{F}^{-1}[I_{l;j_1,j_2}^{\mu, \nu}]\|_{B_{k,j}} \lesssim  \sum_{j_1\leq j_2, m+l-4\beta m \leq j_2}  2^{2\delta m + \alpha k   + m + j +k_1  + k_1+k_2+l}
 \]
\[
\times  \| f_{k_1,j_1}\|_{L^1} \| f_{k_2,j_2}\|_{L^2}\lesssim \max\{2^{m + (2-2\alpha)k_2 +k_1+l+12\beta m }, 2^{12\beta m + (1-\alpha)k_2 +k_1-l}\} \epsilon_1^2 \lesssim 2^{-2\delta m -2\delta j}\epsilon_0. 
\]
In the above estimate, we used the fact that $k_2\leq -m/(2-2\alpha)-12\beta m$ and $l \geq -m/4$. Hence finishing the proof.

\end{proof}

\begin{lemma}\label{highlowbadlemma5}
Under the bootstrap assumption \textup{(\ref{smallness})}  and the assumption that Proposition \textup{\ref{propZnorm2}} holds, if $  j\leq \max\{m+l,\min\{-k_2-l,m\}\} + 100\delta m  $, then the desired estimate (\textup{\ref{baddesired2}}) holds  for any fixed $k$,  $k_1$, and $k_2$ that satisfy \textup{(\ref{badrange1})} and \textup{(\ref{badrange2})} under the assumption that  $(2-2\alpha)k_2\geq -m-20\beta m $, and  $k_2+2l\leq -m +4\beta m $.
\end{lemma}
\begin{proof}

Note that the assumptions on $j$, $k_2$, and $l$ implies that $j\leq -k_2-l +100\delta m+4\beta m $, $l\leq -m/5$ and $k_1\leq - m/10$. For this case,  we first do integration by parts in time. As a result, we have the same equality as in (\ref{equation7000}). 

From the estimate (\ref{eqn61001}) in  Lemma \ref{Snorm}, the estimate (\ref{sizeofsymboluniform}) in Lemma \ref{sizeofsymbol}, the estimate (\ref{noveqn519}) in Lemma \ref{roughestimatephase2}, and the estimates  (\ref{dece1}) and (\ref{dece2}) in Lemma \ref{phasesize}, the following estimates hold, 
\begin{equation}\label{equation7010}
\| m_{\mu, \nu}(\xi-\eta, \eta) \varphi_{\bar{l};l}(\angle(\xi, \nu \eta))\|_{L^{\infty}_{\xi, \eta}} \lesssim 2^{k_1-k_2-2\max\{k_{1}, l\} },
\end{equation}
\[
 \| m_{\mu, \nu}(\xi-\eta, \eta) \varphi_{\bar{l};l}(\angle(\xi, \nu \eta))\|_{\mathcal{S}^{\infty}_{k,k_1,k_2}} \lesssim \max\{2^{k_1-k_2-2\max\{k_{1,-}, l\}  - 3k_{1,-}},
\]
\begin{equation}\label{equation8010}
 2^{k_1-k_2-2\max\{k_{1,-}, l\}  - 3l} \}  \lesssim 2^{k_1-k_2-2\max\{k_{1,-}, l\}-6k_{1,-}  }.
\end{equation}

$\bullet$\quad \textit{Estimate of the endpoint case.}\quad Recall (\ref{noveqn810}).  From the estimate (\ref{eqn293}) in Lemma \ref{bilinearest}, the estimates (\ref{L2estimate}) and (\ref{equation6726}) in Lemma \ref{L2estimatelemma}, and the estimate (\ref{equation7010}), the following estimate holds, 
\[
\sum_{i=1,2}2^{\delta j}\| \mathcal{F}^{-1}[ \textup{End}_{l;k_1, k_2}^{\mu, \nu, i } ]\|_{B_{k,j}}
\lesssim \sum_{i=1,2} 2^{2\delta m + \alpha k   + j-k_1-k_2   + k_1+k_2+l} \| \widehat{f_{k_1}}(t_i,\xi)\|_{L^\infty_\xi} \|f_{k_2}(t_i)\|_{L^2}
\]
\[
\lesssim 2^{\alpha k +4\beta m+200\delta m}\epsilon_0 \lesssim 2^{-2\delta m-2\delta j}\epsilon_0.
\]

\noindent $\bullet$ \quad\textit{Estimate of $J^{\mu, \nu,i}_{l;k_1, k_2}$, $i\in\{1,2\}$.\quad} Recall (\ref{equation7000}).  With minor modifications, we can  estimate of $\mbox{JR}^{1}_{l;k_1,k_2}$ and $\mbox{JR}^{2}_{l;k_1,k_2}$ in the same way as we did for $ \textup{End}_{l;k_1,  k_2}^{\mu, \nu, i }$. We omit details here and proceed to the estimate of $J^{\mu, \nu, \tau, \kappa, i}_{l;k_1', k_2'}$. From (\ref{eqn293}) in Lemma \ref{bilinearest}, (\ref{sizeofsymboluniform}) in Lemma \ref{sizeofsymbol},  (\ref{L2estimate}) and (\ref{equation6726}) in Lemma \ref{L2estimatelemma}, and (\ref{equation7010}),  the following estimate holds after putting  $T^{\tau, \kappa}(f_{k_1'}, f_{k_2'})$ in $L^2$ and the other one in $L^\infty_\xi$, 
\[
\sum_{i=1,2} 2^{\delta j}\| \mathcal{F}^{-1}[J^{\mu, \nu, \tau, \kappa, i}_{l;k_1', k_2'}]\|_{B_{k,j}}\lesssim 2^{2\delta m + \alpha k +m +j -k_1-k_2 +k_1+k_2+l} \|\widehat{f_{k_1}}(t,\xi)\|_{L^\infty_\xi} 2^{k_2+k_{1,+}'} \| e^{-it\Lambda} f_{k_1'}\|_{L^\infty}
\]
\[
\times  \| f_{k_2'}\|_{L^2} + 2^{2\delta m + \alpha k  +m +j -k_1-k_2 +2k_2+l} \|\widehat{f_{k_2}}(t,\xi)\|_{L^\infty_\xi} 2^{k_1+k_{1,+}'} \| e^{-it\Lambda} f_{k_1'}\|_{L^\infty} \| f_{k_2'}\|_{L^2}
\]
\begin{equation}\label{equation7030}
\lesssim \min\{  2^{\alpha k   +(1-\alpha) k_{2}' -4k_{1,+}'+3\beta m}, 2^{m+\alpha k +(1-\alpha) k_{2}' +k_{1}' -N_0k_{1,+}'+3\beta m}\}\epsilon_0.
\end{equation}
 From the  above estimate, we can rule out the case when $k_2'\leq -10\beta m $ or $k_1'\geq\beta m$.

  It remains to consider fixed $k_1'$ and $k_2'$ such that $-10\beta m\leq k_2'\leq k_1'\leq \beta m.$  Recall that $k_1\leq -m/10$. In other words, we have  $|\eta|\ll |\eta-\sigma|\sim |\sigma|$ or $|\xi-\eta|\ll |\xi-\sigma|\sim |\sigma-\eta|$. From the estimates (\ref{dece1}) and  (\ref{dece2}) in Lemma \ref{phasesize}, we know that   $\nabla_\sigma \Phi^{\mu, \tau, \kappa}_i(\xi, \eta, \sigma)$ always has a good lower bound. 
Hence, we can rule out the case when $\max\{j_1',j_2'\}\leq m + k_2+k_1'-4\beta m$ by doing integration by parts in ``$\sigma$'' many times.   It would be sufficient to consider the case when $\max\{j_1',j_2'\}\geq m + k_2+k_1'-4\beta m$.   From (\ref{eqn293}) in Lemma \ref{bilinearest}, the following estimate holds  after first putting  $T^{\tau, \kappa}(f_{k_1',j_1'}, f_{k_2',j_2'})$ in $L^2$ and then putting the input with higher spatial localization in $L^2$ and the other input in $L^\infty_x$,
  \[
\sum_{i=1,2}\sum_{\max\{j_1, j_2\}\geq m+k_2+k_1'-4\beta m }2^{\delta j}\| \mathcal{F}^{-1}[J^{\mu, \nu, \tau, \kappa, i}_{l;k_1',j_1', k_2',j_2'}]\|_{B_{k,j}}\lesssim \sum_{\max\{j_1, j_2\}\geq m+k_2+k_1'-4\beta m } 
2^{2\delta m + \alpha k +m +j} \]
\[
\times 2^{ -k_1-k_2 +k_1+k_2+l} \|\widehat{f_{k_1}}(t,\xi)\|_{L^\infty_\xi} 2^{k_2+k_{1,+}'} 2^{-m-\alpha k_1'-\alpha k_2'-\max\{j_1',j_2'\}} \| f_{k_1',j_1'}\|_Z \| f_{k_2',j_2'}\|_Z
\]
\[
+ 2^{\alpha k  +m +j -k_1-k_2 +2k_2+l} \|\widehat{f_{k_2}}(t,\xi)\|_{L^\infty_\xi} 2^{k_1+k_{1,+}'} 2^{-m-\alpha k_1'-\alpha k_2'-\max\{j_1',j_2'\}}\| f_{k_1',j_1'}\|_Z \| f_{k_2',j_2'}\|_Z
\]
\[
\lesssim 2^{-m-(1+\alpha)k_2-(1+2\alpha) k_2'+4\beta m}\epsilon_0\lesssim 2^{-2\delta m-2\delta j}\epsilon_0.
\]
Hence finishing the proof. 
\end{proof}

\begin{lemma}\label{highlowbadlemma9}
Under the bootstrap assumption \textup{(\ref{smallness})}  and the assumption that Proposition \textup{\ref{propZnorm2}} holds, if $  j\leq \max\{m+l,\min\{-k_2-l,m\}\} + 100\delta m  $, then the desired estimate (\textup{\ref{baddesired2}}) holds  for any fixed $k$,  $k_1$, and $k_2$ that satisfy \textup{(\ref{badrange1})} and \textup{(\ref{badrange2})} under the assumption that  $(2-2\alpha)k_2\geq -m-20\beta m $, and  $k_2+2l\geq -m +4\beta m $.
\end{lemma}

\begin{proof}

Note that the assumptions on $j$, $k_2$, and $l$ implies that  $j\leq m +l +100\delta m $. Moreover, note that   we are away from the space-resonance in ``$\eta$'' set since $k_2+2l\geq -m +4\beta m $. For this case, we do integration by parts in time once and have the same identity as in (\ref{equation7000}).

$\bullet$\quad \textit{Estimate of the endpoint case.}\quad  Recall (\ref{noveqn810}). We separate into two cases based on whether $j_1$ is smaller than $j_2$ as follows.

$(i)$  If $j_2\leq j_1$, then  we can first rule out the case when $j_1\leq m + l -4\beta m$  by doing integration by parts in ``$\eta$'' many times. It would be sufficient to consider the case when $j_1\geq m +l -4\beta m$. From (\ref{eqn293}) in Lemma \ref{bilinearest}, the following estimate holds if $k_2\leq -30\beta m$
\[
\sum_{j_2\leq j_1, m+l-4\beta m\leq j_1} \sum_{i=1,2}2^{\delta j}\| \mathcal{F}^{-1}[ \textup{End}_{l;k_1,j_1, k_2,j_2}^{\mu, \nu, i } ]\|_{B_{k,j}}
\lesssim \sum_{j_2\leq j_1, m+l-4\beta m\leq j_1} 2^{2\delta m + \alpha k  +j +k_1-k_2-2\max\{l,k_{1  }\}}  
\]
\[
\times 2^{2\beta m + 2k_2+l} \| f_{k_1,j_1}\|_{L^2} \| f_{k_2, j_2}\|_{L^1}\lesssim 2^{(1-\alpha)k_2+15\beta m }\epsilon_0\lesssim 2^{-2\delta m -2\delta j}\epsilon_0.
\]

If $k_2\geq -30\beta m$, then  the the following estimate holds from    estimate (\ref{equation8010}), $L^2-L^\infty$ type bilinear estimate (\ref{bilinearesetimate}) in Lemma \ref{multilinearestimate} and (\ref{equation8010}) to derive the following estimate, 
\[
\sum_{j_2\leq j_1, m+l-4\beta m\leq j_1} \sum_{i=1,2}2^{\delta j}\| \mathcal{F}^{-1}[ \textup{End}_{l;k_1,j_1, k_2,j_2}^{\mu, \nu, i } ]\|_{B_{k,j}}
\lesssim \sum_{j_2\leq j_1, m+l-4\beta m\leq j_1}  2^{ \beta  m+ \alpha k  +j -k_2-7k_{1 } }
\]
\[
\times \| f_{k_1,j_1}\|_{L^2} \| e^{-it\Lambda} f_{k_2,j_2}\|_{L^\infty}  \lesssim 2^{-m-(1+\alpha)k_2-7k_{1 } +15\beta m }\epsilon_1^2  \lesssim 2^{-2\delta m-2\delta j}\epsilon_0.
\]

$(ii)$  If $j_1\leq j_2$, then we can first  rule out the case when $\max\{m+(1-\alpha)(k_2-k_1)+l, -k_1-l\}+3\beta m \leq j\leq \max\{m+l, -k_2-l\}+100\delta m $ by redoing   the argument used in the proof of Lemma \ref{highlowbadlemma2}. Moreover, by doing integration by parts in ``$\eta$'' many times, we can further rule out the case when $j_2\leq m+l-4\beta m.$ Therefore, it is sufficient to consider the case when $j\leq \max\{m+(1-\alpha)(k_2-k_1)+l, -k_1-l\}+3\beta m= m+(1-\alpha)(k_2-k_1)+l+3\beta m $ and $j_2\geq m + l -4\beta m .$ 
From (\ref{eqn293}) in Lemma \ref{bilinearest}, the following estimate holds if $k_2\leq -30\beta m$
\[
\sum_{j_1\leq j_2, m+l-4\beta m\leq j_2} \sum_{i=1,2}2^{\delta j}\| \mathcal{F}^{-1}[ \textup{End}_{l;k_1,j_1, k_2,j_2}^{\mu, \nu, i } ]\|_{B_{k,j}}
\lesssim \sum_{j_1\leq j_2, m+l-4\beta m\leq j_2} 2^{\beta m + \alpha k  +j }  
\]
\[
\times 2^{ k_1-k_2-2\max\{l,k_{1,-}\} + 2\beta m + k_1+k_2+l} \| f_{k_1,j_1}\|_{L^1} \| f_{k_2, j_2}\|_{L^2}\lesssim  2^{(1-2\alpha)k_2+10\beta m }\epsilon_0\lesssim 2^{-2\delta m -\delta j}\epsilon_0.
\]

If $k_2\geq -30\beta m$, then the following estimate holds from  the $L^2-L^\infty$ type bilinear estimate (\ref{bilinearesetimate}) in Lemma \ref{multilinearestimate} and the estimate (\ref{equation8010}), 
\[
\sum_{j_1\leq j_2, m+l-4\beta m\leq j_2} \sum_{i=1,2}2^{\delta j}\| \mathcal{F}^{-1}[ \textup{End}_{l;k_1,j_1, k_2,j_2}^{\mu, \nu, i } ]\|_{B_{k,j}}
\lesssim \sum_{j_1\leq j_2, m+l-4\beta m\leq j_2}  2^{\beta m + \alpha k +j  -k_2-7k_{1,-} +2\beta m }
\]
\[
\times \| f_{k_2,j_2}\|_{L^2} \| e^{-it\Lambda} f_{k_1,j_1}\|_{L^\infty} \lesssim 2^{-m-(1+\alpha)k_2-7k_1+15\beta m }\epsilon_1^2  \lesssim 2^{-2\delta m-2\delta j}\epsilon_0.
\]

 $\bullet$ \quad\textit{Estimate of $J^{\mu, \nu,i}_{l;k_1, k_2}$, $i\in\{1,2\}$.\quad} Same as before, we omit details for the estimates of $\mbox{JR}^{1}_{l;k_1,k_2}$ and $\mbox{JR}^{2}_{l;k_1,k_2}$ here and proceed to the estimate of $J^{\mu, \nu, \tau, \kappa, i}_{l;k_1', k_2'}$ directly.  From the estimate (\ref{eqn293}) in Lemma \ref{bilinearest}, it is easy to see that the following estimates hold,
 \[
2^{\delta j}\| \mathcal{F}^{-1}[J^{\mu, \nu, \tau, \kappa,1}_{l;k_1', k_2'}]\|_{B_{k,j}}\lesssim 2^{2\delta m + \alpha k +  m +j+k_1-k_2-2\max\{l, k_{1,-}\} +k_1+k_2+l}
\]
\begin{equation}\label{equation10000}
 \times \| \widehat{f_{k_1}}(t,\xi)\|_{L^\infty_\xi} 2^{k_2} \| e^{-it\Lambda} f_{k_1'}\|_{L^\infty} \| f_{k_2'}\|_{L^2}, 
\end{equation}
 \[
2^{\delta j}\| \mathcal{F}^{-1}[J^{\mu, \nu, \tau, \kappa, 2}_{l;k_1', k_2'}]\|_{B_{k,j}}\lesssim 2^{2\delta m + \alpha k  + m +j+k_1-k_2-2\max\{l, k_{1,-}\} +2k_2+l}
\]
\begin{equation}\label{equation10001}
 \times \| \widehat{f_{k_2}}(t,\xi)\|_{L^\infty_\xi} 2^{k_1} \| e^{-it\Lambda} f_{k_1'}\|_{L^\infty} \| f_{k_2'}\|_{L^2}.
\end{equation}
From (\ref{equation10000}) and (\ref{equation10001}), it is easy to see that the following estimate holds if $k_1',k_2'\notin [-2m , 2\beta m] $,  
\[
\sum_{i=1,2}2^{\delta j}\| \mathcal{F}^{-1}[J^{\mu, \nu, \tau, \kappa, i}_{l;k_1', k_2'}]\|_{B_{k,j}}  \lesssim \min\{ 2^{m +(1-\alpha)k_2'+10\beta m }, 2^{2m+(1-\alpha)k_2'+10\beta m-(N_0-8)k_{1,+}'}\}\epsilon_1^2
\]
\be\label{noveqn841}
\lesssim 2^{-2\delta m -2\delta j}\epsilon_0.
\ee
From the estimate (\ref{noveqn993}) in Lemma \ref{hlowcubiczlemma1} and the estimate (\ref{noveqn862}) in Lemma \ref{hlowcubiczlemma2}, it is easy to see that the desired estimate (\ref{noveqn841}) also holds if  $k_1',k_2'\in [-2m , 2\beta m] $. Hence finishing the proof.  
\end{proof}
\begin{lemma}\label{hlowcubiczlemma1}
Under the bootstrap assumption \textup{(\ref{smallness})}  and the assumption that Proposition \textup{\ref{propZnorm2}} holds, if $  j\leq \max\{m+l,\min\{-k_2-l,m\}\} + 100\delta m  $,   $k$,  $k_1$, and $k_2$   satisfy \textup{(\ref{badrange1})} and \textup{(\ref{badrange2})},  $(2-2\alpha)k_2\geq -m-20\beta m $, and  $k_2+2l\geq -m +4\beta m $, $ k_1', k_2' \in [-2m, 2\beta m ]$, then the following estimate holds, 
\be\label{noveqn993}
 2^{\delta j}\| \mathcal{F}^{-1}[J^{\mu, \nu, \tau, \kappa, 1}_{l;k_1', k_2'}]\|_{B_{k,j}}  \lesssim 2^{-2\delta m -2\delta j}\epsilon_0. 
\ee
\end{lemma}
\begin{proof}
Recall (\ref{equation7001}) and (\ref{noveqn811}). Note that $(k_1',k_2')\in \chi_{k_2}^1\cup \chi_{k_2}^2\cup \chi_{k_2}^3$ and $j\leq m+l+100\delta m $.

\textbf{Case $1$:\quad}  If $k_2'-3\beta m \leq k_2$. From the estimate (\ref{equation6911}) in Lemma \ref{angularbilinear} and the estimate (\ref{L2estimate}) in Lemma \ref{L2estimatelemma}, after putting $T^{\nu\tau, \nu\kappa}(f_{k_1'}, f_{k_2'})$ in $L^2$ and the other input in $L^\infty$,  the following estimate holds,  
\[
 2^{\delta j}\| \mathcal{F}^{-1}[J^{\mu, \nu, \tau, \kappa, 1}_{l;k_1', k_2'}]\|_{B_{k,j}} \lesssim  \sup_{|\lambda|\leq 2^{\beta m}} 2^{2\delta m + \alpha k + m +j+k_1-k_2-2\max\{l,k_{1,-} \}}2^{(k_1-k_2)/2}  2^{k_2} \| e^{-it\Lambda} f_{k_1'}\|_{L^\infty}\]
\[
\times  \| f_{k_2'}\|_{L^2} \| e^{-i (t+2^{-k_2-2\max\{l, k_{1,-}\}} \lambda)} f_{k_1}(t)\|_{L^\infty} 
+2^{-10 m +k+\alpha k+m+j+k_1-k_2-2\max\{l,k_{1,-}\} +k_2}
\]
\[
\times \| f_{k_1}\|_{L^2}\| e^{-it\Lambda} f_{k_1'}\|_{L^\infty} \| f_{k_2'}\|_{L^2}\lesssim  2^{k_2'-\alpha k_1'+(k_1-k_2)/2+\beta m/2} \epsilon_0
\lesssim  2^{(1-2\alpha)k_2'/2 +2\beta m} \epsilon_0.
\]
From above estimate, we can rule out the case when $k_2'\leq -7\beta m$. 

It remains to consider the case when $k_2'\geq -7\beta m$. As $k_2\geq k_2'-3\beta m $, we have $k_2\geq -10\beta m $. That is to say, all frequencies are relatively large, which implies that  the cubic degeneracy of the phases is not an issue. Recall that $|\eta|\ll |\xi-\eta|\sim |\xi|$. From the estimates (\ref{dece1}) and (\ref{dece2}) in Lemma \ref{phasesize}, it is easy to verify that  $\nabla_\sigma\Phi^{\mu,\tau, \kappa}_1(\xi, \eta,\sigma)$ is bounded from below by $2^{k_2+k_2'-4\beta m }$ when $\sigma$ is away from $\eta/2$  and  $\nabla_\eta\Phi^{\mu,\tau, \kappa}_1(\xi, \eta,\sigma)$ is bounded from below by $2^{k_2+k_2'-4\beta m }$ when $\sigma$ is close to  $\eta/2$. As a result, we can do integration by parts in $\sigma$ and $\eta$ many times respectively to rule out the case when $\max\{j_1, j_1',j_2'\}\leq m + k_2+k_2'-10\beta m $.

Now, it's sufficient to consider the case when $\max\{j_1,j_1', j_2'\}\geq m+  k_2+k_2'-10\beta m$.  From the estimate (\ref{equation6911}) in Lemma \ref{angularbilinear} and the  estimate (\ref{sizeofsymboluniform}) in Lemma \ref{sizeofsymbol}, the following estimate holds after putting the input with the maximum spatial concentration in $L^2$ and the other two inputs in $L^\infty$, 
\[
\sum_{\max\{j_1,j_1', j_2'\}\geq m+k_2+k_2'-10\beta m} 2^{\delta j} \| \mathcal{F}^{-1}[H_{l,j_1;j_1',j_2'}^{\mu,\nu,\tau, \kappa,1}]\|_{B_{k,j}}  \lesssim \sum_{\max\{j_1,j_1', j_2'\}\geq m+ k_2+k_2'-10\beta m}   2^{2\delta m + \alpha k +m +j} \]
\[
\times 2^{k_1-k_2-2\max\{l,k_{1,-} \}} 2^{(k_1-k_2)/2} 2^{-m-\alpha k_1} 2^{k_2-m-\max\{j_1,j_1',j_2'\} -2\alpha k_2'} \|f_{k_1', j_1'}\|_Z \|f_{k_2',j_2'}\|_Z  \| f_{k_1,j_1}\|_Z\]
\[
+2^{-10 m +k +m+j+k_1-k_2-2\max\{l,k_{1,-}\} +k_2}\| f_{k_1,j_1}\|_{Z} 2^{-m-\max\{j_1,j_1',j_2'\}-2\alpha k_2'} \|f_{k_1', j_1'}\|_Z \|f_{k_2',j_2'}\|_Z
\]
\[
\lesssim 2^{-m-3k_2'+200\beta m }\epsilon_0\lesssim 2^{-2\delta m -2\delta j}\epsilon_0.
\]

\textbf{Case $2$:\quad} If $k_2'-3\beta m \geq k_2$ and $(k_1', k_2',\nu\tau, \nu\kappa)\in \mathcal{P}_{good}^{k_2}$. Note that the assumption $k_2'-3\beta m \geq k_2$ implies that $(k_1',k_2')\in \chi_{k_2}^1$.

  Recall (\ref{neweqn1}). A key observation for this case is  that the phase  $\Phi_1^{\mu, \tau, \kappa}(\xi, \eta, \sigma)$ is relatively large. More precisely, from the estimate  (\ref{hhchi1}) and the estimate (\ref{dece31}) in Lemma \ref{roughestimatephase2}, we have
\[
2^{k_2'-k_{2,+}'/2}\lesssim 2^{k_2'-k_{2,+}'/2} - 2^{k_2+2\max\{l, k_{1,-}\}  } \leq  | \Phi_{1}^{\mu, \nu, \tau, \kappa}(\xi, \eta, \sigma)| \]
\[ \leq 2^{k_2'-k_{2,+}'/2}  + 2^{k_2+2\max\{l, k_{1,-}\}}\lesssim 2^{k_2'-k_{2,+}'/2}.
\]
Hence, we can take the advantage of the above fact by   doing integration by parts in time again. As a result, we have
\[
J^{\mu, \nu,\tau, \kappa, 1}_{l;k_1',k_2'}= \sum_{i=1,2} (-1)^{i} \textup{E}_i + \textup{H}_1, \quad  \textup{E}_i= \int_{\R^2} \int_{\R^2} e^{i t_i \Phi_1^{\mu, \tau, \kappa}(\xi, \eta, \sigma)} \widetilde{m}^{\tau, \kappa}_{\mu, \nu,1}(\xi, \eta, \sigma) \varphi_{\bar{l};l}(\angle(\xi, \nu\eta)) \widehat{f_{k_1}^{\mu}}(t_i,\xi-\eta) 
\]
\[\times \widehat{
f_{k_1'}^{\tau}
}(t_i,\eta-\sigma)\widehat{f_{k_2'}^{\kappa}}(t_i,\sigma)  d \eta d\sigma, \quad \textup{H}_1= - \int_{t_1}^{t_2}\int_{\R^2} \int_{\R^2} e^{i t \Phi_1^{\mu, \tau, \kappa}(\xi, \eta, \sigma)}   \]
\begin{equation}\label{equation7082}
\times \widetilde{m}^{\tau, \kappa}_{\mu, \nu,1}(\xi, \eta, \sigma)  \varphi_{\bar{l};l}(\angle(\xi, \nu\eta)) \p_t\big(\widehat{f_{k_2}^{\mu}}(t,\xi-\eta) \widehat{
f_{k_1'}^{\tau}
}(t,\eta-\sigma)\widehat{f_{k_2'}^{\kappa}}(t,\sigma) \big) d \eta d\sigma   d t,
\end{equation}
where
\[
\widetilde{m}^{\tau, \kappa}_{\mu, \nu,1}(\xi, \eta, \sigma) = \frac{{m}^{\tau, \kappa}_{\mu, \nu,1}(\xi, \eta, \sigma)}{i \Phi_1^{\mu, \nu,\tau, \kappa}(\xi, \eta,\sigma)}= \frac{ q_{\mu, \nu}(\xi-\eta, \eta)}{-\Phi^{\mu, \nu}(\xi, \eta)}\frac{ (q_{\nu\tau, \nu\kappa}(\eta-\sigma, \sigma))^{\nu}}{\Phi_1^{\mu, \nu,\tau, \kappa}(\xi, \eta,\sigma)}\]
\[=\frac{ q_{\mu, \nu}(\xi-\eta, \eta)}{-\Phi^{\mu, \nu}(\xi, \eta)}\frac{ (q_{\tau\nu, \kappa\nu}(\eta-\sigma, \sigma))^{\nu}}{\Phi_1^{\mu, \nu,\tau, \kappa}(\xi, \eta,\sigma)} \psi_{[-10,10]}(2^{-\kappa_1} \Phi^{\mu, \nu}(\xi, \eta)) \psi_{[-10,10]}(2^{-\kappa_2} \Phi_1^{\mu, \nu, \tau, \kappa}(\xi, \eta, \sigma)), \]
where
\[ \kappa_1:=k_2+2\max\{k_1,l\},  \quad \kappa_2 = {k_2'-k_{2,+}'/2}
.
\]

 Using the inverse Fourier transform twice, we have
\[
 \textup{E}_i= \frac{1}{16 \pi^4} \int_{\R} \int_{\R} \int_{\R^2} \int_{\R^2} 2^{-\kappa_2-\kappa_1} \widehat{\chi}(\lambda_2) \widehat{\chi}(\lambda_1)e^{i(t_i + 2^{-\kappa_2} \lambda_2) \Phi^{\mu, \nu, \tau, \kappa}_1(\xi,\eta, \sigma) + i 2^{-\kappa_1}\lambda_1\Phi^{\mu, \nu}(\xi, \eta)} \varphi_{\bar{l};l}(\angle(\xi, \nu\eta))\]
\[\times q_{\mu, \nu}(\xi-\eta, \eta)  \big(q_{\tau\nu, \kappa\nu}(\eta-\sigma, \sigma)\big)^\nu \widehat{f_{k_1}^{\mu}}(t_i,\xi-\eta) \widehat{
f_{k_1'}^{\tau}
}(t_i,\eta-\sigma)\widehat{f_{k_2'}^{\kappa}}(t_i,\sigma)  d \eta d\sigma d \lambda_2 d\lambda_1
\]
\[
= \frac{1}{16 \pi^4} 2^{-\kappa_1-\kappa_2}\int_{\R} \int_{\R} \int_{\R^2}\widehat{\chi}(\lambda_1)\widehat{\chi}(\chi_2) e^{i(t_i + 2^{-\kappa_2}\lambda_2+2^{-\kappa_1}\lambda_1)\Phi^{\mu, \nu}(\xi, \eta)+i\nu(t_i + 2^{-\kappa_2}\lambda_2
)\Lambda(\eta)} \varphi_{\bar{l};l}(\angle(\xi, \nu\eta))
\]
\[
\times q_{\mu, \nu}(\xi-\eta, \eta) \widehat{f_{k_1}^{\mu}}(\xi-\eta) T^{\tau,\kappa}_{ \lambda_2}(f_{k_1'}, f_{k_2'})(\eta) d \eta d\lambda_1 d\lambda_2, 
\]
where
\[
\widehat{\chi}(\lambda) = \int e^{-i \lambda x} \frac{\psi_{[-10,10]}(x)}{x} d x, \]
\[  T^{\tau,\kappa}_{ \lambda_2}(f_{k_1'}, f_{k_2'})(\eta)  = \int_{\R^2} e^{-i(t_i + 2^{-\kappa_2}\lambda_2)(\tau \Lambda(|\eta-\sigma|) +\kappa \Lambda(|\sigma|))} (q_{\tau\nu, \kappa\nu}(\eta-\sigma, \sigma))^{\nu}  \widehat{
f_{k_1'}^{\tau}
}(t_i,\eta-\sigma)\widehat{f_{k_2'}^{\kappa}}(t_i,\sigma)  d\sigma.
\]
Using the rapidly decay property of $\widehat{\chi}(\lambda)$, very similar to the proof of (\ref{equation6910}) in Lemma \ref{angularbilinear}, we can derive the following estimate, 
\[
2^{\delta j}\| \mathcal{F}^{-1}[ \textup{E}_i]\|_{B_{k,j}} \lesssim \sup_{|\lambda_1|, |\lambda_2|\leq 2^{\beta m/10}} 2^{2\delta m + \alpha k   + j+k_1 -\kappa_1 -\kappa_2} 2^{(k_1-k_2)/2} \| e^{i(t_i + 2^{-\kappa_1}\lambda_1+2^{-\kappa_2}\lambda_2)\Lambda} f_{k_1}\|_{L^\infty}
\]
\[
\times  \| T^{\tau,\kappa}_{ \lambda_2}(f_{k_1'}, f_{k_2'})(\eta)\|_{L^2}+
 2^{-10 m + k_1+k_2-\kappa_1-\kappa_2 +2k_2+2k_2'} \| \widehat{f_{k_1}}\|_{L^\infty_\xi} \| \widehat{f_{k_1'}}\|_{L^\infty_\xi}\| \widehat{f_{k_2'}}\|_{L^\infty_\xi}
\]
\[
\lesssim \sup_{|\lambda_1|, |\lambda_2|\leq 2^{\beta m/10}} 2^{-3k_2/2+k_1/2-k_2'+k_2 +10\beta m} \| e^{i(t_i +2^{-\kappa_2\lambda_2})\Lambda} f_{k_1'}\|_{L^\infty} \| f_{k_2'}\|_{L^2} + 2^{-2\delta m -2\delta j}\epsilon_0
\]
\be\label{neweqn450}
\lesssim 2^{-m-(1+2\alpha)k_2/2+k_1/2+13\beta m }\epsilon_0 + 2^{-2\delta m -2\delta j}\epsilon_0 \lesssim 2^{-2\delta m -2\delta j}\epsilon_0.
\ee
In the above estimate, we used the fact that $k_2\geq -m/(2-2\alpha)-12\beta m $ and also used the estimate (\ref{L2estimate}) in Lemma \ref{L2estimatelemma} and  (\ref{sizeofsymboluniform}) in Lemma \ref{sizeofsymbol}.
 
With minor modifications, we can   estimate   $\textup{H}_1$  very similarly. From (\ref{eqn293}) in Lemma \ref{bilinearest}, and (\ref{L2derivativeestimate}) in Lemma \ref{L2estimatelemma},  the following estimate holds if $k_2\leq -10\beta m $
\[
2^{\delta j}\| \mathcal{F}^{-1}[ \textup{H}_1]\|_{B_{k,j}} \lesssim\sup_{|\lambda_1|, |\lambda_2|\leq  2^{\beta m/10}} 2^{2\delta m }\big[ 2^{\alpha k  +m+ j+k_1 -\kappa_1 -\kappa_2} 2^{(k_1-k_2)/2} \| e^{i(t_i + 2^{-\kappa_1}\lambda_1+2^{-\kappa_2}\lambda_2)\Lambda} f_{k_1}\|_{L^\infty}
\]
\[
\times \big( \| T^{\tau,\kappa}_{ \lambda_2}(\p_t f_{k_1'}, f_{k_2'})(\eta)\|_{L^2}+\| T^{\tau,\kappa}_{ \lambda_2}( f_{k_1'},\p_t f_{k_2'})(\eta)\|_{L^2}\big) + 2^{\alpha k  + m + j+k_1 -\kappa_1-\kappa_2+k_2+l/2} \| \p_t f_{k_1}\|_{L^2}  \]
\[
   \times\| T^{\tau,\kappa}_{ \lambda_2}( f_{k_1'}, f_{k_2'})(\eta)\|_{L^2} \big]+ 2^{-10 m + k_1+k_2-\kappa_1-\kappa_2 +k_2+k_2'}\big( \|\p_t {f_{k_1}}\|_{L^2} \| f_{k_1'}\|_{L^2}\| {f_{k_2'}}\|_{L^2}
\]
\[
+ \|{f_{k_1}}\|_{L^2} \| \p_t {f_{k_1'}}\|_{L^2}\| {f_{k_2'}}\|_{L^2} + \|{f_{k_1}}\|_{L^2} \| {f_{k_1'}}\|_{L^2}\| \p_t {f_{k_2'}}\|_{L^2}\big)
\]
\be\label{neweqn460}
\lesssim 2^{-(1+2\alpha)k_2/2+ k_1/2-m+13\beta m } \epsilon_0 + 2^{-2\delta m -2\delta j}\epsilon_0 + 2^{(1-\alpha)k_2+k_1/2+8\beta m}\epsilon_0 \lesssim 2^{-2\delta m-2\delta j}\epsilon_0.
\ee
If $k_2\geq -10\beta m$, then instead of using the inverse Fourier transform twice, we use the $L^2-L^\infty-L^\infty$ type trilinear estimate directly. From Lemma \ref{Snorm}, (\ref{trilinearesetimate}) in Lemma \ref{multilinearestimate},   and (\ref{L2derivativeestimate}) in Lemma \ref{L2estimatelemma}, the following estimate holds, 
\[
2^{\delta j}\| \mathcal{F}^{-1}[ \textup{H}_1]\|_{B_{k,j}} \lesssim 2^{2\delta m + \alpha k  + m + j -20k_2 }\big( \| \p_t f_{k_1}\|_{L^2} \| e^{-it \Lambda} f_{k_1'}\|_{L^\infty}\| e^{-it \Lambda} f_{k_2'}\|_{L^\infty} \]
\[+ \| \p_t f_{k_1'}\|_{L^2} \| e^{-it \Lambda} f_{k_1}\|_{L^\infty}\| e^{-it \Lambda} f_{k_2'}\|_{L^\infty}+ \| \p_t f_{k_2'}\|_{L^2} \| e^{-it \Lambda} f_{k_1'}\|_{L^\infty}\| e^{-it \Lambda} f_{k_1}\|_{L^\infty}\big)
\]
\be\label{neweqn490}
\lesssim 2^{-m-22k_2+7\beta m } \epsilon_0 \lesssim 2^{-2\delta m -2\delta j}\epsilon_0. 
\ee

\textbf{Case $3$:\quad} If $k_2'-3\beta m \geq k_2$ and $(k_1', k_2', \nu\tau, \nu\kappa)\in \mathcal{P}_{bad}^{k_2}$. Note that the assumption $k_2'-3\beta m \geq k_2$ implies that $(k_1',k_2')\in \chi_{k_2}^1$. 

For this case, we first 
localize the angle between $\eta$ and $\nu\kappa \sigma$ and then decompose $J^{\mu, \nu, \tau, \kappa, 1}_{l;k_1', k_2'}$ as follows, 
\begin{equation}\label{equation7060}
J^{\mu, \nu, \tau, \kappa, 1}_{l;k_1', k_2'} = \sum_{j_1\geq -k_{1,-},j_2\geq -k_{2,-}} \sum_{j_1'\geq -k_{1,-}',j_2'\geq -k_{2,-}'} \sum_{\bar{\tilde{l}}\leq \tilde{l}\leq 2} H^{j_1, j_2}_{l,\tilde{l};j_1',j_2'}
\end{equation}
\begin{equation}\label{equation7070}
  H^{j_1, j_2}_{l,\tilde{l};j_1',j_2'}:=\int_{t_1}^{t_2} e^{it \Phi^{\mu, \nu}(\xi, \eta)} \varphi_{\bar{l};l}(\angle(\xi, \nu \eta)) m_{\mu, \nu}(\xi, \eta) \widehat{f_{k_1, j_1}^{\mu}}(t, \xi-\eta) \widehat{Q_{k_2, j_2}[T^{\nu\tau,\nu \kappa}_{\tilde{l};j_1',j_2'}(t) ]^{\nu}}(\eta)  d \eta dt,
\end{equation}
where $\bar{\tilde{l}}= \max\{l -6\beta m/5, 2k_{1,-}'\} $ and 
\[
T^{\nu\tau,\nu \kappa}_{\tilde{l};j_1',j_2'}(t) = \mathcal{F}^{-1}\big[ \int_{\R} e^{i t \Phi^{\tau, \kappa}(\eta,\sigma)} \widehat{f_{k_1',j_1'}^{\nu\tau}}(t,\eta-\sigma) \widehat{f_{k_2',j_2'}^{\nu\kappa}}(t,\sigma) q_{\nu\tau, \nu\kappa}(\eta-\sigma, \sigma) \varphi_{\bar{\tilde{l}};\tilde{l}}(\angle(\eta, \nu \kappa \sigma))d \sigma \big].
\]
For simplicity, we also use the following notation,
\[
 H_{l,\tilde{l};j_1',j_2'}:= \sum_{j_1\geq-k_{1,-}, j_2\geq-k_{2,-},  }  H^{j_1, j_2}_{l,\tilde{l};j_1',j_2'},\]
 \[  H^{j_1, j_2}_{l,\tilde{l}} =  \sum_{j_1'\geq-k_{1,-}',j_2'\geq-k_{2,-}'  }  H^{j_1, j_2}_{l,\tilde{l};j_1',j_2'}, \quad T^{\nu\tau,\nu \kappa}_{\tilde{l}}(t) = \sum_{j_1'\geq-k_{1,-}',j_2'\geq-k_{2,-}' } T^{\nu\tau,\nu \kappa}_{\tilde{l};j_1',j_2'}(t). 
\]

$\bullet$\quad We first consider the case when either $2k_{1,-}'\geq  l-6\beta m/5$  or $2k_{1,-}'<  l-6\beta m/5$, $\tilde{l}> \bar{\tilde{l}}=l-6\beta m /5$. Recall that $l\in[2k_{1,-},2]$ and $k_2\leq k_2'-3\beta m $, i.e., $|\eta|\ll |\eta-\sigma|\sim |\sigma|$. For the case we are considering,  we have $k_2+2\tilde{l}\geq k_2 +2l -12\beta m/5 \geq -m + 8\beta m/5 $ and $l-\tilde{l} \leq 6\beta m /5$, which means that we are away from the space resonance in ``$\sigma$'' set.  Hence, we can do integration by parts in ``$\sigma$'' many times to rule out the case when $\max\{j_1',j_2'\}\leq m+ k_2-k_{2}' +\tilde{l} -2\beta m$. 

On one hand, from (\ref{equation6910}) in Lemma \ref{angularbilinear}  and (\ref{equation6884}) in Lemma \ref{bilinearest}, the following estimate holds, 
\[
\sum_{\max\{j_1', j_2'\}\geq m+ k_2-k_2'+\tilde{l}-2\beta m}2^{\delta j}\| \mathcal{F}^{-1}[H_{l,\tilde{l};j_1',j_2'}]\|_{B_{k,j}}\lesssim \sum_{\max\{j_1', j_2'\}\geq m+ k_2-k_2'+\tilde{l}-2\beta m} \sup_{|\lambda|\leq 2^{\beta m } } 2^{2\delta m + \alpha k  } \]
\[
\times 2^{ m +jk_1-k_2-2\max\{l,k_{1,-} \}} 2^{(k_1-k_2)/2}\| e^{-i (t+2^{-k_2-2\max\{l, k_{1,-}\}} \lambda)} f_{k_1}(t)\|_{L^\infty}  2^{k_2-m-\max\{j_1',j_2'\} -2\alpha k_2'} \|f_{k_1', j_1'}\|_Z \]
\[
\times \|f_{k_2',j_2'}\|_Z +2^{-10 m +k+\alpha k+m+j+k_1-k_2-2\max\{l,k_{1,-}\} +k_2}\| f_{k_1}\|_{L^2} 2^{-m-\max\{j_1',j_2'\}-2\alpha k_2'} \|f_{k_1', j_1'}\|_Z \|f_{k_2',j_2'}\|_Z
\]
\begin{equation}\label{equation7055}
\lesssim 2^{-3k_2/2-k_1/2+k_2'+10\beta m -m } \epsilon_0
\lesssim 2^{-m/10 +11\beta m -k_1/2}\epsilon_0\lesssim 2^{-\alpha m/2 -k_1/2}\epsilon_0.
\end{equation}
On the other hand, from (\ref{eqn293}) and (\ref{equation6884}) in Lemma \ref{bilinearest}, the following estimate also holds, 
\[
\sum_{\max\{j_1', j_2'\}\geq m+ k_2-k_2'+\tilde{l}-2\beta m}2^{\delta j}\| \mathcal{F}^{-1}[H_{l,\tilde{l};j_1',j_2'}]\|_{B_{k,j}}\lesssim \sum_{\max\{j_1', j_2'\}\geq m+ k_2-k_2'+\tilde{l}-2\beta m} 2^{\alpha k  + m +j +k_1-k_2} 
\]
\begin{equation}\label{equation7056}
 \times 2^{-2\max\{l, k_{1,-}\}+k_1+k_2+l} \| \widehat{f_{k_1}}(t,\xi)\|_{L^\infty_\xi} 2^{k_2-m-\max\{j_1',j_2'\}-2\alpha k_2'} \|f_{k_1',j_1'}\|_{Z} \| f_{k_2',j_2'}\|_{Z} \lesssim 2^{k_1 +10\beta m }\epsilon_0. 
\end{equation}
Therefore, combining estimates (\ref{equation7055}) and (\ref{equation7056}), we can derive the following estimates,
\[
\sum_{\max\{j_1', j_2'\}\geq m+ k_2-k_2'+\tilde{l}-2\beta m}2^{\delta j}\| \mathcal{F}^{-1}[H_{l,\tilde{l};j_1',j_2'}]\|_{B_{k,j}}\lesssim \big( 2^{-\alpha m/2 -k_1/2}\epsilon_0\big)^{1/2} (2^{k_1 +10\beta m }\epsilon_0)^{1/2}\]
\[
\lesssim 2^{-2\delta m -2\delta j}\epsilon_0.
\]
 $\bullet$\quad Lastly, we consider the case when $2k_{1,-}' < l-6\beta m /5$ and  $\tilde{l}=\bar{\tilde{l}} =   l -6\beta m/5  $. Hence,  we have $k_{2}'\leq  -3\beta m /5$ and $k_2\leq -18\beta m /5$.

If we view the bilinear term $T^{\nu\tau,\nu \kappa}_{\tilde{l}}(t)$ in (\ref{equation7070}) as a single input, then it is easy to see that  the estimate of $J^{\mu, \nu, \tau, \kappa, 1}_{l;k_1', k_2'}$ is very similar to   the estimate of the  endpoint case in the proof of  Lemma \ref{highlowbadlemma9}.  More precisely, we separate into two cases based on whether $j_1$ is smaller than $j_2$. 

(i)\quad If $j_2\leq j_1$.  Then we can first rule out the case when $j_1\leq  m +l -\beta m $ by doing integration by parts in ``$\eta$'' many times for (\ref{equation7070}). It remains to consider the case when $j_1\geq m  +l -\beta m$. From the estimates (\ref{eqn293}) and (\ref{equation6884}) in Lemma \ref{bilinearest}, the following estimate holds, 
\[
\sum_{j_2\leq j_1, m+l-\beta m \leq j_1}2^{\delta j} \|\mathcal{F}^{-1}[ H^{j_1, j_2}_{l,\tilde{l}}]\|_{B_{k,j}} \lesssim \sum_{m+l-\beta m \leq j_1} 2^{2\delta m + \alpha k + m +j+k_{1}-k_2-2\max\{l, k_{1,-}\} + k_2+l/2}  \| f_{k_1,j_1}\|_{L^2}\]
\[
\times  \| T^{\nu\tau,\nu \kappa}_{\tilde{l}}(t)\|_{L^2} \lesssim \sum_{j_1\geq m +l -\beta m } 2^{-k_{1}/2  +2m +l+ 100\delta m -j_1} \|f_{k_1,j_1}\|_Z   2^{k_2} \| e^{-it \Lambda} f_{k_1'}\|_{L^\infty} \| f_{k_2'}\|_{L^2}
\]
\[
  \lesssim 2^{k_2/2+(1-2\alpha)k_2'+\beta m+100\delta m } \epsilon_0\lesssim 2^{-2\delta m -2\delta j}\epsilon_0.
\]
 
(ii)\quad If $j_2\geq j_1$. For this case, we can  rule out the case when $j_2\leq m +l -\beta m$ by doing integration by parts in ``$\eta$'' many times for  (\ref{equation7070}). Therefore, it remains to consider the case when   $j_2\geq m +l -\beta m$. Note that $j_2\geq m + l -\beta m \geq m + \tilde{l}+\beta m /6$, $k_2+2\tilde{l}\geq k_2+2 l -12\beta m /5\geq -m + 8\beta m/5$, and $k_2'\geq k_2+3\beta m$. Now, it is easy to see that  all conditions in Lemma \ref{largejscenario} are satisfied.   Therefore, from (\ref{eqn293}) in Lemma \ref{bilinearest}, (\ref{L2estimate}) in Lemma \ref{L2estimatelemma}, and (\ref{equation7080}) in Lemma \ref{largejscenario}, the following  estimate holds,
\[
 \sum_{j_1\leq j_2, m+l-\beta m \leq j_2}2^{\delta j}\|\mathcal{F}^{-1}[ H^{j_1, j_2}_{l,\tilde{l}}]\|_{B_{k,j}} \lesssim \sum_{m+l-\beta m \leq j_2} 2^{2\delta m + \alpha k   + m +j+k_{1}-k_2-2\max\{l, k_{1,-}\} + k_2+l/2 }  \]
\[
\times \| f_{k_1}\|_{L^2} \| Q_{k_2,j_2}[\mathcal{F}^{-1}[T^{\nu\tau,\nu \kappa}_{\tilde{l}}(t)]]\|_{L^2} \lesssim \sum_{ m +l -\beta m\leq j_2 } 2^{m+(1-2\alpha)k_2 +3l/2-j_2+ 200\delta m }\epsilon_0 \lesssim 2^{-2\delta m -2\delta j}\epsilon_0.
\]
Hence finishing the proof.
\end{proof}
\begin{lemma}\label{largejscenario}
Under the bootstrap assumption \textup{(\ref{smallness})}, the following estimate holds if $(k_1, k_2, \mu, \nu)\in \mathcal{P}^k_{bad}$ and $t\in[2^{m-1},2^m]$,
\[
 \| Q_{k,j}\Big[ \mathcal{F}^{-1}\big[\int_{\R^2}e^{it\Phi^{\mu, \nu}(\xi, \eta)
} \widehat{f_{k_1}^{\mu}}(t, \xi-\eta) \widehat{f_{k_2}^{\nu}}(t,\eta) q_{\mu, \nu}(\xi-\eta, \eta) \varphi_{\bar{l};l}(\angle(\xi, \nu\eta)) d \eta\big]\Big]\|_{L^2}  
\]
\begin{equation}\label{equation7080}
\lesssim 2^{(1-2\alpha) k -m-j+2\delta m}\epsilon_0, \quad \textup{if $j\geq \max\{m+l, -k-l, -k_2-l\} +100\delta m$ and $\bar{l}\geq 2k_{1,-}$}.
\end{equation}
\end{lemma}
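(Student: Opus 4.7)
The lemma is essentially a repackaging, in $L^2$ rather than $B_{k,j}$ norm and at a single time slice, of the bounds already established in subsubsections \ref{jlarge} and \ref{jlargehighlow}. The plan is to unify both cases $\chi_k^1$ and $\chi_k^2$ by performing the same dyadic spatial decomposition and the same dichotomy on $\min\{j_1,j_2\}$, and then quote Lemma \ref{bilinearest} and Lemma \ref{sizeofsymbol} to turn the existing $B_{k,j}$-norm estimates into the claimed $L^2$ bound. Concretely, write $f_{k_i}=\sum_{j_i\ge \max\{-k_{i,-},0\}}f_{k_i,j_i}$, localize the bilinear integrand accordingly, and in the $\chi_k^2$ case swap the two inputs (by $\eta\mapsto\xi-\eta$) if necessary so that it is always the input carrying the smaller spatial concentration $j_1$ whose profile is differentiated when we integrate by parts in $\xi$.

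First I would dispose of the regime $\min\{j_1,j_2\}\le j-\delta m$ by iterated integration by parts in $\xi$. The key input is that on $\mathrm{supp}\,\varphi_{\bar l;l}(\angle(\xi,\nu\eta))$,
\[
|\n_\xi\Phi^{\mu,\nu}(\xi,\eta)|=\Big|\Lambda'(|\xi|)\tfrac{\xi}{|\xi|}-\mu\Lambda'(|\xi-\eta|)\tfrac{\xi-\eta}{|\xi-\eta|}\Big|\lesssim 2^{l}+2^{2k_{1,-}},
\]
which is $\lesssim 2^{l}$ for $l\ge\bar l=2k_{1,-}$; together with $j\ge m+l+100\delta m$ this yields $|\n_\xi(x\cdot\xi+t\Phi^{\mu,\nu})|\sim 2^{j}$ on $\mathrm{supp}\,\varphi_j^k(x)$, so each integration by parts gains $2^{-j}$. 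The loss per step is bounded by $\max\{2^{\min\{j_1,j_2\}},2^{-k-l},2^{-k_2-l}\}$, accounting for derivatives falling on the profile, the angular cutoff, or the amplitude $a_{\mu,\nu}$ in (\ref{equation6840}); the hypotheses $j\ge \max\{m+l,-k-l,-k_2-l\}+100\delta m$ and $\min\{j_1,j_2\}\le j-\delta m$ ensure every step is a net gain of at least $2^{-\delta m}$, so iterating produces rapid decay as in (\ref{eqn3321}).

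The remaining regime $\min\{j_1,j_2\}\ge j-\delta m$ is handled by Lemma \ref{bilinearest}. Combining the middle option of (\ref{eqn298}) in the $\chi_k^1$ case (respectively (\ref{eqn293}) in the $\chi_k^2$ case) with the symbol bounds $\|q_{\mu,\nu}\|_{L^\infty_{\xi,\eta}}\lesssim 2^{\min\{k,k_2\}+k_{1,+}}$ from Lemma \ref{sizeofsymbol} gives
\[
\| Q_{k,j}\mathcal{F}^{-1}[I^{\mu,\nu}_{l;j_1,j_2}]\|_{L^2}\lesssim 2^{\min\{k,k_2\}+k_{1,+}+\min\{k,k_2\}+l/2}\|f_{k_1,j_1}\|_{L^2}\|f_{k_2,j_2}\|_{L^2}.
\]
Using the $Z$-norm bound $\|f_{k_i,j_i}\|_{L^2}\lesssim 2^{-\alpha k_i-6k_{i,+}-j_i}\epsilon_0$ and summing the geometric series in $j_1,j_2\ge j-\delta m$ replaces $2^{-j_1-j_2}$ by $2^{-2j+2\delta m}$; invoking $l\le j-m-100\delta m$ to absorb $2^{l/2}$ against one power of $2^{-j}$ and using $|k_1-k_2|\le 5$ (in $\chi_k^1$) or $|k_1-k|\le 5$ (in $\chi_k^2$) to combine the $2^{-\alpha k_i}$ factors reproduces exactly the same computation that led to (\ref{equation6603}) and (\ref{equation10010}), yielding the target bound $2^{(1-2\alpha)k-m-j+2\delta m}\epsilon_0$.

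The main obstacle is mostly notational rather than conceptual: the two subcases $\chi_k^1$ and $\chi_k^2$ have slightly different symbol sizes and different options in Lemma \ref{bilinearest} to invoke, and one must verify that the asymmetric treatment required in $\chi_k^2$ (swapping inputs so that $\n_\xi$ acts on the less-concentrated profile, since $|k_1-k_2|$ is large) does not break the angular structure. Once the swap is made and the bookkeeping is arranged so that the same pair of steps (integration by parts in $\xi$; Lemma \ref{bilinearest}) applies uniformly, both cases collapse to the same final inequality and the lemma follows.
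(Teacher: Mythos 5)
Your proposal is correct and follows essentially the same route as the paper, whose proof of this lemma is literally ``redo the arguments of subsubsections \ref{jlarge} and \ref{jlargehighlow} and keep the second estimates in (\ref{equation6603}) and (\ref{equation10010})'': the same dichotomy on $\min\{j_1,j_2\}$, iterated integration by parts in $\xi$ using $|\nabla_\xi\Phi^{\mu,\nu}|\lesssim 2^{l}$ on the angular support (with the input swap when $j_1\geq j_2$ in the $\chi_k^2$ case), and Lemma \ref{bilinearest} for the complementary range $\min\{j_1,j_2\}\geq j-\delta m$. One small inaccuracy: in the $\chi_k^2$ case your quoted symbol bound $\|q_{\mu,\nu}\|_{L^\infty_{\xi,\eta}}\lesssim 2^{\min\{k,k_2\}+k_{1,+}}=2^{k_2+k_{1,+}}$ is too strong, since (\ref{sizeofhl}) only gives $2^{k_1+k_{1,+}/2}\approx 2^{k+k_{1,+}/2}$; but this weaker, correct bound is exactly what enters (\ref{equation10010}) and it still produces the target $2^{(1-2\alpha)k-m-j+2\delta m}\epsilon_0$.
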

\begin{proof}
To prove the desired estimate (\ref{equation7080}), we only need to redo the proof of Lemma \ref{badhhlemma1} and the proof of Lemma \ref{highlowbadlemma1}. From  the second estimate in (\ref{equation6603}) and (\ref{equation10010}) instead of the last estimate in (\ref{equation6603}) and (\ref{equation10010}), it is easy to see that our desired estimate    (\ref{equation7080}) holds.
\end{proof}
\begin{lemma}\label{hlowcubiczlemma2}
Under the bootstrap assumption \textup{(\ref{smallness})}  and the assumption that Proposition \textup{\ref{propZnorm2}} holds, if $  j\leq \max\{m+l,\min\{-k_2-l,m\}\} + 100\delta m  $, $k$,  $k_1$, and $k_2$   satisfy \textup{(\ref{badrange1})},  $(2-2\alpha)k_2\geq -m-20\beta m $, and  $k_2+2l\geq -m +4\beta m $, $ k_1', k_2' \in [-2m, 2\beta m ]$, then the following estimate holds, 
\be\label{noveqn862}
 2^{\delta j}\| \mathcal{F}^{-1}[J^{\mu, \nu, \tau, \kappa, 2}_{l;k_1', k_2'}]\|_{B_{k,j}}  \lesssim 2^{-2\delta m -2\delta j}\epsilon_0. 
\ee
\end{lemma}
\begin{proof}
Recall (\ref{equation7001}) and (\ref{noveqn812}). Note that $(k_1',k_2')\in \chi_{k_1}^1\cup \chi_{k_1}^2\cup \chi_{k_1}^3$.

We first rule out the case when $\tau=\kappa=-$. Note that the  phase $\Phi_2^{\tau, \kappa, \nu}(\xi, \eta, \sigma)$ is  at least  of size $2^{k_1}$ for this case. Hence,
   same as what we did in the estimate of $J^{\mu, \nu, \tau, \kappa, 1}_{l;k_1', k_2'}$,  we can do integration by parts in time to take the advantage of the fact that the size of phase is big. With minor modifications in (\ref{neweqn450}), (\ref{neweqn460}) and (\ref{neweqn490}), it is easy to see that our desired estimate (\ref{noveqn862}) holds. 

    For the case when $(\tau, \kappa)\neq (-,-)$, we divide it into four cases as follows.  

\textbf{Case $1$:\quad} If $l\geq -2\alpha m/3$ and $k_2'+2k_1\leq -m/2-\alpha m+3\beta m $. 
Recall that $k_2+2l\geq -m +4\beta m $, which means that the frequencies are away from the time resonance set. 
 From (\ref{equation6911}) in Lemma \ref{angularbilinear} and (\ref{L2estimate}) in Lemma \ref{L2estimatelemma}, it is easy to see that the following estimate holds   after putting the input $f_{k_2}$ in $L^\infty$ and $T^{\tau, \kappa}(f_{k_1'},f_{k_2'})$ in $L^2$,
\[
2^{\delta j}\| \mathcal{F}^{-1}[J^{\mu, \nu, \tau, \kappa, 2}_{l;k_1', k_2'}]\|_{B_{k,j}}\lesssim 2^{   \alpha k  + m +j+k_1-k_2-2\max\{l,k_{1,-} \} } \big( \sup_{|\lambda|\leq 2^{\beta m} }  \| e^{i(t+2^{-k_2-2\max\{l, k_{1,-}\}}\lambda)} f_{k_2}(t)\|_{L^\infty}\]
\begin{equation}\label{equation7040}
\times 2^{2\delta m+k_1+k_{1,+}'} \| e^{-it\Lambda} f_{k_1'}\|_{L^\infty} \| f_{k_2'}\|_{L^2}+2^{-10 m + k_1 +k_{1,+}'}\| f_{k_2}\|_{L^2}\| e^{-it\Lambda} f_{k_1'}\|_{L^\infty}  \| f_{k_2'}\|_{L^2}\big)
\end{equation}
\[
 \lesssim 2^{m+2k_1+2\beta m +k_2'-k_2-l}  (2^{2k_2+\delta m })^{1/2} ( 2^{-m+\alpha m/3})^{1/2}\epsilon_0\lesssim 2^{-\alpha m/6+6\beta m}\epsilon_0\lesssim 2^{-2\delta m -2\delta j}\epsilon_0.
\]
Note that we used the following fact in the above estimate, 
\[
\| e^{-it \Lambda} f_{k_2}(t)\|_{L^\infty} \lesssim  \min\{ 2^{-m+\alpha m /3}\epsilon_1, 2^{k_2}\|P_{k_2} f(t)\|_{L^2}\} \lesssim  \min\{ 2^{-m+\alpha m/3}, 2^{2k_2+\delta m }\}\epsilon_1.
\]
 
\textbf{Case $2$:\quad} If $l\geq -2\alpha m/3$ and $k_2'+2k_1\geq -m/2-\alpha m+3\beta m $. Recall that $(k_1',k_2')\in  \chi_{k_1}^1\cup \chi_{k_1}^2 \cup \chi_{k_1}^3$.  For this case we have $\max\{k_1, k_1'\}\geq -m/6-\alpha m /3+\beta m $ and $\min\{k_1,k_2'\}+4k_1'\geq -5m/6-5\alpha m/3+5\beta m\geq -m +5\beta m $.

When  $\tau\kappa=-$, from (\ref{dece1}) in Lemma \ref{phasesize}, it is easy to see that we are away from the space resonance in ``$\sigma$'' set. Hence,  we can do integration by parts in $\sigma$ many times to rule out the case $\max\{j_1',j_2'\}\leq m +k_1'+k_1-4\beta m $.

 For the case when $\tau\kappa=+$, i.e., $\tau=\kappa=+$ as the case   $\tau=\kappa=-$ is ruled out, we separate into two cases based on the  size of the angle $\angle(\xi-\sigma, \sigma-\eta)$. 

If  $\angle(\xi-\sigma, \sigma -\eta) \geq 2^{-\alpha m}$, then the net gain of doing integration by parts in ``$\sigma$ '' once is at least  $\max\{2^{-m+\alpha m-\max\{j_1', j_2'\}+2\beta m}, 2^{-m-k_1+2\alpha m +2\beta m }\}$, which is less than $2^{-\beta  m}$ when $\max\{j_1', j_2'\}\leq m -2\alpha m$. If $\angle(\xi-\sigma, \sigma -\eta) \leq 2^{-\alpha m}$, then we have $\angle(\xi-\eta, \sigma-\eta) \leq 2^{-\alpha m}$ and $\angle(\sigma-\eta, \nu \eta)\sim \angle(\xi-\eta, \nu \eta)\sim 2^l\gtrsim 2^{-2\alpha m/3}$. For this case, we do integration by parts in ``$\eta$''. The net gain of doing integration by parts in ``$\eta$'' once is at least $\max\{2^{-m-l-\max\{j_2, j_2'\}+2\beta m}, 2^{-m-k_2-2l+2\beta m }\}$, which is less than $2^{-\beta  m}$ when $\max\{j_2 , j_2'\}\leq m -2\alpha m$. 

Therefore,  in whichever case, we can rule out the case when $\max\{j_2,j_1',j_2'\}\leq \min\{m+k_1+k_1'-4\beta m , m-2\alpha m \}$. It is sufficient to consider the case when $\max\{j_2,j_1',j_2'\}\geq \min\{m+k_1+k_1'-4\beta m , m-2\alpha m \}\geq m+k_1+k_1'-4\beta m-2\alpha m   $. From (\ref{equation6911}) in Lemma \ref{angularbilinear}, the following estimate holds, 
\[
\sum_{\max\{j_2,j_1',j_2'\}\geq  m+k_1+k_1'-4\beta m-2\alpha m }2^{\delta j}\| \mathcal{F}^{-1}[H_{l,j_2;j_1',j_2'}^{\mu,\nu,\tau, \kappa,2}]\|_{B_{k,j}}\lesssim  \sum_{\max\{j_2,j_1',j_2'\}\geq  m+k_1+k_1'-4\beta m-2\alpha m   }   2^{2\delta m + \alpha k  } \]
\[ \times 2^{   m +j k_1-k_2-2\max\{l,k_{1,-} \}} 2^{-m-\alpha k_2}\| f_{k_2,j_2}\|_Z
2^{k_1-m-\max\{j_2,j_1',j_2'\} -\alpha k_1'-\alpha k_2' } \| f_{k_1',j_1'}\|_Z \| f_{k_2',j_2'}\|_Z \]
\[
    +2^{-10 m +k+\alpha k+m+j+k_1-k_2-2\max\{l,k_{1,-}\} }\| f_{k_2,j_2}\|_{Z}2^{k_1-m-\max\{j_2,j_1',j_2'\} -\alpha k_1'-\alpha k_2' } \| f_{k_1',j_1'}\|_Z \| f_{k_2',j_2'}\|_Z
\]
\[
\lesssim  2^{-(1+\alpha)k_2-l+2\alpha m -\alpha k_2'+4\beta m -m}\epsilon_0 \lesssim 2^{-2\delta m -2\delta j}\epsilon_0.
\]

\textbf{Case $3$:\quad} If $l\leq -2\alpha m/3$ and $k_2'+2k_1 \leq k_2+2\max\{l, k_{1,-}\} +\beta m$. From (\ref{equation6911}) in Lemma \ref{angularbilinear} and (\ref{L2estimate}), estimate (\ref{equation7040}) also holds and 
\[
(\ref{equation7040})\lesssim 2^{l+2k_1+k_2'-k_2-2\max\{l, k_{1,-}\} +\alpha m/3 + 8 \beta m}\epsilon_0  + 2^{-2\delta m -2\delta j}\epsilon_0\]
\[\lesssim  2^{-2\alpha m /3+\alpha m/3+9\beta m }\epsilon_0+ 2^{-2\delta m -2	\delta j}\epsilon_0\lesssim 2^{-2\delta m -2\delta j}\epsilon_0.
\]

\textbf{Case $4$:\quad} If $l\leq -2\alpha m/3$ and $k_2'+2k_1 \geq k_2+2\max\{l, k_{1,-}\} +\beta m$. The assumption in this case implies that  $k_1\leq l/2\leq -\alpha m /3$,  $k_2'\geq k_2 +\beta m $. Recall (\ref{neweqn2}). From the estimate (\ref{noveqn519}) in Lemma \ref{roughestimatephase2}, the following estimate holds for the size of phase $\Phi_2^{\mu, \nu,\tau, \kappa}(\xi, \eta,\sigma)$, 
\begin{equation}\label{equation7041}
|\Phi_2^{\mu, \nu,\tau, \kappa}(\xi, \eta,\sigma)|\gtrsim 2^{k_2'+2k_1}-2^{k_2+2\max\{l,k_{1,-}\}}\gtrsim 2^{k_2'+2k_1}.
\end{equation}
With this observation, it motives us to do integration by parts in time again and have a similar identity as in (\ref{equation7082}). Very similar to the proof of the estimate (\ref{neweqn450}),  after using the inverse Fourier transform twice, the  following estimate holds from  the estimate  (\ref{eqn293}) in Lemma \ref{bilinearest}, (\ref{L2estimate}), (\ref{L2derivativeestimate}) and (\ref{equation6726}) in Lemma \ref{L2estimatelemma},
\[
2^{\delta m}\| \mathcal{F}^{-1}[J^{\mu, \nu,\tau, \kappa, 2}_{l;k_1',k_2'}] \|_{B_{k,j}} \lesssim \sum_{\kappa_2 \geq k_2'+2k_1}  \sup_{|\lambda_1|,|\lambda_2|\leq 2^{\beta m/10}} \sum_{i=1,2} 2^{2\delta m + \alpha k + j+2k_1 -k_2-2\max\{k_1,l\}-\kappa_2+k_2+l/2} 
\]
\[
\times \| f_{k_2}(t_i)\|_{L^2} \| e^{-i(t_i +2^{-\kappa_2}\lambda_2) \Lambda}f_{k_1'}\|_{L^\infty} \| f_{k_2'}\|_{L^2}  + 2^{\alpha k + m+ j+2k_1 -k_2-2\max\{k_1,l\}-\kappa_2 +k_2+l/2} 
 \]
 \[
\times\big(  \| \p_t  f_{k_2}\|_{L^2} \| e^{-i(t_i +2^{-\kappa_2}\lambda_2) \Lambda}f_{k_1'}\|_{L^\infty}\| f_{k_2'}\|_{L^2} + \|  f_{k_2}\|_{L^2} \| e^{-i(t_i +2^{-\kappa_2}\lambda_2) \Lambda}f_{k_1'}\|_{L^\infty} \| \p_t f_{k_2'}\|_{L^2}\big) \]
\[ + 2^{\alpha k + m+ j+2k_1 -k_2-2\max\{k_1,l\}-\kappa_2 +2k_2+l}  \|  \widehat{f_{k_2}}(t,\xi)\|_{L^\infty_\xi}\| e^{-i(t_i +2^{-\kappa_2}\lambda_2) \Lambda}f_{k_2'}\|_{L^\infty} \| \p_t f_{k_1'}\|_{L^2} \]
\[
+ 2^{-10m-k_2-2\max\{k_1,l\}-\kappa_2} \big(\|{f_{k_2}}\|_{L^2}\| {f_{k_1'}}\|_{L^2}
\| f_{k_2'}\|_{L^2} + \| {f_{k_2}}\|_{L^2}\| {\p_t f_{k_1'}}\|_{L^2}
\| {f_{k_2'}}\|_{L^2}
\]
\[+ \| {\p_t f_{k_2}}\|_{L^2}\| {f_{k_1'}}\|_{L^2}
\| {f_{k_2'}}\|_{L^2} + \| {f_{k_2}}\|_{L^2}\| {f_{k_1'}}\|_{L^2}
\| {\p_t f_{k_2'}}\|_{L^2}\big)
\]
\[
\lesssim 2^{k_2 -\alpha k_2'-\max\{l, k_{1,-}\}/2+\beta m }\epsilon_0 +2^{ k_2 +k_1'+2l-k_2'-2\max\{l, k_{1,-}\}+\beta m +\alpha m /3  }\epsilon_0  + 2^{-2\delta m-2\delta j}\epsilon_0\]
\[
\lesssim 2^{(1-2\alpha)k_2/2+2\beta m }\epsilon_0 + 2^{ l+\alpha m/3+2\beta m} \epsilon_0+ 2^{-2\delta m-2\delta j}\epsilon_0\lesssim 2^{-2\delta m -2\delta j}\epsilon_0.
\]
In the above estimate, we used the fact that $k_2\leq k_1-5$, $k_2\leq k_2'-\beta m$, $(k_1',k_2')\in \chi_{k_1}^1\cup  \chi_{k_1}^2\cup  \chi_{k_1}^3$, $l\leq -2\alpha m/3$ and $k_2\leq k_1\leq -\alpha m/3$ in the above estimate. 	Hence finishing the proof. 

 \end{proof}

\subsection{The estimate of  $K^{\mu, \nu}(f_{k_1}^{\mu}, f_{k_2}^{\nu})$   in $\textup{bad}_k$.  }\label{comparablebad} 
In this subsection, we estimate the last term in `` $\textup{bad}_k$'', see (\ref{badtypeterms}). Hence finishing the proof of Proposition \ref{propZnorm4bad}.

Recall (\ref{badtypeterms}) and (\ref{neweqn95}). Note that the output frequency and the two input frequencies are all comparable as $(k_1,k_2)\in \chi_k^3$. From the estimate (\ref{sizeofnormalform}), the estimates (\ref{L2estimate}) and (\ref{L2derivativeestimate}) in Lemma \ref{L2estimatelemma}, the following estimate holds from the $L^2-L^2$ type estimate and the volume of support of $\xi$, 
\[
2^{\delta j}\| K^{\mu, \nu}(f_{k_1}^{\mu}, f_{k_2}^{\nu})\|_{B_{k,j}}\lesssim 2^{\alpha k  + 7k_{+}-2k_{-}+ m+(1+\delta)j+k}\big(\|\p_t f_{k_1}(t)\|_{L^2} \|f_{k_2}(t)\|_{L^2} + \|\p_t f_{k_2}(t)\|_{L^2} \|f_{k_1}(t)\|_{L^2}\big)
\] 
\be\label{deceqn31}
\lesssim 2^{(1+\alpha)k+(1+\delta)j - (N_0-10)k_{+}}\epsilon_0.
\ee
Hence, we can rule out the case when $k\leq -(1+\delta)j/(1+\alpha)-2\delta m$  or $k\geq j/(N_0-20)+10\delta m$. Moreover, it is easy to see that the proof of Lemma \ref{hhgoodj1} is still valid. As a result, it would be sufficient to   prove the following estimate, 
\be\label{desiredestimatecomparable}
(\textup{Desired estimate}): \quad 2^{\delta j}\| \int_{t_1}^{t_2} \mathcal{F}^{-1}\big[ K^{\mu, \nu}
(f^{\mu}_{k_1}, {f}^{\nu}_{k_2})\big] d t\|_{B_{k,j}} \lesssim 2^{-2\delta m - 2\delta j}\epsilon_0,
\ee
where fixed $k, k_1,k_2$ and $j$ satisfy the following estimate, 
\be\label{noveqn1000}
 - (1+100\delta)m/(1+\alpha)\leq k  \leq  2\beta m, \quad (k_1,k_2)\in \chi_k^3, \quad   j\leq (1+20 \delta ) m.
\ee

Recall (\ref{neweqn95}), after plugging the equation satisfied by $\p_t f$ in  (\ref{eqn1400}),  the following equality holds,  
\be\label{noveqn1008}
\int_{t_1}^{t_2} K^{\mu, \nu}( {f}^{\mu}_{k_1}, f^{\nu}_{k_2}) = \sum_{k_1', k_2'\in \mathbb{Z}} \sum_{\tau, \kappa\in \{+, -\}}\sum_{i=1,2} \sum_{\bar{l}_i\leq l\leq 2}\big[ \sum_{ j=1,2} K^{\mu, \nu, \tau, \kappa,i}_{l;k_1', k_2',j}\big] + \textup{JR}_{l;k_1,k_2}^{\mu, \nu,i}, 
\ee
where $ \bar{l}_1:=2k_{-},  \bar{l}_2:=0,$
\[
K^{\mu, \nu, \tau, \kappa,i}_{l; k_1', k_2',1}=  -\int_{t_1}^{t_2}\int_{\R^2} \int_{\R^2}  e^{i t \Phi_1^{\mu, \tau, \kappa}(\xi, \eta, \sigma)} c^{ \tau, \kappa,i}_{\mu, \nu,1}(\xi, \eta, \sigma)\widehat{
f_{k_1 }^{\mu}
}(t,\xi-\eta)\widehat{f_{k_1'  }^{\tau}}(t,\eta-\sigma) \]
\be
\times \widehat{f_{k_2' }^{\kappa}}(t,\sigma) \varphi_{\bar{l}_i; l}(\angle(\xi, \nu \eta))d \eta d\sigma   d t,\quad i=1,2,
\ee
\[
K^{\mu, \nu, \tau, \kappa,i}_{l; k_1', k_2',2}=  -\int_{t_1}^{t_2}\int_{\R^2} \int_{\R^2} e^{i t \Phi_2^{\tau, \kappa, \nu}(\xi, \eta, \sigma)}c^{  \tau, \kappa,i}_{\mu , \nu  ,2}(\xi, \eta, \sigma)\widehat{
f_{k_1' }^{\tau}
}(t,\xi-\sigma)\widehat{f_{k_2'  }^{\kappa}}(t,\sigma-\eta) \]
\be\label{deceqn2}
\times \widehat{f_{k_2 }^{\nu}}(t,\eta) \varphi_{\bar{l}_i; l}(\angle(\xi, \nu \eta))d \eta d\sigma   d t,\quad i=1,2,
\ee
\[
\textup{JR}^{\mu, \nu,i}_{l;k_1,k_2}=-  \int_{t_1}^{t_2} \int e^{i t\Phi^{\mu  ,\nu}(\xi, \eta)}  \big(\widehat{ ({\mathcal{R}'})^{\mu}  }_{k_1}(t,\xi-\eta)  \widehat{f_{k_2}^{\nu}}(t,\eta)  +  \widehat{ {f}^{\mu}_{k_1}}(t,\xi-\eta)\widehat{ (\mathcal{R'})^{\nu}_{k_2}}(t, \eta)\big)\]
\be\label{deceqn1}
\times m_{\mu,\nu}^i(\xi-\eta, \eta) \varphi_{\bar{l}_i; l}(\angle(\xi, \nu \eta)) d \eta d t,
\ee
where    the symbols $ m^i_{\mu,\nu} (\xi-\eta, \eta)$ and $c^{  \tau, \kappa}_{\mu, \nu,i}(\xi, \eta,\sigma)$, $i\in \{1,2\}$, are defined as follows,
\[
c^{  \tau, \kappa,i}_{\mu, \nu,1}(\xi, \eta,\sigma):= m^i_{\mu,\nu}(\xi-\eta, \eta)  \big(q_{\tau\nu, \kappa\nu}(\eta-\sigma, \sigma)\big)^{\nu}   \psi_{k }(\xi )\psi_{k_1}(\xi-\eta)\psi_{k_2}(\eta), 
\]
\[
 c^{  \tau, \kappa,i}_{\mu, \nu, 2}(\xi, \eta, \sigma):= m^i_{\mu,\nu} (\xi-\eta, \eta)\big( q_{\mu\tau, \mu\kappa}(\xi-\sigma, \sigma-\eta)\big)^{\mu} \psi_{k }(\xi )\psi_{k_1}(\xi-\eta)\psi_{k_2}(\eta),
\]
\be\label{deceqn11}
 m^1_{\mu,\nu} (\xi-\eta, \eta):= m_{\mu,\nu} (\xi-\eta, \eta)(1-c_{\mu, \nu}(\xi, \eta)), \quad  m^2_{\mu,\nu} (\xi-\eta, \eta):= m_{\mu,\nu} (\xi-\eta, \eta) c_{\mu, \nu}(\xi, \eta),
\ee
where the symbols $m_{\mu, \nu}(\xi-\eta, \eta) $  $c_{\mu,\nu}(\xi, \eta)$, $\mu,\nu\in\{+,-\}$, are defined in   (\ref{neweqn90}), (\ref{auxillarycutoff1}) and (\ref{auxillarycutoff2}) respectively.

In (\ref{noveqn1008}), we separated the cubic terms into two parts based on whether $(\xi, \eta)$ is close to the support of $c_{\mu,\nu}(\xi, \eta)$. We did this decomposition because the size of phases is not small when  $(\xi, \eta)$ is close to the support of $c_{\mu,\nu}(\xi, \eta)$, see (\ref{improvedphase}) in Lemma \ref{roughestimatephase2}.

From the estimate (\ref{sizeofsymboluniform}) in Lemma \ref{sizeofsymbol}, (\ref{noveqn519}) in Lemma \ref{roughestimatephase2}, the following estimate holds,
\[
\|c^{  \tau, \kappa,1}_{\mu, \nu,1}(\xi, \eta,\sigma)\psi_{k_1'}(\eta-\sigma) \psi_{k_2'}(\sigma) \varphi_{\bar{l}_1;l}(\angle(\xi, \nu\eta))\|_{L^\infty_{\xi, \eta, \sigma}} \]
\be\label{noveq719}
+ \|c^{  \tau, \kappa,1}_{\mu, \nu,2}(\xi, \eta,\sigma)\psi_{k_1'}(\xi-\sigma) \psi_{k_2'}(\sigma-\eta) \varphi_{\bar{l}_1;l}(\angle(\xi, \nu\eta))\|_{L^\infty_{\xi, \eta, \sigma}} \lesssim 2^{k-2\max\{l,k_{-}\}+k_{+}+k_{1,+}' }. 
\ee
From   the estimate (\ref{sizeofsymboluniform}) in Lemma \ref{sizeofsymbol}, (\ref{improvedphase}) in Lemma \ref{roughestimatephase2}, the following estimate holds,
\[
\|c^{  \tau, \kappa,2}_{\mu, \nu,1}(\xi, \eta,\sigma)\psi_{k_1'}(\eta-\sigma) \psi_{k_2'}(\sigma) \varphi_{\bar{l}_2;l}(\angle(\xi, \nu\eta))\|_{L^\infty_{\xi, \eta, \sigma}} \]
\be\label{noveq929}
+ \|c^{  \tau, \kappa,2}_{\mu, \nu,2}(\xi, \eta,\sigma)\psi_{k_1'}(\xi-\sigma) \psi_{k_2'}(\sigma-\eta) \varphi_{\bar{l}_2;l}(\angle(\xi, \nu\eta))\|_{L^\infty_{\xi, \eta, \sigma}} \lesssim 2^{k +k_{+}+k'_{1,+}}. 
\ee
 For $K^{\mu, \nu, \tau, \kappa,i}_{l; k_1', k_2'}$, $i\in\{1,2\}$, we do spatial localizations for all inputs. As a result, the following decompositions hold, 
\[
K^{\mu, \nu, \tau, \kappa,i}_{l;k_1', k_2',1}= \sum_{j_1\geq -k_{1,-}, j_1'\geq -k_{1,-}', j_2'\geq -k_{2,-}'}K^{\mu, \nu, \tau, \kappa,i}_{l;j_1, j_1', j_2'}, \quad K^{\mu, \nu, \tau, \kappa,i}_{l;k_1', k_2',2}= \sum_{j_2\geq -k_{2,-}, j_1'\geq -k_{1,-}', j_2'\geq -k_{2,-}'}K^{\mu, \nu, \tau, \kappa,i}_{l;j_2, j_1', j_2'},
\]
where
 \[
K^{\mu, \nu, \tau, \kappa,i}_{l;j_1, j_1', j_2'}=  -\int_{t_1}^{t_2}\int_{\R^2} \int_{\R^2}  e^{i t \Phi_1^{\mu, \tau, \kappa}(\xi, \eta, \sigma)} c^{ \tau, \kappa,i}_{\mu, \nu,1}(\xi, \eta, \sigma)\widehat{
f_{k_1,j_1 }^{\mu}
}(t,\xi-\eta)\widehat{f_{k_1' , j_1'}^{\tau}}(t,\eta-\sigma) \]
\be
\times \widehat{f_{k_2',j_2' }^{\kappa}}(t,\sigma) \varphi_{\bar{l}_i; l}(\angle(\xi, \nu \eta))d \eta d\sigma   d t,\quad i=1,2,
\ee
\[
K^{\mu, \nu, \tau, \kappa,i}_{l;j_2, j_1', j_2'}=  -\int_{t_1}^{t_2}\int_{\R^2} \int_{\R^2} e^{i t \Phi_2^{\tau, \kappa, \nu}(\xi, \eta, \sigma)}c^{  \tau, \kappa,i}_{\mu , \nu  ,2}(\xi, \eta, \sigma)\widehat{
f_{k_1',j_1'}^{\tau}
}(t,\xi-\sigma)\widehat{f_{k_2', j_2' }^{\kappa}}(t,\sigma-\eta) \]
\be\label{deceqn5}
\times \widehat{f_{k_2,j_2}^{\nu}}(t,\eta) \varphi_{\bar{l}_i; l}(\angle(\xi, \nu \eta))d \eta d\sigma   d t,\quad i=1,2.
\ee

\begin{lemma}\label{cubictermsestimatelemma}
Under the bootstrap assumption \textup{(\ref{smallness})}  and the assumption that Proposition \textup{\ref{propZnorm2}} holds, if fixed $k,k_1,k_2$ and $j$ satisfy the estimate \textup{(\ref{noveqn1000})}, then the following estimate holds,
\be
\sum_{i_1,i_2=1,2, k_1',k_2'\in \mathbb{Z}}\sum_{\bar{l}_{i_1}\leq l\leq 2} 2^{\delta j}\| \mathcal{F}^{-1}[ K^{\mu, \nu, \tau, \kappa,i_1}_{l; k_1', k_2',i_2}]\|_{B_{k,j}} \lesssim 2^{-2\delta m -2 \delta j}\epsilon_0.
\ee
\end{lemma}
\begin{proof}

We first rule out the case  when  $(k_1',k_2')\in  \cup_{i=1,2}\chi_{k_i}^1\cup \chi_{k_i}^2$ or $(k_1',k_2')\in  \cup_{i=1,2} \chi_{k_i}^3, \tau\kappa=-$. From the estimates (\ref{dece1}) and (\ref{dece2}) in Lemma \ref{phasesize}, we know that $\nabla_\sigma  \Phi_1^{\mu, \tau , \kappa}(\xi, \eta, \sigma)$ always has a good lower bound. Therefore, there is no extra difficulty caused by the fact that $(k_1,k_2)\in \chi_k^3$.  With minor modifications,  we can redo the argument used in the estimate of `` $\textup{good}_k$'' and the estimate of ``$\textup{bad}_k$''     to estimate those scenarios. Hence, We omit  the details here for those cases.

 Now, we restrict ourself to the case when $(k_1',k_2')\in \chi_{k_2}^3$ and $\tau \kappa=+$. We separate into three cases based on the possible size of $k$.

$\bullet$\quad If $k+2l\leq -m+\beta m $.\quad Note that this assumption implies that $k\leq -m/5+\beta m $. From the estimate (\ref{noveq929}), (\ref{eqn293}) in Lemma \ref{bilinearest} and (\ref{L2estimate}) in Lemma \ref{L2estimatelemma} , the following estimate holds, 
\[
 \sum_{i=1,2} 2^{\delta j}\|\mathcal{F}^{-1}[K_{l;k_1',k_2',i}^{\mu , \nu,\tau, \kappa,2}]\|_{B_{k,j}} \lesssim \sup_{t\in[t_1,t_2]}  2^{2\delta m + \alpha k + m + j } 2^{k+k'_{1,+} } 2^{2k+l}  \| \widehat{f}_{k_2}(t, \xi)\|_{L^\infty_\xi}\| e^{-it \Lambda}f_{k_1'}\|_{L^\infty}  \|f_{k_2'}\|_{L^2} \]
\be\label{febneweqn201}
  \lesssim  2^{ 200\delta m} 2^{ m+4k+l}  \epsilon_0\lesssim 2^{m+2k+2l+3\beta m} \epsilon_0 \lesssim  2^{-2\delta  m-2\delta j}\epsilon_0.
\ee
Now we proceed to estimate $ K_{l;k_1',k_2'}^{\mu, \nu,\tau, \kappa,1}$. From the estimate (\ref{angularrelation}) in Lemma \ref{angularrelationlemma}, it is easy to see that  the proof of Lemma \ref{badhhlemma1} is also valid. Hence, we can rule out the case when $\max\{m+l,\min\{-k-l,m\}\} + 100\delta m \leq j \leq m +20\delta m$. Now, it would be sufficient to consider the case when $j\leq -k-l + 100\delta m$.

 From (\ref{noveq719}),  the estimate (\ref{eqn293}) in Lemma \ref{bilinearest} and the estimates  (\ref{L2estimate}) and (\ref{equation6726}) in Lemma \ref{L2estimatelemma}, the following estimate holds, 
\[
\sum_{i=1,2} 2^{\delta j}\| \mathcal{F}^{-1}[ K_{l;k_1',k_2',i}^{\mu , \nu,\tau, \kappa,1}]\|_{B_{k,j}} 
 \lesssim \sup_{t\in[t_1,t_2]}  2^{2\delta m + \alpha k + m + j } 2^{k-2\max\{k, l\}+k'_{1,+}} 2^{2k+l}  \| \widehat{f}_{k_2}(t, \xi)\|_{L^\infty_\xi}   \]
\be\label{neweqn201}
 \times  \| e^{-it \Lambda}f_{k_1'}\|_{L^\infty} \|f_{k_2'}\|_{L^2}  \lesssim  2^{ 3\beta m +k}  \epsilon_0\lesssim 2^{- 2\delta  m- 2\delta  j}\epsilon_0.
\ee
$\bullet$\quad If $-m+\beta m\leq k+2l \leq -\beta m/100.$\quad Note that this assumption implies that $k\leq -\beta m/500$. From the estimate (\ref{noveqn519}) in Lemma \ref{roughestimatephase2}, we know that $|\Phi^{\mu  ,\nu}(\cdot, \cdot)|$ is greater than $2^{-m+\beta m}$. From the estimate  (\ref{equation6911}) in Lemma \ref{angularbilinear}, and  the $L^2_x-L^\infty_x$ type bilinear estimate,  the following estimate holds after putting $T^{\tau, \kappa}(f_{k_1'}^\tau, f_{k_2'}^\kappa)$ in $L^2$,
\[
 \sum_{i=1,2}  2^{\delta j}\| \mathcal{F}^{-1}[ K_{l;k_1',k_2',i}^{\mu , \nu,\tau, \kappa,2}]\|_{B_{k,j}} 
 \lesssim 2^{2\delta m + \alpha k +m +j+k-2m-3\alpha k +k}\epsilon_1^3 +2^{-2\delta m -2\delta j}\epsilon_1^3\lesssim 2^{-2\delta m - 2\delta j}\epsilon_0.
  \]

Same as the previous case,  we can rule out the case   when $\max\{m+l,\min\{-k-l,m\}\} + 100\delta m \leq j \leq m +20\delta m$ for the estimate of $K_{l;k_1',k_2'}^{\mu , \nu,\tau, \kappa,1}$. Hence, it would be sufficient to consider the case when  $j\leq m+l + 100\delta m$, then from the estimate  (\ref{equation6911}) in Lemma \ref{angularbilinear} and the $L^2_x-L^\infty_x$ type bilinear estimate, the following estimate holds after putting $T^{\tau, \kappa}(f_{k_1'}^\tau, f_{k_2'}^\kappa)$ in $L^2$,
\[
  \sum_{i=1,2}  2^{\delta j}\| \mathcal{F}^{-1}[ K_{l;k_1',k_2',i}^{\mu , \nu,\tau, \kappa,1}]\|_{B_{k,j}}  \lesssim 
 2^{\alpha k + m + j+k -2\max\{k,l\}} 2^{-2m-3\alpha k+k} \epsilon_1^3 + 2^{-2\delta m - 2\delta j} \epsilon_1^3.
\]
\be\label{neweqn203}
\lesssim    2^{(1-2\alpha)k +200\delta m  } \epsilon_0  +  2^{-2\delta m - 2\delta j} \epsilon_0   \lesssim 2^{-2\delta m - 2\delta j} \epsilon_0.
\ee
$\bullet$\quad If $  -\beta m/100\leq k +2l.$\quad Recall (\ref{noveqn1000}). Note that this assumption implies that $k\in [-\beta m/1000, 2\beta m]$.
For the case we are considering, all frequencies are almost of size ``$1$'', which means that  the localized angle $\angle(\xi, \nu \eta)$, which is of size greater than $2^{2k_{-}}$, and  the degenerated phase, which is of size greater than $2^{3k_{-}}$,  play  little role. As a result,  there is little difference between estimating $ K_{l;k_1',k_2',1}^{\mu , \nu,\tau, \kappa,1}$  and  $ K_{l;k_1',k_2',i}^{\mu , \nu,\tau, \kappa,j}$, $i,j\in\{1,2\}$. For simplicity,  we only estimate  $ K_{l;k_1',k_2',1}^{\mu , \nu,\tau, \kappa,1}$ in details here.

 From the  $L^2_x-L^\infty_x-L^\infty_x$ type trilinear estimate, and  the following estimate holds   when $\max\{j_1,j_1',j_2'\}\geq 10 \beta m $ after putting the input with the maximum spatial concentration in $L^2$ and the other two inputs in $L^\infty$,
\[
\sum_{ \max\{j_1,j_1',j_2'\}\geq 10 \beta m}  2^{\delta j}\| \mathcal{F}^{-1}[ K_{l;j_1,j_1',j_2'}^{\mu , \nu,\tau, \kappa,1}]\|_{B_{k,j}} \lesssim \sum_{ \max\{j_1,j_1',j_2'\}\geq 10 \beta m} 2^{\beta m +m+j - 10k_{+}-2m-3\alpha k -\max\{j_1,j_1',j_2'\}}\epsilon_1^3\]
\be
 \lesssim 2^{-2\delta m -2\delta j}\epsilon_0.
\ee
It remains to consider the case when $\max\{j_1,j_1',j_2'\}\leq 10 \beta m$. 

In the estimate of ``$\textup{good}_k$'' and ``$\textup{bad}_{k}$'', we used the fact that either $|\xi-\eta|\approx |\eta|$ or $|\eta|\leq 2^{-5}|\xi-\eta|$ to show that the space resonance  in ``$\eta$''  set doesn't intersect with the space resonance in `` $\sigma$ ''set (when $\sigma=\eta/2$), which means that we can alway do integration by parts in ``$\sigma$'' or `` $\eta$'' to take the advantage of the high oscillation either in ``$\sigma$'' or ``$\eta$''. The only extra difficulty caused by the fact that   $(k_1,k_2)\in \chi_k^3$ is that there exists a space resonance  in ``$\eta$'' and ``$\sigma$'' set, i.e., $\nabla_\eta \Phi^{\mu,\tau, \kappa}_i(\xi, \eta, \sigma)$ and  $\nabla_\sigma \Phi^{\mu,\tau, \kappa}_i(\xi, \eta, \sigma)$, $i=1,2$, can equal to zero at the same time.

Therefore, we can decompose the support of frequencies into three regions: (i) the frequencies  are  far  away from the space resonance in ``$\sigma$'' set ; (ii) the frequencies are close to the space resonance in ``$\sigma$'' set but far away from the space resonance in ``$\eta$'' set; (iii) the frequencies  are close to the space resonance  ``$\sigma$'' and in ``$\eta$'' set. More precisely, we decompose the symbols $ c^{  \tau, \tau,1}_{\mu, \nu,1}(\xi, \eta, \sigma)$ and $ c^{  \tau, \tau,1}_{\mu, \nu,2} (\xi, \eta, \sigma)$  into three pieces as follows, 
\[
 c^{  \tau, \tau,1}_{\mu, \nu,1}(\xi, \eta, \sigma)=\sum_{j=1,2,3}  e^{  \tau, \tau,j}_{\mu, \nu }(\xi, \eta, \sigma), \quad   e^{  \tau, \tau,1 }_{\mu, \nu}(\xi, \eta, \sigma)= c^{  \tau, \tau,1}_{\mu, \nu,1}(\xi, \eta, \sigma) \psi_{\geq  \tilde{l}_{\mu,\tau} }(\sigma-\eta/2),\]
\[
e^{  \tau, \tau,2}_{\mu, \nu }(\xi, \eta, \sigma)= c^{  \tau, \tau,1}_{\mu, \nu,1}(\xi, \eta, \sigma) \psi_{\geq  \tilde{l}_{\mu,\tau}  }\big((\xi-\eta)-\mu\tau(\eta-\sigma)\big) \psi_{<  \tilde{l}_{\mu,\tau} }(\sigma-\eta/2),
\]
 \[
e^{  \tau, \tau,3}_{\mu, \nu  }(\xi, \eta, \sigma)= c^{  \tau, \tau,1}_{\mu, \nu,1}(\xi, \eta, \sigma)  \psi_{< \tilde{l}_{\mu,\tau}}\big((\xi-\eta)-\mu\tau(\eta-\sigma)\big) \psi_{< \tilde{l}_{\mu,\tau}  }(\sigma-\eta/2),
\]
 \[ \tilde{l}_{+,-}=\tilde{l}_{+,+}=\tilde{l}_{-,-}=-2\beta m , \quad \tilde{l}_{-,+}= -m/2+10\beta m.
 \]
Because $\max\{j_1,j_1',j_2'\}\leq 10\beta m $ and the threshold $\tilde{l}_{\mu, \nu}$ we choose is away from $-m/2$, by doing integration by parts in $\sigma $ or $\eta$ many times, the terms with symbols $e^{  \tau, \tau,1}_{\mu, \nu }(\xi, \eta, \sigma)$ and $e^{  \tau, \tau,2}_{\mu, \nu }(\xi, \eta, \sigma)$ decay rapidly over time. 

Now, we consider the cubic term with the symbol $e^{  \tau, \tau,3}_{\mu, \nu }(\xi, \eta, \sigma)$. An important observation for the phase $\Phi^{\mu, \tau,\tau}(\xi, \eta,\sigma)$, $(\mu, \nu, \tau)\in \{(+,-,-),(+,+,+),(-,-,-)\}$, is that the space resonance in $\eta$ and $\sigma$ set is far away from the time resonance set.  More precisely, the following estimate holds, 
 \be
|\Phi_{1}^{\mu, \tau, \tau}(\xi, \eta, \sigma)|  \psi_{< \tilde{l}_{\mu,\tau}}\big((\xi-\eta)-\mu\tau(\eta-\sigma)\big) \psi_{< \tilde{l}_{\mu,\tau}  }(\sigma-\eta/2) \gtrsim 2^{-\beta m  },\quad (\mu, \tau, \tau)\neq (-,+,+).
 \ee
 Therefore, we can first do integration by parts in time once for this case. As a result, we can gain $2^{-m}$ by paying the price of $2^{-\beta m}$, the extra gain of $2^{-m+\beta m }$ is sufficient to close the argument. 
 
Lastly, we consider the case when $(\mu,\tau,\tau)=(-,+,+)$. Note that the following equality and estimate hold around  the space resonance in ``$\eta$'' and ``$\sigma$'' set,  
\[
\nabla_\xi \Phi_1^{-,+,+}(\xi, \eta,\sigma)\big|_{(\eta/2, \eta,\eta/2)} =\Lambda'(|\xi|)\frac{\xi}{|\xi|}+ \Lambda'(|\xi-\eta|) \frac{\xi-\eta}{|\xi-\eta|}\big|_{(\eta/2, \eta,\eta/2)} = 0,
\]
\be\label{noveq1009}
|\nabla_\xi \Phi_1^{-,+,+}(\xi, \eta,\sigma)|\psi_{< \tilde{l}_{-,+}}\big((\xi -\sigma)\big) \psi_{< \tilde{l}_{-,+}  }(\sigma-\eta/2) \lesssim 2^{-k_{-}+\beta m + \tilde{l}_{-,+} }\lesssim 2^{-m/2+12\beta m }.
\ee
Recall that $\max\{j_1,j_1',j_2'\}\leq 10\beta m $. From the above estimate  (\ref{noveq1009}), we can rule out the case when $j\geq m/2+14\beta m$ by doing integration by parts in $\xi$ many times,   see the argument used in the proof of Lemma \ref{badhhlemma1}. 
  For the case when $j\leq m/2+14\beta m $, the following estimate holds after using  the volume of support of $\eta$ and $\sigma$,
  \[
\sum_{ \max\{j_1,j_1',j_2'\}\leq  10 \beta m}  2^{\delta j}\| \mathcal{F}^{-1}[ K_{l;j_1,j_1',j_2'}^{- , \nu,\nu , \nu,1}]\|_{B_{k,j}} \lesssim 2^{\beta m +\alpha k +m +j +4\tilde{l}_{-,+}}\epsilon_1^3\lesssim 2^{-2\delta m -2\delta {j}}\epsilon_0.
  \]
Hence finishing the proof. 
\end{proof}

\begin{lemma}
Under the bootstrap assumption \textup{(\ref{smallness})}  and the assumption that Proposition \textup{\ref{propZnorm2}} holds, if fixed $k,k_1,k_2$ and $j$ satisfy the estimate  \textup{(\ref{noveqn1000})}, then the following estimate holds, 
\be
 \sum_{i=1,2}\sum_{\bar{l}_i\leq l\leq 2 } 2^{\delta j}\| \mathcal{F}^{-1}[ \textup{JR}^{\mu, \nu,i }_{l;k_1 , k_2 }]\|_{B_{k,j}} \lesssim 2^{-2\delta m -2 \delta j}\epsilon_0.
\ee
\end{lemma}
\begin{proof}
Recall (\ref{deceqn1}). As there are at most ``$m$'' cases of ``$l$'', we first fix ``$l$''. From the estimate  (\ref{eqn293}) in Lemma \ref{bilinearest} and   (\ref{equation7760}) in Proposition \ref{propZnorm2}, the following estimate holds if $k\leq -\beta m $,
\[
 \sum_{i=1,2}  2^{\delta j}\| \mathcal{F}^{-1}[ \textup{JR}^{\mu, \nu,i }_{l;k_1 , k_2 }]\|_{B_{k,j}}    \lesssim  \sum_{i=1,2} \sup_{t\in [t_1, t_2]}  2^{2\delta m + \alpha k   + m+j  -2\max\{l,k\} } 2^{2k+l}  \| P_{k_i}\mathcal{R}(t)\|_{L^2}\]
\[
 \times \| \widehat{f}_{k_{3-i}}(t,\xi)\|_{L^\infty_\xi}\lesssim 2^{k +200\delta m } \epsilon_0 \lesssim 2^{-\beta m /2}\epsilon_0.
\]
 From the $L^\infty-L^2$ type bilinear estimate (\ref{bilinearesetimate}) in Lemma
 \ref{multilinearestimate} and  (\ref{equation7760}) in Proposition \ref{propZnorm2}, the following estimate holds if $k\geq -\beta m$,
\[
 \sum_{i=1,2}  2^{\delta j}\| \mathcal{F}^{-1}[ \textup{JR}^{\mu, \nu,i }_{l;k_1 , k_2 }]\|_{B_{k,j}}   \lesssim \sum_{i=1,2}\sup_{t\in [t_1, t_2]}   2^{m+j+50\beta m } \| P_{k_i}\mathcal{R}(t)\|_{L^2}\| e^{-it \Lambda} f_{k_{3-i}}\|_{L^\infty}
\lesssim 2^{-\alpha m /4}\epsilon_0.
\]
Hence  finishing the proof.
\end{proof}

\section{Remainder estimate and the proof of Lemma \textup{\ref{L2Znormestimate} }}\label{remainderZnorm}

This section is devoted to prove Proposition \ref{propZnorm2} and Lemma \ref{L2Znormestimate}. The main idea of proving  Proposition \ref{propZnorm2} can be summarized   as follows,
\begin{enumerate}
\item[(i)] We   first decompose the remainder term $\mathcal{R}$ into two parts: cubic type terms, which don't depend on $\Lambda_{\geq 3}[B(h)\psi]$ and terms that do depend on $\Lambda_{\geq 3}[B(h)\psi]$. We will prove a $Z$-norm estimate for a general trilinear form, which is sufficient to estimate the cubic type terms.

\item[(ii)] To estimate  the $Z$-norm of the profile of  $\Lambda_{\geq 3}[B(h)\psi]$,  it would be sufficient to estimate the profile of $\Lambda_{\geq 3}[\nabla_{x,z}\varphi]$ in the $L^\infty_z Z$-normed space, where ``$\varphi$'' is defined in (\ref{potentialinxzcoordinate}). Due to the small data regime, based on the equality (\ref{fixedpoint}), we can use a fixed point type argument to estimate the   $L^\infty_z Z$-norm   of $\Lambda_{\geq 3}[\nabla_{x,z}\varphi]$.

\end{enumerate}

Step (i) is straightforward. Recall (\ref{remainderterms}), we have
\[
\mathcal{R}= \Lambda_{\geq 3}[(1+|\nabla h|^2) B(h)\psi] + i \Lambda \Lambda_{\geq 3}[(1+|\nabla h|^2 ) (B(h)\psi)^2]=  \Lambda_{\geq 3}[(1+|\nabla h|^2)( \Lambda_{\leq 2}[B(h)\psi]
\]
\[
+ \Lambda_{\geq 3}[B(h)\psi])] + i \Lambda \Lambda_{\geq 3}[(1+|\nabla h|^2 ) (\Lambda_{\leq 2}[B(h)\psi]+ \Lambda_{\geq 3}[B(h)\psi])^2]
= I_{\textup{cubic}}+ I_{\textup{fps}},
\]
where
\begin{equation}\label{cubictype}
 I_{\textup{cubic}}= |\nabla h|^2  \Lambda_{\leq 2}[B(h)\psi]+ i \Lambda\Big(|\nabla h|^2  (\Lambda_{\leq 2}[B(h)\psi])^2 + ( \Lambda_{ 2}[B(h)\psi])^2 + 2\Lambda_{2}[B(h)\psi]\Lambda_{1}[B(h)\psi] \Big),
\end{equation}
\[
I_{\textup{fps}}= (1+|\nabla h|^2) \Lambda_{\geq 3}[B(h)\psi] + i \Lambda\Big( (1+|\nabla h|^2)(  \Lambda_{\geq 3}[B(h)\psi])^2 
\]
\begin{equation}\label{highorderdepend}
+2(1+|\nabla h|^2)( \Lambda_{\leq 2}[B(h)\psi]) (\Lambda_{\geq 3}[B(h)\psi])\Big).
\end{equation}
Since   the explicit formula of $\Lambda_{\leq 2}[B(h)\psi]$ is known, we can  explicitly represent  ``$I_{\textup{cubic}} $''    in terms of $h$ and $\psi$.  More precisely, we can rewrite  ``$I_{\textup{cubic}}$'' as follows,
\begin{equation}\label{cubicrepresentation}
I_{\textup{cubic}}= \sum_{\mu, \nu, \tau\in\{+,-\}}C_{\mu, \nu, \tau}(u^{\mu}, u^{\nu}, u^{\tau}) + C'_{\mu, \nu}(u^{\mu}, u^{\nu}, h_1) +  C_{\mu}(u^{\mu}, h_2, h_3)+ C(h_4, h_5, h_6),
\end{equation}
where $h_i$, $1\leq i \leq 6$, denotes some determined quadratic term in terms of $u$ and $\bar{u}$, whose explicit formulas are not pursued here. Generally speaking,  they can be represented as follows,
\[
h_{i} = \sum_{\mu, \nu\in \{+,-\}} T_{\mu, \nu}^{i}(u^{\mu}, u^{\nu}), \quad 1\leq i \leq 6,
\]
where $T_{\mu, \nu}^i(\cdot,\cdot)$, $i\in\{1,\cdots,6\}$, are some determined bilinear operators.
 
\begin{proof}[Proof of Proposition \ref{propZnorm2} ]
Recall (\ref{smallness}) and   (\ref{energyestimate}). From (\ref{Rough est}) in  Lemma \ref{bilinearZnorm}, we have
\[
\sup_{1\leq i\leq 6}\sup_{t\in[2^{m-1}, 2^{m+1}]} \| e^{ it \Lambda} h_i \|_{Z}\lesssim  \epsilon_0. 
\]
From the above estimate and estimates (\ref{cubictypefixedtime}), (\ref{equation6740}), and (\ref{newequation1001}) in Lemma \ref{bilinearZnorm}, the following estimate holds for $k\in \mathbb{Z}$,  $\theta\in[0,1]$, and $t, t_1,t_2\in [2^{m-1}, 2^{m+1}]$,
\[
 \|e^{it \Lambda}\big[I_{\textup{cubic}}\big]\|_{Z}+ 2^{-(1-\theta)k+\theta m}\| P_{k}\big(e^{it \Lambda}\big[I_{\textup{cubic}}\big]\big)\|_{L^2}
 \lesssim  2^{-m }\epsilon_0,
\]
\[
 \sup_{k\in \mathbb{Z}, j\geq\max\{-k,0\}} 2^{\delta  j}  \| \int_{t_1}^{t_2} e^{it \Lambda}\big[I_{\textup{cubic}}\big] d t \|_{B_{k,j}} \lesssim 2^{-\delta m } \epsilon_0.
\]
From $L^2-L^2-L^\infty$ type trilinear estimate (\ref{trilinearesetimate}) in Lemma \ref{multilinearestimate}, we put the input with the medium frequency in $L^\infty$ and the other inputs in $L^2$. As a result, the following estimate holds, 
\be\label{neweqn609}
\sup_{t\in [2^{m-1}, 2^m]} \| \widehat{I_{\textup{cubic}}}(t, \xi)\|_{L^\infty_\xi} \lesssim 2^{-m} \sum_{1\leq i \leq 6}\big(\| e^{it \Lambda} u\|_{Z}+ \|e^{it \Lambda} h_i\|_Z\big)^3 \lesssim 2^{-m} \epsilon_0.
\ee
Combing the above estimates with  estimates (\ref{higherZnorm3}) and (\ref{lowfrequency}), it's easy to see that Proposition \ref{propZnorm2} holds.
\end{proof}

\subsection{$Z$-- norm estimate of terms depend on $\Lambda_{\geq 3}[B(h)\psi]$} In this subsection, we mainly do step (ii), which is stated at the beginning of this section. More precisely, we have the following lemma,
\begin{lemma}\label{higherorderZ}
Under the bootstrap assumption \textup{(\ref{smallness})} and the improved energy estimate \textup{(\ref{energyestimate})}, the following estimates hold for any $k\in \mathbb{Z}$,  $\theta\in[0,1]$, and $t, t_1,t_2\in [2^{m-1}, 2^{m+1}]$,
\begin{equation}\label{higherZnorm3}
 \|e^{it \Lambda}\big[I_{\textup{fps}}\big]\|_{Z}+ 2^{-(1-\theta)k+\theta m}\| P_{k}\big(e^{it \Lambda}\big[I_{\textup{fps}}\big]\big)\|_{L^2}  + \| \widehat{I_{\textup{fps}}}(t, \xi)\|_{L^\infty_\xi} 
 \lesssim  2^{-m }\epsilon_0,
\end{equation}	
\be\label{lowfrequency}
 \sup_{k\in \mathbb{Z}, j\geq\max\{-k,0\}} 2^{\delta  j}  \|  \int_{t_1}^{t_2} e^{it \Lambda}\big[I_{\textup{fps}}\big] d t\|_{B_{k,j}} \lesssim 2^{-\delta m } \epsilon_0.
\ee
\end{lemma}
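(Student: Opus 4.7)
The plan is to exploit the fact that $I_{\textup{fps}}$ is at least cubic in $(h,\Lambda\psi)$, so that two of the three (or more) factors can be placed in $L^\infty$ to harvest $(1+t)^{-2}$ from the dispersive bootstrap bound (\ref{smallness}), leaving one factor free to be controlled in the target norm. The main object to control is $\Lambda_{\geq 3}[\nabla_{x,z}\varphi]$, since by (\ref{DN1}) and (\ref{essentialpart}) together with a Taylor expansion of $1/(1+|\nabla h|^2)$, the contribution of $\Lambda_{\geq 3}[B(h)\psi]$ to $I_{\textup{fps}}$ reduces --- up to trilinear-in-$(h,\psi)$ remainders of the same type as in (\ref{cubicrepresentation}) --- to $\Lambda_{\geq 3}[\p_z\varphi]|_{z=0}$ multiplied by smooth functions of $h$.

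First, I would solve the fixed-point equation (\ref{fixed1}) for $\Lambda_{\geq 3}[\nabla_{x,z}\varphi]$ by Neumann iteration in an $L^\infty_z$-extension of whatever norm is needed. The bilinear pieces $T_z^i(\tilde h_i,\cdot)$ and $\widetilde C_z^j(h,\tilde h_2,\cdot)$ acting on $\Lambda_{\geq 3}[\nabla_{x,z}\varphi]$ have operator norm $\lesssim \|h\|_{L^\infty}\lesssim \epsilon_0(1+t)^{-1}\ll 1$, so the Neumann series converges and the solution is of the same size as the purely trilinear forcing terms $C_z^i(h,\psi,\tilde h_i)$, $\tilde C_z(\tilde h_2,\tilde h_2,\Lambda_{\leq 2}[\nabla_{x,z}\varphi])$, and so on. Second, I would estimate each such trilinear forcing: for the fixed-time bounds in (\ref{higherZnorm3}), place two inputs in $L^\infty$ via (\ref{Linfty}) to gain $2^{-2m}$ and the remaining input in $Z$, $L^2$, or $L^\infty_\xi$ as required, the single factor $2^{\delta m}$ from (\ref{L2}) being absorbed by the spare factor of $2^{-m}$; for the $L^\infty_\xi$ bound on $\widehat{I_{\textup{fps}}}$ use an $L^2$-$L^2$-$L^\infty$ split as in (\ref{neweqn609}). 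Third, for the spatially weighted bound (\ref{lowfrequency}), I would mimic the trilinear analyses of subsections \ref{hhlargej} and \ref{cannotdoanything}: rule out the extreme input frequencies and the very large-$j$ case by crude $L^2$-$L^\infty$-$L^\infty$ estimates combined with integration by parts in $\xi$ on the region where the minimum spatial concentration among the inputs is much smaller than $j$, and handle the remaining logarithmically-many subregions exactly as in the quadratic bad-phase analysis. The cubic (rather than quadratic) structure gives an extra factor $\epsilon_0(1+t)^{-1}$ over the quadratic case closed in sections \ref{goodimprovedZnorm}--\ref{badimprovedZnorm}, so the required $2^{-\delta m}$ gain is automatic and no new phase analysis is needed.

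The principal obstacle is the bookkeeping imposed by the fixed-point formulation: one must track the $L^\infty_z$-extensions of $Z$, $H^{N_0}$, and the $\|\widehat{\cdot}\|_{L^\infty_\xi}$-norm simultaneously, and verify that the symbols of $T_z^i$, $C_z^i$, $\widetilde C_z^j$, $\hat C_z$ satisfy the required $\mathcal{S}^\infty$ bounds uniformly in $z\in[-1,0]$, in the spirit of the symbol estimates (\ref{symbolestimatequadratic}) and (\ref{symbolestimatecubic}) carried over from the first paper. A secondary subtlety appears at very low output frequency $k\ll 0$, where the $2^{\alpha k}$ weight in the $Z$-norm must be supplied by cubic-level low-frequency smallness of the relevant trilinear multipliers --- the trilinear analogue of (\ref{sizeofhh}) --- but as in subsection \ref{hhbad} this does not introduce any new qualitative phenomenon and simply requires summing a convergent geometric series in the low-frequency dyadic index.
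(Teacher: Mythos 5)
Your proposal follows essentially the same route as the paper: reduce to $\Lambda_{\geq 3}[\nabla_{x,z}\varphi]$ via the fixed-point formulation (\ref{fixed1}), resolve it by absorbing the small linear-in-unknown pieces (the paper does this by a direct absorption argument rather than an explicit Neumann series, and handles the rational functions $\tilde h_1,\tilde h_2$ of $h$ through the auxiliary unknowns $\tilde u_1,\tilde u_2$ rather than Taylor expansion, but these are cosmetic differences), then estimate the trilinear forcings with two $L^\infty$ placements for the fixed-time norms and with time oscillation plus the comparable-frequency cubic-phase analysis of subsection \ref{comparablebad} for the weighted time-integrated bound — exactly the content of Lemma \ref{bilinearZnorm}. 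The proposal is correct in approach and matches the paper's proof.
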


\begin{proof}
To estimate the $Z$-norm of $\Lambda_{\geq 3}[B(h)\psi]$, it is sufficient to estimate the $L^\infty_z Z$-norm of $\Lambda_{\geq 3}[\nabla_{x,z}\varphi]$. Recall (\ref{fixed1}). We define 
\[
 \tilde{u}_1 = \tilde{h}_1+ i 2\Lambda \psi, \quad \tilde{u}_2 = \tilde{h}_2 + i \Lambda \psi, 
\]
hence
\[
\tilde{h}_1 = \frac{2h+h^2}{(1+h)^2} \Longrightarrow \tilde{u}_1 = 2 u - \frac{3h}{2} \frac{2h+ h^2}{(1+h)^2} - \frac{h^3}{2(1+h)^2}\]
\begin{equation}\label{auxiliary2}
= 2 u - \frac{3(u+\bar{u})}{4} \frac{\tilde{u}_1 + \overline{\tilde{u}_1}}{2} - \frac{(u+\bar{u})}{4} \Big( \frac{\tilde{u}_2+\overline{\tilde{u}_2}}{2} \Big)^2,
\end{equation}
\begin{equation}\label{auxiliary3}
\tilde{h}_2 = \frac{h}{1+h} \Longrightarrow \tilde{u}_2 = u - h \frac{h}{1+h} = u - \frac{(u+\bar{u})(\tilde{u}_2+ \overline{\tilde{u}_2})}{4}.
\end{equation}
With the above notation, we can easily transfer the fixed point type formulation (\ref{fixed1}) into a fixed point type formulation  in terms of $u$, $\bar{u}$, $\tilde{u}_i$ and $\overline{\tilde{u}_i}, $ $i\in\{1,2\}.$ 

Let us first estimate the $Z$-norm of the profile of $\tilde{u}_i$. From (\ref{auxiliary2}) and (\ref{auxiliary3}), the following estimate holds by using the estimate (\ref{Rough est}) in Lemma \ref{bilinearZnorm},
\[
\sum_{i=1,2}\| e^{i t\Lambda} \tilde{u}_i \|_{Z}\lesssim \epsilon_1 + \epsilon_1 \sum_{i=1,2}\| e^{i t\Lambda} \tilde{u}_i \|_{Z},\Longrightarrow \sum_{i=1,2}\| e^{i t\Lambda} \tilde{u}_i \|_{Z}\lesssim \epsilon_1.
\]

Now we are ready to prove Lemma \ref{higherorderZ}.  From the  estimates (\ref{Rough est}), (\ref{equation6739}), (\ref{cubictypefixedtime}), (\ref{equation6740}), and  (\ref{newequation1001}) in Lemma \ref{bilinearZnorm}, and H\"older type estimates, we can derive the following estimate from (\ref{fixed1}),  
\be\label{dece201}
\| e^{it \Lambda}\Lambda_{\geq 3}[\nabla_{x,z}\varphi]\|_{L^\infty_z Z} \lesssim 2^{-m  } \epsilon_0 + \epsilon_1 \| e^{it \Lambda}\Lambda_{\geq 3}[\nabla_{x,z}\varphi]\|_{L^\infty_z Z},
\ee
\be\label{dece202}
\|\widehat{\Lambda_{\geq 3}[\nabla_{x,z}\varphi]}\|_{L^\infty_z L^\infty_\xi} \lesssim 2^{-m} \epsilon_0 + \epsilon_0 \|e^{i t\Lambda}\Lambda_{\geq 3}[\nabla_{x,z}\varphi]\|_{L^\infty_z Z},
\ee
\be\label{dece203}
2^{-(1-\theta) k + \theta m}\|   P_{k}[\Lambda_{\geq 3}[\nabla_{x,z}\varphi]]\|_{L^\infty_z L^2} \lesssim 2^{-m } \epsilon_0 + \epsilon_0 \| e^{it \Lambda} [\Lambda_{\geq 3}[\nabla_{x,z}\varphi]]\|_{L^\infty_z Z},
\ee
\be\label{dece204}
\sup_{k\in \mathbb{Z}, j\geq -k_{-}} 2^{\delta j } \| \int_{t_1}^{t_2} e^{i t\Lambda} \Lambda_{\geq 3}[\nabla_{x,z}\varphi]\|_{L^\infty_z B_{k,j}} \lesssim 2^{-\delta m }\epsilon_0 + 2^{3m/2} \sup_{k\in \mathbb{Z}}\| P_k[\Lambda_{\geq 3}[\nabla_{x,z}\varphi]]\|_{L^\infty_z L^2}\lesssim 2^{-\delta m }\epsilon_0.
\ee
In the above estimate, we used the fact that the case when $j\geq m +100\delta m $ can be ruled out easily as we did in the previous two sections. Also we  used the $L^2-L^\infty$ type estimate for all quartic-and-higher order terms in (\ref{fixed1}).

From the estimates (\ref{dece201}),  (\ref{dece202}),  (\ref{dece203}), and  (\ref{dece204}),  we have the following estimates,
\[
\| e^{i t \Lambda} \Lambda_{\geq 3}[B(h)\psi]\|_{Z} + 2^{-(1-\theta) k + \theta m}\|   P_{k}[B(h)\psi ]\|_{L^2} \lesssim \| e^{it \Lambda}\Lambda_{\geq 3}[\nabla_{x,z}\varphi]\|_{L^\infty_z Z} 
\]
\begin{equation}\label{equation6747}
+ 2^{-(1-\theta) k + \theta m}\|  P_{k}[\Lambda_{\geq 3}[\nabla_{x,z}\varphi]]\|_{L^\infty_z L^2}\lesssim 2^{-m }\epsilon_0,
\end{equation}
\[
\sup_{k\in \mathbb{Z}, j\geq -k_{-}} 2^{\delta j } \| \int_{t_1}^{t_2} e^{i t\Lambda} \Lambda_{\geq 3}[ B(h)\psi]\|_{L^\infty_z B_{k,j}} \lesssim 2^{-\delta m }\epsilon_0,
\]
\[
\| \Lambda_{\geq 3}[B(h, \psi)]\|_{L^\infty_\xi}\lesssim  \|\widehat{\Lambda_{\geq 3}[\nabla_{x,z}\varphi]}\|_{L^\infty_z L^\infty_\xi} \lesssim    2^{-m} \epsilon_0.
\]
Following the same procedure, recall (\ref{highorderdepend}), it's easy to see that our desired estimates  (\ref{higherZnorm3}) and (\ref{lowfrequency}) hold. 
\end{proof}

\begin{lemma}\label{bilinearZnorm}
For any $\mu, \nu, \kappa \in\{+,-\}$ and   $f$, $g$,$h\in H^{N_0}\cap Z$,  which satisfy the following estimates,
\[
\| f \|_{H^{N_0}} + \| g\|_{H^{N_0}} +\| h\|_{H^{N_0}}\leq A, \quad \| f \|_{Z} + \| g\|_{Z} +\| h\|_{Z} \leq B,
\]
  the following estimates  for  any $t,t_1,t_2\in [2^{m-1}, 2^{m+1}]$, $m\in\mathbb{Z}_{+}$, and $\theta\in[0,1]$,
\begin{equation}\label{Rough est}
\| e^{it \Lambda }Q((e^{-it \Lambda}f)^{\mu},(e^{-it \Lambda}g)^{\nu}) \|_{Z} \lesssim   \|f\|_Z \|g\|_Z + 2^{-10\delta m}(A+B)^2,
\end{equation}
\begin{equation}\label{equation6739}
\sup_{k\in \mathbb{Z}} 2^{-(1-\theta)k} \|  P_{k}\big[e^{it \Lambda }Q((e^{-it \Lambda}f)^{\mu},(e^{-it \Lambda}g)^{\nu})\big] \|_{L^2}\lesssim 2^{-\theta m } B^2. 
\end{equation}
\begin{equation}\label{cubictypefixedtime}
\| e^{it \Lambda} C\big((e^{-it \Lambda}f)^{\mu}, (e^{-it \Lambda} g)^{\nu}, (e^{-it\Lambda} h)^{\kappa}\big)\|_{Z}\lesssim 2^{-m} \|f\|_Z \|g\|_Z\|h\|_Z + 2^{-m-10\delta m}(A+B)^2,
\end{equation}
\begin{equation}\label{equation6740}
\sup_{k\in \mathbb{Z}} 2^{-(1-\theta)k} \| P_k \big[e^{it \Lambda} C\big((e^{-it \Lambda}f)^{\mu}, (e^{-it \Lambda} g)^{\nu}, (e^{-it\Lambda} h)^{\kappa}\big)\big]\|_{L^2}\lesssim 2^{-(1+\theta) m} B^3,
\end{equation}
\be\label{newequation1001}
 \sup_{k\in \mathbb{Z}, j\geq -k_{-}} 2^{\delta  j}  \| \int_{t_1}^{t_2} e^{it \Lambda} C\big((e^{-it \Lambda}f)^{\mu}, (e^{-it \Lambda} g)^{\nu}, (e^{-it\Lambda} h)^{\kappa}\big) d t \|_{B_{k,j}} \lesssim 2^{-10\delta m } (A+B)^3,
\ee
where the symbol $q(\xi-\eta, \eta)$ of bilinear operator $Q(\cdot, \cdot)$ and the symbol $c(\xi-\eta, \eta-\sigma, \sigma)$ of trilinear operator $C(\cdot, \cdot, \cdot)$ satisfy the following estimates respectively, 
\begin{equation}\label{symbolestimatequadratic}
\|q(\xi-\eta, \eta)\|_{\mathcal{S}^\infty_{k,k_1, k_2}}\lesssim 2^{3\max\{k_1, k_2\}_{+}}, 
\end{equation}
\begin{equation}\label{symbolestimatecubic}
\| c(\xi-\eta, \eta-\sigma, \sigma)\|_{\mathcal{S}^{\infty}_{k,k_1, k_2, k_3}}\lesssim 2^{4\max\{k_1, k_2, k_3\}_{+}}.
\end{equation}

\end{lemma}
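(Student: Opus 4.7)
\textbf{Proof proposal for Lemma \ref{bilinearZnorm}.}

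The strategy is to establish all five estimates by dyadic decomposition, combined with the multilinear multiplier bounds of Lemma \ref{multilinearestimate} and the linear dispersive estimates of Lemma \ref{lineardecay}. The symbol bounds \eqref{symbolestimatequadratic}, \eqref{symbolestimatecubic} essentially reduce the analysis to that of a generic bilinear/trilinear operator with symbol of size $1$ on each fixed frequency ball (modulo an $N_0$--absorbed factor at the highest frequency), so the structure of the argument mirrors the estimates carried out in Sections \ref{goodimprovedZnorm}--\ref{badimprovedZnorm}, but without needing to track phase cancellations: the extra smallness comes purely from linear decay.

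First I will handle the $L^2$--type estimates \eqref{equation6739} and \eqref{equation6740}, which are the simplest. Fix $k\in\mathbb{Z}$ and decompose $Q = \sum_{k_1,k_2} Q_{k_1,k_2}$ with frequency--localized inputs. For \eqref{equation6739}, I apply the $L^2$--$L^\infty$ bilinear estimate \eqref{bilinearesetimate}, placing the lower--frequency factor in $L^\infty$ (where Lemma \ref{lineardecay} combined with the $Z$--norm control gives a decay of at least $2^{-m}$), and the higher--frequency factor in $L^2$ (bounded by $B$ via the $Z$--norm decomposition at $\theta = 1$, or by $A$ at $\theta = 0$); interpolation and summation in $(k_1, k_2)$ uses \eqref{symbolestimatequadratic} together with the Sobolev weight to control very high frequencies and the low--frequency summability of $L^2$ via $2^{k_2}$ factors. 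Estimate \eqref{equation6740} is similar, using an $L^2 \times L^\infty \times L^\infty$ trilinear estimate to harvest an extra $2^{-m}$.

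Next I address the fixed--time $Z$--norm estimates \eqref{Rough est} and \eqref{cubictypefixedtime}. For a fixed output $(k, j) \in \mathcal{J}$ and fixed input frequencies, decompose the inputs according to their own spatial concentrations $j_1, j_2$ (resp. $j_1, j_2, j_3$). When $\min j_i$ is comparable to $j$, a direct $L^2 \times L^\infty$ estimate, placing the factor with highest spatial concentration in $L^2$ (controlled using the $Z$--norm of the profile as in \eqref{L2}) and the others in $L^\infty$, produces the desired bound. When $\min j_i$ is significantly smaller than $j$, integrate by parts in $\xi$: on the support of $\varphi_j^k$ the identity \eqref{eqn3320} gives $|x + t\nabla_\xi \Phi|^{-1} \sim 2^{-j}$, so each integration by parts gains $2^{-j}$ at cost at most $\max\{2^{\min j_i}, 2^{-k}\}$ from hitting profiles or cutoffs; this is a net gain whenever $j$ exceeds a suitable threshold. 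The remaining low--$j$ regime of size $\lesssim m^3$ is handled by rough $L^2$--$L^\infty$ estimates, with the at most polynomial--in--$m$ loss absorbed by the $2^{-\theta m}$ in the bilinear case and by the extra $2^{-m}$ in the trilinear case. High--frequency tails are controlled by absorbing $2^{-N_0 k_+}$ from $A$, while low--frequency summability uses the $2^{\alpha k}$ weight of the $Z$--norm.

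Finally, for the integrated trilinear estimate \eqref{newequation1001}, the time integral introduces a factor of $2^m$, which is compensated by the extra $2^{-m}$ decay from having three $L^\infty$--decaying factors; more concretely, one may apply \eqref{cubictypefixedtime} pointwise in $t$ and integrate, then sharpen the resulting bound using the same high--$j$ vs.\ low--$j$ split as before. The only extra input needed compared with \eqref{cubictypefixedtime} is a gain strictly better than $2^{-10\delta m}$, which follows because the high--$j$ regime is rendered negligible by integration by parts in $\xi$, and the low--$j$ dyadic sum contributes at most a polynomial--in--$m$ overhead absorbed comfortably in $2^{-10\delta m}$. The main obstacle I anticipate is not any single estimate in isolation but the interplay of the two input norms: the $Z$--norm controls $L^1$--type concentration (hence sharp $L^\infty$ decay) at every frequency scale, but the $L^2$ bound afforded by the Sobolev norm degrades by $2^{\delta m}$, so one must always position the slowly decaying $L^2$ factor where the symbol, weight and frequency support jointly absorb the $2^{\delta m}$ loss. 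The most delicate case is when all frequencies are simultaneously low, near $-m/2$, since then no factor enjoys strong $L^\infty$ decay and none enjoys Sobolev smallness; here the $2^{\alpha k}$ weight in the $Z$--norm together with the improved $L^2$ estimate \eqref{L2estimate} is exactly what is needed to close.
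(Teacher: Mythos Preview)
Your outline is close in spirit for the easy estimates \eqref{equation6739}--\eqref{equation6740}, but there are two genuine gaps in the harder ones.

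For \eqref{Rough est}, your ``rough $L^2$--$L^\infty$'' step in the low-$j$ regime does not close in the High$\times$Low case with $k_2\le k_1$, $k_1-k_2\ge 5$, $0\le k_1\le 2\beta m$. After integration by parts in $\xi$ you are only reduced to $j\le m+5$; placing $e^{-it\Lambda}f_{k_1}$ in $L^\infty$ and $g_{k_2}$ in $L^2$ then gives a contribution of order $2^{\alpha k+m+6k_+}\cdot 2^{4k_1}\cdot 2^{-m-4.5 k_1}B\cdot\|g_{k_2}\|_{L^2}$, which for $k\sim k_1\sim 2\beta m$ blows up like $2^{(5.5+\alpha)k_1}B^2$. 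There is no ``$2^{-\theta m}$'' in \eqref{Rough est} to absorb this. The paper handles exactly this region by a further integration by parts in $\eta$ (using $|\nabla_\eta\Phi^{\mu,\nu}|\gtrsim 2^{-k_{1,+}/2}$ when $k_1-k_2\ge 5$) to force $\max\{j_1,j_2\}\ge m-10\beta m$, and only then applies the $L^2$--$L^\infty$ bound with the input of larger spatial concentration placed in $L^2$.

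For \eqref{newequation1001} the gap is sharper. Applying \eqref{cubictypefixedtime} pointwise and integrating over $[t_1,t_2]$ gives at best $O(B^3)$, not $2^{-10\delta m}(A+B)^3$; your proposed ``high-$j$ via integration by parts in $\xi$, low-$j$ with polynomial overhead'' recovers nothing extra, since in the low-$j$ regime there is no source of additional decay in your scheme. The paper is explicit that this estimate requires exploiting oscillation in time: after reducing (by $L^2$--$L^\infty$--$L^\infty$) to the case where all four frequencies $k,k_1,k_1',k_2'$ are $O(1)$ and all spatial concentrations $j_1,j_1',j_2'$ are $O(1)$, one must invoke the cubic-phase analysis of subsubsection \ref{allcomparable}---integrating by parts in $\sigma$ or $\eta$ away from the space-resonance set, and either a second integration by parts in time (when $|\Phi_1^{\mu,\tau,\tau}|$ has a good lower bound near resonance) or an improved bound on $j$ via $\nabla_\xi\Phi_1$ (in the $(\mu,\tau,\tau)=(-,+,+)$ case). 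None of this is visible in your proposal. Finally, note that \eqref{L2estimate} is proved for the specific nonlinear profile under the bootstrap hypothesis and is not available for the generic inputs $f,g,h$ in this lemma; you should not invoke it here.
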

\begin{proof}
$\bullet$\quad We first prove the desired  estimates (\ref{equation6739}) and (\ref{equation6740}). Since  (\ref{equation6740}) can be proved very similarly, we only prove (\ref{equation6739}) in details here.  From the bilinear estimate (\ref{bilinearesetimate}) in Lemma \ref{multilinearestimate}, the following estimate holds for any $k\in \mathbb{Z}$ and any $\theta\in[0,1]$,
\[
\sup_{k\in \mathbb{Z}} 2^{-(1-\theta)k} \|  P_{k}\big[e^{it \Lambda }Q((e^{-it \Lambda}f)^{\mu},(e^{-it \Lambda}g)^{\nu})\big] \|_{L^2} \lesssim \sum_{k_2\leq k_1-10} 2^{-(1-\theta)k_1+3k_{1,+}}\big[ \| e^{-it \Lambda} f_{k_1}\|_{L^\infty} \| g_{k_2}\|_{L^2} \]
\[+ \| e^{-it \Lambda} g_{k_1}\|_{L^\infty} \| f_{k_2}\|_{L^2}\big] + \sum_{|k_2-k_1|\leq 10} 2^{-(1-\theta)k+(1-\theta)k+ 3k_{1,+}} \| e^{-it \Lambda} f_{k_1}\|_{L^\infty}^{\theta} \| f_{k_1}\|_{L^2}^{(1-\theta)} \| g_{k_2}\|_{L^2}  \lesssim 2^{-\theta m} B^2.
\]

$\bullet$\quad Now we proceed to prove the desired estimates (\ref{Rough est}) and (\ref{cubictypefixedtime}). Since the proof of the desired estimates (\ref{cubictypefixedtime}) and  (\ref{Rough est}) are very similar, we only prove  (\ref{Rough est}) in details here.  

Firstly, we   do dyadic decomposition   for two inputs. From the $L^2-L^\infty$ type bilinear estimate and the $L^\infty\longrightarrow L^2$ type Sobolev embedding, the following estimate holds,
\[
\| e^{it \Lambda }Q((e^{-it \Lambda}f_{k_1})^{\mu},(e^{-it \Lambda}g_{k_2})^{\nu}) \|_{B_{k,j}}\lesssim 2^{\alpha k + 6k_+ + j +\min\{k_1,k_2\} +3\max\{k_1,k_2\}_{+}} \| f_{k_1}(t)\|_{L^2} \| f_{k_2}(t)\|_{L^2}
\]
\[
\lesssim 2^{j+ \max\{k_1,k_2\}+(2-\alpha)\min\{k_1,k_2\} +\delta m -(N_0-20) \max\{k_1,k_2\}_{+}} (A+B)^2.
\]
Due to the symmetry between inputs, without loss of generality, we assume that $k_2\leq k_1+5$.  From the above estimate, we can rule out the very-low-frequency case and the relatively-high-frequency case. From now on, we restrict ourself to the following case, 
\[
k_2\leq k_1+5, \quad k_1+(2-\alpha)k_2\geq -j -10\delta m, \quad k_1\leq 2\beta j +\delta m. 
\]
 With minor modifications in the proof of Lemma \ref{hhgoodj1}, we can rule out the case when $j\geq   m +10 $. It remains to consider the case when $j\leq   m +10 $. Note that we have $k_2\geq -m/(2-\alpha)-\beta m$ and $k_1\leq 3\beta m$ for this case. From the $L^2-L^\infty$ type bilinear estimate (\ref{bilinearesetimate}) in Lemma \ref{multilinearestimate}, the following estimate holds if $k_2\leq -\alpha m/2,$ 
\[
\| e^{it \Lambda }Q((e^{-it \Lambda}f_{k_1})^{\mu},(e^{-it \Lambda}g_{k_2})^{\nu}) \|_{B_{k,j}}\lesssim 2^{\alpha k + j +6k_{+} +3k_{1,+}} \| e^{-it \Lambda} f_{k_1}\|_{L^\infty} \| g_{k_2}\|_{L^2}
\]
\be
\lesssim 2^{30\beta m +(1-\alpha)k_2} \|f\|_Z\|g\|_Z \lesssim 2^{-10\delta m} \|f\|_Z\|g\|_Z.
\ee
To sum up, it remains to consider the case when $k_1,k_2\in [-\alpha m/2, 3\beta m]$. If moreover $(k_1,k_2)\in \chi_k^1\cup \chi_k^2$,  then from the estimates (\ref{dece1}) and (\ref{dece2}) in Lemma \ref{roughestimatephase2}, it is easy to see that we are away from the space resonance in ``$\eta$'' set. Hence, we can do integration by parts in ``$\eta$'' many times 
to rule out the case when $\max\{j_1,j_2\}\leq m-2\alpha m $. As a result, we have
\[
\sum_{j\leq m +10}\| e^{it \Lambda }Q((e^{-it \Lambda}f)^{\mu},(e^{-it \Lambda}g)^{\nu}) \|_{B_{k,j}} \lesssim \sum_{j\leq m +10} 2^{\alpha k + j + 6 k_+} \Big( \sum_{(k_1,k_2)\in \chi_k^3 } 2^{   3k_{1,+}} \| e^{-it \Lambda} f_{k_1}\|_{L^\infty}
 \]
 \[
 \times \| g_{k_2}\|_{L^2} +    2^{-10\delta m } (A+B)^2  + \sum_{
\begin{subarray}{c}
   -\alpha m/2	 \leq k_2\leq k_1+5\leq 3\beta m\\
   \max\{j_1,j_2\}\geq m-2\alpha m\\
   \end{subarray}
   }   2^{  3k_{1,+}} 2^{-m-\max\{j_1,j_2\}-\alpha k_1 -\alpha k_2} \| f_{k_1,j_1}\|_Z \| g_{k_2,j_2}\|_Z\Big)
 \]
 \[
\lesssim \|f\|_Z \|g\|_Z + 2^{-10\delta m}(A+B)^2.
 \]
	
 $\bullet$\quad Now, we proceed to prove (\ref{newequation1001}). The major difference between the estimate (\ref{equation6740}) and (\ref{newequation1001}) is that   we can take   advantage of the oscillation in time for (\ref{newequation1001}). Firstly, we do dyadic decompositions for all the inputs. Due to the symmetry between inputs, without loss of generality, we assume that $k_3\leq k_2 \leq k_1$. From the   $L^2-L^\infty-L^\infty$ type estimate, the following estimate holds for any $t_1, t_2\in[2^{m-1}, 2^m]$,
 \[
2^{\delta  j}  \| \int_{t_1}^{t_2} e^{it \Lambda} C\big((e^{-it \Lambda}f_{k_1})^{\mu}, (e^{-it \Lambda} g_{k_2})^{\nu}, (e^{-it\Lambda} h_{k_3})^{\kappa}\big) d t \|_{B_{k,j}}\lesssim \sup_{t\in[2^{m-1}, 2^m]} 2^{\alpha k +6k_{+}+m +(1+\delta)j + 4k_{1,+} }
 \]
 \[
\times\| e^{-it\Lambda} f_{k_3}\|_{L^\infty} \| e^{-it\Lambda} f_{k_2}\|_{L^\infty} \|   f_{k_1}\|_{L^2} \lesssim 2^{(2-2\alpha)k_3+(1+\delta)j  -(N_0-12)k_{1,+}+\delta m} (A+B)^3.
 \]
Therefore, from the above estimate, we can rule out the case when $k_1\geq  \beta j +\delta m$ or $k_3\leq -(1+\delta)j/(2-2\alpha) -\beta m$. Hence, it would be sufficient to consider the following case, 
\[
k_3\leq k_2\leq k_1 \leq  \beta j +\delta m, \quad k_3\geq -(1+\delta)j/(2-2\alpha) -\beta m.
\]
As before,  with minor modifications in the proof of Lemma \ref{hhgoodj1}, we can first rule out the case when $j\geq m+10$. For the case when $j\leq m+10$, from the $L^2-L^\infty-L^\infty$ type estimate, we have
\[
2^{\delta  j}  \| \int_{t_1}^{t_2} e^{it \Lambda} C\big((e^{-it \Lambda}f_{k_1})^{\mu}, (e^{-it \Lambda} g_{k_2})^{\nu}, (e^{-it\Lambda} h_{k_3})^{\kappa}\big) d t \|_{B_{k,j}}\lesssim  \sup_{t\in[2^{m-1}, 2^m]} 2^{\alpha k +10k_{1,+}+2m+\delta m }\| f_{k_3}(t)\|_{L^2}
\]
\[
\times \| e^{-it \Lambda} f_{k_1}\|_{L^\infty} \| e^{-it \Lambda} f_{k_2}\|_{L^\infty} \lesssim 2^{(1-2\alpha)k_3+9\beta m}B^3.
\]
From the above estimate, we can rule out further the case when $k_3\leq -12\beta m$. Therefore, it remains to consider the case when $k_1,k_2,k_3\in[-12\beta m, \beta m ]$. In other words,  all frequencies are of size almost like ``$1$''. The $Z$-norm estimate of a  trilinear form of this type has already been considered in the third case of the proof of Lemma \ref{cubictermsestimatelemma}.  We omit details here.	
\end{proof}

\subsection{Proof of Lemma \textup{\ref{L2Znormestimate}}}\label{znormnormalform}.
 Note that $(k_1,k_2)\in \chi_k^3$  and $t\in [2^{m-1},2^m]$, $m\in \mathbb{Z}_{+}$. Since we have already proved the Proposition \ref{propZnorm2} in the previous subsection, under the bootstrap assumption (\ref{smallness}), the estimates in Lemma \ref{L2estimatelemma} are valid in this subsection.

 From the estimate (\ref{sizeofnormalform}), the estimates (\ref{L2estimate}) and (\ref{L2derivativeestimate}) in Lemma \ref{L2estimatelemma}, the following estimate holds from the $L^2-L^2$ type estimate and the volume of support of $\xi$, the following estimate holds, 
 \[
 \sum_{\mu, \nu\in\{+,-\}}  2^{\delta j}\|  e^{i t\Lambda} A_{\mu,\nu} (u^{\mu}_{k_1}(t),u^{\nu}_{k_2}(t)) \|_{B_{k,j}} \lesssim 2^{\alpha k + (1+\delta )j +8k_{+} - 2k+k}\|f_{k_1}(t)\|_{L^2}\|f_{k_2}(t)\|_{L^2}\]
 \[
 \lesssim 2^{(1+\alpha)k+10\delta m +j - (N_0-10)k_{+}}\epsilon_1^2. 
 \]
 Therefore, we can rule out the case when $k\leq -(1+20\delta)j/(1+\alpha)-20\delta m $ and $k\geq j/(N_0-20)+10\delta m $. Moreover, it is easy to check that the proof of Lemma \ref{hhgoodj1} is still valid. Hence we can rule  out the case when $j\geq (1+20)\delta m $. It would be sufficient to consider the case when $k$ and $j$ satisfy the following estimate, 
 \be\label{deceqn41}
-(1+100\delta)m/(1+\alpha)\leq k\leq 2\beta m , \quad j\leq (1+20)\delta m. 
 \ee

  Recall the decomposition of symbol $m_{\mu, \nu}(\xi-\eta, \eta)$ in (\ref{deceqn11}). We localize the angle between $\xi$ and $\nu \eta$ and have the following decomposition, 
 \be\label{dec18eqn1}
\mathcal{F}[e^{i t\Lambda} A_{\mu, \nu}(u^{\mu}_{k_1}, u^{\nu}_{k_2})](\xi)=\sum_{i=1,2}\sum_{\bar{l}_i \leq l\leq 2 } \widetilde{T}_{l;k_1,k_2}^{\mu, \nu,i}(t,\xi),\quad \widetilde{T}_{l;k_1,k_2}^{\mu, \nu,i}(t,\xi)= \sum_{j_1\geq -k_{1,-}, j_2\geq -k_{2,-}} \widetilde{T}_{l;k_1,j_1,k_2,j_2}^{\mu, \nu,i}(t,\xi),
 \ee
 where $\bar{l}_1:=2k_{-}, \bar{l}_2:=-4$,
 \[
\widetilde{T}_{l;k_1,j_1,k_2,j_2 }^{\mu, \nu,i}(t,\xi)= \int_{\R^2} e^{it \Phi^{\mu, \nu}(\xi, \eta)}\widehat{f^{\mu}_{k_1,j_1}}(t, \xi-\eta)\widehat{f^{\nu}_{k_2,j_2}}(t, \eta)m_{\mu, \nu}^i(\xi-\eta, \eta) \varphi_{\bar{l}_i;l}(\angle(\xi, \nu \eta))  d \eta. 
 \]
Therefore, 
\[
\mathcal{F}^{-1}[ \widetilde{T}_{l;k_1,j_1,k_2,j_2}^{\mu, \nu,i}(t,\xi)\psi_k(\xi)]= \int_{\R^2} \int_{\R^2} e^{i x\cdot \xi + it \Phi^{\mu, \nu}(\xi, \eta)}\widehat{f^{\mu}_{k_1,j_1}}(t, \xi-\eta)\widehat{f^{\nu}_{k_2,j_2}}(t, \eta)\]
\[
\times  m_{\mu, \nu}^i(\xi-\eta, \eta)  \varphi_{\bar{l}_i;l}(\angle(\xi, \nu \eta))\psi_k(\xi)  d \eta d\xi,
\]
where symbols $  m_{\mu, \nu}^i(\xi-\eta, \eta)$, $i\in\{1,2\}$, are defined in (\ref{deceqn11}). Recall (\ref{dec18eqn1}). It is easy to see that our goal can be reduced to prove the following two Lemmas. 
\begin{lemma}
Under the bootstrap assumption \textup{(\ref{smallness})}, the following estimate holds if fixed $k$ satisfies the estimate \textup{(\ref{deceqn41})},
\be
\sum_{0\leq j\leq (1+20\delta)m} \sum_{\bar{l}_2 \leq l\leq 2 } \| \mathcal{F}^{-1}\big[ \widetilde{T}_{l;k_1,k_2}^{\mu, \nu,2}(t,\xi)\big] \|_{B_{k,j}} \lesssim \epsilon_0.
\ee
\end{lemma}
\begin{proof}
Recall that $\bar{l}_2=-4$. From the $L^2_x-L^\infty_x$ type bilinear estimate, the following estimate holds,
\[
\sum_{0\leq j\leq m+20} \sum_{\bar{l}_2 \leq l\leq 2 } \| \mathcal{F}^{-1}\big[ \widetilde{T}_{l;k_1,k_2}^{\mu, \nu,2}(t,\xi)\big] \|_{B_{k,j}} \lesssim \sum_{0\leq j\leq m+20} 2^{\alpha k + j +8k_{+}} \|e^{-it\Lambda} f_{k_1}(t) \|_{L^\infty} \| f_{k_2}(t)\|_{L^2}\]
\[
\lesssim \sum_{0\leq j\leq m+20} 2^{(1-\alpha)k-2k_{+}+j-m}\epsilon_1^2\lesssim \epsilon_0.
\]
Note that   the following estimate holds if $|x|\sim 2^j$ and $j\geq m+20$,
\[
|\nabla_\xi[x\cdot \xi + t\Phi^{\mu, \nu}(\xi, \eta)|\sim 2^{j}.
\]
Therefore, after doing integration by parts $\xi$ many times, we can rule out the case when $\min\{j_1,j_2\}\leq j-\delta j$.  If $\min\{j_1,j_2\}\geq j-\delta j$, then the following estimate holds after using the volume of support of $\xi$ first and then using the $L^2-L^2$ type estimate, 
\[
\sum_{\min\{j_1,j_2\}\geq j-\delta j}\|\mathcal{F}^{-1}[ \widetilde{T}_{l;k_1,j_1,k_2,j_2}^{\mu, \nu,i}(t,\xi)\psi_k(\xi)]\|_{B_{k,j}}\lesssim 	\sum_{ j_1,j_2  \geq j-\delta j} 2^{\alpha k + j+k+8k_{+}-j_1-j_2-2\alpha k}\epsilon_1^2 \lesssim 2^{-m/2}\epsilon_0.
\]
Hence finishing the proof. 
\end{proof}
\begin{lemma}
Under the bootstrap assumption \textup{(\ref{smallness})}, the following estimate holds if fixed $k$ satisfies the estimate \textup{(\ref{deceqn41})},
\be\label{deceqn100}
\sum_{0\leq j\leq (1+20\delta)m} \sum_{\bar{l}_1 \leq l\leq 2 } \| \mathcal{F}^{-1}\big[ \widetilde{T}_{l;k_1,k_2}^{\mu, \nu,1}(t,\xi)\big] \|_{B_{k,j}} \lesssim \epsilon_0.
\ee
\end{lemma}
\begin{proof}
Recall that $\bar{l}_1:=2k_{-}$. Note that the estimate (\ref{angularrelation}) holds for the case we are considering. 
Following  the same argument used in the proof of Lemma \ref{badhhlemma1}, we first rule out the case when $j\geq \max\{m+l, -k-l\}+100\delta m$. It would be sufficient to consider the case when $ j\leq \max\{m+l, -k-l\}+100\delta m$. Based on the possible size of $j, k$, and $l$, we separate into three cases as follows.

$\bullet$ If  $k+2l\leq -m +50\delta m  $, then this assumption implies that $j\leq -k-  {l}+200\delta m$ and $k\leq -m/5+\beta m$. From the estimate (\ref{eqn293}) in Lemma \ref{bilinearest} and the estimates (\ref{L2estimate}) and (\ref{equation6726}) in Lemma \ref{L2estimatelemma}, the following estimate holds,
\[
 \| \mathcal{F}^{-1}[ \widetilde{T}_{l;k_1 ,k_2 }^{\mu, \nu,i}(t,\xi)\psi_k(\xi)]\|_{B_{k,j}} \lesssim   2^{\alpha k+ j -2\max\{k_{-}, l\}+2k+l } \|\widehat{ f_{k_1}}(t, \xi)\|_{L^\infty_\xi}\|f_{k_2}\|_{L^2}
\]
\be
 \lesssim 2^{\alpha k+200\delta m }\epsilon_1^2\lesssim 2^{-\beta   m }\epsilon_0.
\ee

$\bullet$ If   $k+2l\geq -m +50\delta m$ and $l=\bar{l}_1$, then this assumption implies that $k\geq -m/5 +10\delta m $. Note that the following estimate holds from the estimate (\ref{equation6911}) in Lemma \ref{angularbilinear}, 
 \be
 \sum_{j\leq m + l+80}\| \mathcal{F}^{-1}[ \widetilde{T}_{l;k_1 ,k_2 }^{\mu, \nu,i}(t,\xi)\psi_k(\xi)]\|_{B_{k,j}} \lesssim  \sum_{j\leq m + l+80} 2^{  j -2\max\{k_{-},l\}  -m +(1-\alpha)k-2k_{+	}}\epsilon_1^2 + 2^{-m}\epsilon_1^2\lesssim  \epsilon_0.
 \ee
 If $j\geq m +l +80$, then from the estimate (\ref{angularrelation}) in Lemma \ref{angularrelationlemma}, the following estimate holds if $|x|\sim 2^{j}$, and $\angle(\xi, \nu\eta)\sim 2^{l}$, 
 \[
|\nabla_\xi[x\cdot \xi + t\Phi^{\mu, \nu}(\xi, \eta)|\sim 2^{j}.
 \]
 Hence, we can do integration by parts in $\xi$ many times to rule out the case when $\min\{j_1,j_2\}\leq j-\delta j$. From the estimate (\ref{eqn293}) in Lemma \ref{bilinearest}, the following estimate holds if $\min\{j_1,j_2\}\geq j-\delta j$,
 \[
\sum_{\min\{j_1,j_2\}\geq j-\delta j} \| \mathcal{F}^{-1}[ \widetilde{T}_{l;k_1,j_1,k_2,j_2}^{\mu, \nu,i}(t,\xi)\psi_k(\xi)]\|_{B_{k,j}} \lesssim \sum_{\min\{j_1,j_2\}\geq j-\delta j} 2^{\alpha k + j -2\max\{k_{-},l\} +k+\bar{l}_1/2} \| f_{k_1, j_1}(t)\|_{L^2}
 \]
 \[
\times  \| f_{k_2,j_2}(t)\|_{L^2} \lesssim 2^{-(1-2\delta)j -\alpha k }\epsilon_1^2 \lesssim 2^{-(1-2\delta)m -(2+\alpha)k_{-}} \epsilon_0\lesssim 2^{-m/3}\epsilon_0. 
 \]
 Note that the above estimate is more than sufficient to cover the logarithm loss of size ``$m$ '' caused by   the summation with respect to $j$.
 
 $\bullet$ If  $k+2l\geq -m  +50\delta m $ and $l>\bar{l}_1$, then this assumption implies that $j\leq m +l+100\delta m $ and $\max\{k,l\}\geq -m/3+10\delta m$. Note that we are away from the space resonance   in ``$\eta$'' set for the case we are considering. Therefore, we can rule out the case when $\max\{j_1,j_2\}\leq m + l-4\beta m$ by doing integration by parts $\eta$ many times. From   the estimate (\ref{equation6911}) in Lemma \ref{angularbilinear} and the estimates (\ref{L2estimate}) and (\ref{equation6726}) in Lemma \ref{L2estimatelemma}, the following estimate holds,  
 \[
\sum_{\max\{j_1,j_2\}\geq m +l-4\beta m}  \| \mathcal{F}^{-1}[ \widetilde{T}_{l;k_1,j_1,k_2,j_2}^{\mu, \nu,i}(t,\xi)\psi_k(\xi)]\|_{B_{k,j}} \lesssim \sum_{\max\{j_1,j_2\}\geq m +l-4\beta m} 2^{\alpha k  +j-2\max\{k_{-},\bar{l}_1\}}
 \]
 \[
\times 2^{-m-2\alpha k-\max\{j_1,j_2\} -4k_{+}}\|f_{k_1,j_1}(t)\|_Z \|f_{k_2,j_2}(t)\|_Z\lesssim 2^{-m+5\beta m -2\max\{k_{-},\bar{l}_1\} -\alpha k }\epsilon_0\lesssim 2^{-\beta m}\epsilon_0.
 \]
 Note that the above estimate is more than sufficient to cover the logarithm loss of size ``$m^2$ '' caused by   the summation with respect to $j$ and $l$. Hence finishing the proof of the desired estimate (\ref{deceqn100}).

\end{proof}

\appendix

\section{Analysis of the Phases}\label{analysisphase}

In  this appendix, we analyze  and estimate the phase $\Phi^{\mu, \nu}(\xi, \eta)$, where $\mu, \nu\in\{+,-\}$. Recall that the phase $\Phi^{\mu, \nu}(\xi, \eta)$ is defined as follows, 
\[
\Phi^{\mu, \nu}(\xi, \eta):= \Lambda(|\xi|)-\mu \Lambda(|\xi-\eta|) -\nu \Lambda(|\eta|), \quad \xi, \eta\in \R^2,  \Lambda(|\xi|):=\sqrt{|\xi|\tanh|\xi|}.
\]
Note that
\[
\nabla_\xi\Phi^{\mu,\nu}(\xi, \eta) =\Lambda'(|\xi|) \frac{\xi}{|\xi|} - \mu \Lambda'(|\xi-\eta|)\frac{\xi-\eta}{|\xi-\eta|},
\quad 
\nabla_\eta \Phi^{\mu,\nu}(\xi, \eta) =    \mu \Lambda'(|\xi-\eta|)\frac{\xi-\eta}{|\xi-\eta|}- \nu\Lambda'(|  \eta|)\frac{ \eta}{| \eta|}. 
\]
It turns out that the relative size between $\nabla_{\xi}\Phi^{\mu, \nu}(\xi, \eta)$ and $\nabla_{\eta}\Phi^{\mu, \nu}(\xi, \eta)$ plays an essential role. Hence it is necessary to consider the relation  between $\angle(\xi, \mu(\xi-\eta))$ and  $\angle(\mu(\xi-\eta), \nu \eta)$.

We will show that either the phase $\Phi^{\mu, \nu}(\xi, \eta)$ is big or  the sizes of     angles $\angle(\xi, \mu(\xi-\eta))$ and  $\angle(\mu(\xi-\eta), \nu \eta)$  are  proportional to each other  if the   phase $\Phi^{\mu, \nu}(\xi, \eta)$ is small, see estimate (\ref{angularrelation}) in Lemma \ref{angularrelationlemma} and estimate \ref{improvedphase} in Lemma \ref{roughestimatephase2}. To this end,  we define   axillary functions as follows,
\be\label{auxillarycutoff1}
c_{-, -}(\xi, \eta)=1,\quad   c_{-,+}(\xi, \eta)= \widetilde{\varphi}((|\eta|-|\xi |)/|\xi-\eta|), 
\ee
\be\label{auxillarycutoff2}
  c_{+,+}(\xi, \eta)= \widetilde{\varphi}((|\xi|-|\eta|)/|\xi-\eta|),\quad  c_{+,-}(\xi, \eta)= \widetilde{\varphi}(\big(|\xi-\eta|-|\xi|)/|\eta|),
\ee
where  $\widetilde{\varphi}(\cdot) $ is a  cutoff function such that $\widetilde{\varphi}(x)=1$ if $  x\leq  2^{-100}$ and it is supported inside $(-\infty,  2^{-50}]$. 

 \begin{lemma}\label{angularrelationlemma}
For any $\mu, \nu\in\{+,-\}$,  $|\xi|\sim 2^{k}, $ $|\xi-\eta|\sim 2^{k_1}, |\eta|\sim 2^{k_2}, $ $k,k_1,k_2\in \mathbb{Z},$ if $(\xi, \eta)\in \textit{supp}(1-c_{\mu,\nu}(\xi, \eta))$, then it is easy to verify that 
\be\label{angularrelation}
\angle(\xi, \mu(\xi-\eta)) \sim 2^{k_2-k_1} \angle(\xi, \nu \eta), \quad  \angle(\mu(\xi-\eta), \nu \eta) \sim 2^{k-k_1} \angle(\xi, \nu \eta).
\ee
Moreover, if $(k_1,k_2, \mu, \nu)\in \mathcal{P}_{\textup{bad}}^k$, then the above estimate also holds. 
\end{lemma}
\begin{proof}
To prove the desired estimate (\ref{angularrelation}), it would be sufficient to consider the case when $\angle(\xi, \nu \eta)\leq 2^{-100} $. If $(\xi, \eta)\in \textit{supp}(1-c_{\mu,\nu}(\xi, \eta)) $, then we only have to consider the case when $(\mu, \nu)\neq (-,-)$. Recall (\ref{auxillarycutoff1}) and (\ref{auxillarycutoff2}). For the case when $(\mu, \nu)=(-,+)$, $(\xi, \eta)\in \textit{supp}(1-c_{\mu,\nu}(\xi, \eta)) $ implies that $|\eta|\geq |\xi| + 2^{-100}|\xi-\eta|$, which further implies that the angle $(\xi, \eta-\xi)$ is small when $\angle(\xi, \eta)$ is small. The other two cases follows very similarly, we omit details here. 

Recall (\ref{badtypephase}). We first consider the case when $(k_1,k_2, \mu, \nu)\in \chi_k^1\times\{(+,-), (-,+)\}$, i.e., $|\xi|\leq 2^{-5}|\eta|$ and $\mu\nu=-$ for this case. It is easy to see that $\angle(\xi, \nu(\eta-\xi))$ is of same size as $\angle(\xi, \nu \eta)$ as the angle $\angle(\nu\eta, \nu(\eta-\xi))$ is much smaller than $\angle(\xi, \nu \eta)$.  The desired estimate (\ref{angularrelation}) also holds  very similarly   for the case when $(k_1,k_2, \mu, \nu)\in \chi_k^2\times\{(+,-), (+,+)\}$.
\end{proof}

\begin{lemma}\label{roughestimatephase2}
Given any $k, k_1,k_2,l \in \mathbb{Z}$,  $\mu, \nu\in\{+,-\}$, s.t., $k_2\leq k_1+5$, $l\leq 2$,    $|\xi|\sim 2^k, |\xi-\eta|\sim 2^{k_1}, |\eta|\sim 2^{k_2}$,   and $\angle(\xi, \nu\eta)\sim 2^l$. 
Then the following rough  estimate holds,
\be\label{noveqn519}
|\Phi^{\mu,\nu}(\xi, \eta)| \gtrsim  2^{\min\{k, k_2\}/2+\min\{k, k_2\}_{-}/2+2 \max\{k, k_2\}_{-}} + 2^{-k_{1,+}/2+\min\{k,k_2\}+2l}. 
\ee
If $(\xi, \eta)\in \textit{supp}(c_{\mu,\nu}(\xi, \eta))$, then the following improved estimate holds, 
\be\label{improvedphase}
|\Phi^{\mu,\nu}(\xi, \eta)| \gtrsim 2^{k_{2}-k_{1,+}/2}.
\ee
If $(k_1,k_2,\mu, \nu)\in \mathcal{P}_{\textup{bad}}^k$, then the following estimate holds, 
\be\label{dece31}
   |\Phi^{\mu,\nu}(\xi, \eta)| \sim 2^{ -\min\{k,k_2\}_{+}/2+ \min\{k,k_2\}+2 k_{1,-}} +  2^{\min\{k,k_2\}-k_{1,+}/2+2l}.
\ee
\end{lemma} 
\begin{proof}
  Note that the following estimate holds easily, 
\be\label{noveqn512}
|\Phi^{-, -}(\xi, \eta)| \geq \Lambda(|\xi-\eta|)\gtrsim 2^{k_{1}/2+k_{1,-}/2}.
\ee
Note that
\be\label{noveqn967}
 \Phi^{-, +}(\xi, \eta)   = \Lambda(|\xi|) +\Lambda(|\xi-\eta|)-\Lambda(|\xi|+|\xi-\eta|) +\Lambda(|\xi|+|\xi-\eta|) -\Lambda(|\eta|),
\ee
\be\label{noveqn622}
\Phi^{+, +}(\xi, \eta)   = \Lambda(|\xi|) -\Lambda(|\xi-\eta|+|\eta|) + \Lambda(|\xi-\eta|+|\eta|) - \Lambda(|\xi-\eta|)-\Lambda(|\eta|),
\ee
\be\label{noveqn621}
\Phi^{+,-}(\xi, \eta) =   \Lambda(|\xi|) +\Lambda(|\eta|)- \Lambda(|\xi|+|\eta|)+\Lambda(|\xi|+|\eta|) - \Lambda(|\xi-\eta|).
\ee
Recall that $|\xi|\sim 2^{k}, |\xi-\eta|\sim 2^{k_1}, |\eta|\sim 2^{k_2}, \angle(\xi, \nu \eta)\sim 2^{l}, $ and $k_2\leq k_1+5$. 
From the estimates (\ref{noveqn461}) and (\ref{noveqn462}) in Lemma \ref{auxillarylemma4}, it's easy to see that the following estimate holds,
\be\label{noveqn513}
|\Phi^{-, +}(\xi, \eta)|  \gtrsim 2^{k/2+k_{-}/2+2k_{1,-}} + 2^{-k_{1,+}/2+k+2l},
\ee
\be\label{noveqn514}
|\Phi^{+, +}(\xi, \eta)|  \gtrsim 2^{k_2/2+k_{2,-}/2+2 k_{1,-}} +2^{-k_{1,+}/2+k_2+2l},
\ee
\be\label{noveqn515}
|\Phi^{+, -}(\xi, \eta)|  \gtrsim 2^{\min\{k, k_2\}/2+\min\{k, k_2\}_{-}/2+2 k_{1,-}}+ 2^{-k_{1,+}/2+\min\{k,k_2\}+2l}. 
\ee
To sum up, from (\ref{noveqn512}), (\ref{noveqn513}), (\ref{noveqn514}), and (\ref{noveqn515}), it is easy to see that our desired estimate (\ref{noveqn519}) holds. 
 
 Now, let's proceed to prove the desired estimate (\ref{improvedphase}). If $(\mu,\nu)=(-,+)$, then the assumption  $(\xi, \eta)\in \textit{supp}(1-c_{-,+}(\xi,\eta)) $ implies that $|\eta|\leq |\xi|+2^{-100}|\xi-\eta|$. Recall (\ref{noveqn967}). From the estimates (\ref{noveqn461}) and (\ref{noveqn462}) in Lemma \ref{auxillarylemma4}, we have
 \[
|\Phi^{-,+}(\xi, \eta)| \gtrsim 2^{k_{1}-k_{1,+}/2}.
 \]
 The proof for the case  $(\mu,\nu)\in\{ (+,+), (+,-)\}$  is very similar, we omit details here.
 
 Lastly, we prove the desired estimate (\ref{dece31}). Recall (\ref{badtypephase}). We first consider the case when $(k_1,k_2, \mu, \nu)\in \chi_k^1\times \{(-,+), (+,-)\}$. Since $|\xi|\ll |\xi-\eta|\approx |\eta|$, from the equalities (\ref{noveqn967}) and (\ref{noveqn621}), it is easy to see that our desired estimate (\ref{dece31}) follows directly from the estimates (\ref{noveqn461}) and (\ref{noveqn462})  in Lemma \ref{auxillarylemma4}. The case when $(k_1,k_2, \mu, \nu)\in \chi_k^2\times \{(+,+), (+,-)\}$ follows very similarly. Hence finishing the proof.

\end{proof}

\begin{lemma}\label{phasesize}
Given $k_1, k_2, k, l\in \mathbb{Z}$, $l\leq 2$, $k_2\leq k_1+4$, $\mu, \nu \in \{+,-\}$, such that  $|\xi|\sim 2^{k}, $ $ |\xi-\eta|\sim 2^{k_1}, |\eta|\sim 2^{k_2}$, $\angle(\xi, \nu \eta)\sim 2^{l}$. 
If $(k_1,k_2)\in \chi_k^1$, then we have
\begin{equation}\label{eqn134}
\big| \Lambda'(|\xi-\eta|) \frac{\xi-\eta}{|\xi-\eta|} +\Lambda'(|\eta|) \frac{\eta}{|\eta|}\big| \sim 2^{k-k_{1}+\max\{l, 2k_{1,-}\}-k_{1,+}/2},
\end{equation}
If    
 $(k_1,k_2)\in \chi_k^2$,  then  we have, 
\begin{equation}\label{eqn140}
 2^{-k_{1,+}/2+\max\{l,2k_{1,-}\}} \lesssim \big| \Lambda'(|\xi-\eta|) \frac{\xi-\eta}{|\xi-\eta|} - \nu\Lambda'(|\eta|) \frac{\eta}{|\eta|}\big|  \lesssim 2^{ \max\{l,2k_{1,-}\}}  .
\end{equation}
Moreover,  the following estimates always hold, 
\be\label{dece1}
\big| \Lambda'(|\xi-\eta|) \frac{\xi-\eta}{|\xi-\eta|} +\Lambda'(|\eta|) \frac{\eta}{|\eta|}\big|\gtrsim 2^{-3k_{1,+}/2+k+k_{1,-}}, 
\ee
\be\label{dece2}
 \big| \Lambda'(|\xi-\eta|) \frac{\xi-\eta}{|\xi-\eta|} -\Lambda'(|\eta|) \frac{\eta}{|\eta|}\big|\psi_{\geq k-10}(|\eta -\xi/2|) \gtrsim 2^{-3k_{1,+}/2+k+k_{1,-}}.
\ee
\end{lemma}

\begin{proof}

  $\bullet$\quad If $(k_1,k_2)\in \chi_k^1$, i.e., $|k_1-k_2|\leq 5$, $k\leq k_1-5$, which means that   $0< |\xi|\ll |\xi-\eta|\approx |\eta|$. Note that
\be\label{deceqn501}
\big| \Lambda'(|\xi-\eta|) \frac{\xi-\eta}{|\xi-\eta|} +\Lambda'(|\eta|) \frac{\eta}{|\eta|}\big|\sim 2^{-k_{1,+}/2} \angle( \xi-\eta, -\eta) + 2^{k_{1,-}-3k_{1,+}/2}\big| |\xi-\eta|-|\eta| \big|.
\ee
\be\label{eqnnew3}
\angle(\xi, \nu \eta)= 2^{l}, k\leq k_1-5, \Longrightarrow \angle(\xi-\eta, -\eta) \sim 2^{k-k_1+l}  ,
\ee
 From (\ref{deceqn501}) and (\ref{eqnnew3}), it is easy to see  that the desired estimate  (\ref{eqn134}) holds if $l \geq 2k_{1,-}-10$. 

 It remains to consider the case when $l\leq 2k_{1,-}-10$. Because the angle between $\xi$ and $\nu \eta$ is very small, as a result we have $\big||\xi-\eta|-|\eta|\big|\sim 2^{k}$. Again from  (\ref{deceqn501}) and (\ref{eqnnew3}), it is easy to see  that the desired estimate  (\ref{eqn134}) also  holds .

  $\bullet$\quad If $(k_1,k_2)\in \chi_k^2$, i.e., $|k_1-k |\leq 4$, $k_2\leq k -5$, which means that   $0< |\eta|\ll |\xi-\eta|\approx |\xi|$. 
   Note that 
   \[
\big| \Lambda'(|\xi-\eta|) \frac{\xi-\eta}{|\xi-\eta|} - \nu\Lambda'(|\eta|) \frac{\eta}{|\eta|}\big|  \sim \max\{|\Lambda'(|\xi-\eta|)-\Lambda'(|\eta|)|, 2^{-k_{1,+}/2}\angle(\xi-\eta,\nu \eta)\},
\]
\[
\angle(\xi, \nu \eta) \sim \angle(\xi-\eta, \nu \eta),\]
\[
  2^{-k_{1,+}/2+2k_{1,-}} \leq  \int_{|\xi-\eta|/2}^{|\xi-\eta|} |\Lambda''(r) |d r 
\leq |\Lambda'(|\xi-\eta|)-\Lambda'(|\eta|)| \leq  \int_{0}^{|\xi-\eta|} |\Lambda''(r) |d r \leq 2^{2k_{1,-}}.
  \]
Hence, from the above estimates, it's easy to see that our desired estimate (\ref{eqn140}) holds.

Lastly, let's proceed to prove the desired estimates (\ref{dece1}) and (\ref{dece2}). Note that
\[
\Lambda'(|\xi-\eta|) \frac{\xi-\eta}{|\xi-\eta|} + \Lambda'(|\eta|) \frac{\eta}{|\eta|}=0,\quad \Longrightarrow \xi =0,
\]
\[
\Lambda'(|\xi-\eta|) \frac{\xi-\eta}{|\xi-\eta|} - \Lambda'(|\eta|) \frac{\eta}{|\eta|}=0,\quad \Longrightarrow \eta = \xi/2,
\]
Therefore, those two quantities have corresponding lower bounds when the frequencies are localized away from these two points. Following a very similar analysis as the proof of the estimates (\ref{eqn134}) and (\ref{eqn140}), it is easy to see that the desired estimates (\ref{dece1}) and (\ref{dece2}) hold.

\end{proof}

\begin{lemma}\label{auxillarylemma4}
Let $f(r):=\sqrt{r\tanh r}$, then the following estimate holds for any $r, s\in \mathbb{R},  r\geq s \geq 0,$
\be\label{noveqn461}
f(r+s) - f(r)  \sim s  \max\{r, 1\}^{-1/2},
\ee
\be\label{noveqn462}
f(r)+ f(s)-f(r+s) \sim s^{\h}  \min\{s,1\}^{\h
} \min\{r,1\}^{2}.
\ee
\end{lemma}
\begin{proof}
From direct computations, we have
\be\label{noveqn432}
f'(r) = \displaystyle{\frac{4r e^{2r} + e^{4 r}-1}{2(1+e^{2r})^{3/2} (e^{2r}-1)^{1/2}r^{1/2}}}\geq 0,\quad r\geq 0,
\ee
\be\label{noveqn431}
f''(r) = \displaystyle{\frac{2e^{4r}(1+8r^2) - 1-e^{8r} - 8 e^{6r}r(2r-1) - 8 e^{2r}r(1+2r)}{4(1+e^{2r})^{5/2} (e^{2r}-1)^{3/2} r^{3/2}}}, \quad r\geq 0. 
\ee
 An important observation is that $f''(r)\leq 0$ and  $f''(r)=0$ if and only if $r=0$. To prove this claim, we only have to prove that the numerator is non-positive. We define $\hat{f}(r)$ to be the numerator of $  f''(r)$ in (\ref{noveqn431}). Note that the following decomposition holds, 
\[
\hat{f}(r):= g(r)+ h(r),\quad g(r):= 16 e^{4r} r^2 - 8 e^{6r} r^2 - 8 e^{2r} r^2=-8e^{2r}r^2(e^{2r}-1)^2,
\]
\[
h(r):=2e^{4r} - 1-e^{8r} - 8 e^{6r}r(r-1) - 8 e^{2r}r(1+r).
\]
Obviously, $g(r)\leq 0 $ and $g(r)=0$ if and only $r=0$. 
It remains to check $h(r)$.  After taking up to four derivatives for $h(r)$, we have $h^{(n)}(0)=0$ for $ n \in\{0,1,2,3\}$, and the following estimate holds, 
\[
h^{(4)}(r)= -128 e^{2 r} (5  + 5 r + r^2 + 
   27 e^{4 r }( r + 3 r^2) +32 e^{6 r} -27 e^{4r}- 4 e^{2 r}) < 0,\]
hence $h(r)\leq 0$ and $h(r)=0$ if and only $r=0$.

Note that $f(0)=0$. Hence, 
\be\label{noveqn469}
f(r+s)-f(r)= \int_{r}^{r+s} f'(\tau) d \tau \geq 0,
\ee
\be\label{equation1222}
f(r)+f(s)-f(r+s)= \int_{0}^{r} [f'(\tau_1)- f'(s+\tau_1)] d \tau_1=\int_{0}^{r} \int_{0}^{s} - f''(\tau_1+\tau_2) d \tau_2 d \tau_1\geq 0.
\ee
Note that
\be\label{noveqn471}
f'(r)\sim \left\{\begin{array}{ll}
1  & \textup{if}\, r\leq 1\\
r^{-1/2} &   \textup{if}\, r\geq 1,\\
\end{array}\right. \qquad f''(r)\sim \left\{\begin{array}{ll}
-r  & \textup{if}\, r\leq 1\\
-r^{-3/2} &   \textup{if}\, r\geq 1.\\
\end{array}\right.
\ee
Hence, from (\ref{noveqn469}) and  (\ref{noveqn471}), we have
\be\label{noveqn491}
f(r+s)-f(s) \sim s \max\{r, 1\}^{-1/2}. 
\ee
From (\ref{equation1222})  and  (\ref{noveqn471}),   the following estimate holds if $r,s\leq 1$,
\be\label{noveqn492}
f(r)+f(s)-f(r+s)\sim r^2 s, 
\ee
If $s \leq 1\leq r$,   we have
\be\label{noveqn493}
s\lesssim \int_{1/2}^1\int_{0}^{s} -f''(\tau_1+\tau_2) d \tau_1 d \tau_2  \leq  f(r)+f(s)-f(r+s) \leq f(s) \leq s.	
\ee
Lastly, if $1\leq s\leq r$,   we have 
\be\label{noveqn494}
 s^{1/2}\lesssim \int_{s/2}^{s}\int_{s/2}^{s} -f''(\tau_1+\tau_2) d\tau_2 d \tau_1\leq    f(r)+f(s)-f(r+s) \leq f(s) \leq s^{1/2}.	
 \ee
To sum up, from estimates (\ref{noveqn492}), (\ref{noveqn493}), (\ref{noveqn494}), we have
\begin{equation}\label{sizeofII}
 f(r)+f(s)-f(r+s) \sim s^{\h}  \min\{s,1\}^{\h
} \min\{r,1\}^{2}, \quad 0\leq s \leq r.
\end{equation}
Hence finishing the proof.
\end{proof}


\begin{thebibliography}{99}

 



\bibitem{alazard} T. Alazard and J. M. Delort. Global solutions and asymptotic behavior for two dimensional gravity water waves, \textit{ Ann. Sci. \'Ec. Norm. Sup\'er.} 48(\textbf{2015}), no. 5, 1149--1238.


\bibitem{alazard1} T. Alarzard, N. Burq and C. Zuily. On the water wave equations with surface tension, \textit{Duke Math. J.}, 158 (\textbf{2011}), no. 3, 413--499.

\bibitem{alazard2} T. Alazard, N. Burq and C. Zuily, On the Cauchy problem for the gravity water waves, \textit{Invent. Math.,} 198 (\textbf{2014}), no. 1,   71--163.

\bibitem{Alvarez} B. Alvarez-Samaniego and D. Lannes. Large time existence for 3D water waves and asymptotics, \textit{Invent. Math.}, \textbf{171} (2008), 485--541.

\bibitem{ambrose} M. Ambrose and N. Masmoudi, The zero surface tension limit of two-dimensional water waves, \textit{Comm. Pure. Appl. Math}., 58 (\textbf{2005}), no. 10, 1287--1315,.

\bibitem{beyer} K. Beyer and M. G\"unther. On the Cauchy problem for a capillary drop. I. Irrotational motion, \textit{Math. Methods. Appl. Sci.}, 21 (\textbf{1998}), no. 12, 1149--1183,.

\bibitem{fefferman} A. Castro, D. C\'ordoba, C. Fefferman, F. Gancedo and J. G\'omez-Serrano. Finite time singularities for the free boundary incompressible Euler equations, \textit{Ann. of Math}., 178 (\textbf{2013}), no. 3, 1061--1134.

\bibitem{christodoulou} D. Christodoulou and H. Lindblad, On the motion of the free surface of a liquid, \textit{Comm. Pure Appl. Math}., 53 (\textbf{2000}), no. 12, 1536--1602.



\bibitem{coutand1} D. Coutand and S. Shkoller. Well-posedness of the free-surface incompressible Euler equations with or without surface tension, \textit{J. Amer. Math. Soc}., 20 (\textbf{2007}), no. 3,  829--930.


\bibitem{coutand2} D. Coutand adn S. Shkoller. On the finite-time splash and splat singularities for the 3-D free-surface Euler equations, \textit{Comm. Math. Phys}., 325 (\textbf{2014}), no. 1,   143--183.


\bibitem{css} W. Craig, C. Sulem and P.-L. Sulem. Nonlinear modulation of gravity waves: a rigorous approach, \textit{Nonlinearity}, 5 (\textbf{1992}), no. 2,  497--522,.

\bibitem{deng} S. Deng and S-M. Sun. Exact theory of three dimensional water waves at the critical speed. \textit{Siam J. Math. Anal.}, 42 (\textbf{2010}), no. 6, 2721--2761.

\bibitem{deng1} Y. Deng, A.Ionescu, B. Pausader and F. Pusateri. Global solutions of the gravity-capillary water wave system in $3$ dimensions, \textit{arXiv:1601.05686}, \textit{preprint}.
\bibitem{fefferman2} C. Fefferman, A. Ionescu and V. Lie. On the absence of ``splash" singularities in the case of two-fluid interfaces, \textit{Duke. Math. J.}, 165(\textbf{2016}), 417--462.

\bibitem{germain2} P. Germain, N. Masmoudi and J. Shatah. Global solutions for the gravity surface water waves equation in dimension 3, \textit{Ann. of Math}., 175 (\textbf{2012}), no. 2, 691--754.

\bibitem{germain3} P. Germain, N. Masmoudi and J. Shatah. Gobal solutions for  capillary waves equation in 3D, \textit{Comm. Pure Appl. Math}., 68(\textbf{2015}), no. 4, 625--687.

\bibitem{guo1} Y. Guo and B. Pausader. Global smooth ion dynamics in the Euler-Poisson system, \textit{Comm. Math. Phys.,} 303 (\textbf{2011}), 89-125.

\bibitem{Guoz} Z. Guo, L. Peng and B. Wang. Decay estimates for a  class of wave equations, \textit{J. Func, Anal.}, 254(\textbf{2008}), no. 6, 1642--1660.




\bibitem{tataru1} J. Hunter, M. Ifrim and D. Tataru. Two dimensional water waves in holomorphic coordinates, \textit{Comm. Math. Phys}.,  346(\textbf{2016}), no. 2,  483--552.



\bibitem{tataru3} M. Ifrim and D. Tataru. Two dimensional water waves in holomorphic coordinates II: global solutions, \textit{Bulletin de la SMF}., 144 (\textbf{2016}), no. 2, 369--394.

\bibitem{tataru4} M. Ifrim and D. Tataru. The lifespan of small data solutions in two dimensional capillary water wave, \textit{Arch. Ration. Mech. Anal.,} 225(\textbf{2017}), no. 3, 1279--1346. 


\bibitem{benoit} A. Ionescu and B. Pausader. The Euler-Poisson system in 2D: global stability of the constant equilibrium solution, \textit{Int. Math. Res. Not.}, 2013 (\textbf{2013}), 761--826.

\bibitem{IP1} A. Ionescu and F. Pusateri. Global solution for the gravity water wave system in 2D, \textit{Invent. Math}., 199 (\textbf{2015}), no.3, 653--804.



\bibitem{IP3} A. Ionescu and F. Pusateri. Global analysis of a model for capillary water waves in 2D, \textit{Comm. Pure Appl. Math}., 69 (\textbf{2016}), no. 11, 2015--2071.

\bibitem{IP4} A. Ionescu and F. Pusateri. Global regularity for 2d water waves with surface tension, \textit{to appear in Memoirs Amer. Math. Soc.}. 

\bibitem{John} F. John. Blow up of solutions of nonlinear wave equations in three space dimensions, \textup{Manuscripta Math}, \textbf{28} (1979), no, 1--3, pp 235--268.

\bibitem{lannes} D. Lannes, Well-posedness of the  water waves equations, \textit{J. Amer. Math. Soc}., 18 (\textbf{2005}), no. 3,  605--654.

\bibitem{lindblad} H. Lindblad, Well-posedness for the motion of an incompressible liquid with free surface boundary, \textit{Ann. of Math}., 162 (\textbf{2005}), no. 1,  109--194.



\bibitem{nalimov} V.I. Nalimov, The Cauchy-Poisson problem, \textit{Dinamika Splosn. Sredy Vyp}. 18 Dinamika Zidkost. so Svobod. Granicami  254 (\textbf{1974}), 10--210.

\bibitem{shatah1} J. Shatah and C. Zeng. Geometry and a priori estimate for free boundary problems of the Eulers equation, \textit{Comm. Pure Appl. Math}.,  61 (\textbf{2008}), no. 5, 698--744.





\bibitem{yosihara} H. Yosihara. Gravity waves on the free surface of an incompressible perfect fluid of finite depth, \textit{Publ.Res. Inst. Math. Sci}., 310 (\textbf{1982}), no. 1, 49--96.


\bibitem{wang1} X. Wang. Global infinite energy solutions for the 2D gravity water waves system, \textit{Comm. Pure. Appl. Math.,} 71 (\textbf{2018}), no. 1, 90--169.

\bibitem{wang2} X. Wang. 
On the 3-dimensional water waves system above a flat bottom, \textit{Analysis \& PDE}, 10 (\textbf{2017}), no. 4, 893--928.

\bibitem{wu1} S. Wu. Well-posedness in Sobolev spaces of the full water wave problem in 2-d, \textit{Invent. Math}., 130 (\textbf{1997}), no. 1,   39--72.

\bibitem{wu2} S. Wu. Well-posedness in Sobolev spaces of the full water wave problem in 3-d, \textit{J. Amer. Math. Soc}., 12 (\textbf{1999}), no. 2, 445--495.


\bibitem{wu3} S. Wu. Almost global wellposedness of the 2-D full water wave problem, \textit{Invent. Math}., 177 (\textbf{2009}), no. 1, 45--135.


\bibitem{wu4} S. Wu. Global wellposedness of the 3-D full water wave problem, \textit{Invent. Math}., 184 (\textbf{2011}), no. 1, 125--220.

\bibitem{zakharov} V. Zakharov. Stability of periodic waves of finite amplitude on the surface of a deep fluid. \textit{J. Appl. Mech. Tech. Phys.}, 9(\textbf{1968}), no. 2,  190--194, .





\end{thebibliography}
\end{document}